\documentclass[12pt]{article}

\usepackage[utf8]{inputenc}
\usepackage[T1]{fontenc}
\usepackage[english]{babel}
\usepackage{amsmath,amssymb,amsthm}
\usepackage{mathtools}
\usepackage{graphicx}
\usepackage{booktabs}
\usepackage{array,colortbl,tikz,algorithm}
\usepackage[algo2e,ruled,vlined]{algorithm2e}
\usepackage{float}
\usepackage{placeins}
\usepackage{microtype}
\usepackage[margin=1in]{geometry}
\usepackage[authoryear,round]{natbib}
\usepackage{caption}
\usepackage{xcolor}
\usepackage{parskip}
\usepackage[colorlinks=true,
            linkcolor=blue!55!black,
            citecolor=blue!55!black,
            urlcolor=blue!55!black]{hyperref}
\hypersetup{
  pdftitle={Finite-Sample Hausdorff Bounds and Hadamard Sensitivity for Regressions with MNAR Covariates},
  pdfauthor={Hugo Dunias}
}
\graphicspath{{figures/}}
\newcommand{\R}{\mathbb{R}}
\newcommand{\E}{\mathbb{E}}
\newcommand{\PP}{\mathbb{P}}
\newcommand{\Xcal}{\mathcal{X}}
\newcommand{\Mcal}{\mathcal{M}}
\newcommand{\norm}[1]{\left\|#1\right\|}
\newcommand{\abs}[1]{\left|#1\right|}
\newcommand{\dH}{d_{\mathrm{H}}}

\newcommand{\Bhat}{\widehat{\mathcal{B}}}
\newcommand{\Bstar}{\mathcal{B}^*}

\newcommand{\alttext}[1]{\par\smallskip\noindent\textit{Alt text:} #1}

\theoremstyle{plain}
\newtheorem{theorem}{Theorem}[section]
\newtheorem{lemma}[theorem]{Lemma}
\newtheorem{proposition}[theorem]{Proposition}
\newtheorem{corollary}[theorem]{Corollary}
\newtheorem{assumption}[theorem]{Assumption}
\theoremstyle{definition}

\theoremstyle{remark}
\newtheorem{remark}[theorem]{Remark}

\numberwithin{equation}{section}

\begin{document}

\title{Finite-Sample Hausdorff Bounds and
Hadamard Sensitivity for Regressions with MNAR Covariates}

\author{Hugo Dunias\\
\small CentraleSup\'elec, 3 rue Joliot-Curie, 91192 Gif-sur-Yvette Cedex, France\\
\small \href{mailto:hugo.dunias@student-cs.fr}{hugo.dunias@student-cs.fr}}
\date{September 2026}

\maketitle

\begin{abstract}
Covariates missing not at random generally prevent point
identification of regression coefficients without untestable restrictions.
This paper studies linear regression when every missing covariate is restricted
to a prespecified compact interval.  The resulting population target is the
set of best linear predictor coefficients compatible with the observed-data
law.  Under a non-atomic observed-data law and the stated regularity
conditions, we can represent this identified set as the image, under the
least-squares moment map, of the Aumann expectation of a random moment set.
For the population set and its empirical analogue, we
derive explicit bounds under bounded,
sub-exponential and polynomial envelope conditions.  The bounds show
their dependence on sample size, dimension, confidence level and tail
parameters. Under a Donsker condition and a uniform zero-set error bound, an
oracle enlargement of the set-valued Z-estimator also converges at the
$n^{-1/2}$ rate. We separately study a Hadamard-design method for exploring the
effect of admissible imputations without enumerating all vertices and build intuition to detect situations where the population target can be approximated by a zonotope.  A numerical experiment illustrates how co-missingness and the
width of the imputation intervals affect this diagnostic.
\end{abstract}

\noindent\textbf{Keywords:} missing covariates; missing not at random; partial
identification; random sets; Hausdorff distance; sensitivity analysis;
Hadamard designs.

\setcounter{tocdepth}{2}
\tableofcontents
\clearpage

\section{Introduction}
\label{sec:introduction}

\citet{Rubin1976} partitions missingness mechanisms into MCAR, MAR and MNAR.
In the last case no restriction is imposed on the dependence of the
missingness indicator on the complete data. The observed-data law does not in
general identify the complete-data law
\citep{LittleRubin2019,MolenberghsEtAl2008}. Rather than specify a parametric
selection model, we restrict each missing coordinate to a prespecified
interval and ask which regression coefficients remain compatible with the
observations.
Worst-case identification with missing covariates has also been studied by
\citet{HorowitzManski2006} and, for conditional moment models, by
\citet{AucejoEtAl2017}; the latter setting differs from the unconditional
estimating equations considered here.

Random-set methods provide a natural language for set-valued observations and
moment restrictions \citep{Aumann1965,Molchanov2005}.  In
particular, \citet{BeresteanuEtAl2011} include best linear predictors with
interval-valued covariates among their examples.  Root-$n$ Hausdorff limit for sample analogues constructed from Minkowski averages was already
established by \citet{BeresteanuMolinari2008}.  Finite-dimensional deviation
estimates for sums of random sets were studied by \citet{Artstein1984}, and
\citet{NorkinWets2012} give explicit concentration inequalities under
boundedness assumptions.  Our probabilistic contribution is deliberately
narrower: it specializes the construction to an unrestricted, possibly MNAR,
covariate-completion model and records explicit dependence on
$n$, $p$, the confidence level and the envelope parameters under three tail
regimes.

A complementary problem is computational.  The empirical completion set is
generated by a box with one coordinate for each missing scalar entry, so direct
vertex enumeration grows exponentially.  \citet{BatteyCox2023} propose a
conditional sensitivity analysis based on orthogonal factorial designs.  We
adapt this idea by associating factors with missing entries or groups of
entries.  Since this type of algorithm does not control the error of its sensitivity estimation, we will try to build intuition on the conditions under which it can give reasonable estimates.

The paper makes four contributions.  First, it specializes the random-set
representation to the unrestricted, possibly MNAR, covariate-completion model
and gives a direct observational-equivalence argument for the population set
of linear projection coefficients.  Second, it gives finite-sample Hausdorff
bounds between the empirical and population sets under bounded,
sub-exponential and polynomial envelopes, including the non-convexity
remainder. Third, under a Donsker condition and a uniform zero-set error bound,
it establishes root-$n$ Hausdorff convergence for an oracle enlargement of a
set-valued Z-estimator. Fourth, it gives analytical and empirical diagnostics for
Hadamard-based exploration of admissible imputations.  The probabilistic bounds
and the Hadamard analysis address complementary questions on a more heuristic point of view.

Section~\ref{sec:model} gives the mathematical formulation.
Sections~\ref{sec:random_sets} and~\ref{sec:bounds} establish identification
and finite-sample Hausdorff bounds. Section~\ref{sec:set_valued_z} extends the
construction to estimating equations. Section~\ref{sec:hadamard} then presents
the Hadamard method, its probabilistic conditions and the adaptive algorithm.
Detailed proofs and the algebraic curvature condition are in Appendices~\ref{app:section4}--\ref{app:section3}.

\section{Mathematical formulation}
\label{sec:model}

\subsection{Observed data and admissible completions}

Let $X=(X_1,\ldots,X_p)^T\in\R^p$ be the complete covariate vector, let
$Y\in\R$ be the response and let $M\in\{0,1\}^p$ be the missingness indicator,
with $M_j=1$ when $X_j$ is missing.  We observe independent copies
$O_i=(X_i^{\mathrm{obs}},Y_i,M_i)$, $i=1,\ldots,n$.
Let $\mathcal I_{\mathrm{miss}}=\{(i,k):M_{ik}=1\}$ and
$N_{\mathrm{miss}}=|\mathcal I_{\mathrm{miss}}|$.
For each coordinate $j$, fix before the analysis a non-empty compact interval
$I_j=[\ell_j,u_j]$.  

For each observation $i$, let
$\mathbf X_i=\{z\in\R^p:z_j=X_{ij}\ \text{if }M_{ij}=0,\ z_j\in I_j\ \text{if }M_{ij}=1\}$.
Thus
$\Xcal_{\mathrm{adm}}=\{Z\in\R^{n\times p}:z_i\in\mathbf X_i,
i=1,\ldots,n\}$, where $z_i^T$ is the $i$th row.  For admissible $Z$ with
$Z^TZ$ invertible, write $\widehat\beta(Z)=(Z^TZ)^{-1}Z^TY$ and
$\Bhat=\{\widehat\beta(Z):Z\in\Xcal_{\mathrm{adm}},\ Z^TZ\text{ invertible}\}$.

\subsection{Population target and assumptions}

Let $P$ be a distribution for the
complete random vector $(X,Y,M)$. Whenever the second moments exist and the
second-moment matrix is nonsingular, the population least-squares coefficient
is $\beta(P)=\{\E_P[XX^T]\}^{-1}\E_P[XY]$.

To define observational equivalence under partial identification, we
formalize the mapping from complete to observed data. Let
$\mathcal Z=\R^p\times\R\times\{0,1\}^p$ be the complete-data space, let
$\R_\star=\R\cup\{\star\}$ and let
$\mathcal O=\R_\star^p\times\R\times\{0,1\}^p$.
Define $(m_m(x))_j=x_j$ if $m_j=0$ and $(m_m(x))_j=\star$ otherwise; hence
$\mathsf{Obs}(X,Y,M)=(m_M(X),Y,M)$.
We restrict attention to distributions satisfying the support
restriction for the missing covariates. For $o=(x^{\mathrm{obs}},y,m)$, put
$\mathbf X(o)=\{x\in\R^p:x_j=x_j^{\mathrm{obs}}\ \text{if }m_j=0,
 x_j\in I_j\ \text{if }m_j=1\}$.
Let $P_{\mathrm{obs}}$ denote the observed-data law and define
\begin{align}
    \mathcal P_{\mathrm{adm}}
    &=\left\{P:P\circ\mathsf{Obs}^{-1}=P_{\mathrm{obs}},\quad
       P\bigl(X\in\mathbf X(\mathsf{Obs}(X,Y,M))\bigr)=1\right\},
       \label{eq:Padm_formal}\\
    \mathcal P_{\mathrm{adm}}^{++}
    &=\left\{P\in\mathcal P_{\mathrm{adm}}:
      \E_P[XX^T]\in\mathbb S_{++}^p,\quad
      \E_P[\norm{X}_2^2+\norm{X}_2|Y|]<\infty\right\}.
      \label{eq:Padm_positive_definite}
\end{align}
The observationally equivalent identified set is
\begin{equation}
    \Bstar_{\mathrm{Obs}}
    =\left\{\left(\E_P[XX^T]\right)^{-1}\E_P[XY]:
      P\in\mathcal P_{\mathrm{adm}}^{++}\right\}.
    \label{eq:pop_IS_obs_equivalent}
\end{equation}
Let $P_0\in\mathcal P_{\mathrm{adm}}^{++}$ denote the true complete-data law
and write $\beta^*=\beta(P_0)$. Then $\beta^*\in\Bstar_{\mathrm{Obs}}$,
although it is not point identified from $P_{\mathrm{obs}}$.

\begin{assumption}[Sampling and completion correspondence]
\label{ass:sampling}
The observations $O_1,\ldots,O_n$ are i.i.d. with non-atomic law
$P_{\mathrm{obs}}$.  The correspondence $o\mapsto\mathbf X(o)$ has a
measurable graph and non-empty compact values.
\end{assumption}

\begin{assumption}[Integrable envelope]
\label{ass:integrable_envelope}
$\E[\sup_{z\in\mathbf X}\norm{z}_2^2+|Y|\sup_{z\in\mathbf X}\norm{z}_2]<\infty$.
\end{assumption}

\begin{assumption}[Uniform positive-definiteness]
\label{ass:ure}
There exists $\kappa>0$ such that every admissible population completion
$X^{\mathrm{obs}}$, satisfying $X_j^{\mathrm{obs}}=X_j$ if $M_j=0$ and
$X_j^{\mathrm{obs}}\in I_j$ if $M_j=1$ almost surely, obeys
$\lambda_{\min}(\E[X^{\mathrm{obs}}(X^{\mathrm{obs}})^T])\ge\kappa$.
For the empirical admissible space, assume separately that
$\inf_{Z\in\Xcal_{\mathrm{adm}}}\lambda_{\min}(Z^TZ/n)\ge\kappa$ almost surely.
\end{assumption}

\section{Estimation of the Population Identified Set: Observational
Equivalence and Random Sets}
\label{sec:estimation_beta_set}
\label{sec:random_sets}

\subsection{Aumann expectation and equivalence of identification}

Calculating $\Bstar_{\mathrm{Obs}}$ directly involves searching over a complex
space of admissible complete-data joint distributions. We instead use Random
Set Theory \citep{Molchanov2005}.

For a non-empty random closed set $\mathbf Y$, its Aumann expectation is the
set of integrals of its integrable measurable selections,
\begin{equation}
    \E_{\mathrm{Aumann}}[\mathbf Y]
    =\left\{\int V(o)\,dP_{\mathrm{obs}}(o):
      V(o)\in\mathbf Y(o)\ \text{a.s.},\quad V\in\mathcal L^1\right\}.
    \label{eq:aumann_def}
\end{equation}
If $\mathbf Y$ is measurable, non-empty compact-valued and integrably bounded,
then this expectation is non-empty and compact; it is convex when
$P_{\mathrm{obs}}$ is non-atomic \citep{Molchanov2005}. Hence adding a closure
does not change it.

Define the random moment set
$\mathbf W(o)=\{(xx^T,xY(o)):x\in\mathbf X(o)\}$ and its Aumann expectation
$\mathcal K^*_{\mathrm{adm}}
=\E_{\mathrm{Aumann}}[\mathbf W]$.
Applying $\Phi(\Sigma,\gamma)=\Sigma^{-1}\gamma$ gives
\begin{equation}
    \Bstar_{\mathrm{Aumann}}
    =\left\{\Sigma^{-1}\gamma:(\Sigma,\gamma)
      \in\mathcal K^*_{\mathrm{adm}}\right\}.
    \label{eq:pop_IS_aumann}
\end{equation}

\begin{theorem}[Equivalence of identification]
\label{thm:identification_equivalence}
Under Assumptions~\ref{ass:sampling}--\ref{ass:ure},
\begin{equation}
    \Bstar_{\mathrm{Obs}}=\Bstar_{\mathrm{Aumann}}.
    \label{eq:identification_equivalence}
\end{equation}
\end{theorem}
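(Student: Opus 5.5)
The plan is to prove the two inclusions separately. In both directions we identify an admissible complete-data law $P$ with a measurable selection of the completion correspondence, using the disintegration of $P$ over $P_{\mathrm{obs}}$.

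For $\Bstar_{\mathrm{Obs}}\subseteq\Bstar_{\mathrm{Aumann}}$, fix $P\in\mathcal P_{\mathrm{adm}}^{++}$. The map $\mathsf{Obs}$ is measurable, so there is a regular conditional distribution $Q_o$ of $X$ given $\mathsf{Obs}(X,Y,M)=o$; the state spaces are Polish (with $\R_\star$ treated as a one-point extension). The support restriction gives $Q_o(\mathbf X(o))=1$ for $P_{\mathrm{obs}}$-a.e. $o$. Since $Y$ is a coordinate of $o$, the tower property gives
\[
\E_P[XX^T]=\int\!\int xx^T\,dQ_o(x)\,dP_{\mathrm{obs}}(o),\qquad
\E_P[XY]=\int\!\int xY(o)\,dQ_o(x)\,dP_{\mathrm{obs}}(o).
\]
The inner integral $V(o)=\int (xx^T,xY(o))\,dQ_o(x)$ lies in $\overline{\mathrm{conv}}\,\mathbf W(o)$, not in $\mathbf W(o)$ itself. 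Integrability of $V$ follows from Assumption~\ref{ass:integrable_envelope}, and measurability from that of $o\mapsto Q_o$. Hence $(\E_P[XX^T],\E_P[XY])\in\E_{\mathrm{Aumann}}[\overline{\mathrm{conv}}\,\mathbf W]$. Because $P_{\mathrm{obs}}$ is non-atomic (Assumption~\ref{ass:sampling}), the Aumann expectation of $\overline{\mathrm{conv}}\,\mathbf W$ equals that of $\mathbf W$ (Lyapunov/Richter, \citealp{Molchanov2005}). We also need $\mathbf W$ to be compact-valued and measurable, which holds because it is a continuous image of the compact-valued measurable $\mathbf X$. Applying $\Phi$ then gives the inclusion; the Gram matrix is invertible because $P\in\mathcal P^{++}_{\mathrm{adm}}$.

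For the reverse inclusion, take $(\Sigma,\gamma)\in\mathcal K^*_{\mathrm{adm}}$, realized by an integrable selection $V(o)\in\mathbf W(o)$. The first task is to lift $V$ to an $x$-selection. The set-valued map $o\mapsto\{x\in\mathbf X(o):(xx^T,xY(o))=V(o)\}$ is non-empty, compact-valued, and has a measurable graph. The Kuratowski--Ryll-Nardzewski / Aumann measurable selection theorem then provides a measurable $\xi(o)\in\mathbf X(o)$ with $V(o)=(\xi\xi^T,\xi Y)(o)$. Next define $P$ as the law of $(\tilde X,Y,M)$, where $O\sim P_{\mathrm{obs}}$ and $\tilde X=\xi(O)$. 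Observed coordinates of $\xi(O)$ coincide with $x^{\mathrm{obs}}$, and the missing ones lie in $I_j$. It follows that $\mathsf{Obs}(\tilde X,Y,M)=O$, so $P\circ\mathsf{Obs}^{-1}=P_{\mathrm{obs}}$ and the support restriction holds. The moment condition comes from Assumption~\ref{ass:integrable_envelope}. Positive definiteness is exactly Assumption~\ref{ass:ure} applied to the admissible completion $\tilde X$, since $\lambda_{\min}(\Sigma)\ge\kappa>0$. Then $\beta(P)=\Sigma^{-1}\gamma$, which proves $\Bstar_{\mathrm{Aumann}}\subseteq\Bstar_{\mathrm{Obs}}$. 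Note that Assumption~\ref{ass:ure} also ensures $\Phi$ is well defined on all of $\mathcal K^*_{\mathrm{adm}}$: every point of it arises from such a completion, so its matrix component is positive definite.

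The main obstacle is the first inclusion. A general admissible $P$ randomizes the completion given $o$, and this produces conditional averages in the convex hull of $\mathbf W(o)$ rather than points of $\mathbf W(o)$. The argument therefore has to invoke convexification of the Aumann integral under non-atomicity; this is the only place the non-atomic hypothesis enters. A secondary technical point is the measurability of $o\mapsto Q_o$ and of $\mathbf W$. I would handle it by embedding $\R_\star$ into $\R\times\{0,1\}$, so that standard disintegration on Polish spaces applies.
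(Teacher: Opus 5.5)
Your proposal is correct and follows the paper's route. The paper gets $\E_{\mathrm{Aumann}}[\mathbf W]=\{(\Sigma_P,\gamma_P):P\in\mathcal P_{\mathrm{adm}}\}$ by applying its general sharp Aumann representation (Theorem~\ref{thm:z_sharp_equivalence}) to $\phi=(XX^T,XY)$, whose proof uses your disintegration-plus-convexification step for one inclusion and your measurable-lifting-plus-pushforward step for the other, and then applies $\Phi$; your inlined version is, if anything, more careful in checking that the constructed law lies in $\mathcal P_{\mathrm{adm}}^{++}$ (via Assumptions~\ref{ass:integrable_envelope} and~\ref{ass:ure}), a point the paper passes over by working with $\mathcal P_{\mathrm{adm}}$.
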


The complete argument is given in Appendix~\ref{app:proofplan_z_ols}. This is the
least-squares case of the sharp Aumann representation in
Section~\ref{sec:set_valued_z}. In the rest of the paper, we write
$\Bstar=\Bstar_{\mathrm{Obs}}=\Bstar_{\mathrm{Aumann}}$.

We define the empirical joint set constructed from the admissible imputations:
\begin{equation}
    \widehat{\mathcal K}_{\mathrm{adm}}
    =\left\{\left(\frac1nZ^TZ,\frac1nZ^TY\right):
      Z\in\Xcal_{\mathrm{adm}}\right\}.
    \label{eq:empirical_joint}
\end{equation}

To measure the distance between the multidimensional estimator sets, we use
the Hausdorff metric. For non-empty compact sets $A,B\subset\R^d$, let
\begin{equation}
    \dH(A,B)=\max\left\{
      \sup_{a\in A}\inf_{b\in B}\norm{a-b}_2,
      \sup_{b\in B}\inf_{a\in A}\norm{a-b}_2\right\}.
    \label{eq:hausdorff}
\end{equation}
For compact convex sets, the support function
$\psi_A(u)=\sup_{a\in A}u^Ta$ satisfies
\begin{equation}
    \dH(A,B)=\sup_{u\in\mathbb S^{d-1}}
      \abs{\psi_A(u)-\psi_B(u)}
    \label{eq:hausdorff_support}
\end{equation}
\citep{Schneider1993}.

In order to use this characterization of the Hausdorff distance we have to
establish that $\mathcal K^*_{\mathrm{adm}}$ is compact and convex.

\begin{proposition}[Properties of $\mathcal K^*_{\mathrm{adm}}$]
\label{prop:Kadm_properties}
Under Assumptions~\ref{ass:sampling} and~\ref{ass:integrable_envelope},
$\mathcal K^*_{\mathrm{adm}}$ is a compact convex subset of
$\R^{p\times p}\times\R^p$.
\end{proposition}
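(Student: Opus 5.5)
The plan is to verify, for the random moment set $\mathbf W(o)=\{(xx^T,xY(o)):x\in\mathbf X(o)\}$, the three hypotheses of the Aumann-expectation result recalled after \eqref{eq:aumann_def}: measurability, non-empty compact values, and integrable boundedness. Compactness and convexity of $\mathcal K^*_{\mathrm{adm}}=\E_{\mathrm{Aumann}}[\mathbf W]$ then follow, using that $P_{\mathrm{obs}}$ is non-atomic by Assumption~\ref{ass:sampling}.

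\textbf{Values.} Write $g(x,o)=(xx^T,xY(o))$. For fixed $o$, the map $x\mapsto g(x,o)$ is a continuous polynomial map into $\R^{p\times p}\times\R^p$. Assumption~\ref{ass:sampling} gives that $\mathbf X(o)$ is non-empty and compact, so $\mathbf W(o)=g(\mathbf X(o),o)$ is non-empty and compact as a continuous image.

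\textbf{Integrable boundedness.} I would bound, for every $x\in\mathbf X(o)$,
\[
\norm{(xx^T,xY)}\le\norm{x}_2^2+|Y|\,\norm{x}_2,
\]
using the Frobenius norm of $xx^T$, which equals $\norm{x}_2^2$, and the triangle inequality for the product norm. Hence
\[
\sup_{w\in\mathbf W(o)}\norm{w}\le\sup_{z\in\mathbf X(o)}\norm{z}_2^2+|Y|\sup_{z\in\mathbf X(o)}\norm{z}_2,
\]
and the right-hand side is integrable by Assumption~\ref{ass:integrable_envelope}. Every measurable selection is therefore in $\mathcal L^1$.

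\textbf{Measurability.} This is the step I expect to need the most care. $\mathbf X$ has a measurable graph and compact values, so it is an Effros-measurable random closed set (for instance via the measurable projection theorem, possibly after completing the probability space). By the Kuratowski--Ryll-Nardzewski and Castaing representation theorem, there are measurable selections $\xi_k$ with $\mathbf X(o)=\overline{\{\xi_k(o)\}}$. The map $g$ is Carathéodory: measurable in $o$ through $Y(o)$ and continuous in $x$. Each $g(\xi_k(o),o)$ is therefore measurable. By continuity and compactness,
\[
\mathbf W(o)=\overline{\{g(\xi_k(o),o)\}_k},
\]
so $\mathbf W$ has a Castaing representation and is a random compact set.

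\textbf{Conclusion.} The cited result (Molchanov, 2005) gives that the Aumann expectation of a measurable, non-empty compact-valued, integrably bounded random set is non-empty and compact. Non-atomicity of $P_{\mathrm{obs}}$ then gives convexity, via Lyapunov's theorem, which identifies the expectation with that of the convex hull $\operatorname{conv}\mathbf W$.

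The only subtlety is closedness, since in infinite-dimensional settings one needs a closure. Here the space is finite dimensional and the set is integrably bounded, so the set of selection integrals is already closed; this is the remark "adding a closure does not change it" made after \eqref{eq:aumann_def}. Note that Assumption~\ref{ass:ure} is not needed.
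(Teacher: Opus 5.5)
Your proposal is correct and follows the paper's proof essentially step for step. Compact values come from $\mathbf W(o)$ being a continuous image of $\mathbf X(o)$, integrable boundedness from $\norm{(zz^T,zY)}\le\norm{z}_2^2+|Y|\norm{z}_2$ together with Assumption~\ref{ass:integrable_envelope}, compactness from the Aumann-expectation result of \citet{Molchanov2005}, and convexity from non-atomicity of $P_{\mathrm{obs}}$. The only addition is your explicit Castaing-representation check that $\mathbf W$ is measurable, which the paper takes for granted here; it carries out the analogous projection-theorem argument only for $\Phi_\beta$, in Appendix~\ref{app:proofplan_z_measurability}.
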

\noindent\textit{Proof: Appendix~\ref{app:proof_aumann_properties}.}

This characterization yields bounds for the convergence of
the empirical joint set towards the population joint set, following the strong
law for random compact sets of \citet{ArtsteinVitale1975}.

\begin{proposition}[Integrable-envelope convergence]
\label{prop:l1_K}
Under the same assumptions, for fixed $p$,
\begin{equation}
    \dH\!\left(\operatorname{conv}(\widehat{\mathcal K}_{\mathrm{adm}}),
          \mathcal K^*_{\mathrm{adm}}\right)\longrightarrow0
    \qquad\text{almost surely}.
    \label{eq:l1_K}
\end{equation}
\end{proposition}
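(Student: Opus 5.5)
The plan is to recognize $\widehat{\mathcal K}_{\mathrm{adm}}$ as a Minkowski average of i.i.d.\ random compact sets and then invoke the strong law of large numbers for random compact sets \citep{ArtsteinVitale1975}. Since the choice of $Z\in\Xcal_{\mathrm{adm}}$ is made row by row, with $z_i\in\mathbf X_i$ chosen independently across $i$, and since
\[
\left(\tfrac1nZ^TZ,\tfrac1nZ^TY\right)=\tfrac1n\sum_{i=1}^n\left(z_iz_i^T,z_iY_i\right),
\]
I will first verify the identity
\[
\widehat{\mathcal K}_{\mathrm{adm}}=\tfrac1n\bigl(\mathbf W(O_1)\oplus\cdots\oplus\mathbf W(O_n)\bigr),
\]
with $\oplus$ the Minkowski sum. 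The $\mathbf W(O_i)$ are i.i.d.\ because the $O_i$ are (Assumption~\ref{ass:sampling}).

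Next I will check the hypotheses of the Artstein--Vitale theorem in the finite-dimensional space $\R^{p\times p}\times\R^p$.
\begin{itemize}
\item \emph{Compact values.} $\mathbf W(o)$ is the image of the compact set $\mathbf X(o)$ under the continuous map $x\mapsto(xx^T,xY(o))$, so it is non-empty and compact.
\item \emph{Measurability.} This follows from the measurable graph of $o\mapsto\mathbf X(o)$. A Castaing representation $\{\xi_k\}$ of $\mathbf X$ is mapped through the Carath\'eodory map above, and taking closures shows that $\mathbf W$ is a random compact set.
\item \emph{Integrable boundedness.} $\sup_{w\in\mathbf W}\norm{w}\le\sup_{x\in\mathbf X}\norm{x}_2^2+|Y|\sup_{x\in\mathbf X}\norm{x}_2$, where the norm on matrices is Frobenius and $\norm{xx^T}_F=\norm{x}_2^2$. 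This is integrable by Assumption~\ref{ass:integrable_envelope}.
\end{itemize}
Under these conditions the theorem gives
\[
\dH\!\left(\tfrac1n\textstyle\bigoplus_i\mathbf W(O_i),\ \E_{\mathrm{Aumann}}[\operatorname{conv}\mathbf W]\right)\to0\quad\text{a.s.}
\]

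Finally I will pass to convex hulls and identify the limit. Two facts are needed.
\begin{itemize}
\item The convex hull commutes with Minkowski sums and scaling, so $\operatorname{conv}(\widehat{\mathcal K}_{\mathrm{adm}})=\tfrac1n\bigoplus_i\operatorname{conv}\mathbf W(O_i)$.
\item Under a non-atomic $P_{\mathrm{obs}}$, $\E_{\mathrm{Aumann}}[\operatorname{conv}\mathbf W]=\E_{\mathrm{Aumann}}[\mathbf W]=\mathcal K^*_{\mathrm{adm}}$. This is the Lyapunov-type convexification already used in Proposition~\ref{prop:Kadm_properties}, and it can equivalently be checked through support functions, since $\psi_{\mathbf W}=\psi_{\operatorname{conv}\mathbf W}$ and the Aumann expectation's support function is $\E\psi$.
\end{itemize}
So the statement follows by applying the law of large numbers to the convexified sets directly. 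As a cleaner alternative, I could use the support-function form \eqref{eq:hausdorff_support} with a uniform SLLN over $u\in\mathbb S^{d-1}$. The summands $\psi_{\mathbf W(O_i)}(u)$ are Lipschitz in $u$ with integrable constant $\sup\norm{\mathbf W}$, which gives a Glivenko--Cantelli class on the compact sphere.

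I expect the main obstacle to be the measurability of $\mathbf W$ as a random closed set, together with the exact form of the limit in Artstein--Vitale: the limit is the expectation of the convex hull, and it must be matched with $\mathcal K^*_{\mathrm{adm}}$, which is defined without convexification. The non-atomicity assumption is what makes this match work. For measurability I would argue via Castaing representations. For the identification of the limit I would use the support-function identity, which avoids the issue entirely because both sides share the support function $u\mapsto\E[\psi_{\mathbf W}(u)]$ and both are compact convex.
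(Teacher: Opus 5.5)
Your proposal is correct. Your ``cleaner alternative'' is in fact exactly the paper's proof. The paper uses Lemma~\ref{lem:support_representation} and \eqref{eq:hausdorff_support} to reduce the claim to a uniform strong law for $U\mapsto n^{-1}\sum_i W_i(U)-\E[W(U)]$ over the unit sphere. It then proves this by hand: a finite $\rho$-net, the pointwise SLLN at the net points, a Lipschitz bound in $U$ whose constant is the integrable envelope (controlled by a second SLLN), and finally $\rho\to0$.

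Your primary route is genuinely different. You write $\widehat{\mathcal K}_{\mathrm{adm}}=n^{-1}\bigoplus_i\mathbf W(O_i)$ and invoke the Artstein--Vitale theorem as a black box. This is shorter and actually yields more. Applied to the non-convex sets $\mathbf W(O_i)$ themselves, it gives $\dH(\widehat{\mathcal K}_{\mathrm{adm}},\mathcal K^*_{\mathrm{adm}})\to0$ almost surely in one step. The paper has to obtain that separately in Lemma~\ref{lem:l1_K_nonconv}, via the Shapley--Folkman--Starr bound and a Borel--Cantelli argument on $\max_i A_i/n$.

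The price is reliance on an external theorem, whose non-convex case itself rests on a Shapley--Folkman-type argument, together with the measurability check you sketch through Castaing representations. The paper's argument is self-contained. It also sets up the support-process template (nets, Lipschitz envelopes) that is later sharpened into the chaining, Sudakov--Fernique, Bernstein and Fuk--Nagaev bounds of the finite-sample theorems.

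One small correction to your last paragraph: the support-function identification does not avoid non-atomicity. It needs $\mathcal K^*_{\mathrm{adm}}$ to be convex and closed, which is Proposition~\ref{prop:Kadm_properties}, and the paper's use of \eqref{eq:hausdorff_support} rests on the same fact. Without it, the convexified empirical set would converge to $\E_{\mathrm{Aumann}}[\operatorname{conv}\mathbf W]$, which can strictly contain $\mathcal K^*_{\mathrm{adm}}$ when $P_{\mathrm{obs}}$ has atoms.
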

\noindent\textit{Proof: Appendix~\ref{app:proof_l1_K}.}

Using the convexity and compactness of $\mathcal K^*_{\mathrm{adm}}$
established in Proposition~\ref{prop:Kadm_properties}, we now show the
compactness of $\Bstar$.

\begin{proposition}[Compactness of $\Bstar$]
\label{prop:Bstar}
Under the assumptions of Proposition~\ref{prop:Kadm_properties} and
Assumption~\ref{ass:ure}, $\Bstar$ is a compact subset of $\R^p$.
\end{proposition}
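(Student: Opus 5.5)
The plan is to realise $\Bstar$ as the image of the compact set $\mathcal K^*_{\mathrm{adm}}$ under a continuous map, and then use that continuous images of compact sets are compact. By Theorem~\ref{thm:identification_equivalence}, $\Bstar=\Bstar_{\mathrm{Aumann}}=\Phi(\mathcal K^*_{\mathrm{adm}})$ with $\Phi(\Sigma,\gamma)=\Sigma^{-1}\gamma$. Proposition~\ref{prop:Kadm_properties} already gives that $\mathcal K^*_{\mathrm{adm}}$ is compact in $\R^{p\times p}\times\R^p$. What remains is to show that $\Phi$ is well defined and continuous on all of $\mathcal K^*_{\mathrm{adm}}$. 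Since $\Phi$ is continuous (indeed rational) on the open set $\{\Sigma \text{ invertible}\}\times\R^p$, this reduces to one claim: every $\Sigma$ appearing in $\mathcal K^*_{\mathrm{adm}}$ satisfies $\lambda_{\min}(\Sigma)\ge\kappa$.

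To prove the claim I would take $(\Sigma,\gamma)\in\mathcal K^*_{\mathrm{adm}}$. By definition~\eqref{eq:aumann_def} there is an integrable measurable selection $V(o)\in\mathbf W(o)$ with $\int V\,dP_{\mathrm{obs}}=(\Sigma,\gamma)$. The key step is to lift $V$ to a selection of $\mathbf X$. One option is to use the measurable-graph hypothesis of Assumption~\ref{ass:sampling} with a measurable selection theorem (Aumann/Kuratowski--Ryll-Nardzewski) applied to the correspondence $o\mapsto\{x\in\mathbf X(o):(xx^T,xY(o))=V(o)\}$. This correspondence is non-empty and compact-valued with measurable graph, so it yields a measurable $\xi(o)\in\mathbf X(o)$ with $V(o)=(\xi\xi^T,\xi Y)$. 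Then $\Sigma=\E[\xi\xi^T]$, and $\xi$ is exactly an admissible population completion in the sense of Assumption~\ref{ass:ure}. Hence $\lambda_{\min}(\Sigma)\ge\kappa>0$.

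An alternative that avoids the lifting is to argue directly on the support function. For a unit vector $u$, the map $(\Sigma,\gamma)\mapsto u^T\Sigma u$ is linear, so $u^T\Sigma u=\int u^T V_\Sigma(o)u\,dP_{\mathrm{obs}}\ge\inf$ over admissible completions. I nonetheless expect the lifting route to be cleaner, and the measurability of the lifted selection is the only delicate point. As a fallback, I would note that the identification theorem's proof presumably constructs such $P\in\mathcal P^{++}_{\mathrm{adm}}$ from selections, which would give the same conclusion.

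Finally, the set $D_\kappa=\{(\Sigma,\gamma):\lambda_{\min}(\Sigma)\ge\kappa\}$ is closed, because $\lambda_{\min}$ is continuous, and on it $\Sigma\mapsto\Sigma^{-1}$ is continuous with $\|\Sigma^{-1}\|_2\le\kappa^{-1}$. So $\Phi$ restricted to $\mathcal K^*_{\mathrm{adm}}\subset D_\kappa$ is continuous, and $\Bstar=\Phi(\mathcal K^*_{\mathrm{adm}})$ is compact. As a by-product, $\|\beta\|_2\le\kappa^{-1}\sup\{\|\gamma\|_2:(\Sigma,\gamma)\in\mathcal K^*_{\mathrm{adm}}\}$, which is finite by Assumption~\ref{ass:integrable_envelope}. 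The main obstacle is the uniform eigenvalue lower bound on the Aumann set, i.e.\ the measurable lifting step. Convexity of $\mathcal K^*_{\mathrm{adm}}$ is not needed here, and $\Bstar$ itself need not be convex.
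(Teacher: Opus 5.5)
Your proposal is correct and follows the paper's proof: compactness of $\mathcal K^*_{\mathrm{adm}}$ from Proposition~\ref{prop:Kadm_properties}, the bound $\lambda_{\min}(\Sigma)\ge\kappa$ on $\mathcal K^*_{\mathrm{adm}}$, continuity of $\Phi(\Sigma,\gamma)=\Sigma^{-1}\gamma$ on $\mathcal D_\kappa$, and compactness of the continuous image. The only difference is that you justify the eigenvalue bound, by lifting a selection of $\mathbf W$ to a measurable selection of $\mathbf X$; this is the same Kuratowski--Ryll-Nardzewski step used in the proof of Theorem~\ref{thm:z_sharp_equivalence}, whereas the paper simply asserts the bound from Assumption~\ref{ass:ure}.
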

\noindent\textit{Proof: Appendix~\ref{app:proof_bstar_compactness}.}

While $\mathcal K^*_{\mathrm{adm}}$ is a convex set, the map
$(\Sigma,\gamma)\mapsto\Sigma^{-1}\gamma$ is a non-linear rational mapping.
Consequently, $\Bstar$ can be non-convex. The Shapley--Folkman--Starr
remainder below \citep{Starr1969} allows us to compare the empirical and
population coefficient sets directly.

\begin{proposition}[Integrable-envelope Hausdorff consistency]
\label{prop:l1_B}
Suppose Assumptions~\ref{ass:sampling}, \ref{ass:integrable_envelope}
and~\ref{ass:ure} hold. Then
\begin{equation}
    \dH\!\left(\Bhat,\Bstar\right)\longrightarrow0
    \qquad\text{almost surely}.
    \label{eq:l1_B}
\end{equation}
\end{proposition}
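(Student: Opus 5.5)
The plan is to transfer the joint-set convergence of Proposition~\ref{prop:l1_K} to the coefficient sets through the map $\Phi(\Sigma,\gamma)=\Sigma^{-1}\gamma$, handling the non-convexity of $\widehat{\mathcal K}_{\mathrm{adm}}$ with a Shapley--Folkman--Starr argument. Observe first that $\Bhat=\Phi(\widehat{\mathcal K}_{\mathrm{adm}})$, since $\widehat\beta(Z)=(Z^TZ/n)^{-1}(Z^TY/n)$, and that $\Bstar=\Phi(\mathcal K^*_{\mathrm{adm}})$ by Theorem~\ref{thm:identification_equivalence}. By the triangle inequality,
\begin{equation*}
\dH(\Bhat,\Bstar)\le \dH\bigl(\Phi(\widehat{\mathcal K}_{\mathrm{adm}}),\Phi(\operatorname{conv}\widehat{\mathcal K}_{\mathrm{adm}})\bigr)+\dH\bigl(\Phi(\operatorname{conv}\widehat{\mathcal K}_{\mathrm{adm}}),\Phi(\mathcal K^*_{\mathrm{adm}})\bigr),
\end{equation*}
and I will show that each term vanishes almost surely.

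\textbf{Step 1: a good domain for $\Phi$.} Let $D_\kappa=\{(\Sigma,\gamma):\lambda_{\min}(\Sigma)\ge\kappa/2,\ \norm{\gamma}_2\le R\}$. On $D_\kappa$ the map $\Phi$ is Lipschitz with constant $L=2/\kappa+4R/\kappa^2$, because $\Sigma^{-1}-\Sigma'^{-1}=\Sigma^{-1}(\Sigma'-\Sigma)\Sigma'^{-1}$. By Assumption~\ref{ass:ure}, every point of $\widehat{\mathcal K}_{\mathrm{adm}}$ has $\lambda_{\min}\ge\kappa$, and since $\lambda_{\min}$ is concave, so does every point of its convex hull. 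For $\mathcal K^*_{\mathrm{adm}}$, each element is $\E[V]$ for a selection $V$. By measurable selection, this corresponds to an admissible completion $X^{\mathrm{obs}}$, possibly after convexification; the population part of Assumption~\ref{ass:ure} together with concavity then gives $\lambda_{\min}\ge\kappa$. For $\gamma$, the strong law applied to the envelope $n^{-1}\sum_i|Y_i|\sup_{z\in\mathbf X_i}\norm{z}_2$ (Assumption~\ref{ass:integrable_envelope}) bounds $\norm{\gamma}$ eventually by some finite $R$, almost surely. So all three sets eventually lie in $D_\kappa$.

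\textbf{Step 2: the convexified term.} Given a point of $\mathcal K^*_{\mathrm{adm}}$, take its nearest point in $\operatorname{conv}\widehat{\mathcal K}_{\mathrm{adm}}$, and conversely. Both points lie in $D_\kappa$, so applying $\Phi$ gives
\begin{equation*}
\dH\bigl(\Phi(\operatorname{conv}\widehat{\mathcal K}_{\mathrm{adm}}),\Bstar\bigr)\le L\,\dH\bigl(\operatorname{conv}\widehat{\mathcal K}_{\mathrm{adm}},\mathcal K^*_{\mathrm{adm}}\bigr),
\end{equation*}
which tends to $0$ almost surely by Proposition~\ref{prop:l1_K}.

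\textbf{Step 3: the non-convexity remainder (main obstacle).} The set $\widehat{\mathcal K}_{\mathrm{adm}}=n^{-1}\sum_{i=1}^n\mathbf W_i$ is a Minkowski average of the compact sets $\mathbf W_i$ in dimension $d=p^2+p$. The Shapley--Folkman--Starr theorem gives
\begin{equation*}
\dH\bigl(\widehat{\mathcal K}_{\mathrm{adm}},\operatorname{conv}\widehat{\mathcal K}_{\mathrm{adm}}\bigr)\le \frac{\sqrt d}{n}\max_{i\le n}\operatorname{diam}(\mathbf W_i).
\end{equation*}
The diameter is bounded by a multiple of the envelope $G_i=\sup_{z\in\mathbf X_i}\norm{z}_2^2+|Y_i|\sup_{z\in\mathbf X_i}\norm{z}_2$. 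Since $\E G<\infty$, the standard Borel--Cantelli argument gives $\max_{i\le n}G_i/n\to0$ almost surely. The delicate point is that the convex-hull points and the original points both stay in $D_\kappa$, which Step~1 already ensures. The Lipschitz bound then shows that the first term is at most $L\sqrt d\,\max_iG_i/n\to0$. Summing the two terms proves \eqref{eq:l1_B}.

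I expect Step~1 to be the hardest part, specifically the claim that every element of $\mathcal K^*_{\mathrm{adm}}$ satisfies $\lambda_{\min}\ge\kappa$. Selections of $\mathbf W$ are exactly $(xx^T,xY)$ for measurable selections $x$ of $\mathbf X$, so the population part of Assumption~\ref{ass:ure} applies to them directly. The issue is that the Aumann expectation on a non-atomic space may also contain limits of such means, and concavity of $\lambda_{\min}$ together with closure handles those.
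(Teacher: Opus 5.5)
Your proposal is correct and uses the same ingredients as the paper's proof: Proposition~\ref{prop:l1_K}, the Shapley--Folkman--Starr remainder controlled by $\max_{i\le n}G_i/n\to0$ via Borel--Cantelli, and Lipschitz transport through $\Phi$ on a domain where $\lambda_{\min}(\Sigma)\ge\kappa$ and $\norm{\gamma}_2$ is bounded by the strong law (your worry about limits in Step~1 is moot, since by \eqref{eq:aumann_def} every point of $\mathcal K^*_{\mathrm{adm}}$ is exactly the mean of a selection). The only difference is bookkeeping: the paper applies the triangle inequality through $\operatorname{conv}\widehat{\mathcal K}_{\mathrm{adm}}$ at the moment level to bound $\dH(\widehat{\mathcal K}_{\mathrm{adm}},\mathcal K^*_{\mathrm{adm}})$ and then applies $\Phi$ once, so it never needs the convex hull inside the Lipschitz domain, whereas your coefficient-level split needs that extra concavity-of-$\lambda_{\min}$ step, which you supply correctly.
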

\noindent\textit{Proof: Appendix~\ref{app:proof_l1_B}.}

We now aim to measure the distance between the resulting empirical estimator
set $\Bhat$ and $\Bstar$.

\subsection{Finite-sample Hausdorff bounds}
\label{sec:bounds}

The following bounds correspond to bounded, sub-exponential and polynomial
assumptions on the envelope
$W=\sup_{z\in\mathbf X}\norm{z}_2^2+|Y|\sup_{z\in\mathbf X}\norm{z}_2$,
which can arise, for example, from bounded, sub-Gaussian and polynomial-tailed
covariates, respectively.

\begin{assumption}[Bounded covariates and response]
\label{ass:bounded_xy}
There exist finite constants $M_X,M_Y$ such that
$\sup_{z\in\mathbf X}\norm{z}_2\le M_X$ and $|Y|\le M_Y$ almost surely.
\end{assumption}

\begin{theorem}[Bounded-envelope finite-sample bound]
\label{thm:main_bounded}
Suppose Assumptions~\ref{ass:sampling}, \ref{ass:ure} and
\ref{ass:bounded_xy} hold. Let $L=\sqrt{M_X^4+M_X^2M_Y^2}$,
$d=p(p+1)/2+p$ and
$\overline L_\Phi^*=\sqrt{\kappa^{-2}+M_X^2M_Y^2\kappa^{-4}}$.
For every $\delta\in(0,1)$, with probability at least $1-\delta$,
\begin{equation}
  \dH\!\left(\Bhat,\Bstar\right)
  \le L\overline L_\Phi^*
  \left[
      \sqrt{\frac{2\log(1/\delta)}{n}}
      +\frac{\sqrt{2\pi d}+2}{\sqrt n}
      +\frac{\sqrt d}{n}
  \right].
  \label{eq:bounded_beta_bound}
\end{equation}
\end{theorem}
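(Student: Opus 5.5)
The plan is to split the bound into a Lipschitz transfer through $\Phi$ and a random-set concentration bound in the joint moment space of dimension $d=p(p+1)/2+p$ (symmetric $\Sigma$ plus $\gamma$). Write $\widehat{\mathcal K}=\widehat{\mathcal K}_{\mathrm{adm}}$ and $\mathcal K^*=\mathcal K^*_{\mathrm{adm}}$. The triangle inequality gives
\[
\dH(\widehat{\mathcal K},\mathcal K^*)\le \dH(\widehat{\mathcal K},\operatorname{conv}\widehat{\mathcal K})+\dH(\operatorname{conv}\widehat{\mathcal K},\mathcal K^*).
\]
We then pass to $\Bhat=\Phi(\widehat{\mathcal K})$ and $\Bstar=\Phi(\mathcal K^*)$, where the latter identity holds by Theorem~\ref{thm:identification_equivalence}.

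\emph{Step 1 (Lipschitz transfer).} On the set $D=\{(\Sigma,\gamma):\lambda_{\min}(\Sigma)\ge\kappa,\ \norm{\gamma}_2\le M_XM_Y\}$, use $D\Phi(\Sigma,\gamma)[H,h]=\Sigma^{-1}h-\Sigma^{-1}H\Sigma^{-1}\gamma$. This gives $\norm{D\Phi}\le\sqrt{\kappa^{-2}+M_X^2M_Y^2\kappa^{-4}}=\overline L_\Phi^*$ (Cauchy--Schwarz on the two pieces). $D$ is convex: the constraint $\lambda_{\min}\ge\kappa$ is concave, hence a convex superlevel set, and the ball constraint is convex. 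So segments stay in $D$ and $\Phi$ is $\overline L_\Phi^*$-Lipschitz on $D$. Both $\widehat{\mathcal K}$ and $\mathcal K^*$ lie in $D$ by Assumption~\ref{ass:ure} and boundedness, and so do $\operatorname{conv}\widehat{\mathcal K}$ and the segments between these sets. Hence $\dH(\Phi(A),\Phi(B))\le\overline L_\Phi^*\dH(A,B)$ for $A,B\subset D$.

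\emph{Step 2 (convexified concentration).} The set $\widehat{\mathcal K}$ is the Minkowski average $n^{-1}\sum_i\mathbf W(O_i)$, so $\operatorname{conv}\widehat{\mathcal K}=n^{-1}\sum_i\operatorname{conv}\mathbf W(O_i)$. Its support function is $\psi(u)=n^{-1}\sum_i\psi_{\mathbf W(O_i)}(u)$ with mean $\psi_{\mathcal K^*}(u)$, using that the Aumann expectation has the support function of the expectation. By \eqref{eq:hausdorff_support}, $\dH(\operatorname{conv}\widehat{\mathcal K},\mathcal K^*)=\sup_{u\in\mathbb S^{d-1}}|n^{-1}\sum_i(\psi_i(u)-\E\psi_i(u))|$. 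Each $\psi_i(u)$ is bounded in absolute value by $\sup_{w\in\mathbf W}\norm{w}\le L$ and is $L$-Lipschitz in $u$. McDiarmid's inequality with bounded differences $2L/n$ gives deviation $L\sqrt{2\log(1/\delta)/n}$. For the expectation we symmetrize, which contributes the factor 2 and a Rademacher process, and compare with a Gaussian process via $\E|\cdot|\le\sqrt{\pi/2}\,\E_g$. Bounding the Gaussian supremum by $\E\norm{g}_2\le\sqrt d$ yields $L\sqrt{2\pi d}/\sqrt n$. The extra $2L/\sqrt n$ absorbs centering and sup-versus-abs slack. I expect this constant-tracking to be the main obstacle: the exact arrangement, for instance a $\sqrt{2\pi}$ coming from $2\sqrt{\pi/2}$, has to reproduce $\sqrt{2\pi d}+2$. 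I would first try the Gaussian process $\sup_u n^{-1}\sum g_i\psi_i(u)$, whose increments are dominated by $L\norm{u-u'}$, and apply Sudakov--Fernique against $L\langle g,u\rangle$.

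\emph{Step 3 (non-convexity remainder).} The Shapley--Folkman--Starr theorem \citep{Starr1969} applied to the sum of $n$ sets in $\R^d$ gives $\dH(n^{-1}\sum\mathbf W_i,n^{-1}\sum\operatorname{conv}\mathbf W_i)\le\sqrt d\,\max_i\operatorname{rad}(\mathbf W_i)/n\le L\sqrt d/n$. Combining the three steps via Step 1 gives \eqref{eq:bounded_beta_bound}.
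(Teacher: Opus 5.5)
Your overall architecture matches the paper's: support-function identity, McDiarmid with differences $2L/n$, symmetrization plus Gaussian comparison, Shapley--Folkman--Starr remainder $L\sqrt d/n$, and Lipschitz transfer with $G=M_XM_Y$. Steps~1 and~3 are correct. Your derivative-plus-convexity argument for $\Phi$ is a valid substitute for the paper's resolvent identity $\Sigma_1^{-1}-\Sigma_2^{-1}=\Sigma_1^{-1}(\Sigma_2-\Sigma_1)\Sigma_2^{-1}$, which does not need convexity of the domain.

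The gap is in Step~2, exactly where you expected trouble. Symmetrization produces $\E\sup_u\bigl|n^{-1}\sum_i\varepsilon_i\psi_i(u)\bigr|$, and the absolute value cannot be dropped. The reason is that $\dH$ is a supremum of $|\psi_{\widehat{\mathcal K}}-\psi_{\mathcal K^*}|$, and support functions are not odd in $u$, so the symmetry of the sphere does not turn $\sup_u$ into $\sup_u|\cdot|$.

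Sudakov--Fernique applied to $G_u=n^{-1}\sum_ig_i\psi_i(u)$ against $(L/\sqrt n)\langle g,u\rangle$ only bounds $\E\sup_uG_u$. (The comparison scale must be $L/\sqrt n$, not $L$.) Generic reductions such as $\E\sup_u|G_u|\le\E|G_{u_0}|+2\E\sup_uG_u$ double the $\sqrt d$ coefficient and give $(2\sqrt{2\pi d}+2)L/\sqrt n$, not the stated constant.

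There is also no centering slack to spend. The inequality $\E\sup_u|n^{-1}\sum_i\psi_i(u)-\E\psi(u)|\le2\E\sup_u|n^{-1}\sum_i\varepsilon_i\psi_i(u)|$ holds with no extra term. So the entire ``$+2$'' has to pay for the absolute value, and your sketch gives no mechanism that costs exactly that.

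The missing device, which the paper uses, is to enlarge the index set. Take $(u,s)\in\mathbb S^{d-1}\times\{-1,1\}$ with $G_{u,s}=sG_u$, so that $\sup_{u,s}G_{u,s}=\sup_u|G_u|$. Compare with $H_{u,s}=(L/\sqrt n)(\langle g,u\rangle+sg_0)$ for an independent $g_0\sim N(0,1)$.

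Conditionally on the sample, the increments check out. For $s=t$ the increment variance is at most $(L^2/n)\norm{u-v}_2^2$. For $s\ne t$ it is $n^{-2}\sum_i(\psi_i(u)+\psi_i(v))^2\le4L^2/n\le(L^2/n)(\norm{u-v}_2^2+4)$, which is the increment variance of $H$.

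Sudakov--Fernique then gives $\E\sup_u|G_u|\le(L/\sqrt n)(\sqrt d+\sqrt{2/\pi})$. Multiplying by $\sqrt{\pi/2}$ (Rademacher to Gaussian) and by $2$ (symmetrization) yields exactly $(\sqrt{2\pi d}+2)L/\sqrt n$. The $g_0$ term is the source of the $+2$.
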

\noindent\textit{Proof: Appendix~\ref{app:proof_main_asymptotic}.}

\begin{assumption}[Sub-exponential envelope]
\label{ass:subexp_envelope}
There exist $K_W<\infty$ such that 
\begin{equation}
\norm{W}_{\psi_1}\le K_W.
\label{eq:subexp_envelope}
\end{equation}
$\norm{.}_{\psi_1}$ being the Orlicz norm.
\end{assumption}

\begin{theorem}[Sub-exponential finite-sample bound]
\label{thm:main_subexp}
Suppose Assumptions~\ref{ass:sampling}, \ref{ass:ure} and
\ref{ass:subexp_envelope} hold.  Let $d=p(p+1)/2+p$ and
$L_{\Phi,\mathrm{sub}}(n,\delta)=
\{\kappa^{-2}+K_W^2(\log2+\log(3/\delta)/n)^2\kappa^{-4}\}^{1/2}$.
For every $\delta\in(0,1)$, with probability at least $1-\delta$,
\begin{equation}
\begin{aligned}
  &\dH\!\left(\Bhat,\Bstar\right)\\
  &\quad\le K_WL_{\Phi,\mathrm{sub}}(n,\delta)
  \left[
  \begin{aligned}
      &2\sqrt{\frac{\log(3/\delta)+3\log2+d\log3}{n}}\\
      &+4\sqrt{\frac{\log(3/\delta)+5\log2+d\log12}{n}}\\
      &+\frac{3\log(3/\delta)+13\log2+d(\log3+2\log12)}{n}\\
      &+\frac{\sqrt d\log(6n/\delta)}{n}
  \end{aligned}
  \right].
\end{aligned}
\label{eq:subexp_beta_bound}
\end{equation}
\end{theorem}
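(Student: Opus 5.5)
We propose to mirror the structure we expect for Theorem~\ref{thm:main_bounded}: reduce the coefficient-set distance to a joint-moment-set distance through a Lipschitz bound on $\Phi(\Sigma,\gamma)=\Sigma^{-1}\gamma$, and then control the joint-set distance by support functions plus a Shapley--Folkman--Starr remainder. The new ingredient relative to the bounded case is that the envelope $W$ is only sub-exponential. We therefore truncate or localize on a high-probability event and spend the confidence budget $\delta$ in three pieces of size $\delta/3$. This accounts for the $\log(3/\delta)$ throughout the statement.

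\textbf{Step 1: decomposition and Lipschitz constant.} On the convex hull $\operatorname{conv}(\widehat{\mathcal K}_{\mathrm{adm}})$ and on $\mathcal K^*_{\mathrm{adm}}$, every $\Sigma$ satisfies $\lambda_{\min}(\Sigma)\ge\kappa$ by Assumption~\ref{ass:ure}, which is preserved under convex combinations. The derivative of $\Phi$ is
\begin{equation*}
(d\Sigma,d\gamma)\mapsto \Sigma^{-1}d\gamma-\Sigma^{-1}d\Sigma\,\Sigma^{-1}\gamma .
\end{equation*}
Its operator norm is at most $\{\kappa^{-2}+\norm{\gamma}^2\kappa^{-4}\}^{1/2}$. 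The empirical $\gamma$ has norm at most $\frac1n\sum_i W_i$. By Bernstein's inequality for sub-exponential variables, combined with $\E W\le \norm{W}_{\psi_1}\log 2$, we get $\frac1n\sum W_i\le K_W(\log2+\log(3/\delta)/n)$ with probability $1-\delta/3$, after absorbing constants as the statement does. This yields $L_{\Phi,\mathrm{sub}}(n,\delta)$. We then write
\begin{equation*}
\dH(\Bhat,\Bstar)\le L_{\Phi,\mathrm{sub}}\bigl[\dH(\operatorname{conv}\widehat{\mathcal K}_{\mathrm{adm}},\mathcal K^*_{\mathrm{adm}})+\dH(\widehat{\mathcal K}_{\mathrm{adm}},\operatorname{conv}\widehat{\mathcal K}_{\mathrm{adm}})\bigr].
\end{equation*}
Here $\Bhat=\Phi(\widehat{\mathcal K}_{\mathrm{adm}})$ and $\Bstar=\Phi(\mathcal K^*_{\mathrm{adm}})$ by Theorem~\ref{thm:identification_equivalence}. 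The convex set where $\Phi$ is Lipschitz contains both sets.

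\textbf{Step 2: non-convexity remainder.} The set $\widehat{\mathcal K}_{\mathrm{adm}}$ is the Minkowski average $\frac1n\sum_i \mathbf W(O_i)$ in $\R^d$. The Shapley--Folkman--Starr theorem bounds its Hausdorff distance to its hull by $\frac{\sqrt d}{n}\max_i \operatorname{rad}(\mathbf W(O_i))\le \frac{\sqrt d}{n}\max_i W_i$. A union bound with sub-exponential tails gives $\max_i W_i\le K_W\log(6n/\delta)$ with probability $1-\delta/3$, which produces the last term.

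\textbf{Step 3: support-function deviation (main obstacle).} By \eqref{eq:hausdorff_support}, Proposition~\ref{prop:Kadm_properties}, and the identity $\psi_{\mathcal K^*}(u)=\E\,\psi_{\mathbf W}(u)$ for Aumann expectations under non-atomicity, we have
\begin{equation*}
\dH(\operatorname{conv}\widehat{\mathcal K}_{\mathrm{adm}},\mathcal K^*_{\mathrm{adm}})=\sup_{u\in\mathbb S^{d-1}}\Bigl|\tfrac1n\sum_i\psi_{\mathbf W(O_i)}(u)-\E\psi_{\mathbf W}(u)\Bigr| .
\end{equation*}
Each $\psi_{\mathbf W(O_i)}(\cdot)$ is $W_i$-Lipschitz and bounded by $W_i$, hence sub-exponential with norm at most $K_W$. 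We take a $1/2$-net $N_{1/2}$ of cardinality at most $3^d$ on the sphere. A Lipschitz reduction bounds the supremum by
\begin{equation*}
2\max_{u\in N_{1/2}}|\cdot|+\Bigl(\tfrac1n\sum_i W_i+\E W\Bigr)\times(\text{a finer-net correction}),
\end{equation*}
where the correction uses a net of cardinality at most $12^d$. We then apply Bernstein's inequality with a union bound at levels $\delta/3$ split further by the factors $2^3$ and $2^5$. This gives the sub-Gaussian terms $2\sqrt{(\log(3/\delta)+3\log2+d\log3)/n}$ and $4\sqrt{\cdots d\log12/n}$, together with the linear Bernstein terms collected in the third bracketed line.

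The delicate part is making the constants match exactly. It requires careful bookkeeping of the centered variable's $\psi_1$ norm, the two-scale net argument, and the allocation of $\delta$. If the constants do not close on the first attempt, we would first try handling the centered Lipschitz constant $\frac1n\sum_i(W_i-\E W)$ separately on the $12^d$ net. A final union bound over the three events gives probability at least $1-\delta$.
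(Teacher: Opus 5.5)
Your overall architecture matches the paper's proof: three events at level $\delta/3$, Lipschitz transport through $\Phi$, and the Shapley--Folkman--Starr remainder with $\max_iW_i\le K_W\log(6n/\delta)$. Step~2 is correct. The genuine gap is Step~3, which is exactly the content of Proposition~\ref{prop:subexp_K}; as described, it does not establish the first three lines of the bracket.

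\emph{The net reduction fails.} The bound $\sup_u|Z(u)|\le2\max_{u\in N_{1/2}}|Z(u)|+\cdots$ is the net argument for a linear functional of $u$. Here $Z(u)=n^{-1}\sum_i\psi_{\mathbf W(O_i)}(u)-\E\psi_{\mathbf W}(u)$ is a difference of sublinear functions, and no such reduction holds. For example, take $A$ the unit ball and $B=\operatorname{conv}(A\cup\{(1+\eta)v\})$. Then $\psi_B-\psi_A$ equals $\eta$ at $v$ but vanishes outside a cap of angular radius of order $\sqrt\eta$, which a $1/2$-net can miss entirely.

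\emph{The fallback also fails.} Your finer-net correction multiplies the net scale by $n^{-1}\sum_iW_i+\E W$, the Lipschitz constant of the uncentered support functions. This is of order $K_W$ and does not shrink with $n$. At the fixed scale $1/4$ (the one giving $12^d$ points) you are left with an additive error of order $K_W/4$, so the bound does not even tend to zero. Shrinking the scale to $n^{-1/2}$ would introduce a $d\log n$ term that the statement does not have. Centering the constant, as in $n^{-1}\sum_i(W_i-\E W)$, does not help: $Z(u)-Z(v)$ is not controlled by it, since it is itself a centered empirical process in $(u,v)$.

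\emph{What is missing is chaining over all dyadic scales on centered increments.} Index by $(U,s)\in\mathcal S\times\{-1,1\}$ to handle the absolute value. Lemma~\ref{lem:support_lipschitz} gives $\norm{sW(U)-tW(V)}_{\psi_1}\le K_W\rho((U,s),(V,t))$. Apply the Bernstein bound $K_W\rho\{2\sqrt{x/n}+x/n\}$, with failure probability $2e^{-x}$, to every edge joining a point of the $2^{-j}$-net to its parent in the $2^{-(j-1)}$-net, for all $j\ge1$. Use exponents $x_j=x+(j+2)\log2+\log\{2(3\cdot2^j)^d\}$ and telescope. The constants do not come from any single $12^d$-net:
\begin{itemize}
\item $3\log2+d\log3=\log\{8\cdot3^d\}$ is the exponent for the coarsest net.
\item $5\log2+d\log12=\sum_{j\ge1}2^{-j}x_j-x$.
\item The factor $4$ comes from edge lengths $2\cdot2^{-j}$ times the Bernstein coefficient $2$, together with concavity applied to $\sum_j2^{-j}\sqrt{x_j}$.
\end{itemize}

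A smaller point in Step~1 concerns the bound on $n^{-1}\sum_iW_i$. Bernstein gives $n^{-1}\sum_iW_i\le\E W+K_W\{2\sqrt{x/n}+x/n\}$. This is not bounded by $K_W(\log2+x/n)$, and since $L_{\Phi,\mathrm{sub}}$ has explicit constants, nothing can be absorbed. The stated form comes instead from the exponential Markov inequality at $\lambda=1/K_W$:
\begin{itemize}
\item Independence and the $\psi_1$ normalization give $\E\exp(K_W^{-1}\sum_iW_i)\le2^n$, hence $\PP(\sum_iW_i>K_W\{n\log2+\log(3/\delta)\})\le\delta/3$.
\item Jensen gives $\E W\le K_W\log2$ for the population $\gamma$.
\end{itemize}
Together these yield $G_n\le K_W\{\log2+\log(3/\delta)/n\}$. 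Your derivative-based Lipschitz bound for $\Phi$ is fine, because the domain $\{\lambda_{\min}(\Sigma)\ge\kappa,\ \norm{\gamma}_2\le G\}$ is convex. The paper instead uses the identity $\Sigma_1^{-1}-\Sigma_2^{-1}=\Sigma_1^{-1}(\Sigma_2-\Sigma_1)\Sigma_2^{-1}$.
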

\noindent\textit{Proof: Appendix~\ref{app:proof_subexp_B}.}

\begin{assumption}[Polynomial envelope]
\label{ass:polynomial_envelope}
There exists $q>2$ such that,
\begin{equation}
K_q=\{\E[W^q]\}^{1/q}<\infty,\qquad
K_2=\{\E[W^2]\}^{1/2}<\infty.
\label{eq:polynomial_constants}
\end{equation}
\end{assumption}

For $q>2$, write $\mu_q=2+\max(4/3,q/3)$.

\begin{theorem}[Polynomial-envelope finite-sample bound]
\label{thm:main_polynomial}
Suppose Assumptions~\ref{ass:sampling}, \ref{ass:ure} and
\ref{ass:polynomial_envelope} hold.  Let $d=p(p+1)/2+p$ and
$L_{\Phi,q}(n,\delta)=
\{\kappa^{-2}+K_2^2[1+(3-\delta)/(n\delta)]\kappa^{-4}\}^{1/2}$.
For every $\delta\in(0,1)$, with probability at least $1-\delta$,
\begin{equation}
\begin{aligned}
  &\dH\!\left(\Bhat,\Bstar\right)\\
  &\quad\le L_{\Phi,q}(n,\delta)
  \left[
  \begin{aligned}
      &\frac{(\sqrt{2\pi d}+2)K_2}{\sqrt n}
      +5\sqrt2K_2\sqrt{\frac{\log(3/\delta)}{n}}
      \\
      &+\{3\mu_q(K_q+K_2)+\sqrt d\,K_q\}(3/\delta)^{1/q}n^{-1+1/q}
  \end{aligned}
  \right].
\end{aligned}
\label{eq:polynomial_beta_bound}
\end{equation}
\end{theorem}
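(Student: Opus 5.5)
The plan follows the same three-stage reduction that underlies the bounded case (Theorem~\ref{thm:main_bounded}): a deterministic perturbation step transfers a Hausdorff bound on the joint moment sets to the coefficient sets, the joint-set error splits into a stochastic support-function term and a Shapley--Folkman--Starr non-convexity remainder, and each term is controlled with probability at least $1-\delta/3$ by a union bound.

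\textbf{Step 1: deterministic reduction.} On the domain where $\lambda_{\min}(\Sigma)\ge\kappa$, Assumption~\ref{ass:ure} gives
\[
\norm{\Sigma^{-1}\gamma-\Sigma'^{-1}\gamma'}_2\le\kappa^{-1}\norm{\gamma-\gamma'}_2+\kappa^{-2}\norm{\gamma'}_2\norm{\Sigma-\Sigma'}.
\]
With the Cauchy--Schwarz inequality this yields a Lipschitz constant $\{\kappa^{-2}+\norm{\gamma'}_2^2\kappa^{-4}\}^{1/2}$ for $\Phi$ in the Euclidean norm on $(\Sigma,\gamma)$. The constant $\gamma'$ may come from either the empirical or the population set. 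On the population side, $\norm{\gamma'}_2\le\E[W]\le K_2$. On the empirical side, $\norm{\gamma'}_2\le n^{-1}\sum_i W_i$. Chebyshev's inequality with probability $\delta/3$ gives
\[
\Bigl(n^{-1}\sum_i W_i\Bigr)^2\le K_2^2\bigl[1+(3-\delta)/(n\delta)\bigr]
\]
on that event: bound $\PP(\bar W-\E W>t)$ by $\operatorname{Var}(W)/(nt^2)$, then use $(\E W)^2+\operatorname{Var}(W)\le K_2^2$ with appropriate algebra. This produces $L_{\Phi,q}(n,\delta)$. Since each point of $\Bhat$ is the image of a point of $\widehat{\mathcal K}_{\mathrm{adm}}$, and likewise for $\Bstar$ and $\mathcal K^*_{\mathrm{adm}}$, we get
\[
\dH(\Bhat,\Bstar)\le L_{\Phi,q}\,\dH(\widehat{\mathcal K}_{\mathrm{adm}},\mathcal K^*_{\mathrm{adm}}).
\]

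\textbf{Step 2: splitting the joint-set error.} Write
\[
\dH(\widehat{\mathcal K},\mathcal K^*)\le\dH(\widehat{\mathcal K},\operatorname{conv}\widehat{\mathcal K})+\dH(\operatorname{conv}\widehat{\mathcal K},\mathcal K^*).
\]
$\widehat{\mathcal K}$ is a Minkowski average of the $n$ sets $\mathbf W(O_i)/n$ in dimension $d$. The Shapley--Folkman--Starr bound controls the first term by $\sqrt d\max_i W_i/n$, with the radius of each summand bounded by $W_i$. A union bound with Markov's inequality on $W^q$ gives
\[
\max_i W_i\le K_q(3n/\delta)^{1/q}
\]
with probability at least $1-\delta/3$. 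This produces the $\sqrt d K_q(3/\delta)^{1/q}n^{-1+1/q}$ term. For the second term, Proposition~\ref{prop:Kadm_properties} and \eqref{eq:hausdorff_support} reduce it to
\[
\sup_{u\in\mathbb S^{d-1}}\Bigl|n^{-1}\textstyle\sum_i\{h_i(u)-\E h(u)\}\Bigr|,
\qquad h_i(u)=\psi_{\mathbf W(O_i)}(u).
\]
Here $\E\psi_{\mathbf W}=\psi_{\mathcal K^*}$ is the Aumann support-function identity. Each $h_i$ is $W_i$-Lipschitz in $u$ and bounded by $W_i$.

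\textbf{Step 3: the empirical process.} I would bound the expectation of the supremum by symmetrization and a Gaussian comparison, exactly as in the bounded case but with envelope $K_2$ in place of $L$. This gives $(\sqrt{2\pi d}+2)K_2/\sqrt n$, where the $\sqrt{2\pi d}$ comes from $\E\norm{g}_2$ via the $\sqrt{\pi/2}$ factor from Rademacher-to-Gaussian comparison, and the $2$ from symmetrization. The deviation above the mean is where the polynomial tail bites, and I expect it to be the main obstacle. I would use a Fuk--Nagaev or Einmahl--Li type inequality for suprema of empirical processes with unbounded envelope, with constants made explicit. The Gaussian part gives $5\sqrt2K_2\sqrt{\log(3/\delta)/n}$ with weak variance at most $K_2^2$. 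The heavy-tail part, obtained by truncating at level $\asymp K_q(n/\delta)^{1/q}$, contributes $3\mu_q(K_q+K_2)(3/\delta)^{1/q}n^{-1+1/q}$, with $\mu_q$ arising from optimizing the truncation exponent.

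A union bound over the three events, each of probability at least $1-\delta/3$, and insertion into Step 1 gives \eqref{eq:polynomial_beta_bound}. The delicate points are tracking explicit constants in the Fuk--Nagaev step and checking that the Lipschitz bound for $\Phi$ is applied only where $\lambda_{\min}\ge\kappa$, which holds on both sides by Assumption~\ref{ass:ure}.
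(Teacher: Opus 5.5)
Your plan follows the paper's proof of this theorem step for step. It uses the Lipschitz transport of Lemma~\ref{lem:transport_B}, the Shapley--Folkman--Starr remainder with a Markov union bound on $\max_i W_i$ (Lemma~\ref{lem:poly_sf}), and symmetrization with Gaussian comparison for the mean plus Marchina's Fuk--Nagaev inequality for the deviation (Lemma~\ref{lem:poly_support_chaining}). Each event is taken at level $\delta/3$, exactly as in the paper.

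The one step that does not deliver the stated constant is the control of $n^{-1}\sum_i W_i$. If you set the Chebyshev bound $\PP(\bar W-\E W>t)\le\operatorname{Var}(W)/(nt^2)$ equal to $\delta/3$, you get $t^2=3\operatorname{Var}(W)/(n\delta)$. Your Cauchy--Schwarz step then gives only $(\bar W)^2\le K_2^2[1+3/(n\delta)]$, and no algebra turns the $3$ into $3-\delta$.

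To obtain exactly $L_{\Phi,q}(n,\delta)$, replace Chebyshev by Cantelli's one-sided inequality $\PP(T\ge t)\le v/(v+t^2)$, with $T=\bar W-\E W$ and $v=\operatorname{Var}(W)/n$. At level $\delta/3$ this gives $t^2=\operatorname{Var}(W)(1-\delta/3)/(n\delta/3)=\operatorname{Var}(W)(3-\delta)/(n\delta)$. Your Cauchy--Schwarz step then yields $(\bar W)^2\le K_2^2[1+(3-\delta)/(n\delta)]$; this is Lemma~\ref{lem:poly_lipschitz_envelope}. With that substitution the rest of your argument goes through unchanged.
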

\noindent\textit{Proof: Appendix~\ref{app:proof_poly_B}.}

\FloatBarrier

\begin{table}[H]
\centering
\caption{Two-term orders of the explicit Hausdorff bounds.  Every row concerns
$\dH(\Bhat,\Bstar)$.}
\label{tab:convergence_rates}
\begin{tabular}{lll}
\toprule
Envelope & Condition & Two-term bound \\
\midrule
Bounded
& Assumption~\ref{ass:bounded_xy}
& $\mathcal O_P\!\left(pn^{-1/2}+pn^{-1}\right)$ \\
Sub-exponential
& Assumption~\ref{ass:subexp_envelope}
& $\mathcal O_P\!\left(pn^{-1/2}+(p^2+p\log n)n^{-1}\right)$ \\
Polynomial
& Assumption~\ref{ass:polynomial_envelope}, $q>2$
& $\mathcal O_P\!\left(pn^{-1/2}+p n^{-1+1/q}\right)$ \\
Integrable
& Assumption~\ref{ass:integrable_envelope}
& almost-sure consistency only \\
\bottomrule
\end{tabular}
\end{table}
\FloatBarrier

The displayed orders treat $q$ as fixed and $\kappa^{-1}$ and the envelope
constants as uniformly bounded in $p$.

Root-$n$ Hausdorff limit for sample analogues constructed from
Minkowski averages was already established by
\citet{BeresteanuMolinari2008} under second-moment and regularity conditions.
The present results provide explicit finite-sample upper bounds and display
the dependence on $p$, $\delta$ and the envelope constants. The next section
extends this random-set representation to a broader class of estimating
equations and gives an oracle root-$n$ result under Donsker and error-bound
conditions. These bounds concern the exact empirical completion set.
\section{Set-Valued Z-Estimation under MNAR Covariates}
\label{sec:set_valued_z}

This section extends the previous least-squares construction to general
estimating equations.

The main difference with the previous framework is
that the population target is no longer a fixed transformation of one Aumann
expectation. Here we generalize it to the zero set of a set-valued map
$\mathcal G$ defined below: $\Bstar=\{\beta\in\Theta:0\in\mathcal G(\beta)\}$.

\subsection{Aumann representation and empirical zero sets}

Let $\Theta\subset\R^p$ be non-empty and compact and let
$\varphi:\mathcal Z\times\Theta\to\R^r$ be an estimating function. For an
observation $o=(x^{\mathrm{obs}},y,m)$, set
$\mathbf Z(o)=\mathbf X(o)\times\{y\}\times\{m\}$, so that every
$z\in\mathbf Z(o)$ is compatible with $o$.

This framework includes, for example, least squares,
$\varphi((x,y,m),\beta)=x(x^T\beta-y)$; GLMs,
$\varphi((x,y,m),\beta)=x\{\mu(x^T\beta)-y\}$;
and M-estimators $\varphi((x,y,m),\beta)=-x\psi(y-x^T\beta)$ when
$\psi$ is continuous. The usual quantile-regression score is discontinuous
and is therefore not covered by this framework.

\begin{assumption}[Completion and moment correspondence]
\label{ass:z_completion}
The correspondence $o\mapsto\mathbf X(o)$ is measurable and has non-empty
compact values. The map $(z,\beta)\mapsto\varphi(z,\beta)$ is jointly
measurable and, almost surely, continuous on $\mathbf Z(O)\times\Theta$.
Moreover, $\E[\sup_{\beta\in\Theta}\sup_{z\in\mathbf Z(O)}
\norm{\varphi(z,\beta)}_2]<\infty$.
\end{assumption}

For each $\beta\in\Theta$, define
$\Phi_\beta(o)=\{\varphi(z,\beta):z\in\mathbf Z(o)\}$ and
$\mathcal G(\beta)=\E_{\mathrm{Aumann}}[\Phi_\beta(O)]$.
The observationally equivalent identified set is
\begin{equation}
    \Bstar_{\mathrm{Obs}}
    =\left\{\beta\in\Theta:\exists P\in\mathcal P_{\mathrm{adm}},\
      \E_P[\varphi((X,Y,M),\beta)]=0\right\}.
    \label{eq:z_obs_identified_set}
\end{equation}

We now characterize the identified set using the Aumann expectation. This
characterization is crucial for the concentration results and was used for
least squares in Theorem~\ref{thm:identification_equivalence}.

\begin{theorem}[Sharp Aumann representation]
\label{thm:z_sharp_equivalence}
Under Assumptions~\ref{ass:sampling} and~\ref{ass:z_completion}, for every
$\beta\in\Theta$,
\begin{equation}
    \mathcal G(\beta)
    =\{\E_P[\varphi((X,Y,M),\beta)]:P\in\mathcal P_{\mathrm{adm}}\}.
    \label{eq:z_attainable_moments_equal_aumann}
\end{equation}
Consequently,
\begin{equation}
    \Bstar_{\mathrm{Obs}}
    =\Bstar_{\mathrm{Aumann}}
    =\{\beta\in\Theta:0\in\mathcal G(\beta)\},
    \label{eq:z_identification_equivalence}
\end{equation}
and $\mathcal G(\beta)$ is non-empty, compact and convex.
\end{theorem}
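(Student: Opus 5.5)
The plan is to prove the two inclusions in \eqref{eq:z_attainable_moments_equal_aumann} for fixed $\beta\in\Theta$. After that, \eqref{eq:z_identification_equivalence} and the structural properties follow quickly.

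\emph{Inclusion $\supseteq$.} Let $g\in\mathcal G(\beta)$. By definition \eqref{eq:aumann_def}, there is an integrable measurable selection $V(o)\in\Phi_\beta(o)$ with $g=\int V\,dP_{\mathrm{obs}}$.
\begin{enumerate}
\item Consider the correspondence $o\mapsto\{x\in\mathbf X(o):\varphi((x,y,m),\beta)=V(o)\}$. It has non-empty values because $V(o)\in\Phi_\beta(o)$. Its values are compact because $\varphi(\cdot,\beta)$ is continuous on the compact set $\mathbf Z(o)$ by Assumption~\ref{ass:z_completion}.
\item Its graph is measurable, being the intersection of the measurable graph of $\mathbf X$ with the preimage of $\{V\}$ under a Carathéodory map.
\item Apply the Kuratowski--Ryll-Nardzewski (or Aumann) measurable selection theorem to obtain a measurable $\xi(o)\in\mathbf X(o)$ with $\varphi((\xi(o),y,m),\beta)=V(o)$.
\item Define $P$ as the law of $(\xi(O),Y,M)$ with $O\sim P_{\mathrm{obs}}$. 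Since $\xi(o)_j=x^{\mathrm{obs}}_j$ when $m_j=0$, we get $\mathsf{Obs}(\xi(O),Y,M)=O$, so $P\circ\mathsf{Obs}^{-1}=P_{\mathrm{obs}}$. The support restriction holds by construction, hence $P\in\mathcal P_{\mathrm{adm}}$.
\item Finally $\E_P[\varphi]=\int V\,dP_{\mathrm{obs}}=g$.
\end{enumerate}

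\emph{Inclusion $\subseteq$.} Let $P\in\mathcal P_{\mathrm{adm}}$ and $(X,Y,M)\sim P$.
\begin{enumerate}
\item Set $V(o)=\E_P[\varphi((X,Y,M),\beta)\mid \mathsf{Obs}(X,Y,M)=o]$, a version of the conditional expectation. It is integrable because $\norm{\varphi}$ is dominated by the integrable envelope $\sup_{\beta}\sup_{z\in\mathbf Z(O)}\norm{\varphi(z,\beta)}_2$.
\item Conditionally on $O=o$, the vector $\varphi((X,Y,M),\beta)$ lies a.s.\ in $\Phi_\beta(o)$, since $X\in\mathbf X(o)$ a.s. Hence $V(o)$ lies in $\overline{\operatorname{conv}}\,\Phi_\beta(o)=\operatorname{conv}\Phi_\beta(o)$, which is closed because $\Phi_\beta(o)$ is compact as a continuous image of a compact set. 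To justify this, use regular conditional distributions on the Polish space $\mathcal Z$, together with the fact that the mean of a probability measure on a compact set lies in its closed convex hull.
\item So $V$ is a selection of $\operatorname{conv}\Phi_\beta$, and $\E_P\varphi=\int V\,dP_{\mathrm{obs}}\in\E_{\mathrm{Aumann}}[\operatorname{conv}\Phi_\beta]$.
\item Because $P_{\mathrm{obs}}$ is non-atomic (Assumption~\ref{ass:sampling}), Lyapunov's theorem in its random-set form \citep{Molchanov2005} gives $\E_{\mathrm{Aumann}}[\operatorname{conv}\Phi_\beta]=\E_{\mathrm{Aumann}}[\Phi_\beta]=\mathcal G(\beta)$.
\end{enumerate}
I expect this step to be the main obstacle. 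Its measurability details (measurability of $\Phi_\beta$ as a random compact set, the regular conditional law, and the identification of the conditional mean with a selection of the convex hull) are where care is needed. Non-atomicity is used precisely here.

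\emph{Conclusion.} The properties of $\mathcal G(\beta)$ follow from the general facts quoted before \eqref{eq:pop_IS_aumann}. $\Phi_\beta$ is a measurable random compact set, since $\mathbf Z$ is measurable and compact-valued and $\varphi$ is Carathéodory. It is integrably bounded by Assumption~\ref{ass:z_completion}. Its Aumann expectation is therefore non-empty and compact, and convex by non-atomicity, exactly as in Proposition~\ref{prop:Kadm_properties}. Lastly, \eqref{eq:z_identification_equivalence} is immediate from \eqref{eq:z_attainable_moments_equal_aumann}: $\beta\in\Bstar_{\mathrm{Obs}}$ if and only if some $P\in\mathcal P_{\mathrm{adm}}$ has $\E_P\varphi=0$, if and only if $0\in\mathcal G(\beta)$. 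The latter set is $\Bstar_{\mathrm{Aumann}}$ by definition.
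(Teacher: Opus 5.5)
Your proposal is correct and follows the paper's proof essentially step for step: for $\mathcal G(\beta)\subseteq\{\E_P[\varphi((X,Y,M),\beta)]:P\in\mathcal P_{\mathrm{adm}}\}$ you take a Kuratowski--Ryll-Nardzewski selection of the preimage correspondence $\{z\in\mathbf Z(o):\varphi(z,\beta)=V(o)\}$ and push $P_{\mathrm{obs}}$ forward to an admissible law, and for the reverse inclusion you use disintegration (your regular conditional law) to place the conditional mean in $\operatorname{conv}\Phi_\beta(o)$ and then apply Aumann's convexity theorem under non-atomicity. The non-emptiness, compactness, convexity and zero-set consequences are then obtained exactly as in the paper.
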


A detailed proof roadmap is given in Appendix~\ref{app:proofplan_z_sharp_equivalence}. The proof
uses three standard ingredients: disintegration of admissible laws, Aumann's
convexity theorem for non-atomic probability spaces, and measurable lifting
from a selection of $\Phi_\beta$ to a compatible selection of $\mathbf X$.

For $u\in\mathbb S^{r-1}$, let
$\psi_{\beta,u}(o)=\sup_{z\in\mathbf Z(o)}u^T\varphi(z,\beta)$.
Measurable maximum arguments give
$\psi_{\mathcal G(\beta)}(u)=\E[\psi_{\beta,u}(O)]$, which links
set-valued identification to scalar empirical-process theory.

Given $O_1,\ldots,O_n$, the deterministic Minkowski average is
$\widehat{\mathcal G}^{\mathrm d}_n(\beta)=n^{-1}\sum_i\Phi_\beta(O_i)$;
its convexified version is
$\widehat{\mathcal G}_n(\beta)=n^{-1}\sum_i\operatorname{conv}(\Phi_\beta(O_i))
=\operatorname{conv}(\widehat{\mathcal G}^{\mathrm d}_n(\beta))$.
Thus convexification preserves support-function bounds and directional
extrema.

We now study the asymptotic properties of the empirical moment process and the
convergence of its zero set. We begin with a pointwise central limit theorem
for the random moment sets, which adapts the methods of
\citet{BeresteanuMolinari2008} to our setting.

\begin{corollary}[Pointwise random-set central limit theorem]
\label{cor:z_pointwise_clt}
Under Assumption~\ref{ass:sampling}, fix $\beta\in\Theta$. Suppose that
$\Phi_\beta$ is measurable,
non-empty and compact-valued, and\linebreak[2]
\hspace{-0.4pt}$\E[(\sup_{v\in\Phi_\beta(O)}\norm{v}_2)^2]<\infty$. Then
\begin{equation}
    \sqrt n\,\dH\bigl(\widehat{\mathcal G}_n(\beta),\mathcal G(\beta)\bigr)
    \rightsquigarrow
    \sup_{u\in\mathbb S^{r-1}}|\mathbb Z_\beta(u)|,
    \label{eq:z_pointwise_hausdorff_limit}
\end{equation}
where $\mathbb Z_\beta$ is centered Gaussian with covariance
$\operatorname{Cov}(\psi_{\beta,u}(O),\psi_{\beta,v}(O))$.
\end{corollary}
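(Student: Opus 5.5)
The plan is to reduce the Hausdorff distance to a supremum of a scalar empirical process indexed by the unit sphere, and then apply a Donsker argument with the continuous mapping theorem. This is the route of \citet{BeresteanuMolinari2008}. By Theorem~\ref{thm:z_sharp_equivalence}, $\mathcal G(\beta)$ is non-empty, compact and convex. The set $\widehat{\mathcal G}_n(\beta)=n^{-1}\sum_i\operatorname{conv}(\Phi_\beta(O_i))$ is a Minkowski average of compact convex sets, so it is also compact and convex. Support functions are additive under Minkowski sums and unchanged by convexification. Hence $\psi_{\widehat{\mathcal G}_n(\beta)}(u)=n^{-1}\sum_i\psi_{\beta,u}(O_i)$. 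Combined with the identity $\psi_{\mathcal G(\beta)}(u)=\E[\psi_{\beta,u}(O)]$ stated above and \eqref{eq:hausdorff_support}, this gives
\begin{equation*}
\sqrt n\,\dH\bigl(\widehat{\mathcal G}_n(\beta),\mathcal G(\beta)\bigr)
=\sup_{u\in\mathbb S^{r-1}}\abs{\mathbb G_n\psi_{\beta,u}},
\qquad \mathbb G_n=\sqrt n(\mathbb P_n-P_{\mathrm{obs}}).
\end{equation*}
Measurability of $o\mapsto\psi_{\beta,u}(o)$ comes from the measurable maximum theorem, since $\Phi_\beta$ is a measurable compact-valued correspondence.

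The key step is to show that the class $\mathcal F_\beta=\{\psi_{\beta,u}:u\in\mathbb S^{r-1}\}$ is $P_{\mathrm{obs}}$-Donsker. Let $F(o)=\sup_{v\in\Phi_\beta(o)}\norm{v}_2$. For $u,u'\in\mathbb S^{r-1}$ we have
\begin{equation*}
\abs{\psi_{\beta,u}(o)-\psi_{\beta,u'}(o)}\le F(o)\norm{u-u'}_2,
\end{equation*}
because a supremum of linear functionals is Lipschitz with constant bounded by the radius of the set. The class is therefore Lipschitz in a parameter ranging over a compact subset of $\R^r$, with envelope $F$. By hypothesis $\E F^2<\infty$, so the bracketing number satisfies
\begin{equation*}
N_{[\,]}\bigl(2\epsilon\norm{F}_{L^2},\mathcal F_\beta,L^2\bigr)\le C(r)\,\epsilon^{-r}.
\end{equation*}
This is the standard bound for parametric Lipschitz classes (van der Vaart, Example~19.7). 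The bracketing entropy integral is finite, so $\mathcal F_\beta$ is Donsker. Consequently $\mathbb G_n$ converges weakly in $\ell^\infty(\mathbb S^{r-1})$ to a tight centered Gaussian process $\mathbb Z_\beta$ with covariance $\operatorname{Cov}(\psi_{\beta,u}(O),\psi_{\beta,v}(O))$. Separability and tightness of $\mathbb Z_\beta$ follow because its sample paths are uniformly continuous with respect to the $L^2$ semimetric, and this semimetric is dominated by $\norm{F}_{L^2}\norm{u-v}_2$.

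The final step is to apply the continuous mapping theorem with the functional $h(f)=\sup_u\abs{f(u)}$. This functional is $1$-Lipschitz on $\ell^\infty(\mathbb S^{r-1})$, and applying it to $\mathbb G_n$ yields \eqref{eq:z_pointwise_hausdorff_limit}.

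I expect the main obstacle to be the measurability bookkeeping rather than the probability. Two points need care. First, $\sup_u\abs{\mathbb G_n\psi_{\beta,u}}$ must be a genuine random variable. Continuity in $u$ lets us replace the supremum by one over a countable dense subset of the sphere, so $\mathcal F_\beta$ is pointwise measurable and outer expectations can be dropped. Second, the identity $\psi_{\mathcal G(\beta)}(u)=\E\psi_{\beta,u}$ must hold for all $u$ at once, not just almost every $u$. This follows from the measurable selection of an argmax for each fixed $u$. Non-atomicity in Assumption~\ref{ass:sampling} is needed only to guarantee convexity of $\mathcal G(\beta)$, which justifies using \eqref{eq:hausdorff_support}.
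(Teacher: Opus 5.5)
Your argument is correct and follows the paper's route, only unpacked. The paper's proof is a one-line appeal to the random-set central limit theorem for the i.i.d.\ compact convex sets $\operatorname{conv}(\Phi_\beta(O_i))$, and your chain of steps is the standard proof of that theorem: the support-function reduction, the Lipschitz-in-$u$ bracketing bound with square-integrable envelope, and the continuous-mapping step. One small hypothesis mismatch: Theorem~\ref{thm:z_sharp_equivalence} is stated under Assumption~\ref{ass:z_completion}, which this corollary does not assume. You should therefore take compactness and convexity of $\mathcal G(\beta)$ directly from Aumann's theorem, using the integrable boundedness implied by your square-integrable envelope and the non-atomicity of $P_{\mathrm{obs}}$.
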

\noindent\textit{Proof: Appendix~\ref{app:proof_z_pointwise_clt}.}

To establish the convergence of the zero sets, we require a functional central
limit theorem ensuring uniform convergence of the empirical moment process over the parameter space $\Theta$.

\begin{theorem}[Functional random-set central limit theorem]
\label{thm:z_functional_clt}
Under Assumptions~\ref{ass:sampling} and~\ref{ass:z_completion}, if the class
$\{\psi_{\beta,u}:\beta\in\Theta,u\in\mathbb S^{r-1}\}$ is
$P_{\mathrm{obs}}$-Donsker and is either pointwise measurable or admits a
separable version, then the empirical support process converges
weakly in $\ell^\infty(\Theta\times\mathbb S^{r-1})$ to a centered Gaussian
process. Consequently,
\begin{equation}
    \Delta_n
    :=\sup_{\beta\in\Theta}
      \dH(\widehat{\mathcal G}_n(\beta),\mathcal G(\beta))
    =\mathcal O_P(n^{-1/2}).
    \label{eq:z_uniform_moment_rate}
\end{equation}
\end{theorem}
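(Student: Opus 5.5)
The plan is to reduce the set-valued statement to a scalar empirical-process statement through support functions. For each $\beta\in\Theta$, the set $\widehat{\mathcal G}_n(\beta)$ is a Minkowski average of the compact convex sets $\operatorname{conv}(\Phi_\beta(O_i))$. Its support function is therefore
\[
\psi_{\widehat{\mathcal G}_n(\beta)}(u)=n^{-1}\sum_{i=1}^n\psi_{\beta,u}(O_i)=\mathbb P_n\psi_{\beta,u},
\]
since taking the convex hull does not change the supremum of a linear functional. On the population side, the measurable maximum identity stated after Theorem~\ref{thm:z_sharp_equivalence} gives $\psi_{\mathcal G(\beta)}(u)=P_{\mathrm{obs}}\psi_{\beta,u}$. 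Both sets are non-empty, compact and convex: compactness of $\Phi_\beta(o)$ follows from the continuity of $\varphi$ on the compact set $\mathbf Z(o)$, and convexity of $\mathcal G(\beta)$ is part of Theorem~\ref{thm:z_sharp_equivalence}. The support-function formula \eqref{eq:hausdorff_support} then yields
\[
\Delta_n=\sup_{(\beta,u)\in\Theta\times\mathbb S^{r-1}}\bigl|(\mathbb P_n-P_{\mathrm{obs}})\psi_{\beta,u}\bigr|
=n^{-1/2}\,\|\mathbb G_n\|_{\mathcal F},
\]
where $\mathcal F=\{\psi_{\beta,u}:\beta\in\Theta,\ u\in\mathbb S^{r-1}\}$ and $\mathbb G_n=\sqrt n(\mathbb P_n-P_{\mathrm{obs}})$.

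Next I would check that the empirical support process is well defined as an element of $\ell^\infty(\Theta\times\mathbb S^{r-1})$. Each $\psi_{\beta,u}(o)$ is finite and bounded in absolute value by the envelope $F(o)=\sup_{\beta}\sup_{z}\norm{\varphi(z,\beta)}_2$, which is integrable by Assumption~\ref{ass:z_completion}. Hence $P_{\mathrm{obs}}\psi_{\beta,u}$ is uniformly bounded and every sample path of $\mathbb G_n$ is bounded.

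The measurability of each $\psi_{\beta,u}$ is the delicate point. The function is a supremum over the compact random set $\mathbf Z(o)$ of a Carathéodory-type integrand. I would obtain its measurability from the measurable maximum theorem, using the measurable graph in Assumption~\ref{ass:sampling}. Because the process is indexed by an uncountable set, its measurability in $\ell^\infty$ is not automatic. This is exactly why the statement assumes pointwise measurability or a separable version; under that hypothesis outer expectations can be replaced by ordinary ones, and the standard Donsker theory in $\ell^\infty$ applies.

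With these checks in place, the Donsker hypothesis says directly that $\mathbb G_n\rightsquigarrow\mathbb G$ in $\ell^\infty(\Theta\times\mathbb S^{r-1})$. Here $\mathbb G$ is a tight centered Gaussian process with covariance $\operatorname{Cov}(\psi_{\beta,u}(O),\psi_{\beta',v}(O))$, and I would identify this limit with the empirical support process in the statement. The norm map $f\mapsto\|f\|_\infty$ is continuous on $\ell^\infty$. The continuous mapping theorem therefore gives $\sqrt n\,\Delta_n\rightsquigarrow\|\mathbb G\|_\infty$. The limit is tight, so $\|\mathbb G\|_\infty$ is finite almost surely, and weak convergence to a tight limit implies that the sequence is uniformly tight. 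Hence $\sqrt n\,\Delta_n=\mathcal O_P(1)$, which is $\Delta_n=\mathcal O_P(n^{-1/2})$.

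The main obstacle is the identification step $\psi_{\widehat{\mathcal G}_n(\beta)}=\mathbb P_n\psi_{\beta,\cdot}$ together with $\psi_{\mathcal G(\beta)}=P\psi_{\beta,\cdot}$. These two identities must hold simultaneously for all $\beta$ and $u$, not merely outside a null set that depends on $(\beta,u)$. For the empirical side the identity is deterministic, so no exceptional set arises. For the population side, $\psi_{\mathcal G(\beta)}(u)$ is deterministic, and the identity holds for each fixed $(\beta,u)$; no uniform-in-$(\beta,u)$ almost-sure statement is needed. The only place a uniform almost-sure statement enters is the measurability issue, which the separable-version hypothesis handles.
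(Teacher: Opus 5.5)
Your proposal is correct and follows essentially the same route as the paper. Both arguments use the support-function identities $\psi_{\widehat{\mathcal G}_n(\beta)}(u)=n^{-1}\sum_i\psi_{\beta,u}(O_i)$ and $\psi_{\mathcal G(\beta)}(u)=\E[\psi_{\beta,u}(O)]$, together with \eqref{eq:hausdorff_support}, to write $\sqrt n\,\Delta_n$ as the supremum of the empirical process indexed by $\{\psi_{\beta,u}\}$; the Donsker hypothesis and tightness of the Gaussian limit then give $\sqrt n\,\Delta_n=\mathcal O_P(1)$. Your additional checks, namely compactness and convexity of both sets, envelope boundedness of the sample paths, and the continuous-mapping step for the sup norm, only make explicit what the paper's short proof leaves implicit.
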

\noindent\textit{Proof: Appendix~\ref{app:proof_z_functional_clt}.}

Uniform convergence of $\widehat{\mathcal G}_n(\beta)$ is not, by itself,
enough to guarantee convergence of its zero set. We therefore add an error bound for the estimating equation,
as specified in the following assumption. The completion and moment
assumptions make the zero sets closed; since $\Theta$ is compact, they are
compact whenever non-empty.

\begin{assumption}[Uniform zero-set error bound]
\label{ass:z_error_bound}
The identified set $\Bstar$ is non-empty and there exists $\kappa_0>0$ such
that, for every $\beta\in\Theta$,
\begin{equation}
    d(0,\mathcal G(\beta))
    \ge\kappa_0d(\beta,\Bstar).
    \label{eq:z_metric_subregularity}
\end{equation}
\end{assumption}

We therefore enlarge the
empirical zero set by the error $\Delta_n$ defined above.
Using this error bound, the following enlargement attains the root-$n$
rate of the uniform moment process.

\begin{corollary}[Root-$n$ convergence of the enlarged zero set]
\label{cor:z_oracle_zero_root_n}
Under Assumption~\ref{ass:z_error_bound}, define
\begin{equation}
    \widehat{\mathcal B}_n^+
    =\{\beta\in\Theta:
       d(0,\widehat{\mathcal G}_n(\beta))\le\Delta_n\}.
    \label{eq:z_enlarged_zero_set}
\end{equation}
Then $\Bstar\subseteq\widehat{\mathcal B}_n^+$ and
\begin{equation}
    \dH(\widehat{\mathcal B}_n^+,\Bstar)
    \le\frac{2\Delta_n}{\kappa_0}
    =\mathcal O_P(n^{-1/2})
    \label{eq:z_zero_set_rate}
\end{equation}
under Theorem~\ref{thm:z_functional_clt}.
\end{corollary}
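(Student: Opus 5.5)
The plan is a direct deterministic argument: both claims follow from the triangle inequality for the point-to-set distance $d(0,\cdot)$ combined with the error bound of Assumption~\ref{ass:z_error_bound}. The rate then comes from Theorem~\ref{thm:z_functional_clt}. The elementary fact I will use repeatedly is that for non-empty compact $A,B\subset\R^r$,
\[
\abs{d(0,A)-d(0,B)}\le\dH(A,B).
\]
This holds because, for any $a\in A$, there is $b\in B$ with $\norm{a-b}_2\le\dH(A,B)$, and the roles of $A$ and $B$ are symmetric. Applied with $A=\widehat{\mathcal G}_n(\beta)$ and $B=\mathcal G(\beta)$, the definition of $\Delta_n$ gives, for every $\beta\in\Theta$,
\[
\abs{d(0,\widehat{\mathcal G}_n(\beta))-d(0,\mathcal G(\beta))}\le\Delta_n.
\]
The sets $\widehat{\mathcal G}_n(\beta)$ are non-empty and compact as finite Minkowski averages of convex hulls of compact sets. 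The sets $\mathcal G(\beta)$ are non-empty and compact by Theorem~\ref{thm:z_sharp_equivalence}.

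\textbf{Step 1 (inclusion).} Take $\beta\in\Bstar$. By Theorem~\ref{thm:z_sharp_equivalence}, $0\in\mathcal G(\beta)$, so $d(0,\mathcal G(\beta))=0$. The displayed inequality then gives $d(0,\widehat{\mathcal G}_n(\beta))\le\Delta_n$, hence $\beta\in\widehat{\mathcal B}_n^+$. Because of this inclusion, the term $\sup_{b\in\Bstar}d(b,\widehat{\mathcal B}_n^+)$ in the Hausdorff distance vanishes. Therefore
\[
\dH(\widehat{\mathcal B}_n^+,\Bstar)=\sup_{\beta\in\widehat{\mathcal B}_n^+}d(\beta,\Bstar).
\]

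\textbf{Step 2 (outer deviation).} Take $\beta\in\widehat{\mathcal B}_n^+$. Assumption~\ref{ass:z_error_bound}, followed by the displayed inequality and the definition of $\widehat{\mathcal B}_n^+$, gives
\[
\kappa_0\,d(\beta,\Bstar)\le d(0,\mathcal G(\beta))\le d(0,\widehat{\mathcal G}_n(\beta))+\Delta_n\le2\Delta_n.
\]
Taking the supremum over $\beta\in\widehat{\mathcal B}_n^+$ yields $\dH(\widehat{\mathcal B}_n^+,\Bstar)\le2\Delta_n/\kappa_0$.

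Some well-posedness should be recorded. $\Bstar$ is non-empty by assumption, so $\widehat{\mathcal B}_n^+$ is non-empty. The Hausdorff distance (possibly between closed but not necessarily compact sets) is still defined, since $\Theta$ is compact. Closedness of $\widehat{\mathcal B}_n^+$ follows from continuity of $\beta\mapsto d(0,\widehat{\mathcal G}_n(\beta))$, which is guaranteed by the almost-sure continuity of $\varphi$ on the compact $\mathbf Z(O_i)\times\Theta$ in Assumption~\ref{ass:z_completion}. This is inessential for the inequality itself.

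\textbf{Step 3 (rate).} Under the hypotheses of Theorem~\ref{thm:z_functional_clt}, $\Delta_n=\mathcal O_P(n^{-1/2})$. Since $\kappa_0$ is a fixed constant, the bound $2\Delta_n/\kappa_0$ is also $\mathcal O_P(n^{-1/2})$.

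The only delicate point is that $\Delta_n$ is an oracle quantity, defined through the unknown $\mathcal G$. The result is therefore deterministic given $\Delta_n$. No obstacle arises beyond verifying the $1$-Lipschitz property of $d(0,\cdot)$ in the Hausdorff metric, together with the measurability needed for $\Delta_n$ to be a random variable, which the separability hypothesis of Theorem~\ref{thm:z_functional_clt} supplies.
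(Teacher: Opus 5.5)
Your proposal is correct and follows essentially the same argument as the paper. The $1$-Lipschitz property of $d(0,\cdot)$ with respect to $\dH$ gives both $\Bstar\subseteq\widehat{\mathcal B}_n^+$ and $d(0,\mathcal G(\beta))\le2\Delta_n$ on $\widehat{\mathcal B}_n^+$; the error bound then yields $d(\beta,\Bstar)\le2\Delta_n/\kappa_0$, and Theorem~\ref{thm:z_functional_clt} supplies the rate. Your additional well-posedness remarks (non-emptiness and closedness of $\widehat{\mathcal B}_n^+$, measurability of $\Delta_n$) are correct extras that the paper leaves implicit.
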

\noindent\textit{Proof: Appendix~\ref{app:proofplan_z_oracle_zero_root_n}.}

Because $\Delta_n$ depends on the population moment correspondence, this is an
oracle outer approximation rather than a directly implementable confidence
region.

The identification and moment-process results above allow $r\ne p$, whereas
the implicit-function sensitivity analysis below requires $r=p$ or a separate
square estimating map.

The final part of the paper concerns computation. Direct enumeration of the
completion box is exponential in the number of missing entries so we will try to identify cases where the population set is zonotopal and can be estimated by a Hadamard-inspired completion algorithm.

\section{Hadamard approximation of imputation sensitivity}
\label{sec:hadamard}

We adapt the approach of \citet{BatteyCox2023} by treating each missing entry
as a factor in a fractional factorial design. The main-effect contrasts define
a candidate zonotopal approximation, while diagonal derivatives and
two-factor contrasts diagnose departures from this approximation.

Let $X^{\mathrm{obs}}$ be the observed data matrix and
$M\in\{0,1\}^{n\times p}$ the missingness indicator matrix, where
$M_{ik}=1$ implies the covariate $k$ for observation $i$ is missing. Recall
that $\mathcal I_{\mathrm{miss}}$ contains the $N_{\mathrm{miss}}$ missing
scalar entries. In our design, each of them is treated as an independent
factor.

For each missing entry $(i,k)\in\mathcal I_{\mathrm{miss}}$, we define two
extreme plausible imputation values based on the prespecified interval $I_k$.
Define its midpoint and half-width by
$z_{ik}^*=(\ell_k+u_k)/2$ and
$c_k=(u_k-\ell_k)/2$, and set
$Z_{ik}(x)=z_{ik}^*+c_kx_{ik}$ for $x_{ik}\in[-1,1]$, with observed entries
fixed. This parameterizes the admissible box and induces
$\widehat\beta(x)=\{Z(x)^TZ(x)\}^{-1}Z(x)^TY$.

To avoid an exhaustive evaluation of all $2^{N_{\mathrm{miss}}}$ possible
combinations, we can reduce the number by using the method presented in
\citet{BatteyCox2023}. We select a Hadamard matrix $H\in\{-1,+1\}^{N\times N}$,
where $N=2^q$ is chosen so that $N\ge N_{\mathrm{miss}}+1$. We exclude the
all-ones column and extract $N_{\mathrm{miss}}$ balanced, mutually orthogonal
columns of $H$.
Let $h^{(r)}\in\{-1,+1\}^{N_{\mathrm{miss}}}$ denote the $r$-th row of this
truncated matrix. Each row $h^{(r)}$ specifies an imputation scenario for the
entire dataset: if $h_{r,(i,k)}=+1$, the
missing entry for observation $i$ on variable $k$ is replaced by the upper
endpoint, and if $h_{r,(i,k)}=-1$, it is replaced by the lower endpoint.
This yields $N$ fully imputed, dense design matrices. For each of the $N$
imputed matrices, we estimate the OLS regression coefficients. For a target coefficient
$\widehat\beta_j$, the Hadamard main-effect contrast of entry $(i,k)$ is
\begin{equation}
    \tau_{j,(i,k)}=\frac2N\sum_{r=1}^Nh_{r,(i,k)}
        \widehat\beta_j(h^{(r)}).
    \label{eq:microscopic_contrast}
\end{equation}

When two-factor interactions are extracted, a Resolution~V design is used on
the relevant block. For a block of $d_g$ factors, this requires at least
$1+d_g+\binom{d_g}{2}$ runs, so Resolution~V is not imposed on the full
microscopic screening design.

For a compact set $A\subset\R^p$, write
$\operatorname{wid}_j(A)=\sup_{\beta\in A}\beta_j-
\inf_{\beta\in A}\beta_j$.

\begin{theorem}[Deterministic Hadamard width bound]
\label{thm:hadamard_width_bound}
Under the empirical part of Assumption~\ref{ass:ure}, write
$r=(i,k)$ for an element of $\mathcal I_{\mathrm{miss}}$ and
$\partial_r=\partial/\partial x_{ik}$. Define $\widehat W_{\mathrm H,j}
   :=\sum_{r\in\mathcal I_{\mathrm{miss}}}|\tau_{j,r}| $, $\varepsilon_{\mathrm H,j} :=\sup_{x\in[-1,1]^{N_{\mathrm{miss}}}}
      \sum_{r,s\in\mathcal I_{\mathrm{miss}}}
      |\partial_{rs}^2\widehat\beta_j(x)|
      +\sum_{r\in\mathcal I_{\mathrm{miss}}}
      |\tau_{j,r}-2\partial_r\widehat\beta_j(0)|
$
Then
\begin{equation}
 \left|\widehat W_{\mathrm H,j}
       -\operatorname{wid}_j(\Bhat)\right|
 \le \varepsilon_{\mathrm H,j}.
 \label{eq:hadamard_width_bound}
\end{equation}
For each $r\in\mathcal I_{\mathrm{miss}}$, define the vector contrast
$\tau_r=2N^{-1}\sum_{b=1}^Nh_{b,r}\widehat\beta(h^{(b)})$ and the zonotope
\[
 \widehat{\mathcal Z}_{\mathrm H}
 =\widehat\beta(0)+
 \left\{\frac12\sum_{r\in\mathcal I_{\mathrm{miss}}}t_r\tau_r:
 t_r\in[-1,1]\right\}.
\]
Then
\begin{equation}
\begin{split}
 \dH(\Bhat,\widehat{\mathcal Z}_{\mathrm H})
 \le{}&\frac12\sup_{x\in[-1,1]^{N_{\mathrm{miss}}}}
 \sum_{r,s\in\mathcal I_{\mathrm{miss}}}
 \norm{\partial_{rs}^2\widehat\beta(x)}_2\\
 &+\frac12\sum_{r\in\mathcal I_{\mathrm{miss}}}
 \norm{\tau_r-2\partial_r\widehat\beta(0)}_2.
\end{split}
\label{eq:hadamard_zonotope_bound}
\end{equation}
The same statement applies to any partition of missing entries parametrization after
replacing the factor index set.
\end{theorem}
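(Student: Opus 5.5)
The argument is a second-order Taylor comparison on the cube $[-1,1]^{N_{\mathrm{miss}}}$, using the affine parameterization $Z(x)$ from the start of Section~\ref{sec:hadamard}.

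\emph{Smoothness and the linear comparison set.} By the empirical part of Assumption~\ref{ass:ure}, $Z(x)^TZ(x)/n$ has smallest eigenvalue at least $\kappa$ for every $x$ in the cube. Since $Z(x)$ is affine in $x$, the map $x\mapsto\widehat\beta(x)=\{Z(x)^TZ(x)\}^{-1}Z(x)^TY$ is rational with nonvanishing denominator, hence $C^\infty$ on a neighbourhood of the cube. The parameterization is a bijection from the cube onto $\Xcal_{\mathrm{adm}}$, so $\Bhat=\widehat\beta([-1,1]^{N_{\mathrm{miss}}})$. Write $g_r=\partial_r\widehat\beta(0)$ and introduce the linearization
\[
\ell(x)=\widehat\beta(0)+\sum_r x_r g_r .
\]
Its image of the cube is the zonotope $\mathcal Z_0=\widehat\beta(0)+\{\sum_r t_r g_r:t_r\in[-1,1]\}$.

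\emph{Bounding $\Bhat$ against $\mathcal Z_0$.} Taylor's theorem with integral remainder along the segment from $0$ to $x$ gives, for each $x$ in the cube,
\[
\norm{\widehat\beta(x)-\ell(x)}_2\le\tfrac12\sup_{y}\norm{\textstyle\sum_{r,s}x_rx_s\partial^2_{rs}\widehat\beta(y)}_2\le\tfrac12\sup_y\sum_{r,s}\norm{\partial^2_{rs}\widehat\beta(y)}_2 ,
\]
using $|x_r|\le1$. Both $\Bhat$ and $\mathcal Z_0$ are images of the same cube, so pairing $\widehat\beta(x)$ with $\ell(x)$ in each direction bounds $\dH(\Bhat,\mathcal Z_0)$ by the first term of \eqref{eq:hadamard_zonotope_bound}.

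\emph{Replacing $g_r$ by the Hadamard contrasts.} The zonotope $\widehat{\mathcal Z}_{\mathrm H}$ in the statement has generators $\tfrac12\tau_r$. Pair the point $\widehat\beta(0)+\sum_r t_rg_r$ of $\mathcal Z_0$ with the point $\widehat\beta(0)+\sum_r t_r\tau_r/2$ of $\widehat{\mathcal Z}_{\mathrm H}$. Their distance is at most $\sum_r\norm{g_r-\tau_r/2}_2=\tfrac12\sum_r\norm{\tau_r-2g_r}_2$, which is the second term. The triangle inequality for $\dH$ then yields \eqref{eq:hadamard_zonotope_bound}.

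\emph{Width bound.} For coordinate $j$, the width of the linearization's image of the cube is exactly $\sum_r|\partial_r\widehat\beta_j(0)|\cdot 2$. Rescaling the generators to match the paper's normalization of $\tau_{j,r}$, compare this with $\widehat W_{\mathrm H,j}=\sum_r|\tau_{j,r}|$. By the reverse triangle inequality the discrepancy is at most $\sum_r|\tau_{j,r}-2\partial_r\widehat\beta_j(0)|$.

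The sup and inf of $\widehat\beta_j$ over the cube differ from those of the linearization by at most the scalar second-order remainder. That remainder is at most $\tfrac12\sup\sum_{r,s}|\partial^2_{rs}\widehat\beta_j|$ at each endpoint, so the width changes by at most $\sup\sum_{r,s}|\partial^2_{rs}\widehat\beta_j|$. Adding the two pieces gives \eqref{eq:hadamard_width_bound}.

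\emph{Partitions.} For a partition of the missing entries into groups, each group moves along a single coordinate $x_g$ in a subcube of the original. The same argument applies verbatim with the grouped index set.

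\emph{Main obstacle: normalization.} The hardest step is bookkeeping the factor $2$ between $\tau$, $\partial_r\widehat\beta(0)$ and the half-width parameterization. The zonotope defined in the statement has generators $\tfrac12\tau_r\approx g_r$, which is consistent with the argument above. The width statement, however, compares $\sum_r|\tau_{j,r}|\approx 2\sum_r|g_{j,r}|$, and this equals the linear width $2\sum_r|g_{j,r}|$, so it is also consistent. I would verify both normalizations carefully at the start, since an error there would break the chain.

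The proof only uses that Taylor's remainder is controlled on the whole cube. It never uses orthogonality of $H$, so the quantities $\tau_r$ enter only through the explicit discrepancy term.
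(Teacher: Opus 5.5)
Your proposal is correct and follows essentially the same route as the paper's proof. The shared steps are the linearization $\ell$ with its zonotope $\mathcal Z_0$, the integral-form Taylor remainder bounded by $\frac12\sup_x\sum_{r,s}\norm{\partial^2_{rs}\widehat\beta(x)}_2$ with pairing at the same $x$, and pairing at the same coefficients $t_r$ to pass to $\widehat{\mathcal Z}_{\mathrm H}$. For the width, both arguments use that widths differ by at most twice the uniform distance and that $\max\ell_j-\min\ell_j=2\sum_r|\partial_r\widehat\beta_j(0)|$. Your explicit checks that $\widehat\beta$ is smooth near the cube (via the empirical eigenvalue bound) and that $\Bhat=\widehat\beta([-1,1]^{N_{\mathrm{miss}}})$ are points the paper leaves implicit.
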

\noindent\textit{Proof: Appendix~\ref{app:proof_hadamard_width_bound}.}

In general this error term is not well controlled, we will build in the rest heuristic conditions indicating if this error can be reasonably small.

\begin{corollary}[Statistical and computational error]
\label{cor:total_width_error}
Let $r_n(\delta)$ denote the right-hand side of the applicable finite-sample
bound in Theorems~\ref{thm:main_bounded}--\ref{thm:main_polynomial}. Under its
assumptions, with probability at least $1-\delta$,
\begin{equation}
 \left|\widehat W_{\mathrm H,j}
       -\operatorname{wid}_j(\Bstar)\right|
 \le \varepsilon_{\mathrm H,j}+2r_n(\delta).
 \label{eq:total_width_error}
\end{equation}
\end{corollary}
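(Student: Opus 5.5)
The proof is a triangle inequality that combines the deterministic Hadamard bound of Theorem~\ref{thm:hadamard_width_bound} with the statistical Hausdorff bound. Write
\[
\bigl|\widehat W_{\mathrm H,j}-\operatorname{wid}_j(\Bstar)\bigr|
\le\bigl|\widehat W_{\mathrm H,j}-\operatorname{wid}_j(\Bhat)\bigr|
+\bigl|\operatorname{wid}_j(\Bhat)-\operatorname{wid}_j(\Bstar)\bigr|.
\]
The first term is bounded by $\varepsilon_{\mathrm H,j}$ through \eqref{eq:hadamard_width_bound}. That bound is deterministic: it holds on the event where the empirical part of Assumption~\ref{ass:ure} holds, which has probability one. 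The assumptions of each of Theorems~\ref{thm:main_bounded}--\ref{thm:main_polynomial} include Assumption~\ref{ass:ure}, so this part needs no extra probabilistic cost.

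For the second term, I will show a Lipschitz property of the coordinate width with respect to Hausdorff distance. For non-empty compact $A,B\subset\R^p$, $|\operatorname{wid}_j(A)-\operatorname{wid}_j(B)|\le 2\dH(A,B)$. To see this, set $\eta=\dH(A,B)$. Every $a\in A$ has some $b\in B$ with $\norm{a-b}_2\le\eta$, hence $|a_j-b_j|\le\eta$. Therefore $\sup_{A}\beta_j\le\sup_B\beta_j+\eta$, and by symmetry $|\sup_A\beta_j-\sup_B\beta_j|\le\eta$. The same argument gives $|\inf_A\beta_j-\inf_B\beta_j|\le\eta$, and subtracting the two bounds gives the factor $2$. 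Compactness is not needed for the inequality itself, only for the suprema to be finite and attained. It holds for $\Bstar$ by Proposition~\ref{prop:Bstar}. For $\Bhat$, the empirical part of Assumption~\ref{ass:ure} shows that $\widehat\beta$ is continuous on the compact box $\Xcal_{\mathrm{adm}}$, so $\Bhat$ is a continuous image of a compact set.

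Finally, I apply the relevant theorem among Theorems~\ref{thm:main_bounded}--\ref{thm:main_polynomial}. It gives an event of probability at least $1-\delta$ on which $\dH(\Bhat,\Bstar)\le r_n(\delta)$. On that event the second term is at most $2r_n(\delta)$. Intersecting with the almost-sure event from the first step leaves the probability at least $1-\delta$, which yields \eqref{eq:total_width_error}.

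The only delicate point is bookkeeping rather than substance. The quantities $\widehat W_{\mathrm H,j}$ and $\varepsilon_{\mathrm H,j}$ are random, since they depend on the sample. Because the Hadamard bound holds pathwise, it can be combined with the probabilistic event directly, without any union bound. I will also note that $\varepsilon_{\mathrm H,j}$ may be infinite, in which case the statement is trivial. It is finite whenever $\widehat\beta$ is $C^2$ on the box, which follows from the uniform invertibility of $Z^TZ$.
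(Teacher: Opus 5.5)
Your proposal is correct and follows the paper's own proof. Both use the triangle inequality through $\operatorname{wid}_j(\Bhat)$, bound the algorithmic term by the deterministic Theorem~\ref{thm:hadamard_width_bound}, and observe that coordinatewise extrema move by at most $\dH(\Bhat,\Bstar)\le r_n(\delta)$ on the probability-$(1-\delta)$ event, which gives the factor $2$. Your additional bookkeeping makes explicit what the paper leaves implicit: compactness of $\Bhat$ from the empirical part of Assumption~\ref{ass:ure}, and combining the pathwise Hadamard bound with that event without a union bound.
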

\noindent\textit{Proof: Appendix~\ref{app:proof_hadamard_width_bound}.}

This is direct and works for any other estimation algorithm $\widehat W$ and if its associated error bound is well - controlled it can be a great application of the finite sample error bounds of Theorems~\ref{thm:main_bounded}--\ref{thm:main_polynomial}.
Thus the two terms respectively measure the algorithmic approximation error and
the statistical error. 

\subsection{Coordinate-level probabilistic condition}

The exact bound depends on the Hessian over the full cube. For interpretation,
the following comparisons require some simplifications but give some more interpretable conditions. Their derivation is
given in Appendix~\ref{app:section3} and they are only heuristics.

\begin{remark}[Simplified probabilistic condition for GLMs]
\label{rem:z_probabilistic_domination_glm}
For the canonical GLM score
$\varphi((z,y,m),\beta)=z\{\mu(z^T\beta)-y\}$, set
$\Omega_\mu=\{\E_P[\mu'(z^T\beta^*)zz^T]\}^{-1}$, let
$\omega_{\mu,j}^T$ be its $j$-th row, and let
$\varepsilon^*(z)=Y-\mu(z^T\beta^*)$. Suppose that $\mu$ is twice continuously differentiable and that the aggregated global coupling terms are negligible relative to the first-order sensitivity. The implicit function theorem then makes
$x\mapsto\widehat\beta(x)$ twice continuously differentiable, so by implicit
differentiation we get that the linear approximation can be reliable if:
\begin{equation}
    \sum_l\PP(M_l=1\mid M_k=1)c_l\mathcal C_{j,kl}^*
    \ll
    \E_P\!\left[
      |\Omega_{\mu,jk}\varepsilon^*(z)
       -\mu'(z^T\beta^*)\beta_k^*(\omega_{\mu,j}^Tz)|
      \mid M_k=1\right],
    \label{cond:z_probabilistic_glm}
\end{equation}
where $\mathcal C_{j,kl}^*=\E_P[\abs{\mu'(z^T\beta^*)
(\beta_l^*\Omega_{\mu,jk}+\beta_k^*\Omega_{\mu,jl})
+\mu''(z^T\beta^*)\beta_k^*\beta_l^*(\omega_{\mu,j}^Tz)}
\mid M_k=1,M_l=1]$.
For $l=k$, the repeated conditioning event is read as $M_k=1$.
\end{remark}

The bounded curvature of the logistic mean function makes this condition
potentially more favorable than for Poisson regression, whose curvature may
grow rapidly.

In the remainder of this section, we treat only linear regression, for which
the corresponding conditions are more directly interpretable. Write
$\Sigma=\E_P[XX^T]$, $\Omega=\Sigma^{-1}$,
$\omega_j^T=e_j^T\Omega$, $\beta^*=\Sigma^{-1}\E_P[XY]$, and
$\varepsilon=Y-X^T\beta^*$.

\begin{remark}[Simplified probabilistic condition]
\label{rem:orthogonal_simplification}
Assuming orthogonal covariates ($\Omega=I$), $c_l\asymp c$ as for standardized
covariates, and $|\beta_l^*|\asymp|\beta_k^*|$,
and in the high signal-to-noise regime, the heuristic rule is
\begin{equation}
\begin{cases}
 c\left\{1+\displaystyle\sum_{l\ne k}\PP(M_l=1\mid M_k=1)\right\}
 \ll\E_P[|X_k|\mid M_k=1],&j=k,\\[2mm]
 c\PP(M_j=1\mid M_k=1)
 \ll\E_P[|X_j|\mid M_k=1],&j\ne k.
\end{cases}
\label{eq:final_heuristic_snr}
\end{equation}

In the remainder of the paper, we will consider coarser imputation schemes to reduce computational complexity. However, as shown in the proposition, their accuracy cannot exceed that of microscopic imputation. Therefore, although this condition is heuristic, it is irreducible: if it is not satisfied, none of the imputation schemes considered here can achieve a small error.
\end{remark}

\subsection{Covariate-level imputation and probabilistic condition}

Evaluating sensitivity over a space of dimension $N_{\mathrm{miss}}$ can be
computationally long. We try to reduce this complexity by projecting the
microscopic imputation space onto a lower-dimensional covariate-level
subspace. Let $\Mcal_k=\{i:M_{ik}=1\}$,
$\mathcal J=\{k:\Mcal_k\ne\varnothing\}$ and $m=|\mathcal J|$. Assuming a
uniform missingness policy $x\in[-1,1]^m$ across observations, set
$Z_{ik}(x)=z_{ik}^*+c_kx_k$ for $i\in\Mcal_k$.
This searches a much lower-dimensional subset of the microscopic completion box.

\begin{remark}[Simplified probabilistic condition]
\label{rem:uncorrelated_rules}
For orthogonal covariates, $c_l\asymp c$,
$|\beta_l^*|\asymp|\beta_k^*|$ and
$\E_P[\varepsilon\mid M_k=1]=0$, we get
\begin{equation}
\begin{cases}
 c\left\{1+\displaystyle\sum_{l\ne k}\PP(M_l=1\mid M_k=1)\right\}
 \ll|\E_P[X_k\mid M_k=1]|,&j=k,\\[2mm]
 c\PP(M_j=1\mid M_k=1)
 \ll|\E_P[X_j\mid M_k=1]|,&j\ne k.
\end{cases}
\label{cond:uncorrelated_rule}
\end{equation}
The main difference from the microscopic condition is that we obtain
$|\E_P[X_j\mid M_k=1]|$ instead of
$\E_P[|X_j|\mid M_k=1]$. With some missingness distributions this can change
substantially because opposite perturbations cancel under covariate-level
imputation.
\end{remark}

\subsection{Practical interaction diagnostic}

For a covariate-level run $h^{(r)}\in\{-1,1\}^{|\mathcal J|}$, define the main
effect
\begin{equation}
    \tau_{j,k}=\frac2N\sum_{r=1}^N
      h_{r,k}\widehat\beta_j(h^{(r)}).
    \label{eq:macroscopic_contrast}
\end{equation}
For factors $k,l\in\mathcal J$, define
\begin{equation}
    \tau_{j,kl}=\frac2N\sum_{r=1}^N
      h_{r,k}h_{r,l}\widehat\beta_j(h^{(r)}).
    \label{eq:interaction_contrast}
\end{equation}
If the chosen fractional factorial design has Resolution~V, substitution of
the second-order expansion gives
$\tau_{j,kl}\approx2\,\partial^2\widehat\beta_j(0)/
(\partial x_k\partial x_l)$. A direct numerical diagnostic of linear approximability is:
\begin{equation}
    2\left|\frac{\partial^2\widehat\beta_j}{\partial x_k^2}(0)\right|
      +\sum_{l\in\mathcal J,\,l\ne k}|\tau_{j,kl}|
      \ll |\tau_{j,k}|.
    \label{eq:interaction_diagnostic}
\end{equation}
Without the first term, the same comparison is a cross-interaction
diagnostic.

\subsubsection{Numerical illustration}
\label{sec:numerical}

In the numerical illustrations, the endpoints may be calibrated as empirical
quantiles. This is a sensitivity choice rather than part of the identification
assumptions.

To assess Remark~\ref{rem:uncorrelated_rules} and
condition~\eqref{eq:interaction_diagnostic}, we compare
$\sum_k|\tau_{j,k}|$ with the numerically optimized continuous width and
report the absolute continuous-width error and the second-order-to-main-effect
ratio
\begin{equation}
    \left|
      \max_{x\in[-1,1]^{|\mathcal J|}}\widehat\beta_j(x)
      -\min_{x\in[-1,1]^{|\mathcal J|}}\widehat\beta_j(x)
      -\sum_k|\tau_{j,k}|
    \right|,
    \qquad
    \frac{2\sum_k\left|\frac{\partial^2\widehat\beta_j}{\partial x_k^2}(0)\right|
      +2\sum_{k<l}|\tau_{j,kl}|}{\sum_k|\tau_{j,k}|}.
    \label{eq:numerical_metrics}
\end{equation}

\begin{figure}[H]
\centering
\includegraphics[width=0.82\textwidth]{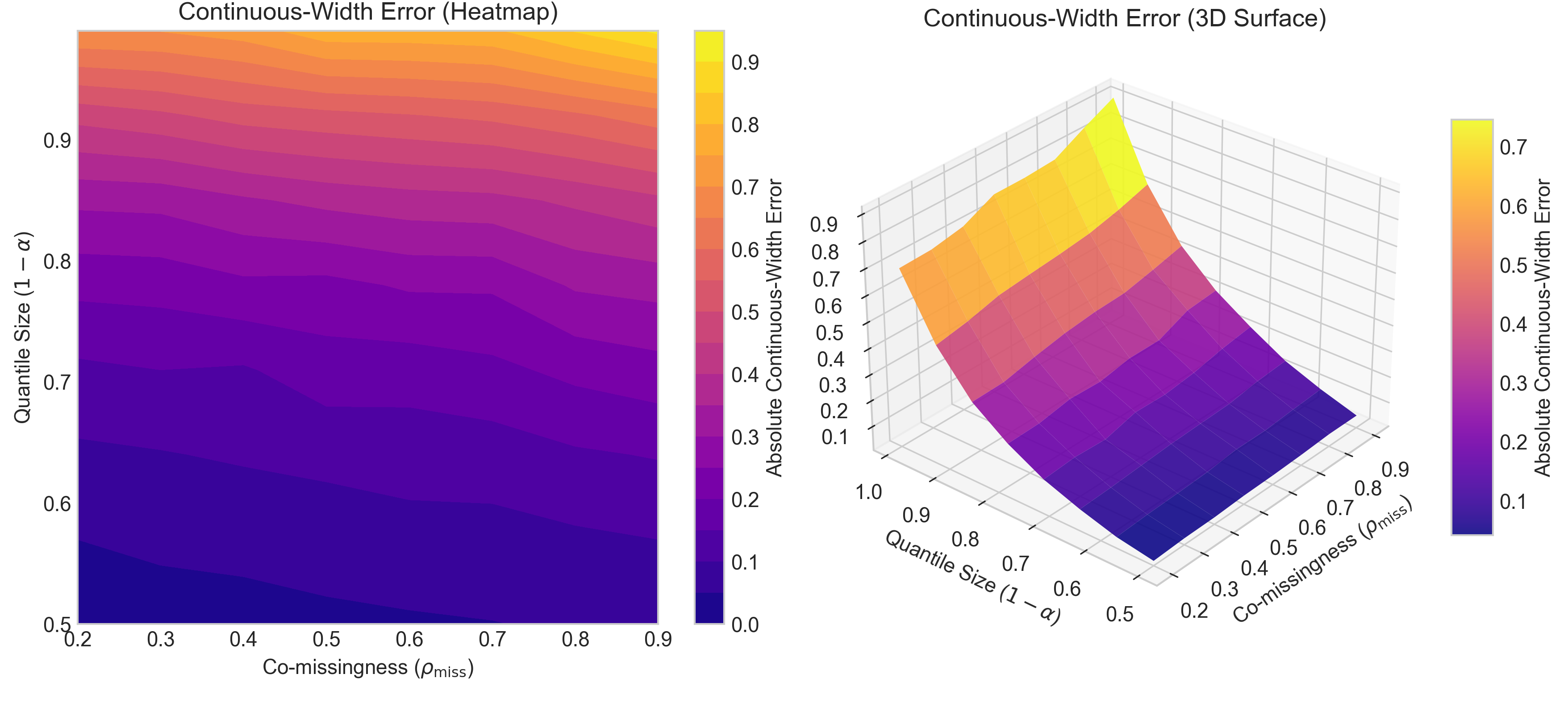}
\caption{Continuous-width Hadamard error and second-order-to-main-effect ratio across
co-missingness $(\rho_{\mathrm{miss}})$ and quantile size $(1-\alpha)$.}
\label{fig:hadamard_grid1}
\alttext{Heat map and surface of continuous-width error.}
\end{figure}

\begin{figure}[H]
\ContinuedFloat
\centering
\includegraphics[width=0.95\textwidth]{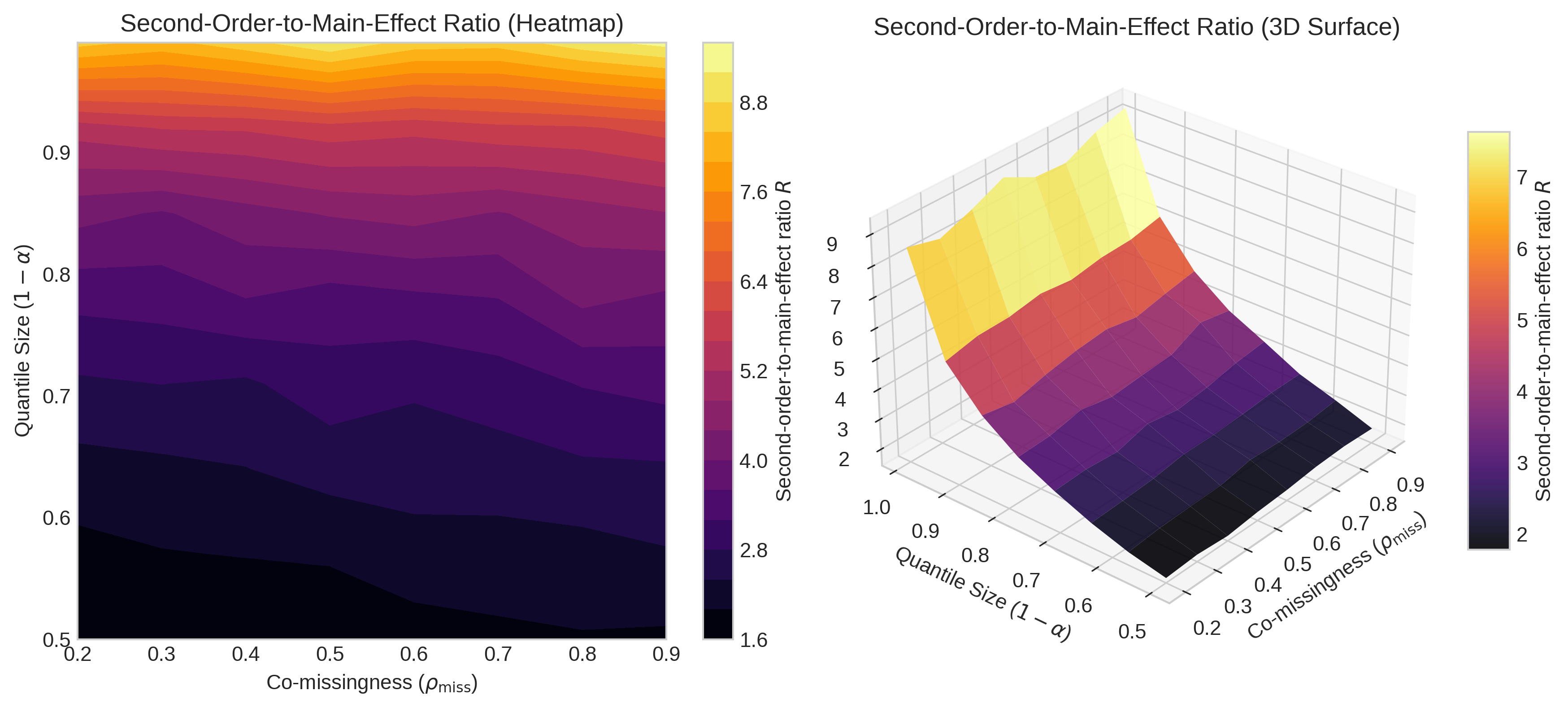}
\caption[]{Continuous-width Hadamard error and second-order-to-main-effect ratio
(continued).}
\alttext{Heat map and surface of the second-order-to-main-effect ratio.}
\end{figure}

\FloatBarrier

Both quantities tend to increase with co-missingness
$(\rho_{\mathrm{miss}})$ and quantile size $(1-\alpha)$. This is consistent
with the probabilistic condition in Remark~\ref{rem:uncorrelated_rules} and
the practical condition~\eqref{eq:interaction_diagnostic}. Reproducibility
details are in Appendix~\ref{app:reproducibility}.

\subsection{Groupwise and adaptive Hadamard analysis}
\label{sec:groupwise_adaptive_hadamard}

The microscopic Hadamard procedure treats every missing scalar entry as an
independent factor, whereas the macroscopic procedure treats each covariate
with missing values as a single factor. These two approaches correspond to two
extreme parametrizations of the admissible imputation space. The microscopic
method is flexible but computationally expensive; the macroscopic method is
computationally efficient but may hide heterogeneity among missing entries
belonging to the same covariate.

This subsection introduces an intermediate family of groupwise procedures and
an adaptive macro/micro refinement rule. The objective is to keep the
computational advantages of the macroscopic method whenever it is reliable,
while switching to a more local parametrization when the macroscopic
approximation is not justified.

Let $\mathcal C=\{C_1,\ldots,C_d\}$ be a partition of
$\mathcal I_{\mathrm{miss}}$. Each cell $C_a$ is assigned $u_a\in[-1,1]$;
for $(i,k)\in C_a$ set $Z_{ik}^{\mathcal C}(u)=z_{ik}^*+c_ku_a$ and
$\Xcal_{\mathcal C}=\{Z^{\mathcal C}(u):u\in[-1,1]^d\}$,
with observed entries fixed. The corresponding restricted coefficient set is
$\Bhat_{\mathcal C}=\{\widehat\beta(Z):Z\in\Xcal_{\mathcal C}\}$. 
Note that the microscopic and macroscopic methods are special cases of this construction, the microscopic method being the discrete partition composed of all the singletons of
the missing data and the macroscopic method being the case with a trivial partition with one element containing all of the missing
data.

\begin{proposition}[Monotonicity of mixed refinement]
\label{prop:mixed_refinement_monotonicity}
If $\mathcal C'$ refines $\mathcal C$, then
$\Xcal_{\mathcal C}\subseteq\Xcal_{\mathcal C'}$ and
$\Bhat_{\mathcal C}\subseteq\Bhat_{\mathcal C'}$. In particular, for every
coefficient $j$,
\begin{equation}
    \sup_{\beta\in\Bhat_{\mathcal C}}\beta_j
    -\inf_{\beta\in\Bhat_{\mathcal C}}\beta_j
    \le
    \sup_{\beta\in\Bhat_{\mathcal C'}}\beta_j
    -\inf_{\beta\in\Bhat_{\mathcal C'}}\beta_j.
    \label{eq:refinement_width}
\end{equation}
\end{proposition}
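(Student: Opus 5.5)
The plan is to prove the inclusion $\Xcal_{\mathcal C}\subseteq\Xcal_{\mathcal C'}$ directly from the definitions. The coefficient-set inclusion and the width inequality then follow formally.

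Since $\mathcal C'$ refines $\mathcal C$, every cell $C'_b$ of $\mathcal C'$ lies in a unique cell of $\mathcal C$. Write $a(b)$ for its index, so that $C'_b\subseteq C_{a(b)}$.

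Fix $u\in[-1,1]^d$ and define $u'\in[-1,1]^{d'}$ by $u'_b=u_{a(b)}$. This is well defined because the cells of $\mathcal C'$ partition $\mathcal I_{\mathrm{miss}}$ and each one sits inside exactly one cell of $\mathcal C$. Take a missing entry $(i,k)\in C'_b$. Then
\[
Z^{\mathcal C'}_{ik}(u')=z^*_{ik}+c_ku'_b=z^*_{ik}+c_ku_{a(b)}=Z^{\mathcal C}_{ik}(u),
\]
because $(i,k)\in C_{a(b)}$. Observed entries are fixed and identical in both constructions. Hence $Z^{\mathcal C}(u)=Z^{\mathcal C'}(u')\in\Xcal_{\mathcal C'}$, which gives $\Xcal_{\mathcal C}\subseteq\Xcal_{\mathcal C'}$.

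Applying the map $Z\mapsto\widehat\beta(Z)$ to both sides gives $\Bhat_{\mathcal C}\subseteq\Bhat_{\mathcal C'}$. This map is well defined on every such $Z$ because both sets lie in $\Xcal_{\mathrm{adm}}$: each imputed value $z^*_{ik}+c_ku$ with $|u|\le1$ lies in $I_k$. By the empirical part of Assumption~\ref{ass:ure}, $Z^TZ$ is then invertible on $\Xcal_{\mathrm{adm}}$. Without that assumption, one simply restricts both sets to matrices with $Z^TZ$ invertible, and the inclusion is preserved.

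For the width bound, the inclusion $A\subseteq B$ gives $\sup_A\beta_j\le\sup_B\beta_j$ and $\inf_A\beta_j\ge\inf_B\beta_j$. Subtracting yields \eqref{eq:refinement_width}. Both sets are non-empty: $u=0$ gives the midpoint completion. Both are also bounded, since they are continuous images of the compact cube $[-1,1]^d$ under $\widehat\beta$, which is continuous where $Z^TZ$ is invertible. So the suprema and infima are finite.

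There is no substantive obstacle. The only care needed is in the bookkeeping step: constructing the parameter map $u\mapsto u'$ from the refinement relation and checking that it reproduces exactly the same matrix.
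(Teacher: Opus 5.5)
Your proposal is correct and follows the paper's own proof essentially verbatim. You lift $u$ to $u'$ by copying $u_a$ onto each subcell $C'_b\subseteq C_a$, check that $Z^{\mathcal C'}(u')=Z^{\mathcal C}(u)$, and then deduce the coefficient-set inclusion and the width inequality from set inclusion. Your additional remarks are sound details that the paper leaves implicit: membership in $\Xcal_{\mathrm{adm}}$, well-definedness of $\widehat\beta$ under the empirical part of Assumption~\ref{ass:ure}, and finiteness of the extrema.
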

\noindent\textit{Proof: Appendix~\ref{app:proof_refinement}.}

This confirms the tradeoff between complexity and exactness and reinforces
the motivation for grouping the missing data carefully.

For a block $B_g$ of cells, let
$H_g\in\{-1,1\}^{N_g\times|B_g|}$ be a Resolution~V design. All factors outside
$B_g$ are held at zero. Define
\begin{align}
    \tau_{j,C}^{(g)}
      &=\frac2{N_g}\sum_{r=1}^{N_g}h_{r,C}\widehat\beta_j^{(r)},
      \label{eq:groupwise_main_def}\\
    \tau_{j,C,C'}^{(g)}
      &=\frac2{N_g}\sum_{r=1}^{N_g}
        h_{r,C}h_{r,C'}\widehat\beta_j^{(r)}.
      \label{eq:groupwise_interaction_def}
\end{align}

First, we group observations whose missingness patterns appear related. Set
\begin{equation}
 s_{ii'}=
 \begin{cases}
 \displaystyle\frac{\sum_kM_{ik}M_{i'k}}
 {\sum_k\mathbf1\{M_{ik}+M_{i'k}\ge1\}},
 &\displaystyle\sum_k\mathbf1\{M_{ik}+M_{i'k}\ge1\}>0,\\[3mm]
 0,&\text{otherwise}.
 \end{cases}
 \label{eq:observation_similarity}
\end{equation}
An observation graph is constructed by connecting patient $i$ and $i'$ whenever
$s_{ii'}\ge\lambda_{\mathrm{obs}}$. The connected components of this graph
define the observation groups $G_1,\ldots,G_H$. For each group $G_h$ and
covariate $k$, the imputation cell is defined as
$C_{h,k}=\{(i,k):i\in G_h,M_{ik}=1\}$. Within a group, the symmetric cell
similarity is
\begin{equation}
 a_{(h,k),(h,l)}=\frac12\left\{
 \frac{\sum_{i\in G_h}M_{ik}M_{il}}{\sum_{i\in G_h}M_{ik}}
 +\frac{\sum_{i\in G_h}M_{ik}M_{il}}{\sum_{i\in G_h}M_{il}}
 \right\}.
 \label{eq:cell_similarity}
\end{equation}
We connect cells whenever $a_{(h,k),(h,l)}\ge\lambda_{\mathrm{cell}}$. The
connected components of this cell graph define the cell blocks
$\mathfrak B=\{B_1,\ldots,B_G\}$.

For a tolerance $\eta_{\mathrm{lin}}\in(0,1)$, cell $C\in B_g$ is kept at
the cell level when
\begin{equation}
    2\left|\frac{\partial^2\widehat\beta_j}{\partial u_C^2}(0)\right|
    +\sum_{C'\in B_g,\,C'\ne C}|\tau_{j,C,C'}^{(g)}|
    \le\eta_{\mathrm{lin}}|\tau_{j,C}^{(g)}|;
    \label{eq:cell_linearity_diagnostic}
\end{equation}
otherwise it is refined to its microscopic entries. 

\begin{algorithm2e}[H]
\caption{Adaptive Groupwise Macro--Micro Hadamard Sensitivity Analysis}
\label{alg:adaptive_groupwise_macro_micro}
\footnotesize
\SetKwInOut{Input}{Input}
\SetKwInOut{Output}{Output}
\Input{Target coefficient $j$; thresholds
$\lambda_{\mathrm{obs}},\lambda_{\mathrm{cell}},\eta_{\mathrm{lin}}$;
observed data $(X^{\mathrm{obs}},Y,M)$.}
\Output{Observation groups, cell blocks, and blockwise width approximation
$\widehat W_j$.}

Restrict the observation graph to rows $i$ with $\sum_kM_{ik}>0$; connect
$i,i'$ when $s_{ii'}\ge\lambda_{\mathrm{obs}}$, and take its connected
components $G_1,\ldots,G_H$\;

For every $G_h$ and incomplete covariate $k$, form the non-empty cells
$C_{h,k}$; connect two cells when
$a_{(h,k),(h,l)}\ge\lambda_{\mathrm{cell}}$, and take the resulting blocks
$B_1,\ldots,B_G$\;

\For{each cell block $B_g$}{
 Run a local Resolution~V or full factorial design with all other blocks fixed
 at zero, and compute $\tau_{j,C}^{(g)}$ and
 $\tau_{j,C,C'}^{(g)}$, together with the central diagonal derivatives\;

 \For{each $C\in B_g$}{
  \eIf{$2|\partial^2\widehat\beta_j(0)/\partial u_C^2|
  +\sum_{C'\in B_g,\,C'\ne C}|\tau_{j,C,C'}^{(g)}|
  \le\eta_{\mathrm{lin}}|\tau_{j,C}^{(g)}|$}{
   Set $\widehat W_{j,C}^{(g)}=|\tau_{j,C}^{(g)}|$\;
  }{
   Refine $C$ into microscopic entries, compute their local effects, and set
   $\widehat W_{j,C}^{(g)}=\sum_{(i,k)\in C}|\tau_{j,(i,k)}^{(g)}|$\;
  }
 }
}
Return
$\widehat W_j=\sum_{g=1}^G\sum_{C\in B_g}\widehat W_{j,C}^{(g)}$\;
\end{algorithm2e}

Under the same validity assumptions for this method, for OLS, exact local derivatives can be cheaper. The gain here is relative to
vertex enumeration and for estimators treated as black boxes.

\section{Discussion}
\label{sec:discussion}

Prespecified support intervals turn unrestricted covariate missingness,
including MNAR mechanisms, into a partial-identification problem rather than a
point-estimation problem. For least squares, the Aumann formulation separates
the random geometry of admissible moments from the nonlinear coefficient map
and leads to explicit finite-sample Hausdorff bounds for the exact, possibly
non-convex, identified set. The extension to smooth estimating equations shows
how the same representation can be used more broadly, although the root-$n$
zero-set result obtained here is oracle and relies on Donsker and error-bound
conditions. The Hadamard construction has a complementary role: it provides a
structured diagnostic for approximately zonotopal sensitivity, not a uniformly
controlled substitute for exact optimization. Observable calibration of the
oracle enlargement and error guarantees for adaptive grouping remain important
directions for future work.

\section*{Appendices}
The appendices contain all proofs, more detailed explanations and additional visual illustrations.

\section*{Funding}
This work received no funding.

\section*{Use of artificial intelligence tools}
OpenAI Codex was used as an assistant to check long analytical calculations formally with LEAN 4, to assist with code used to generate the numerical figures; and to check spelling and language.  The author reviewed every AI-assisted output and takes full responsibility for the content and results of the article.

\section*{Data availability}
The numerical experiments use simulated data only. No third-party data are
used. Code and numerical files are available from the author upon reasonable
request.

\section*{Conflict of interest}
The author declares no conflict of interest.

\clearpage
\appendix
\numberwithin{theorem}{subsection}
\section{Proofs for the Population Identified Set}
\label{app:section4}
This appendix contains the proofs for Section~\ref{sec:random_sets}.

\subsection{Proof of Theorem~\ref{thm:identification_equivalence}}
\label{app:proofplan_z_ols}

In the least-squares setting, the estimating function is $\varphi((X,Y,M),\beta) = X(X^T \beta - Y)$ for the complete data $(X, Y, M)$. For any admissible full-data law $P \in \mathcal{P}_{\mathrm{adm}}$, let $\Sigma_P = \mathbb{E}_P[XX^T]$ and $\gamma_P = \mathbb{E}_P[XY]$. Assumption~\ref{ass:ure} gives $\Sigma_P\succeq\kappa I_p$, so the unique root is $\beta(P) = \Sigma_P^{-1} \gamma_P$.
Define the vector-valued map $\phi(X,Y,M) = (XX^T, XY)$. Under Assumptions~\ref{ass:sampling} and~\ref{ass:integrable_envelope}, the random compact set $\mathbf{W}$ whose realization at $\omega\in\Omega$ is $\mathbf{W}(\omega) = \{\phi(z) : z \in \mathbf{Z}(\omega)\}$ (which corresponds to $\{\phi(x, Y(\omega), M(\omega)) : x \in \mathbf{X}(\omega)\}$) is measurable, compact-valued, and integrably bounded. Applying Theorem~\ref{thm:z_sharp_equivalence} to the estimating function $\phi$, we obtain:
\[
    \E_{\mathrm{Aumann}}[\mathbf{W}] 
    = \left\{ (\Sigma_P, \gamma_P) : P \in \mathcal{P}_{\mathrm{adm}} \right\}.
\]
Since the mapping $\Phi(\Sigma, \gamma) = \Sigma^{-1} \gamma$ is continuous on the open subset of positive-definite matrices $\mathbb{S}_{++}^p \times \mathbb{R}^p$, the image of the set of attainable moments under $\Phi$ is:
\[
    \Bstar_{\mathrm{Obs}} = \left\{ \Sigma_P^{-1} \gamma_P : P \in \mathcal{P}_{\mathrm{adm}} \right\} 
    = \left\{ \Sigma^{-1} \gamma : (\Sigma, \gamma) \in \E_{\mathrm{Aumann}}[\mathbf{W}] \right\}.
\]
This precisely matches the Aumann formulation defined in Section~\ref{sec:random_sets}, establishing that the least-squares equivalence is a direct consequence of Theorem~\ref{thm:z_sharp_equivalence}.

\subsection{Proof of Proposition~\ref{prop:Kadm_properties}}
\label{app:proof_aumann_properties}

\begin{proof}
Under Assumption~\ref{ass:sampling}, the random set $\mathbf{X}\subset\mathbb{R}^p$ is non-empty and compact-valued almost surely. For each $\omega$, the map
\[
    z\longmapsto (zz^T,zY(\omega))
\]
is continuous from $\mathbb{R}^p$ to $\mathbb{R}^{p\times p}\times\mathbb{R}^p$. Hence
\[
    \mathbf{W}(\omega)
    =
    \{(zz^T,zY(\omega)):z\in\mathbf{X}(\omega)\}
\]
is compact-valued almost surely as the continuous image of the compact set $\mathbf{X}(\omega)$.

Moreover, $\mathbf{W}$ is integrably bounded. Indeed, for any $z\in\mathbf{X}$,
\[
    \norm{(zz^T,zY)}
    =
    \left(\norm{zz^T}_F^2+\norm{zY}_2^2\right)^{1/2}
    \le
    \norm{z}_2^2+|Y|\norm{z}_2.
\]
Therefore
\[
    \sup_{w\in\mathbf{W}}\norm{w}
    \le
    \sup_{z\in\mathbf{X}}\norm{z}_2^2
    +
    |Y|\sup_{z\in\mathbf{X}}\norm{z}_2,
\]
and the right-hand side is integrable by Assumption~\ref{ass:integrable_envelope}.

Thus $\mathbf{W}$ is an integrably bounded compact random set. Its Aumann expectation is therefore compact in $\mathbb{R}^{p\times p}\times\mathbb{R}^p$ \citep[Theorem~2.1.20]{Molchanov2005}. Since the underlying probability space is non-atomic, the Aumann expectation is also convex, even though $\mathbf{W}$ itself need not be convex-valued. Consequently,
\begin{equation}
    \mathcal{K}^*_{\mathrm{adm}} = \mathbb{E}_{\mathrm{Aumann}}[\mathbf{W}]
\end{equation}
is a compact and convex subset of $\mathbb{R}^{p \times p} \times \mathbb{R}^p$.
For any integrable selection $Z$ of $\mathbf{X}$, the second-moment block satisfies
$\E[ZZ^T]\succeq0$ because $ZZ^T\succeq0$ almost surely.
\end{proof}

\subsection{Proof of Proposition~\ref{prop:l1_K}}
\label{app:proof_l1_K}

\begin{proof}
Let
\[
    \mathcal{S}
    =
    \left\{
        U=(U_\Sigma,U_\gamma)\in\mathbb{S}^p\times\mathbb{R}^p:
        \norm{U_\Sigma}_F^2+\norm{U_\gamma}_2^2=1
    \right\},
\]
where $U_\Sigma$ is symmetric. Throughout the non-asymptotic proofs, we
identify $\mathbb S^p\times\mathbb R^p$ with $\mathbb R^d$, where
$d=p(p+1)/2+p$, and use the Euclidean norm
$\norm{(\Sigma,\gamma)}_2^2=\norm{\Sigma}_F^2+\norm{\gamma}_2^2$.

\begin{lemma}[Support-function representation]\label{lem:support_representation}
For $U\in\mathcal{S}$, set
\[
    W_i(U)=
    \sup_{z_i\in\mathbf{X}_i}
    \left(z_i^TU_\Sigma z_i+z_i^TU_\gamma Y_i\right),
    \qquad
    W(U)=
    \sup_{z\in\mathbf{X}}
    \left(z^TU_\Sigma z+z^TU_\gamma Y\right).
\]
The support-function representation of the empirical and population moment sets gives
\[
    \psi_{\mathrm{conv}(\hat{\mathcal{K}}_{\mathrm{adm}})}(U)
    =
    \frac1n\sum_{i=1}^n W_i(U),
    \qquad
    \psi_{\mathcal{K}^*_{\mathrm{adm}}}(U)
    =
    \E[W(U)].
\]
\end{lemma}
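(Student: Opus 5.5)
The plan is to prove the two identities separately. Both use the same pairing: for $w=(\Sigma,\gamma)$ and $U=(U_\Sigma,U_\gamma)$ with $U_\Sigma$ symmetric, take the Euclidean inner product $\langle U,w\rangle=\operatorname{tr}(U_\Sigma\Sigma)+U_\gamma^T\gamma$. For $w=(zz^T,zY)$ this equals $z^TU_\Sigma z+z^TU_\gamma Y$. Hence $W(U)$ is exactly the support function of the random compact set $\mathbf W$ in direction $U$, and $W_i(U)$ is that of $\mathbf W_i=\{(z_iz_i^T,z_iY_i):z_i\in\mathbf X_i\}$. Under the identification of $\mathbb S^p\times\R^p$ with $\R^d$ the pairing is an honest inner product; off-diagonal entries carry a factor $\sqrt2$, which I absorb into the coordinates.

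For the empirical identity, observe that $\widehat{\mathcal K}_{\mathrm{adm}}=n^{-1}\sum_{i=1}^n\mathbf W_i$ is a Minkowski average. The constraint $Z\in\Xcal_{\mathrm{adm}}$ decouples across rows, and both $Z^TZ=\sum_i z_iz_i^T$ and $Z^TY=\sum_i z_iY_i$ are row sums. The support function of a set equals that of its convex hull. It is also additive and positively homogeneous under Minkowski sums: the supremum over a product of independent choices splits into a sum of suprema. Together these give $\psi_{\operatorname{conv}\widehat{\mathcal K}_{\mathrm{adm}}}(U)=n^{-1}\sum_i\sup_{w\in\mathbf W_i}\langle U,w\rangle=n^{-1}\sum_iW_i(U)$. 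Each supremum is attained because $\mathbf X_i$ is compact by Assumption~\ref{ass:sampling}.

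For the population identity I will prove two inequalities. For the upper bound, take any integrable selection $V$ of $\mathbf W$. Then $\langle U,V\rangle\le W(U)$ almost surely, so $\langle U,\E V\rangle\le\E[W(U)]$. The right-hand side is finite since $|W(U)|$ is at most the envelope of Assumption~\ref{ass:integrable_envelope}, using $\|U\|=1$ and Cauchy--Schwarz. For the lower bound I need a measurable maximizing selection. The map $(o,z)\mapsto z^TU_\Sigma z+z^TU_\gamma Y(o)$ is Carathéodory: measurable in $o$ and continuous in $z$. The correspondence $\mathbf X$ has a measurable graph and non-empty compact values. The measurable maximum theorem (Aliprantis--Border, Thm.~18.19) therefore makes $W(U)$ measurable and yields a measurable $z^\star(o)\in\mathbf X(o)$ attaining the supremum. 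Then $V^\star=(z^\star z^{\star T},z^\star Y)$ is a selection of $\mathbf W$, integrable by the envelope. Its expectation lies in $\mathcal K^*_{\mathrm{adm}}$ and has $U$-value $\E[W(U)]$. This proves $\psi_{\mathcal K^*_{\mathrm{adm}}}(U)=\E[W(U)]$, which is the standard identity $\psi_{\E\mathbf W}=\E\psi_{\mathbf W}$ \citep[Thm.~2.1.22]{Molchanov2005}.

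I expect the main obstacle to be the measurability step: having a measurable graph rather than weak measurability, and possibly needing completion of the probability space. I would first try citing the measurable maximum theorem directly under the measurable-graph hypothesis, passing to the $P_{\mathrm{obs}}$-completion if needed. This is harmless because only expectations enter. Everything else is algebra and Minkowski additivity.
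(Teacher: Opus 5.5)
Your proposal is correct and follows essentially the paper's route. The paper states this lemma without a separate proof, but the argument it relies on (Appendices~\ref{app:proofplan_z_support} and~\ref{app:proofplan_z_convexification}) is your combination of Minkowski additivity and convex-hull invariance of support functions on the empirical side, plus the two-inequality argument on the Aumann side: any integrable selection gives the upper bound, and a measurable-maximum-theorem selector gives the lower bound. The paper also settles your measurability concern (Appendix~\ref{app:proofplan_z_measurability}) without completion: the compact-section projection theorem on the standard Borel space $\mathcal O$ upgrades the measurable-graph hypothesis to measurability of the compact-valued correspondence.
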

The displayed envelope in Assumption~\ref{ass:integrable_envelope} is finite almost surely and integrable, so the support functions above are finite for every fixed $U\in\mathcal{S}$.
Therefore, by identity~\eqref{eq:hausdorff_support}, it is enough to prove
\[
    \sup_{U\in\mathcal{S}}
    \left|
        \frac1n\sum_{i=1}^n W_i(U)-\E[W(U)]
    \right|
    \longrightarrow0
    \qquad\text{a.s.}
\]

Assumption~\ref{ass:integrable_envelope} implies that $W(U)$ is integrable for every fixed $U$, since
\[
    |W(U)|
    \le
    \sup_{z\in\mathbf{X}}\norm{z}_2^2
    +
    |Y|\sup_{z\in\mathbf{X}}\norm{z}_2.
\]
The same assumption gives, by the strong law,
\[
    \frac1n\sum_{i=1}^n
    \left(
        \sup_{z_i\in\mathbf{X}_i}\norm{z_i}_2^2
        +
        |Y_i|\sup_{z_i\in\mathbf{X}_i}\norm{z_i}_2
    \right)
    \longrightarrow
    \E\!\left[
        \sup_{z\in\mathbf{X}}\norm{z}_2^2
        +
        |Y|\sup_{z\in\mathbf{X}}\norm{z}_2
    \right]
    \qquad\text{a.s.}
\]

Fix $\rho>0$ and choose a finite $\rho$-net $\mathcal{N}_\rho$ of $\mathcal{S}$. For every fixed $U_0\in\mathcal{N}_\rho$, the ordinary strong law yields
\[
    \frac1n\sum_{i=1}^n W_i(U_0)-\E[W(U_0)]
    \longrightarrow0
    \qquad\text{a.s.}
\]
Since $\mathcal{N}_\rho$ is finite, this convergence is uniform over $U_0\in\mathcal{N}_\rho$.

For $U,V\in\mathcal{S}$, the elementary bound
\[
    |W_i(U)-W_i(V)|
    \le
    \sqrt{2}
    \left(
        \sup_{z_i\in\mathbf{X}_i}\norm{z_i}_2^2
        +
        |Y_i|\sup_{z_i\in\mathbf{X}_i}\norm{z_i}_2
    \right)
    \norm{U-V}_2
\]
follows by applying Cauchy--Schwarz to the matrix and vector blocks. The analogous bound holds at the population level after replacing $(\mathbf{X}_i,Y_i)$ by $(\mathbf{X},Y)$. Hence, if $U_0\in\mathcal{N}_\rho$ satisfies $\norm{U-U_0}_2\le\rho$, then
\[
\begin{aligned}
    &\left|
        \frac1n\sum_{i=1}^n W_i(U)-\E[W(U)]
    \right|\\
    &\quad\le
    \left|
        \frac1n\sum_{i=1}^n W_i(U_0)-\E[W(U_0)]
    \right|\\
    &\qquad+
    \sqrt{2}
    \left[
        \frac1n\sum_{i=1}^n
        \left(
            \sup_{z_i\in\mathbf{X}_i}\norm{z_i}_2^2
            +
            |Y_i|\sup_{z_i\in\mathbf{X}_i}\norm{z_i}_2
        \right)
        +
        \E\!\left[
            \sup_{z\in\mathbf{X}}\norm{z}_2^2
            +
            |Y|\sup_{z\in\mathbf{X}}\norm{z}_2
        \right]
    \right]\rho .
\end{aligned}
\]
Taking the supremum over $U$, then the limsup in $n$, gives almost surely
\[
    \limsup_{n\to\infty}
    \sup_{U\in\mathcal{S}}
    \left|
        \frac1n\sum_{i=1}^n W_i(U)-\E[W(U)]
    \right|
    \le
    2\sqrt{2}\,
    \E\!\left[
        \sup_{z\in\mathbf{X}}\norm{z}_2^2
        +
        |Y|\sup_{z\in\mathbf{X}}\norm{z}_2
    \right]\rho .
\]
Since $\rho>0$ is arbitrary, the uniform convergence follows, and therefore~\eqref{eq:l1_K} holds.
\end{proof}

\subsection{Proof of Proposition~\ref{prop:Bstar}}
\label{app:proof_bstar_compactness}

\begin{proof}
By Proposition~\ref{prop:Kadm_properties}, $\mathcal{K}^*_{\mathrm{adm}}$ is compact. By Assumption~\ref{ass:ure}, every $(\Sigma,\gamma)\in\mathcal{K}^*_{\mathrm{adm}}$ satisfies $\lambda_{\min}(\Sigma)\ge\kappa>0$. Hence $\mathcal{K}^*_{\mathrm{adm}}$ is contained in the closed domain
\[
    \mathcal{D}_\kappa
    =
    \{(\Sigma,\gamma): \lambda_{\min}(\Sigma)\ge\kappa\}.
\]
On $\mathcal{D}_\kappa$, all matrices $\Sigma$ are invertible and the inversion map $\Sigma\mapsto\Sigma^{-1}$ is continuous. Therefore the map
\begin{equation}
    \Phi(\Sigma, \gamma) = \Sigma^{-1} \gamma.
\end{equation}
is continuous on $\mathcal{D}_\kappa$, and in particular on $\mathcal{K}^*_{\mathrm{adm}}$. By definition,
\begin{equation}
    \Bstar = \Phi\left( \mathcal{K}^*_{\mathrm{adm}} \right).
\end{equation}
Thus $\Bstar$ is the continuous image of the compact set $\mathcal{K}^*_{\mathrm{adm}}$, and is therefore compact in $\mathbb{R}^p$.
\end{proof}

\subsection{Proof of Proposition~\ref{prop:l1_B}}
\label{app:proof_l1_B}

\begin{lemma}[Shapley--Folkman--Starr bound]\label{lem:sfs_bound}
Let $A_1,\ldots,A_n$ be non-empty compact subsets of $\mathbb R^d$. Then
\[
 \dH\!\left(\sum_{i=1}^n A_i,
             \operatorname{conv}\!\left(\sum_{i=1}^n A_i\right)\right)
 \le \sqrt d\,
     \max_{1\le i\le n}\inf_{c\in\mathbb R^d}
        \sup_{a\in A_i}\norm{a-c}_2
 \le \sqrt d\max_{1\le i\le n}\operatorname{diam}(A_i).
\]
\end{lemma}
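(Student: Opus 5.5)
Since $\sum_iA_i\subseteq\operatorname{conv}(\sum_iA_i)$, one of the two one-sided deviations in \eqref{eq:hausdorff} is zero. It therefore suffices to show the following: every $x\in\operatorname{conv}(\sum_iA_i)$ lies within $\sqrt d\max_i r_i$ of $\sum_iA_i$, where $r_i=\inf_{c}\sup_{a\in A_i}\norm{a-c}_2$. I would first use the elementary identity $\operatorname{conv}(\sum_iA_i)=\sum_i\operatorname{conv}(A_i)$ to write $x=\sum_ix_i$ with $x_i\in\operatorname{conv}(A_i)$.

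\textbf{Step 1 (Shapley--Folkman).} I will show that the decomposition can be chosen so that $x_i\in A_i$ for all but at most $d$ indices.
\begin{itemize}
\item Write each $x_i=\sum_k\lambda_{ik}a_{ik}$ as a finite convex combination with $a_{ik}\in A_i$, using Carath\'eodory.
\item Consider the vectors $(a_{ik},e_i)\in\R^{d}\times\R^n$. The point $(x,\mathbf 1)$ is a nonnegative combination of them.
\item By the conic Carath\'eodory theorem in $\R^{d+n}$, the combination can be reduced to one using at most $d+n$ vectors with positive weights.
\item Each index $i$ must use at least one vector, because of the $e_i$ coordinate. Hence at most $d$ indices use two or more vectors.
\item Every other index uses a single point with weight $1$, so its $x_i$ is a point of $A_i$.
\end{itemize}
Call the exceptional index set $I$, so $|I|\le\min(d,n)$. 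This is the step I expect to be the main obstacle, mostly as bookkeeping of the linear-dependence reduction.

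\textbf{Step 2 (Starr's probabilistic rounding).} For $i\in I$, keep the convex representation $x_i=\sum_k\lambda_{ik}a_{ik}$. Draw independent random points $\xi_i\in A_i$ with $\PP(\xi_i=a_{ik})=\lambda_{ik}$, so that $\E\xi_i=x_i$.
\begin{itemize}
\item By independence and centering,
\[
\E\Bigl\|\sum_{i\in I}(\xi_i-x_i)\Bigr\|_2^2=\sum_{i\in I}\E\norm{\xi_i-x_i}_2^2 .
\]
\item A variance is at most the second moment about any point $c_i$, so each term is bounded by $\E\norm{\xi_i-c_i}_2^2\le\sup_{a\in A_i}\norm{a-c_i}_2^2$.
\item Optimizing over $c_i$, the right-hand side is at most $\sum_{i\in I}r_i^2\le d\max_ir_i^2$. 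The infimum defining $r_i$ is attained by compactness, or one can argue with $r_i+\epsilon$ and let $\epsilon\downarrow0$.
\end{itemize}
Hence some realization $(a_i)_{i\in I}$ satisfies $\norm{\sum_{i\in I}(a_i-x_i)}_2\le\sqrt d\max_ir_i$. The point $\sum_{i\notin I}x_i+\sum_{i\in I}a_i$ belongs to $\sum_iA_i$ and lies within this distance of $x$. Taking the supremum over $x$ gives the first inequality.

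\textbf{Step 3 (second inequality).} Choose $c$ to be any point of $A_i$. Then $\sup_{a\in A_i}\norm{a-c}_2\le\operatorname{diam}(A_i)$, so $r_i\le\operatorname{diam}(A_i)$, which gives the second inequality immediately.
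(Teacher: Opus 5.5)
Your proof is correct. The paper treats the second inequality exactly as your Step~3 does, by taking $c\in A_i$, but it gives no argument for the first: it simply invokes the Euclidean Shapley--Folkman--Starr bound of \citet[Appendix~2]{Starr1969}.

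Your Steps~1--2 therefore replace a citation with a self-contained proof along the classical lines:
\begin{itemize}
\item Conic Carath\'eodory in $\R^{d+n}$ gives the Shapley--Folkman reduction to at most $\min(d,n)$ exceptional indices. Only the easy inclusion $\operatorname{conv}(\sum_iA_i)\subseteq\sum_i\operatorname{conv}(A_i)$ is needed here.
\item Randomized rounding then finishes the argument. The identity $\sum_k\lambda_{ik}\norm{a_{ik}-c}_2^2=\sum_k\lambda_{ik}\norm{a_{ik}-x_i}_2^2+\norm{x_i-c}_2^2$ is precisely what yields the circumradius $r_i$ rather than a diameter-type constant.
\item Independence of the centered increments is what gives the square-root aggregation.
\end{itemize}

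The citation buys brevity. Your version buys transparency about the constant, and in fact proves a sharper statement: $\inf_{y\in\sum_iA_i}\norm{x-y}_2^2\le\sum_{i\in I}r_i^2$ with $|I|\le\min(d,n)$. Consequently, $\sqrt d$ could be replaced by $\sqrt{\min(d,n)}$, and the maximum could be restricted to the exceptional indices.
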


\begin{proof}
The first inequality is the Euclidean Shapley--Folkman--Starr bound in
\citet[Appendix~2, pp.~35--37]{Starr1969}. The second follows by choosing
any $c\in A_i$.
\end{proof}

\begin{lemma}[Integrable-envelope convergence of the non-convex moment set]\label{lem:l1_K_nonconv}
Under Assumptions~\ref{ass:sampling} and~\ref{ass:integrable_envelope},
\[
    \dH\bigl(\hat{\mathcal{K}}_{\mathrm{adm}},\mathcal{K}^*_{\mathrm{adm}}\bigr)
    \longrightarrow0
    \qquad\text{a.s.}
\]
\end{lemma}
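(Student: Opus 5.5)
The plan is a triangle inequality through the convex hull:
\[
\dH\bigl(\widehat{\mathcal K}_{\mathrm{adm}},\mathcal K^*_{\mathrm{adm}}\bigr)
\le \dH\bigl(\widehat{\mathcal K}_{\mathrm{adm}},\operatorname{conv}(\widehat{\mathcal K}_{\mathrm{adm}})\bigr)
+\dH\bigl(\operatorname{conv}(\widehat{\mathcal K}_{\mathrm{adm}}),\mathcal K^*_{\mathrm{adm}}\bigr).
\]
Proposition~\ref{prop:l1_K} already gives almost sure convergence of the second term to zero. So everything reduces to showing that the non-convexity remainder vanishes almost surely.

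For that remainder, write $\widehat{\mathcal K}_{\mathrm{adm}}=\sum_{i=1}^n A_i$ as a Minkowski sum, with $A_i=n^{-1}\mathbf W_i$ and $\mathbf W_i=\{(z_iz_i^T,z_iY_i):z_i\in\mathbf X_i\}$. This holds because the admissible completions factorize row by row: $Z$ ranges over the product $\prod_i\mathbf X_i$, and $(n^{-1}Z^TZ,n^{-1}Z^TY)=n^{-1}\sum_i(z_iz_i^T,z_iY_i)$. Each $A_i$ is non-empty and compact, being a continuous image of the compact set $\mathbf X_i$ under Assumption~\ref{ass:sampling}. After identifying $\mathbb S^p\times\R^p$ with $\R^d$, Lemma~\ref{lem:sfs_bound} therefore gives
\[
\dH\bigl(\widehat{\mathcal K}_{\mathrm{adm}},\operatorname{conv}(\widehat{\mathcal K}_{\mathrm{adm}})\bigr)
\le\frac{\sqrt d}{n}\max_{i\le n}\operatorname{diam}(\mathbf W_i)
\le\frac{2\sqrt d}{n}\max_{i\le n}W_i .
\]
Here $W_i=\sup_{z\in\mathbf X_i}\norm{z}_2^2+|Y_i|\sup_{z\in\mathbf X_i}\norm{z}_2$ is the envelope. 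The second inequality uses the bound $\sup_{w\in\mathbf W_i}\norm{w}\le W_i$ from the proof of Proposition~\ref{prop:Kadm_properties}.

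The main obstacle is showing $n^{-1}\max_{i\le n}W_i\to0$ almost surely when only $\E W<\infty$ is available (Assumption~\ref{ass:integrable_envelope}). I will use the standard argument. By the strong law, $S_n/n\to\E W$ almost surely, where $S_n=\sum_{i\le n}W_i$. Hence
\[
\frac{W_n}{n}=\frac{S_n}{n}-\frac{n-1}{n}\cdot\frac{S_{n-1}}{n-1}\to0
\quad\text{almost surely}.
\]
(Equivalently, Borel--Cantelli applies because $\sum_n\PP(W>\epsilon n)\le\E W/\epsilon<\infty$.) The running maximum then inherits this. Given $\epsilon>0$, almost surely there is $n_0$ with $W_i\le\epsilon i$ for all $i\ge n_0$, so
\[
\frac{\max_{i\le n}W_i}{n}\le\frac{\max_{i<n_0}W_i}{n}+\epsilon\to\epsilon .
\]
Since $\epsilon$ is arbitrary, the limit is zero. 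Because $d$ is fixed, the remainder term tends to zero almost surely.

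Combining the two terms on the intersection of the two probability-one events proves the lemma. One point to check in writing this up is that the identification of $\widehat{\mathcal K}_{\mathrm{adm}}$ with the Minkowski average uses exactly the same coordinates, and the same norm $\norm{\Sigma}_F^2+\norm{\gamma}_2^2$, as those used for $\mathcal K^*_{\mathrm{adm}}$ in Proposition~\ref{prop:l1_K}. The symmetric-matrix embedding into $\R^d$ only rescales off-diagonal entries, which is harmless up to a constant factor.
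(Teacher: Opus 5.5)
Your proposal is correct and follows essentially the same route as the paper: the triangle inequality through $\operatorname{conv}(\widehat{\mathcal K}_{\mathrm{adm}})$, Proposition~\ref{prop:l1_K} for the convex part, and the Shapley--Folkman--Starr remainder bounded by a constant times $\sqrt d\,n^{-1}\max_{i\le n}W_i$. That remainder vanishes almost surely because $W_n/n\to0$, and the paper uses the Borel--Cantelli version you mention. The differences are cosmetic: you use the diameter form of Lemma~\ref{lem:sfs_bound}, which costs a harmless factor $2$ where the paper takes the center $c=0$, and you spell out the running-maximum step that the paper leaves implicit.
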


\begin{proof}
By Proposition~\ref{prop:l1_K},
\[
    \dH\bigl(\mathrm{conv}(\hat{\mathcal{K}}_{\mathrm{adm}}),\mathcal{K}^*_{\mathrm{adm}}\bigr)
    \longrightarrow0
    \qquad\text{a.s.}
\]
It remains to prove that the Shapley--Folkman remainder vanishes. Set
\[
    V=
    \sup_{z\in\mathbf{X}}\norm{z}_2^2
    +
    |Y|\sup_{z\in\mathbf{X}}\norm{z}_2.
\]
By Assumption~\ref{ass:integrable_envelope}, $\E[V]<\infty$. Since
\[
    \E[V]=\int_0^\infty \PP(V>t)\,dt
\]
and $t\mapsto\PP(V>t)$ is non-increasing, for every $\varepsilon>0$,
\[
    \sum_{n=1}^\infty \PP(V>\varepsilon n)
    \le
    \frac{1}{\varepsilon}\E[V]
    <\infty.
\]
Thus Borel--Cantelli gives
\[
    \frac{
        \sup_{z_n\in\mathbf{X}_n}\norm{z_n}_2^2
        +
        |Y_n|\sup_{z_n\in\mathbf{X}_n}\norm{z_n}_2
    }{n}
    \longrightarrow0
    \qquad\text{a.s.}
\]
Consequently,
\[
    \frac1n
    \max_{1\le i\le n}
    \left(
        \sup_{z_i\in\mathbf{X}_i}\norm{z_i}_2^2
        +
        |Y_i|\sup_{z_i\in\mathbf{X}_i}\norm{z_i}_2
    \right)
    \longrightarrow0
    \qquad\text{a.s.}
\]
Lemma~\ref{lem:sfs_bound}, applied in dimension $d=p(p+3)/2$ with center $c=0$, gives
\[
    \dH\bigl(\hat{\mathcal{K}}_{\mathrm{adm}},
    \mathrm{conv}(\hat{\mathcal{K}}_{\mathrm{adm}})\bigr)
    \le
    \frac{\sqrt{d}}{n}
    \max_{1\le i\le n}
    \left(
        \sup_{z_i\in\mathbf{X}_i}\norm{z_i}_2^2
        +
        |Y_i|\sup_{z_i\in\mathbf{X}_i}\norm{z_i}_2
    \right),
\]
and the right-hand side converges to zero almost surely. Since $\mathcal{K}^*_{\mathrm{adm}}$ is convex,
\[
\begin{aligned}
    \dH\bigl(\hat{\mathcal{K}}_{\mathrm{adm}},\mathcal{K}^*_{\mathrm{adm}}\bigr)
    &\le
    \dH\bigl(\hat{\mathcal{K}}_{\mathrm{adm}},
    \mathrm{conv}(\hat{\mathcal{K}}_{\mathrm{adm}})\bigr)\\
    &\quad+
    \dH\bigl(\mathrm{conv}(\hat{\mathcal{K}}_{\mathrm{adm}}),
    \mathcal{K}^*_{\mathrm{adm}}\bigr)
    \longrightarrow0
    \qquad\text{a.s.}
\end{aligned}
\]
\end{proof}

\begin{proof}[Proof of Proposition~\ref{prop:l1_B}]
By Proposition~\ref{prop:Bstar}, the population identified set $\Bstar$ is compact, and the Hausdorff distance in~\eqref{eq:l1_B} is well defined.
By Lemma~\ref{lem:l1_K_nonconv},
\[
    \dH\bigl(\hat{\mathcal{K}}_{\mathrm{adm}},\mathcal{K}^*_{\mathrm{adm}}\bigr)
    \longrightarrow0
    \qquad\text{a.s.}
\]
Let
\[
    G_0=
    1+
    \E\!\left[
        \sup_{z\in\mathbf{X}}\norm{z}_2^2
        +
        |Y|\sup_{z\in\mathbf{X}}\norm{z}_2
    \right].
\]
For every population admissible moment pair, Jensen's inequality gives
\[
    \norm{\gamma}_2
    \le
    \E\!\left[
        |Y|\sup_{z\in\mathbf{X}}\norm{z}_2
    \right]
    \le G_0.
\]
For every empirical admissible moment pair,
\[
    \norm{\gamma}_2
    \le
    \frac1n\sum_{i=1}^n
    |Y_i|\sup_{z_i\in\mathbf{X}_i}\norm{z_i}_2
    \le
    \frac1n\sum_{i=1}^n
    \left(
        \sup_{z_i\in\mathbf{X}_i}\norm{z_i}_2^2
        +
        |Y_i|\sup_{z_i\in\mathbf{X}_i}\norm{z_i}_2
    \right).
\]
The strong law therefore implies that, almost surely, all empirical admissible moment pairs satisfy $\norm{\gamma}_2\le G_0$ for all sufficiently large $n$.

By Assumption~\ref{ass:ure}, every population admissible moment pair in $\mathcal{K}^*_{\mathrm{adm}}$ and every empirical admissible moment pair in $\hat{\mathcal{K}}_{\mathrm{adm}}$ satisfies the same restricted eigenvalue bound $\lambda_{\min}(\Sigma)\ge\kappa$ almost surely. Hence, eventually almost surely, both $\hat{\mathcal{K}}_{\mathrm{adm}}$ and $\mathcal{K}^*_{\mathrm{adm}}$ are contained in
\[
    \mathcal{D}_{G_0}
    =
    \left\{
        (\Sigma,\gamma):
        \lambda_{\min}(\Sigma)\ge\kappa,\,
        \norm{\gamma}_2\le G_0
    \right\}.
\]
On this deterministic domain, the map $\Phi(\Sigma,\gamma)=\Sigma^{-1}\gamma$ is Lipschitz with constant
\[
    L_\Phi(G_0)
    =
    \sqrt{\frac{1}{\kappa^2}+\frac{G_0^2}{\kappa^4}}.
\]
Indeed, for $K_j=(\Sigma_j,\gamma_j)\in\mathcal{D}_{G_0}$,
\[
\begin{aligned}
    \norm{\Phi(K_1)-\Phi(K_2)}_2
    &\le
    \frac{1}{\kappa}\norm{\gamma_1-\gamma_2}_2
    +
    \frac{G_0}{\kappa^2}\norm{\Sigma_1-\Sigma_2}_F\\
    &\le
    L_\Phi(G_0)\norm{K_1-K_2}_2.
\end{aligned}
\]
Consequently, for all sufficiently large $n$,
\[
    \dH(\Bhat,\Bstar)
    \le
    L_\Phi(G_0)
    \dH\bigl(\hat{\mathcal{K}}_{\mathrm{adm}},\mathcal{K}^*_{\mathrm{adm}}\bigr)
    \longrightarrow0
    \qquad\text{a.s.}
\]
This proves~\eqref{eq:l1_B}.
\end{proof}

\subsection{Bounded-envelope moment-set bound}
\label{app:proof_uniform}

\begin{lemma}[Error bounds for $\hat{\mathcal{K}}_{\mathrm{adm}}$ estimation]
\label{lem:uniform}
Under Assumptions~\ref{ass:sampling} and~\ref{ass:bounded_xy}, let $L = \sqrt{M_X^4+M_X^2M_Y^2}$ and $d = p(p+1)/2 + p$. For every $\delta\in(0,1)$, with probability at least $1-\delta$,
\begin{equation}\label{eq:uniform_bound}
  \dH\bigl( \mathrm{conv}(\hat{\mathcal{K}}_{\mathrm{adm}}), \mathcal{K}^*_{\mathrm{adm}} \bigr)
  \le
  L\sqrt{\frac{2\log(1/\delta)}{n}}
  +\frac{(\sqrt{2\pi d}+2)L}{\sqrt n}.
\end{equation}
\end{lemma}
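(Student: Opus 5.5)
The plan is to reduce the Hausdorff distance to the supremum of a centred empirical process over the unit sphere, and then bound it by its mean plus a bounded-differences deviation. By Lemma~\ref{lem:support_representation} and identity~\eqref{eq:hausdorff_support}, applied to the compact convex sets $\mathrm{conv}(\hat{\mathcal K}_{\mathrm{adm}})$ and $\mathcal K^*_{\mathrm{adm}}$ (Proposition~\ref{prop:Kadm_properties}), we have
\[
 \dH\bigl(\mathrm{conv}(\hat{\mathcal K}_{\mathrm{adm}}),\mathcal K^*_{\mathrm{adm}}\bigr)=Z_n:=\sup_{U\in\mathcal S}\Bigl|\frac1n\sum_{i=1}^n\bigl(W_i(U)-\E[W(U)]\bigr)\Bigr|.
\]
Here $\mathcal S$ is the unit sphere of $\R^d$ with $d=p(p+1)/2+p$. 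Under Assumption~\ref{ass:bounded_xy}, Cauchy--Schwarz blockwise gives $|W_i(U)|\le(\norm{z}_2^4+\norm{z}_2^2Y^2)^{1/2}\le L$ for unit $U$. So each summand lies in an interval of length at most $2L$, uniformly in $U$.

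\emph{Step 1 (concentration).} $Z_n$ is a function of the i.i.d. observations $O_1,\ldots,O_n$. Changing one observation changes $Z_n$ by at most $2L/n$. McDiarmid's inequality then gives, with probability at least $1-\delta$,
\[
 Z_n\le\E Z_n+L\sqrt{2\log(1/\delta)/n},
\]
which is exactly the first term of~\eqref{eq:uniform_bound}. A measurability remark is needed here: $U\mapsto W_i(U)$ is Lipschitz, so the supremum may be taken over a countable dense subset of $\mathcal S$.

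\emph{Step 2 (expectation).} First apply symmetrization, $\E Z_n\le 2\,\E\sup_U|n^{-1}\sum_i\varepsilon_iW_i(U)|$ with Rademacher signs $\varepsilon_i$. The aim is a constant of the form $(\sqrt{2\pi d}+2)L/\sqrt n$. The natural route is to split the process as $W_i(U)=W_i(U_0)+(W_i(U)-W_i(U_0))$ for a fixed $U_0\in\mathcal S$.

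The fixed point contributes $2\E|n^{-1}\sum\varepsilon_iW_i(U_0)|\le 2L/\sqrt n$, which accounts for the ``$+2$''.

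For the increments, note that $W_i$ is a supremum of linear functionals $U\mapsto\langle U,w\rangle$ with $\norm{w}_2\le L$. Hence $U\mapsto W_i(U)$ is $L$-Lipschitz on $\R^d$, being the support function of a set inside the ball of radius $L$. I would then pass to Gaussian symmetrization, using $\E|\varepsilon|=\sqrt{2/\pi}\,\E|g|$, i.e. $\E\sup\sum\varepsilon_i(\cdot)\le\sqrt{\pi/2}\,\E\sup\sum g_i(\cdot)$. Next apply a Slepian/Sudakov--Fernique comparison: the process $\sum_ig_i W_i(U)$ has increments dominated by those of $L\sqrt n\,\langle g',U\rangle$ with $g'\sim N(0,I_d)$, because
\[
 \sum_i(W_i(U)-W_i(V))^2\le nL^2\norm{U-V}_2^2 .
\]
This yields $\E\sup_U\le L\sqrt n\,\E\norm{g'}_2\le L\sqrt{nd}$. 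Combined with the factors $2\sqrt{\pi/2}=\sqrt{2\pi}$, this gives the $\sqrt{2\pi d}\,L/\sqrt n$ term.

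\emph{Main obstacle.} Getting exactly these constants is delicate. Sudakov--Fernique controls $\E\sup$ of the process, not of its absolute value, and the Rademacher-to-Gaussian step needs a contraction argument. I would handle the absolute value by centering at $U_0$ (which is exactly why the $+2L/\sqrt n$ term appears) and using the symmetry of the Gaussian increment process. If the comparison with the vector process fails to give the clean constant, the fallback is a $\rho$-net argument on $\mathcal S$ with the same Lipschitz bound. That would lose only absolute constants and a factor $\sqrt d$, which is already present.
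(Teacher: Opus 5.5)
Your reduction to $Z_n=\sup_{U\in\mathcal S}|n^{-1}\sum_i(W_i(U)-\E[W(U)])|$ matches the paper. So do the McDiarmid step with bounded differences $2L/n$, the symmetrization, and the Gaussian comparison against $\langle g,U\rangle$. Reading $W_i$ as the support function of a set in the ball of radius $L$ is also a clean way to get the Lipschitz bound.

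\textbf{The gap.} It sits in the expectation step, exactly where you flag the main obstacle: centering at $U_0$ does not remove the absolute value. After the split you still need $\E_g\sup_U|Y_U-Y_{U_0}|$, with $Y_U=n^{-1}\sum_ig_iW_i(U)$. Sudakov--Fernique against $Ln^{-1/2}\langle g',U\rangle$ only gives the one-sided bound $\E_g\sup_UY_U\le L\sqrt{d/n}$. The symmetric-process argument then gives $\E\sup_U|Y_U-Y_{U_0}|\le\E\sup_{U,V}(Y_U-Y_V)=2\E\sup_UY_U$. Symmetry does not let you replace the two-sided supremum by the one-sided one for free. Carried through honestly, your route yields $\E Z_n\le(2\sqrt{2\pi d}+2)L/\sqrt n$, not the stated $(\sqrt{2\pi d}+2)L/\sqrt n$. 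The $\rho$-net fallback loses further absolute constants. As written, the proposal therefore does not prove the lemma with its explicit constants.

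\textbf{The missing device.} Absorb the absolute value into the index set rather than into a fixed anchor. The paper works on $\mathcal S\times\{-1,1\}$ with $G_{U,s}=\frac sn\sum_ig_iW_i(U)$, so that $\sup_{U,s}G_{U,s}=\sup_U|Y_U|$. It compares this with $H_{U,s}=a_n\{\langle g,U\rangle+sg_0\}$, where $a_n=L/\sqrt n$.
\begin{itemize}
\item For $s=t$ the increments are controlled by the Lipschitz bound.
\item For $s\ne t$ they equal $n^{-2}\sum_i(W_i(U)+W_i(V))^2\le4a_n^2=a_n^2(s-t)^2$. This is where $|W_i|\le L$ enters.
\end{itemize}
Sudakov--Fernique then gives $\E_g\sup_U|Y_U|\le a_n(\E\norm{g}_2+\E|g_0|)\le a_n(\sqrt d+\sqrt{2/\pi})$. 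Multiply by $2\sqrt{\pi/2}$: the factor $2$ comes from symmetrization and $\sqrt{\pi/2}$ from the Rademacher-to-Gaussian step. That step is just Jensen after writing $g_i=\varepsilon_i|g_i|$ and needs no contraction argument. The result is exactly $(\sqrt{2\pi d}+2)L/\sqrt n$. So the ``$+2$'' comes from $\E|g_0|$, not from a fixed point, and no centering at $U_0$ is needed.
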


\begin{lemma}[Lipschitz Continuity of the Support Function]
\label{lem:support_lipschitz}
For any direction $U, V \in \mathcal{S}$ and any observation $i$:
\begin{align*}
    |W_i(U)|
    &\le
    \sup_{z_i\in\mathbf{X}_i}\norm{z_i}_2^2
    +
    |Y_i|\sup_{z_i\in\mathbf{X}_i}\norm{z_i}_2, \\
    |W_i(U)-W_i(V)|
    &\le
    \left(
    \sup_{z_i\in\mathbf{X}_i}\norm{z_i}_2^2
    +
    |Y_i|\sup_{z_i\in\mathbf{X}_i}\norm{z_i}_2
    \right)
    \norm{U-V}_2.
\end{align*}
Under Assumption~\ref{ass:bounded_xy}, with $L = \sqrt{M_X^4+M_X^2M_Y^2}$, the support function summand is uniformly bounded and Lipschitz continuous on the unit sphere $\mathcal{S}$:
\begin{equation}
    |W_i(U)| \le L, \qquad |W_i(U) - W_i(V)| \le L \norm{U - V}_2.
\end{equation}
\end{lemma}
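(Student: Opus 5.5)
The statement to prove is Lemma~\ref{lem:support_lipschitz}. The argument is elementary and works pointwise in $\omega$, using only the definition of $W_i(U)$ from Lemma~\ref{lem:support_representation}. Write $E_i=\sup_{z\in\mathbf X_i}\norm{z}_2^2+|Y_i|\sup_{z\in\mathbf X_i}\norm{z}_2$.

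\textbf{Uniform bound.} For fixed $z\in\mathbf X_i$ and $U=(U_\Sigma,U_\gamma)\in\mathcal S$, write the integrand as the inner product
\[
z^TU_\Sigma z+Y_iz^TU_\gamma=\langle U,(zz^T,Y_iz)\rangle .
\]
Here the inner product is the Frobenius one on the symmetric block plus the Euclidean one on the vector block. Cauchy--Schwarz then gives
\[
|\langle U,(zz^T,Y_iz)\rangle|\le\norm{U}_2\bigl(\norm{zz^T}_F^2+Y_i^2\norm{z}_2^2\bigr)^{1/2}.
\]
Use $\norm{U}_2=1$ and $\norm{zz^T}_F=\norm{z}_2^2$. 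The last factor equals $(\norm{z}_2^4+Y_i^2\norm{z}_2^2)^{1/2}$, which is at most $\norm{z}_2^2+|Y_i|\norm{z}_2\le E_i$ by $\sqrt{a^2+b^2}\le a+b$. Since $|\sup f|\le\sup|f|$, taking the supremum over $z$ gives $|W_i(U)|\le E_i$.

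\textbf{Lipschitz bound.} I will use the standard fact that $|\sup_z f(z)-\sup_z g(z)|\le\sup_z|f(z)-g(z)|$. This holds because $\mathbf X_i$ is compact and both functions are continuous, so both suprema are finite and attained. Hence
\[
|W_i(U)-W_i(V)|\le\sup_{z\in\mathbf X_i}|\langle U-V,(zz^T,Y_iz)\rangle|.
\]
Applying the same Cauchy--Schwarz step with $U-V$ in place of $U$ bounds this by $E_i\norm{U-V}_2$.

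\textbf{Identification subtlety.} There is one point I would check carefully. The vectorization identifying $\mathbb S^p\times\R^p$ with $\R^d$ must be an isometry for the Frobenius norm: either keep the full Frobenius inner product on symmetric matrices, or weight off-diagonal coordinates by $\sqrt2$. Otherwise a spurious constant, such as the $\sqrt2$ appearing in the proof of Proposition~\ref{prop:l1_K}, enters. With the isometric convention the constant is exactly $1$, matching the statement.

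\textbf{Bounded case.} Under Assumption~\ref{ass:bounded_xy}, $\norm{z}_2\le M_X$ and $|Y_i|\le M_Y$. Here I would not pass through $E_i$, since $M_X^2+M_YM_X$ exceeds $L$. Instead I would stop at the sharper intermediate quantity $(\norm{z}_2^4+Y_i^2\norm{z}_2^2)^{1/2}\le\sqrt{M_X^4+M_X^2M_Y^2}=L$, in both the uniform and the Lipschitz estimates. This yields $|W_i(U)|\le L$ and $|W_i(U)-W_i(V)|\le L\norm{U-V}_2$.

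The only real obstacle is this bookkeeping: keep the sharp $\ell_2$ combination rather than the looser sum $E_i$ in the bounded case, and make sure the norm identification introduces no extra factor.
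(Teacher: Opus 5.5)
Your proof is correct and is essentially the paper's argument. The paper bounds the two blocks separately by $\norm{z_i}_2^2\norm{U_\Sigma}_F$ and $|Y_i|\norm{z_i}_2\norm{U_\gamma}_2$ and then applies Cauchy--Schwarz on $\R^2$; this is the same computation as your single Cauchy--Schwarz step for $\langle U,(zz^T,Y_iz)\rangle$. In the bounded case the paper likewise stops at $\sqrt{M_X^4+M_X^2M_Y^2}$ rather than passing through the additive envelope.

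Your identification worry does not arise, because the paper explicitly equips $\mathcal S$ with the norm $\norm{(\Sigma,\gamma)}_2^2=\norm{\Sigma}_F^2+\norm{\gamma}_2^2$. The $\sqrt2$ in the proof of Proposition~\ref{prop:l1_K} is just slack in that cruder bound, not an artifact of vectorization.
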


\begin{proof}
For any $z_i\in\mathbf{X}_i$,
\[
    |z_i^TU_\Sigma z_i+z_i^TU_\gamma Y_i|
    \le \norm{z_i}_2^2\norm{U_\Sigma}_F+\norm{z_i}_2|Y_i|\norm{U_\gamma}_2
    \le
    \sup_{z_i\in\mathbf{X}_i}\norm{z_i}_2^2
    +
    |Y_i|\sup_{z_i\in\mathbf{X}_i}\norm{z_i}_2,
\]
because $\norm{U_\Sigma}_F,\norm{U_\gamma}_2\le1$. The same argument applied to $U-V$, followed by Cauchy--Schwarz on $\mathbb{R}^2$ with vectors $(\sup_{u\in\mathbf{X}_i}\norm{u}_2^2, |Y_i|\sup_{u\in\mathbf{X}_i}\norm{u}_2)$ and $(\norm{U_\Sigma-V_\Sigma}_F, \norm{U_\gamma-V_\gamma}_2)$, gives the Lipschitz bound. 

Under Assumption~\ref{ass:bounded_xy}, any admissible selection $z_i \in \mathbf{X}_i$ satisfies $\norm{z_i}_2 \le M_X$ and the response variable satisfies $|Y_i| \le M_Y$ almost surely. By Cauchy-Schwarz on matrices and vectors:
\begin{align*}
    |W_i(U)| &\le \sup_{z_i \in \mathbf{X}_i} \left( \abs{z_i^T U_{\Sigma} z_i} + \abs{z_i^T U_{\gamma} Y_i} \right) \\
    &\le M_X^2 \norm{U_{\Sigma}}_F + M_X M_Y \norm{U_{\gamma}}_2.
\end{align*}
Applying Cauchy-Schwarz on $\mathbb{R}^2$ with vectors $(M_X^2, M_X M_Y)$ and $(\norm{U_{\Sigma}}_F, \norm{U_{\gamma}}_2)$ yields:
\begin{align*}
    |W_i(U)| &\le \sqrt{(M_X^2)^2 + (M_X M_Y)^2} \sqrt{\norm{U_{\Sigma}}_F^2 + \norm{U_{\gamma}}_2^2} \\
    &= \sqrt{M_X^4 + M_X^2 M_Y^2}=L.
\end{align*}
Similarly, for any $U, V \in \mathcal{S}$:
\begin{align*}
    \abs{W_i(U) - W_i(V)} &\le \sup_{z_i \in \mathcal{X}_{\mathrm{adm}, i}} \abs{ z_i^T (U_{\Sigma} - V_{\Sigma}) z_i + z_i^T (U_{\gamma} - V_{\gamma}) Y_i } \\
    &\le M_X^2 \norm{U_{\Sigma} - V_{\Sigma}}_F + M_X M_Y \norm{U_{\gamma} - V_{\gamma}}_2 \\
    &\le L \norm{U - V}_2,
\end{align*}
which completes the proof.
\end{proof}

\begin{lemma}[Bounded support-process concentration]\label{lem:bounded_support_chaining}
Suppose that $|W_i(U)|\le L$ and
$|W_i(U)-W_i(V)|\le L\norm{U-V}_2$ almost surely for all
$i$ and $U,V\in\mathcal S$. For every $\delta\in(0,1)$, with probability at least $1-\delta$,
\[
    \sup_{U\in\mathcal{S}}
    \left|\frac1n\sum_{i=1}^nW_i(U)-\E[W(U)]\right|
    \le
    L\sqrt{\frac{2\log(1/\delta)}{n}}
    +\frac{(\sqrt{2\pi d}+2)L}{\sqrt n}.
\]
\end{lemma}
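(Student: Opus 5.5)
Let $Z=\sup_{U\in\mathcal S}|n^{-1}\sum_i W_i(U)-\E[W(U)]|$. The bound follows the standard split: concentration of $Z$ around its mean, then a bound on $\E Z$.

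\textbf{Concentration step.} Changing one observation $O_i$ changes each $n^{-1}W_i(U)$ by at most $2L/n$. Hence $Z$ changes by at most $2L/n$, and McDiarmid's bounded-difference inequality gives, with probability at least $1-\delta$,
\[
Z\le \E Z+L\sqrt{\frac{2\log(1/\delta)}{n}},
\]
since $\sum_i(2L/n)^2=4L^2/n$ and $\sqrt{(4L^2/n)\log(1/\delta)/2}=L\sqrt{2\log(1/\delta)/n}$. This is exactly the first term.

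\textbf{Symmetrization.} Because $Z$ involves an absolute value, symmetrization gives $\E Z\le 2\E\sup_{U}|n^{-1}\sum_i\varepsilon_iW_i(U)|$ with Rademacher signs $\varepsilon_i$. I will remove the absolute value by centering at a fixed direction $U_0$:
\[
\sup_U|R(U)|\le |R(U_0)|+\sup_{U,V}(R(U)-R(V)).
\]
Here $\E|R(U_0)|\le L/\sqrt n$, since $|W_i|\le L$. After the factor $2$, this yields the $2L/\sqrt n$ term.

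\textbf{Expected increment via Gaussian comparison.} Conditionally on the data, the process $n^{-1/2}\sum_i\varepsilon_iW_i(U)$ has sub-Gaussian increments with respect to $L\|U-V\|_2$, by the Lipschitz property in Lemma~\ref{lem:support_lipschitz}. I will compare it with the canonical Gaussian process $L\langle g,U\rangle$, where $g\sim N(0,I_d)$, on the unit sphere $\mathcal S\cong\mathbb S^{d-1}$. Its sup-minus-inf has expectation $2L\,\E\|g\|_2\le 2L\sqrt d$. The cleanest way to get a constant of the form $\sqrt{2\pi d}$ is the Rademacher-to-Gaussian comparison $\E\sup\sum\varepsilon_i a_i\le\sqrt{\pi/2}\,\E\sup\sum g_ia_i$. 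This is followed by Sudakov--Fernique: the Gaussian increments $\E(\sum_i g_i(W_i(U)-W_i(V)))^2/n\le L^2\|U-V\|_2^2$ are dominated by those of $L\langle g,U\rangle$. That yields
\[
\E\sup_{U,V}\bigl(R(U)-R(V)\bigr)\le \sqrt{\pi/2}\cdot 2L\sqrt d/\sqrt n,
\]
and adding the $L/\sqrt n$ centering term and doubling gives roughly $(\sqrt{2\pi d}+2)L/\sqrt n$, up to the factor bookkeeping.

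\textbf{Main obstacle.} The hardest part is matching the constants exactly. The symmetrization factor $2$, the $\sqrt{\pi/2}$, and the sup-minus-inf factor must combine into $\sqrt{2\pi d}$; naively they give $2\sqrt{2\pi d}$. I would first try bounding $\E\sup_U(R(U)-R(U_0))$ one-sidedly by $\sqrt{\pi/2}\,L\,\E\|g\|/\sqrt n$, using Sudakov--Fernique with the one-sided supremum, which loses no factor $2$. If the constants still do not close, I would accept a slightly different numerical constant. An alternative route is a Dudley/$\varepsilon$-net argument.
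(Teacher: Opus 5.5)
Your McDiarmid step, the symmetrization, and the Gaussian comparison are all sound, and they give the right rate. As written, though, they prove the lemma only with $(2\sqrt{2\pi d}+2)L/\sqrt n$ in place of $(\sqrt{2\pi d}+2)L/\sqrt n$, and the fix you propose does not remove the factor $2$.

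The decomposition $\sup_U|R(U)|\le|R(U_0)|+\sup_U|R(U)-R(U_0)|$ needs the increment in both directions. Indeed,
\[
\sup_U|R(U)-R(U_0)|=\max\Bigl\{\sup_U\bigl(R(U)-R(U_0)\bigr),\ \sup_U\bigl(R(U_0)-R(U)\bigr)\Bigr\}.
\]
A one-sided Sudakov--Fernique bound controls the expectation of each term inside the max, not the expectation of the max itself. Bounding the max by the sum of the two (both are nonnegative) brings you back to the oscillation $\sup R-\inf R$. Its Gaussian counterpart has expectation $2\E\sup_U X_U$, and that is where the second copy of $\E\norm{g}_2$ comes from. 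Since the statement fixes the constant, accepting a slightly different constant means the stated inequality is not proved.

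The missing idea is to remove the absolute value by enlarging the index set rather than by centering at $U_0$.
\begin{itemize}
\item Write $\sup_U|R(U)|=\sup_{(U,s)\in\mathcal S\times\{-1,1\}}sR(U)$, and pass to Gaussians with the same $\sqrt{\pi/2}$ Jensen argument you already use.
\item Compare $G_{U,s}=(s/n)\sum_ig_iW_i(U)$ with $H_{U,s}=(L/\sqrt n)\{\langle g,U\rangle+sg_0\}$, where $g_0$ is an extra independent standard normal.
\item For $s=t$, the increments are dominated via the Lipschitz bound, exactly as in your argument.
\item For $s\ne t$, the uniform bound $|W_i|\le L$ (which you used only for $R(U_0)$) gives $\E(G_{U,s}-G_{V,t})^2=n^{-2}\sum_i(W_i(U)+W_i(V))^2\le 4L^2/n\le\E(H_{U,s}-H_{V,t})^2$.
\end{itemize}
Sudakov--Fernique then yields
\[
\E\sup_{U,s}G_{U,s}\le\frac{L}{\sqrt n}\bigl(\E\norm{g}_2+\E|g_0|\bigr)\le\frac{L}{\sqrt n}\bigl(\sqrt d+\sqrt{2/\pi}\bigr).
\]
So the absolute value costs only $\E|g_0|=\sqrt{2/\pi}$, not a second $\E\norm{g}_2$. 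Multiplying by $\sqrt{\pi/2}$ and by the symmetrization factor $2$ gives exactly $(\sqrt{2\pi d}+2)L/\sqrt n$. The $+2$ now comes from $g_0$ rather than from $\E|R(U_0)|$, and no centering direction is needed. This is the paper's argument.
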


\begin{proof}
Set $F=\sup_{U\in\mathcal S}
|n^{-1}\sum_{i=1}^nW_i(U)-\E[W(U)]|$.
Continuity allows all suprema below to be taken over a countable dense
subset of $\mathcal S$. For independent Rademacher variables
$\varepsilon_1,\ldots,\varepsilon_n$, symmetrization
\citep[Section~2.3]{vanDerVaartWellner1996} gives
\[
 \E F\le2\E\E_\varepsilon
 \sup_{U\in\mathcal S}\left|\frac1n
       \sum_{i=1}^n\varepsilon_iW_i(U)\right|.
\]
Condition on the sample. More generally, let $A_i\ge0$ satisfy
$|W_i(U)|\le A_i$ and
$|W_i(U)-W_i(V)|\le A_i\norm{U-V}_2$; here $A_i=L$.
Put $a_n=n^{-1}(\sum_iA_i^2)^{1/2}$.
Take independent standard Gaussian variables $g_0,g_1,\ldots,g_n$
and an independent $g\sim N(0,I_d)$. On
$\mathcal S\times\{-1,1\}$, compare
\[
 G_{U,s}=\frac{s}{n}\sum_{i=1}^ng_iW_i(U),
 \qquad H_{U,s}=a_n\{\langle g,U\rangle+s g_0\}.
\]
Their increments satisfy
\[
 \E_g(G_{U,s}-G_{V,t})^2
 \le a_n^2
 \begin{cases}
   \norm{U-V}_2^2,&s=t,\\
   4,&s\ne t
 \end{cases}
 \le a_n^2\{\norm{U-V}_2^2+(s-t)^2\}
 =\E_g(H_{U,s}-H_{V,t})^2.
\]
The Sudakov--Fernique inequality
\citep[Theorem~7.2.11]{Vershynin2018} therefore yields
\[
 \E_g\sup_{U,s}G_{U,s}
 \le \E_g\sup_{U,s}H_{U,s}
 =a_n\{\E\norm{g}_2+\E|g_0|\}
 \le a_n\{\sqrt d+\sqrt{2/\pi}\}.
\]
Writing each $g_i$ as its independent sign and absolute value, Jensen's
inequality and $\E|g_i|=\sqrt{2/\pi}$ give
\[
 \E_\varepsilon\sup_{U\in\mathcal S}
 \left|\frac1n\sum_{i=1}^n\varepsilon_iW_i(U)\right|
 \le\sqrt{\frac{\pi}{2}}\,\E_g\sup_{U,s}G_{U,s}
 \le\frac{\sqrt{\pi d/2}+1}{n}
       \left(\sum_{i=1}^nA_i^2\right)^{1/2}.
\]
With $A_i=L$, symmetrization gives
$\E F\le(\sqrt{2\pi d}+2)L/\sqrt n$.

Replacing one observation changes $F$ by at most $2L/n$.
McDiarmid's inequality \citep{McDiarmid1989} consequently gives, for every
$x\ge0$,
\[
 \PP\left(F>\E F+L\sqrt{\frac{2x}{n}}\right)\le e^{-x}.
\]
Taking $x=\log(1/\delta)$ proves the result.
\end{proof}

\begin{proof}[Proof of Lemma~\ref{lem:uniform}]
By Lemma~\ref{lem:support_representation} and identity~\eqref{eq:hausdorff_support},
\[
    \dH\bigl(\mathrm{conv}(\hat{\mathcal K}_{\mathrm{adm}}),\mathcal K^*_{\mathrm{adm}}\bigr)
    =
    \sup_{U\in\mathcal S}
    \left|\frac1n\sum_{i=1}^nW_i(U)-\E[W(U)]\right|.
\]
Under Assumption~\ref{ass:bounded_xy}, Lemma~\ref{lem:support_lipschitz} gives
$|W_i(U)|\le L$ and
$|W_i(U)-W_i(V)|\le L\norm{U-V}_2$. Lemma~\ref{lem:bounded_support_chaining}
proves~\eqref{eq:uniform_bound}.
\end{proof}

\subsection{Proof of Theorem~\ref{thm:main_bounded}}
\label{app:proof_main_asymptotic}

The following lemma records the Lipschitz continuity of the parameter
identification mapping.

\begin{lemma}[Lipschitz Continuity of the Coefficient Mapping]
\label{lem:phi_lipschitz}
Let $a>0$ and $G<\infty$, and define
\[
    \mathcal{D}_{a,G}
    =
    \{(\Sigma,\gamma):\lambda_{\min}(\Sigma)\ge a,\ \norm{\gamma}_2\le G\}.
\]
The coefficient mapping $\Phi(\Sigma,\gamma)=\Sigma^{-1}\gamma$ is Lipschitz continuous on $\mathcal{D}_{a,G}$ with constant
\[
    L_\Phi(a,G)=\sqrt{\frac1{a^2}+\frac{G^2}{a^4}}.
\]
That is, for all $K_1,K_2\in\mathcal{D}_{a,G}$,
\begin{equation}
    \norm{\Phi(K_1) - \Phi(K_2)}_2 \le L_\Phi(a,G) \norm{K_1 - K_2}_2.
\end{equation}
\end{lemma}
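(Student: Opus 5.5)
The plan is to write $\Phi(K_1)-\Phi(K_2)$ as a sum of two terms, one isolating the change in $\gamma$ and one isolating the change in $\Sigma$, and bound each with the spectral constraint $\lambda_{\min}(\Sigma)\ge a$. For $K_j=(\Sigma_j,\gamma_j)\in\mathcal D_{a,G}$, I would use
\[
\Sigma_1^{-1}\gamma_1-\Sigma_2^{-1}\gamma_2
=\Sigma_1^{-1}(\gamma_1-\gamma_2)+(\Sigma_1^{-1}-\Sigma_2^{-1})\gamma_2 ,
\]
together with the resolvent identity $\Sigma_1^{-1}-\Sigma_2^{-1}=\Sigma_1^{-1}(\Sigma_2-\Sigma_1)\Sigma_2^{-1}$. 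This identity is valid because both matrices are invertible on $\mathcal D_{a,G}$.

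Each piece is then bounded in operator norm. Since $\Sigma_j$ is symmetric with $\lambda_{\min}(\Sigma_j)\ge a>0$, we have $\|\Sigma_j^{-1}\|_{\mathrm{op}}\le 1/a$. The first term is therefore at most $a^{-1}\|\gamma_1-\gamma_2\|_2$. The second term is at most
\[
\|\Sigma_1^{-1}\|_{\mathrm{op}}\,\|\Sigma_2-\Sigma_1\|_{\mathrm{op}}\,\|\Sigma_2^{-1}\|_{\mathrm{op}}\,\|\gamma_2\|_2\le \frac{G}{a^2}\|\Sigma_1-\Sigma_2\|_F ,
\]
using $\|\cdot\|_{\mathrm{op}}\le\|\cdot\|_F$ and $\|\gamma_2\|_2\le G$.

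Adding the two bounds gives $a^{-1}\|\Delta\gamma\|_2+Ga^{-2}\|\Delta\Sigma\|_F$. Cauchy--Schwarz in $\R^2$, applied to the vectors $(a^{-1},Ga^{-2})$ and $(\|\Delta\gamma\|_2,\|\Delta\Sigma\|_F)$, turns this into $L_\Phi(a,G)\,(\|\Delta\Sigma\|_F^2+\|\Delta\gamma\|_2^2)^{1/2}$. The last factor is exactly $\|K_1-K_2\|_2$ under the norm fixed in the proof of Proposition~\ref{prop:l1_K}. This is the same computation already sketched inside the proof of Proposition~\ref{prop:l1_B}, now carried out with general constants $(a,G)$.

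None of these steps is a real obstacle; the only point needing care is the norm bookkeeping. The paper identifies $\mathbb S^p\times\R^p$ with $\R^d$, so I must make sure the $\Sigma$-block norm in $\|K_1-K_2\|_2$ is the Frobenius norm on the full symmetric matrix. If one instead used the half-vectorization coordinates, off-diagonal entries would be counted once and the Frobenius norm would no longer be dominated, so I would state explicitly that the Frobenius convention is the one in force. I would also note that $\mathcal D_{a,G}$ need not be convex. This causes no difficulty, because the argument uses only the algebraic identity at the two endpoints and never integrates along a segment.
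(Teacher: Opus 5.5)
Your proof is correct and follows the paper's argument step for step: the same split with the resolvent identity, the bound $\|\Sigma_j^{-1}\|_{\mathrm{op}}\le 1/a$, domination of the operator norm by the Frobenius norm, and Cauchy--Schwarz in $\R^2$. One side remark is inaccurate: for symmetric $\Sigma$, $\mathcal D_{a,G}$ is in fact convex, because $\lambda_{\min}$ is concave; as you note, though, the argument never uses convexity, so nothing in the proof is affected.
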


\begin{proof}
For any $K_1 = (\Sigma_1, \gamma_1)$ and $K_2 = (\Sigma_2, \gamma_2)$ in $\mathcal{D}_{a,G}$, we have $\norm{\Sigma_1^{-1}}_2\le1/a$, $\norm{\Sigma_2^{-1}}_2\le1/a$, and $\norm{\gamma_2}_2\le G$. Using the matrix identity $\Sigma_1^{-1} - \Sigma_2^{-1} = \Sigma_1^{-1}(\Sigma_2 - \Sigma_1)\Sigma_2^{-1}$, we evaluate the difference:
\begin{align*}
    \norm{\Phi(K_1) - \Phi(K_2)}_2 &= \norm{\Sigma_1^{-1} \gamma_1 - \Sigma_2^{-1} \gamma_2}_2 \\
    &= \norm{\Sigma_1^{-1} \gamma_1 - \Sigma_1^{-1} \gamma_2 + \Sigma_1^{-1} \gamma_2 - \Sigma_2^{-1} \gamma_2}_2 \\
    &= \norm{\Sigma_1^{-1} (\gamma_1 - \gamma_2) + (\Sigma_1^{-1} - \Sigma_2^{-1}) \gamma_2}_2 \\
    &= \norm{\Sigma_1^{-1} (\gamma_1 - \gamma_2) + \Sigma_1^{-1} (\Sigma_2 - \Sigma_1) \Sigma_2^{-1} \gamma_2}_2 \\
    &\le \norm{\Sigma_1^{-1}}_2 \norm{\gamma_1 - \gamma_2}_2 + \norm{\Sigma_1^{-1}}_2 \norm{\Sigma_2 - \Sigma_1}_2 \norm{\Sigma_2^{-1}}_2 \norm{\gamma_2}_2 \\
    &\le \frac{1}{a} \norm{\gamma_1 - \gamma_2}_2 + \frac{G}{a^2} \norm{\Sigma_1 - \Sigma_2}_F.
\end{align*}
Cauchy--Schwarz and the definition of $L_\Phi(a,G)$ give:
\begin{align*}
    \norm{\Phi(K_1) - \Phi(K_2)}_2
    &\le L_\Phi(a,G)\norm{K_1-K_2}_2,
\end{align*}
which completes the proof.
\end{proof}

\begin{lemma}[Shapley--Folkman remainder]\label{lem:sf_remainder}
Let $\mathbf{W}_i=\{(z_iz_i^T,z_iY_i):z_i\in\mathbf{X}_i\}$. Then
\begin{equation}\label{eq:sf_general}
    \dH\bigl(\hat{\mathcal{K}}_{\mathrm{adm}},\mathrm{conv}(\hat{\mathcal{K}}_{\mathrm{adm}})\bigr)
    \le
    \frac{\sqrt{d}}{n}\max_{1\le i\le n}
        \sup_{w\in\mathbf{W}_i}\norm{w}_2
    \le
    \frac{\sqrt{d}}{n}
    \max_{1\le i\le n}
    \left(
        \sup_{z_i\in\mathbf{X}_i}\norm{z_i}_2^2
        +
        |Y_i|\sup_{z_i\in\mathbf{X}_i}\norm{z_i}_2
    \right).
\end{equation}
\end{lemma}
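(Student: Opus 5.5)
The plan is to write $\hat{\mathcal K}_{\mathrm{adm}}$ as a scaled Minkowski sum and then apply Lemma~\ref{lem:sfs_bound}. Every admissible $Z$ corresponds to a choice of rows $z_i\in\mathbf X_i$, chosen independently across $i$. Moreover, $\frac1n Z^TZ=\frac1n\sum_i z_iz_i^T$ and $\frac1n Z^TY=\frac1n\sum_i z_iY_i$. Hence
\[
\hat{\mathcal K}_{\mathrm{adm}}=\frac1n\sum_{i=1}^n\mathbf W_i,
\]
where the sum is a Minkowski sum. Each $\mathbf W_i$ is non-empty and compact, being the continuous image of the compact set $\mathbf X_i$ under Assumption~\ref{ass:sampling}. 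We identify it with a subset of $\R^d$, $d=p(p+1)/2+p$, using the Euclidean norm fixed in the proof of Proposition~\ref{prop:l1_K}.

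For the first inequality, use two facts. Convex hull commutes with positive scaling, so $\operatorname{conv}(\frac1n\sum_i\mathbf W_i)=\frac1n\operatorname{conv}(\sum_i\mathbf W_i)$. The Hausdorff distance is homogeneous, $\dH(\lambda A,\lambda B)=\lambda\,\dH(A,B)$. Together these give
\[
\dH\bigl(\hat{\mathcal K}_{\mathrm{adm}},\operatorname{conv}(\hat{\mathcal K}_{\mathrm{adm}})\bigr)=\frac1n\,\dH\Bigl(\sum_i\mathbf W_i,\operatorname{conv}\bigl(\sum_i\mathbf W_i\bigr)\Bigr).
\]
Applying the first inequality of Lemma~\ref{lem:sfs_bound} with center $c=0$ in each infimum gives the bound $\frac{\sqrt d}{n}\max_i\sup_{w\in\mathbf W_i}\norm{w}_2$.

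The second inequality is pointwise. For $w=(z_iz_i^T,z_iY_i)$ we have
\[
\norm{w}_2=(\norm{z_iz_i^T}_F^2+\norm{z_iY_i}_2^2)^{1/2}\le\norm{z_iz_i^T}_F+|Y_i|\norm{z_i}_2,
\]
and $\norm{z_iz_i^T}_F=\norm{z_i}_2^2$. Taking the supremum over $z_i\in\mathbf X_i$ gives the envelope. This is the same computation as in the proof of Proposition~\ref{prop:Kadm_properties}.

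The main point needing care is the norm identification. With symmetric matrices embedded in $\R^{p(p+1)/2}$, the Frobenius norm is not the plain Euclidean norm of the upper-triangular coordinates. The dimension to use is $d$ (the affine dimension in which Starr's bound is applied), while distances are measured in the Frobenius–Euclidean norm. These fit together by choosing an isometric coordinatization: scale the off-diagonal entries by $\sqrt2$. This is a linear isometry onto $\R^d$, and Lemma~\ref{lem:sfs_bound} is invariant under linear isometries. With that in place, both the scaling step and the application of Lemma~\ref{lem:sfs_bound} are routine.
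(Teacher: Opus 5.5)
Your proposal is correct and follows essentially the same route as the paper. Both arguments write $\hat{\mathcal K}_{\mathrm{adm}}$ as the Minkowski average $n^{-1}\sum_i\mathbf W_i$, apply Lemma~\ref{lem:sfs_bound} with center $c=0$ after rescaling by $1/n$, and bound $\|(z z^T, z Y_i)\|_2$ pointwise; your explicit isometric coordinatization of $\mathbb S^p$ (off-diagonal entries scaled by $\sqrt2$) usefully spells out the identification of $\mathbb S^p\times\R^p$ with $\R^d$ under the Frobenius--Euclidean norm, which the paper leaves implicit.
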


\begin{proof}
The empirical moment set is the Minkowski average $n^{-1}\sum_{i=1}^n\mathbf{W}_i$
in a Euclidean space of dimension $d=p(p+3)/2$.
Lemma~\ref{lem:sfs_bound}, with center $c=0$ and scaling by $1/n$,
gives the first inequality. For the second, every $z\in\mathbf X_i$ satisfies
\[
 \norm{(zz^T,zY_i)}_2
 =\sqrt{\norm{z}_2^4+Y_i^2\norm{z}_2^2}
 \le \sup_{u\in\mathbf X_i}\norm{u}_2^2
       +|Y_i|\sup_{u\in\mathbf X_i}\norm{u}_2.
\]
\end{proof}

\begin{lemma}[Deterministic transport from moments to coefficients]\label{lem:transport_B}
Let
\[
    G_n=\max\left\{
    \sup_{(\Sigma,\gamma)\in\mathcal{K}^*_{\mathrm{adm}}}\norm{\gamma}_2,\,
    \sup_{(\Sigma,\gamma)\in\hat{\mathcal{K}}_{\mathrm{adm}}}\norm{\gamma}_2
    \right\}.
\]
On any event where $G_n<\infty$ and the empirical and population second-moment blocks satisfy the URE lower bound $\kappa$,
\begin{equation}\label{eq:transport_B}
\begin{aligned}
    &\dH\bigl(\Bhat,\Bstar\bigr)\\
    &\quad\le
    \sqrt{\frac1{\kappa^2}+\frac{G_n^2}{\kappa^4}}
    \Bigg[
        \dH\bigl(\mathrm{conv}(\hat{\mathcal{K}}_{\mathrm{adm}}),\mathcal{K}^*_{\mathrm{adm}}\bigr)\\
    &\hspace{6.3cm}
        +\frac{\sqrt{d}}{n}
        \max_{1\le i\le n}
        \left(
            \sup_{z_i\in\mathbf{X}_i}\norm{z_i}_2^2
            +|Y_i|\sup_{z_i\in\mathbf{X}_i}\norm{z_i}_2
        \right)
    \Bigg].
\end{aligned}
\end{equation}
\end{lemma}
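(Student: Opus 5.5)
The plan is to push everything through the moment space, using the Lipschitz map $\Phi$. First I record that $\Bhat=\Phi(\hat{\mathcal K}_{\mathrm{adm}})$ and $\Bstar=\Phi(\mathcal K^*_{\mathrm{adm}})$. On the stated event both sets lie in the domain $\mathcal D_{\kappa,G_n}$ of Lemma~\ref{lem:phi_lipschitz}. Indeed, the empirical part of Assumption~\ref{ass:ure} gives $\lambda_{\min}(Z^TZ/n)\ge\kappa$ for every admissible $Z$, the population part gives the same bound on $\mathcal K^*_{\mathrm{adm}}$, and $G_n$ bounds the $\gamma$-blocks of both sets by definition. In particular every admissible $Z$ has $Z^TZ$ invertible, so $\Bhat$ is exactly the image of all of $\hat{\mathcal K}_{\mathrm{adm}}$.

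Second, I use the elementary fact that a map which is $L_\Phi$-Lipschitz on a set containing $A\cup B$ satisfies $\dH(\Phi(A),\Phi(B))\le L_\Phi\,\dH(A,B)$. To see this, take $a\in A$ and a point $b\in B$ with $\norm{a-b}\le\dH(A,B)+\eta$. Then $\norm{\Phi(a)-\Phi(b)}\le L_\Phi(\dH(A,B)+\eta)$, and the same argument runs symmetrically from $B$; letting $\eta\to0$ gives the claim. Compactness is not needed because the Hausdorff distance is defined by sup/inf. Applying this with $L_\Phi(\kappa,G_n)=\sqrt{\kappa^{-2}+G_n^2\kappa^{-4}}$ yields
\[
\dH(\Bhat,\Bstar)\le L_\Phi(\kappa,G_n)\,\dH(\hat{\mathcal K}_{\mathrm{adm}},\mathcal K^*_{\mathrm{adm}}).
\]

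Third, I split the moment-space distance with the triangle inequality through $\mathrm{conv}(\hat{\mathcal K}_{\mathrm{adm}})$:
\[
\dH(\hat{\mathcal K}_{\mathrm{adm}},\mathcal K^*_{\mathrm{adm}})\le\dH(\hat{\mathcal K}_{\mathrm{adm}},\mathrm{conv}(\hat{\mathcal K}_{\mathrm{adm}}))+\dH(\mathrm{conv}(\hat{\mathcal K}_{\mathrm{adm}}),\mathcal K^*_{\mathrm{adm}}).
\]
The first term is bounded by the Shapley--Folkman remainder of Lemma~\ref{lem:sf_remainder}. This uses the identity $\hat{\mathcal K}_{\mathrm{adm}}=n^{-1}\sum_i\mathbf W_i$, which holds because rows are chosen independently, so that $Z^TZ=\sum_i z_iz_i^T$ and $Z^TY=\sum_i z_iY_i$. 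Substituting both pieces gives \eqref{eq:transport_B}.

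The main obstacle is the Lipschitz step. Lemma~\ref{lem:phi_lipschitz} needs the eigenvalue bound on both arguments, but the optimal matching point for $a\in\hat{\mathcal K}_{\mathrm{adm}}$ lies in $\mathcal K^*_{\mathrm{adm}}$, and vice versa. Because the argument only matches the two sets directly, no intermediate convex-hull point ever enters $\Phi$. That is why I apply $\Phi$ to the non-convex $\hat{\mathcal K}_{\mathrm{adm}}$ and only afterwards bound its distance in moment space. Consequently the URE condition is required only on $\hat{\mathcal K}_{\mathrm{adm}}$ and $\mathcal K^*_{\mathrm{adm}}$, not on $\mathrm{conv}(\hat{\mathcal K}_{\mathrm{adm}})$, although it would in fact pass to the hull by concavity of $\lambda_{\min}$.
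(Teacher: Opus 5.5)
Your proposal is correct and follows the paper's proof essentially step for step: apply Lemma~\ref{lem:phi_lipschitz} with $G=G_n$ on $\hat{\mathcal K}_{\mathrm{adm}}\cup\mathcal K^*_{\mathrm{adm}}$ to transport the Hausdorff distance to coefficient space, then split $\dH(\hat{\mathcal K}_{\mathrm{adm}},\mathcal K^*_{\mathrm{adm}})$ through $\mathrm{conv}(\hat{\mathcal K}_{\mathrm{adm}})$ and bound the non-convexity term by Lemma~\ref{lem:sf_remainder}. The extra details you supply make explicit what the paper leaves implicit: the Lipschitz-to-Hausdorff argument, the invertibility of every admissible $Z^TZ$ (so that $\Bhat=\Phi(\hat{\mathcal K}_{\mathrm{adm}})$), and the Minkowski-sum identity.
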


\begin{proof}
Let $K=\hat{\mathcal{K}}_{\mathrm{adm}}$ and $K^*=\mathcal{K}^*_{\mathrm{adm}}$. Lemma~\ref{lem:phi_lipschitz}, applied on the union of $K$ and $K^*$ with $G=G_n$, yields
\[
    \dH(\Phi(K),\Phi(K^*))
    \le
    \sqrt{\frac1{\kappa^2}+\frac{G_n^2}{\kappa^4}}\dH(K,K^*).
\]
Since $\Bhat=\Phi(K)$ and $\Bstar=\Phi(K^*)$, this directly bounds
$\dH(\Bhat,\Bstar)$.
Since $K^*$ is convex, the triangle inequality gives
\[
    \dH(K,K^*)
    \le
    \dH\bigl(K,\mathrm{conv}(K)\bigr)
    +\dH\bigl(\mathrm{conv}(K),K^*\bigr).
\]
Lemma~\ref{lem:sf_remainder} completes the proof.
\end{proof}

\begin{proof}[Proof of Theorem~\ref{thm:main_bounded}]
For every empirical admissible moment vector,
\[
    \norm{\gamma}_2
    \le
    \frac1n\sum_{i=1}^n|Y_i|\norm{z_i}_2
    \le M_XM_Y.
\]
The same inequality holds at the population level. Thus $G_n\le M_XM_Y$, and Lemma~\ref{lem:phi_lipschitz} gives the constant $\bar L_\Phi^*$ without a factor $\sqrt p$.

Moreover, every element of $\mathbf W_i$ has norm at most $L$.
The first inequality of Lemma~\ref{lem:sf_remainder} gives
\[
    \dH\bigl(\hat{\mathcal K}_{\mathrm{adm}},\mathrm{conv}(\hat{\mathcal K}_{\mathrm{adm}})\bigr)
    \le
    \frac{L\sqrt d}{n}.
\]
Combining this inequality, Lemma~\ref{lem:uniform}, and Lemma~\ref{lem:transport_B} yields
\[
    \dH\bigl(\Bhat,\Bstar\bigr)
    \le
    L\bar L_\Phi^*
    \left[
        \sqrt{\frac{2\log(1/\delta)}{n}}
        +\frac{\sqrt{2\pi d}+2}{\sqrt n}
        +\frac{\sqrt d}{n}
    \right]
\]
with probability at least $1-\delta$.
\end{proof}

\subsection{Sub-exponential moment-set bound}
\label{app:proof_subexp_K}

\begin{proposition}[Sub-exponential bounds]\label{prop:subexp_K}
Under Assumptions~\ref{ass:sampling} and~\ref{ass:subexp_envelope}, let $d=p(p+1)/2+p$. For every $\delta\in(0,1)$, with probability at least $1-\delta$,
\begin{equation}\label{eq:subexp_bound}
\begin{aligned}
  &\dH\bigl(\mathrm{conv}(\hat{\mathcal{K}}_{\mathrm{adm}}),\mathcal{K}^*_{\mathrm{adm}}\bigr)\\
  &\quad\le K_W\left[
  \begin{aligned}
      &2\sqrt{\frac{\log(1/\delta)+3\log2+d\log3}{n}}\\
      &+4\sqrt{\frac{\log(1/\delta)+5\log2+d\log12}{n}}\\
      &+\frac{3\log(1/\delta)+13\log2+d(\log3+2\log12)}{n}
  \end{aligned}
  \right].
\end{aligned}
\end{equation}
\end{proposition}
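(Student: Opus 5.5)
The plan is to reduce the Hausdorff distance to the supremum of the empirical support process, as in the proof of Lemma~\ref{lem:uniform}, and then to replace the bounded-difference argument of Lemma~\ref{lem:bounded_support_chaining} by a net argument plus Bernstein's inequality. By Lemma~\ref{lem:support_representation} and \eqref{eq:hausdorff_support},
\[
\dH\bigl(\mathrm{conv}(\hat{\mathcal K}_{\mathrm{adm}}),\mathcal K^*_{\mathrm{adm}}\bigr)=F:=\sup_{U\in\mathcal S}\Bigl|\frac1n\sum_{i=1}^nW_i(U)-\E[W(U)]\Bigr|.
\]
Lemma~\ref{lem:support_lipschitz} gives two facts for $U,V\in\mathcal S$:
\[
|W_i(U)|\le W_i, \qquad |W_i(U)-W_i(V)|\le W_i\norm{U-V}_2,
\]
where $W_i$ is the envelope of observation $i$. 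Hence each $W_i(U)$ has $\norm{W_i(U)}_{\psi_1}\le K_W$, and the centred variable has $\psi_1$-norm at most $2K_W$. This uniform sub-exponential control is the only input needed from Assumption~\ref{ass:subexp_envelope}.

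\textbf{Fixed direction.} For a fixed $U_0$, I will use a Bernstein inequality for sums of i.i.d. centred sub-exponential variables in the explicit form
\[
\PP\Bigl(\Bigl|\frac1n\sum_i\xi_i\Bigr|>K\Bigl(2\sqrt{t/n}+t/n\Bigr)\Bigr)\le 2e^{-t}.
\]
This follows from the moment generating function bound $\E e^{\lambda\xi}\le e^{\lambda^2K^2/(1-\lambda K)}$, obtained from the $\psi_1$ definition by expanding the exponential. Tracking the numerical constants in this derivation is what will produce the additive $\log 2$ terms.

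\textbf{Discretisation.} Since $\mathcal S$ is the unit sphere of $\R^d$, I take two nets of $\mathcal S$:
\begin{itemize}
\item a coarse net $\mathcal N_1$ of cardinality $3^d$ (mesh $1$);
\item a finer net $\mathcal N_2$ of cardinality $12^d$, used for increments.
\end{itemize}
The support functions are not linear in $U$, so the usual device $\sup\le(1-\varepsilon)^{-1}\max_{\mathcal N}$ is not available. I will therefore use a one-step chaining decomposition instead:
\[
F\le\max_{U_0\in\mathcal N_1}\bigl|\cdot\bigr|(U_0)+\sup_{\norm{U-V}\le\rho}\bigl|(\mathbb P_n-\mathbb P)(W(U)-W(V))\bigr|.
\]
The increment term is bounded in two ways. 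First, for each pair of points of $\mathcal N_2$ that are close, the increment is a centred sub-exponential variable with $\psi_1$-norm at most $2K_W\rho$; a union bound over at most $12^{d}\cdot 12^d$ pairs (or over $\mathcal N_1\times\mathcal N_2$) gives the $d\log12$ terms. Second, the residual below the finest mesh is handled by the Lipschitz bound, giving
\[
\rho\Bigl(\frac1n\sum_iW_i+\E W\Bigr),
\]
and $n^{-1}\sum_iW_i$ is itself controlled by Bernstein around $\E W\le K_W$.

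\textbf{Assembling the bound.} Union-bounding the three events with failure probabilities split as $\delta$ times powers of $2$ should produce the stated structure:
\begin{itemize}
\item the coarse-net term gives $2\sqrt{(\log(1/\delta)+3\log2+d\log3)/n}$;
\item the increment and envelope terms give $4\sqrt{(\log(1/\delta)+5\log2+d\log12)/n}$;
\item the collected linear Bernstein parts give the $1/n$ term.
\end{itemize}
The choice of $\rho$, of order $1/4$, is fixed so that the Lipschitz residual is absorbed into these terms.

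\textbf{Main obstacle.} I expect the hardest step to be the non-linearity of $U\mapsto W_i(U)$, which blocks the clean single-net argument. The chaining step must be arranged so that the residual discretisation error does not carry a non-vanishing factor multiplying $\E W$. My first attempt is a two-level chain whose last link is bounded through the sub-exponential increments rather than the crude Lipschitz envelope. If that still leaves a constant-order residual, I will fall back on a full dyadic chain over the scales $2^{-k}$, which produces a geometric series with the same $d\log12$-type entropy.
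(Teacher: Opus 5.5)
\textbf{Gap: the one-step chain cannot reach the stated bound.} Your reduction to $F$ and your single-link Bernstein tail are correct, but the chain you actually propose does not give \eqref{eq:subexp_bound}. In the one-step decomposition, the residual below the finest mesh is controlled only by the uncentred Lipschitz quantity $\rho\bigl(n^{-1}\sum_iA_i+\E A\bigr)$. Here $n^{-1}\sum_iA_i$ concentrates around $\E A>0$, not around $0$. So this term is roughly $2\rho\,\E A$, a fixed multiple of $K_W$ that does not decay with $n$, while the right-hand side of \eqref{eq:subexp_bound} is $O(n^{-1/2})$. With $\rho\asymp1/4$ nothing can absorb it. Letting $\rho$ shrink with $n$, say $\rho\asymp n^{-1/2}$, forces a net of cardinality $(3/\rho)^d$ and introduces $d\log n$ rather than $d\log12$. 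Your ``first attempt'' does not fix this at any finite depth either. Between the finest net and a generic $U\in\mathcal S$ there is always an uncountable supremum that Bernstein plus a union bound cannot reach. Relatedly, $d\log12$ is not the log-cardinality of any single net. A net of size $12^d$ has mesh $1/4$, and a union over its pairs would give $2d\log12$. Instead, $d\log12$ is a weighted average of dyadic entropies: $\sum_{j\ge1}2^{-j}\log\{(3\cdot2^j)^d\}=d\log12$.

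\textbf{Fix: your fallback, run to infinite depth, which is the paper's proof.}
\begin{enumerate}
\item Index by $\mathcal T=\mathcal S\times\{-1,1\}$ to remove the absolute value. Use distance $\norm{U-V}_2$ between equal signs and $2$ between opposite signs. Lemma~\ref{lem:support_lipschitz} then bounds $\norm{sW(U)-tW(V)}_{\psi_1}$ by $K_W$ times this distance.
\item Apply $\log\E e^{t(X-\E X)}\le t^2b^2/(1-|t|b)$ with $b$ the $\psi_1$-norm of the \emph{uncentred} variable. Using the centred norm $2K_W$, as you suggest, doubles every constant. This gives: a link of length $\ell$ exceeds $K_W\ell\{2\sqrt{x/n}+x/n\}$ with probability at most $2e^{-x}$.
\item Take $2^{-j}$-nets $\mathcal T_j$ with $|\mathcal T_j|\le2(3\cdot2^j)^d$, and parent edges into $\mathcal T_{j-1}$ of length at most $2\cdot2^{-j}$. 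Assign level exponents $x_j=x+(j+2)\log2+\log\{2(3\cdot2^j)^d\}$, so edge failures sum to $e^{-x}/2$. Give the coarsest net exponent $x+\log(8\cdot3^d)$, which costs the other $e^{-x}/2$.
\item Telescope. Since $\sum_{j\ge1}2^{-j}x_j=x+5\log2+d\log12$, and Jensen gives $\sum_j2^{-j}\sqrt{x_j}\le\sqrt{x+5\log2+d\log12}$, you obtain the $4\sqrt{\cdot}$ term. Together with the coarse net this also yields the linear term $\{3x+13\log2+d(\log3+2\log12)\}/n$.
\item There is no Lipschitz residual at all. The union of the nets is countable and dense, and continuity of $U\mapsto n^{-1}\sum_iW_i(U)-\E W(U)$ carries the bound to all of $\mathcal S$. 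Finally take $x=\log(1/\delta)$.
\end{enumerate}
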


\begin{lemma}[Sub-exponential support-process concentration]\label{lem:subexp_support_chaining}
Let
\[
    A_i
    =
    \sup_{z_i\in\mathbf X_i}\norm{z_i}_2^2
    +|Y_i|\sup_{z_i\in\mathbf X_i}\norm{z_i}_2,
\]
and let $A$ be the corresponding population quantity. If $\norm{A}_{\psi_1}\le K_W$,
then, for every $\delta\in(0,1)$, with probability at least $1-\delta$,
\[
\begin{aligned}
    &\sup_{U\in\mathcal S}
    \left|\frac1n\sum_{i=1}^nW_i(U)-\E[W(U)]\right|\\
    &\quad\le K_W\left[
    \begin{aligned}
        &2\sqrt{\frac{\log(1/\delta)+3\log2+d\log3}{n}}\\
        &+4\sqrt{\frac{\log(1/\delta)+5\log2+d\log12}{n}}\\
        &+\frac{3\log(1/\delta)+13\log2+d(\log3+2\log12)}{n}
    \end{aligned}
    \right].
\end{aligned}
\]
\end{lemma}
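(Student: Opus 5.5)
Write $F=\sup_{U\in\mathcal S}|n^{-1}\sum_iW_i(U)-\E[W(U)]|$. The plan is a single-scale $\varepsilon$-net discretization combined with Bernstein's inequality for sub-exponential variables, with no chaining. The constants in the statement guide the choices: $3^d$ suggests a $1/2$-net, and $12^d$ suggests a finer net or a union over pairs.

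\emph{Step 1.} Fix a $1/2$-net $\mathcal N$ of $\mathcal S\subset\R^d$ with $|\mathcal N|\le 3^d$. By Lemma~\ref{lem:support_lipschitz}, $|W_i(U)-W_i(V)|\le A_i\norm{U-V}_2$, so
\[
F\le\max_{U_0\in\mathcal N}\Bigl|\tfrac1n\textstyle\sum_i W_i(U_0)-\E W(U_0)\Bigr|+\tfrac12\Bigl(\tfrac1n\textstyle\sum_iA_i+\E A\Bigr).
\]
This cannot be absorbed multiplicatively, because $W$ is not linear in $U$. The remainder is therefore not small. So I would instead control the \emph{increment} process through a second net.

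\emph{Step 2.} For $U\in\mathcal S$ and its net point $U_0$, write $D_i=W_i(U)-W_i(U_0)$. Then $|D_i|\le A_i\norm{U-U_0}$, which gives $\norm{D_i}_{\psi_1}\le K_W\norm{U-U_0}$. Iterating the net construction gives a geometric sum. At level $k$ use a $2^{-k}$-net; the pairs (point, parent) number at most $3^d\cdot12^d$ after crude counting. Apply a union bound at confidence $\delta 2^{-k-c}$. This produces the two square-root terms: one for the base net with $\log 3^d$, and one for the increments with $\log12^d$ and extra $\log2$'s from the geometric weights. It also produces the linear term from Bernstein's sub-exponential tail $n^{-1}\log$.

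\emph{Step 3.} Each centered average $n^{-1}\sum_i(\xi_i-\E\xi_i)$ with $\norm{\xi_i}_{\psi_1}\le K$ is handled by an explicit Bernstein inequality. One form is
\[
\PP\bigl(|\cdot|>K(2\sqrt{t/n}+t/n)\bigr)\le 2e^{-t},
\]
obtained from $\E e^{\lambda|\xi|/K}\le2$ via a Chernoff bound, after centering at the cost of a factor $2$. I would take $t=\log(1/\delta)+d\log 3+3\log 2$ for the base net. For the increment level $k$, I would take $t_k=\log(1/\delta)+d\log12+(k+5)\log2$ and multiply by the scale $2^{-k}$. Summing $\sum_k2^{-k}(\ldots)$ then yields the factor $4$ and the linear-term constants $3\log(1/\delta)+13\log2+d(\log3+2\log12)$, after bounding $\sqrt{t_k}\le\sqrt{t_0}+\sqrt{k\log2}$ and summing.

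\textbf{Main obstacle.} The main obstacle is exact bookkeeping. First, the union-bound weights across levels must make total failure probability at most $\delta$. Second, the sub-exponential constant after centering must stay at $K_W$, not $2K_W$, or else the stated constants must absorb it. Third, the limit $k\to\infty$ needs justification through continuity and a countable dense set.

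Finally, $F$ is identified with $\dH(\operatorname{conv}\widehat{\mathcal K}_{\mathrm{adm}},\mathcal K^*_{\mathrm{adm}})$ by Lemma~\ref{lem:support_representation}. That identification gives Proposition~\ref{prop:subexp_K}.
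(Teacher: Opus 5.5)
Your Step~2 drops the single-net plan and arrives at the paper's own architecture: dyadic nets with parent links, a Bernstein bound on each link, a geometrically weighted union bound, and continuity on the countable union of nets. Your Step~1 diagnosis that a single net cannot be absorbed is also correct. The gap is that your bookkeeping does not give a valid union bound, let alone the stated constants.

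The number of links at level $k$ is the size of the $2^{-k}$-net, up to $(3\cdot2^k)^d$. It grows with $k$ and is not bounded by $3^d\cdot12^d$. Your $t_k=\log(1/\delta)+d\log12+(k+5)\log2$ has a $d$-part that does not grow with $k$. The level-$k$ failure probability is then of order $\delta\,2^{(k-2)d-k}$, and the sum over levels diverges. Each level's exponent must absorb its own log-cardinality. The paper takes $x_k=x+(k+2)\log2+\log\{2(3\cdot2^k)^d\}$ for $k\ge1$, whose failures sum to $e^{-x}/2$. It gives the base $1$-net the exponent $x+\log(8\cdot3^d)$; the volume bound $(1+2/\varepsilon)^d$ yields $3^d$ at $\varepsilon=1$, not $1/2$. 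The $12^d$ is not a per-level count but the weighted average $\sum_{k\ge1}2^{-k}x_k=x+5\log2+d(\log3+2\log2)$. The term $4\sqrt{\cdot}$ comes from link lengths $2\cdot2^{-k}$ combined with Jensen, $\sum_{k\ge1}2^{-k}\sqrt{x_k}\le(\sum_{k\ge1}2^{-k}x_k)^{1/2}$, not from $\sqrt{t_k}\le\sqrt{t_0}+\sqrt{k\log2}$. Even arithmetically, your $t_k$ averages to $7\log2$ rather than $5\log2$ in its $\log2$ part, so $13\log2$ would not come out.

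The second gap is the centering constant, which you flag but leave open. If centering doubles the $\psi_1$ norm you obtain $2K_W$, and the stated inequality does not follow. The missing step is to bound the centered moment generating function directly. From $\norm{X}_{\psi_1}\le b$ one gets $\E|X|^m\le m!\,b^m$. Then $\log u\le u-1$ gives $\log\E e^{t(X-\E X)}\le\E e^{tX}-1-t\E X\le\sum_{m\ge2}|t|^mb^m=t^2b^2/(1-|t|b)$ for $|t|b<1$. This is a sub-gamma bound with variance proxy $2b^2$ and scale $b$, with no loss from centering.

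Write $Z(U)=n^{-1}\sum_iW_i(U)-\E[W(U)]$ and take $X=W(U)-W(V)$ with $b=K_W\norm{U-V}_2$. This is legitimate since $|W(U)-W(V)|\le A\norm{U-V}_2$ by Lemma~\ref{lem:support_lipschitz}. It yields $\PP(|Z(U)-Z(V)|>K_W\norm{U-V}_2\{2\sqrt{x/n}+x/n\})\le2e^{-x}$. With $b=K_W$ you get the same bound for $|Z(U)|$. These are exactly the link inequalities the chaining sum needs.
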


\begin{proof}
We use $\norm{Z}_{\psi_1}=\inf\{K>0:\E e^{|Z|/K}\le2\}$.
If $K_W=0$, the result is immediate. If $\norm{X}_{\psi_1}\le b$ with
$b>0$, then $e^u\ge1+u^m/m!$ for $u\ge0$ gives
$\E|X|^m\le m!b^m$. Thus, for $|t|b<1$,
\[
 \log\E e^{t(X-\E X)}
 \le \E e^{tX}-1-t\E X
 \le\sum_{m\ge2}|t|^m b^m
 =\frac{t^2b^2}{1-|t|b}.
\]
This is the Bernstein moment bound with variance proxy $2b^2$ and
scale $b$, obtained without first enlarging the norm by centering.

Let $\mathcal T=\mathcal S\times\{-1,1\}$ and
\[
 \rho((U,s),(V,t))=
 \begin{cases}\norm{U-V}_2,&s=t,\\2,&s\ne t.\end{cases}
\]
By Lemma~\ref{lem:support_lipschitz},
$\norm{sW(U)-tW(V)}_{\psi_1}\le K_W\rho((U,s),(V,t))$.
Independence and the preceding moment bound, followed by Bernstein's
Chernoff argument \citep[Section~2.8]{Vershynin2018}, yield
\[
 \PP\left(|Z_v-Z_w|>
 K_W\rho(v,w)\left\{2\sqrt{\frac{x}{n}}+\frac{x}{n}\right\}\right)
 \le2e^{-x},
\]
where $Z_{(U,s)}=s\{n^{-1}\sum_iW_i(U)-\E W(U)\}$.
The same bound with $\rho(v,w)$ replaced by $1$ applies to $|Z_v|$.

A maximal $\varepsilon$-separated subset of $\mathcal S$ is an
$\varepsilon$-net. The disjoint balls of radius $\varepsilon/2$ about
its points lie in the ball of radius $1+\varepsilon/2$. Comparing
volumes gives
\[
 N(\mathcal S,\norm{\cdot}_2,\varepsilon)
 \le(1+2/\varepsilon)^d\le(3/\varepsilon)^d,
 \qquad 0<\varepsilon\le1.
\]
For $j\ge0$, take a $2^{-j}$-net $\mathcal T_j$, with
$|\mathcal T_j|\le2(3\,2^j)^d$. For each point of $\mathcal T_j$,
$j\ge1$, choose one parent in $\mathcal T_{j-1}$ within distance
$2^{-(j-1)}$. There are at most $|\mathcal T_j|$ such edges.
Assign level $j\ge1$ the exponent
\[
 x_j=x+(j+2)\log2+\log\{2(3\,2^j)^d\}.
\]
The edge failure probabilities sum to at most
$e^{-x}\sum_{j\ge1}2^{-j-1}=e^{-x}/2$.
The coarsest net, with exponent $x+\log(8\,3^d)$, has failure
probability at most $e^{-x}/2$.
Since $\sum_{j\ge1}2^{-j}=1$ and $\sum_{j\ge1}j2^{-j}=2$,
\[
 \sum_{j\ge1}2^{-j}x_j=x+5\log2+d\log12,
 \qquad
 \sum_{j\ge1}2^{-j}\sqrt{x_j}
 \le\sqrt{x+5\log2+d\log12}.
\]
On the complementary event, telescope from each net point through
its parents to $\mathcal T_0$. Using the edge lengths $2\,2^{-j}$ gives
\[
\begin{aligned}
 &\sup_{U\in\mathcal S}
 \left|\frac1n\sum_{i=1}^nW_i(U)-\E[W(U)]\right|\\
 &\quad\le K_W\left[
 \begin{aligned}
    &2\sqrt{\frac{x+3\log2+d\log3}{n}}\\
    &+4\sqrt{\frac{x+5\log2+d\log12}{n}}\\
    &+\frac{3x+13\log2+d(\log3+2\log12)}{n}
 \end{aligned}
 \right].
\end{aligned}
\]
Continuity completes the passage from the dense union of the nets to
$\mathcal S$; take $x=\log(1/\delta)$.
\end{proof}

\begin{proof}[Proof of Proposition~\ref{prop:subexp_K}]
By Lemma~\ref{lem:support_representation} and identity~\eqref{eq:hausdorff_support},
\[
    \dH\bigl(\mathrm{conv}(\hat{\mathcal K}_{\mathrm{adm}}),\mathcal K^*_{\mathrm{adm}}\bigr)
    =
    \sup_{U\in\mathcal S}
    \left|\frac1n\sum_{i=1}^nW_i(U)-\E[W(U)]\right|.
\]
Assumption~\ref{ass:subexp_envelope} and
Lemma~\ref{lem:subexp_support_chaining} prove~\eqref{eq:subexp_bound}.
\end{proof}

\subsection{Proof of Theorem~\ref{thm:main_subexp}}
\label{app:proof_subexp_B}

For any $i = 1,\dots,n$, we denote by
\begin{equation}\label{eq:envelope_def}
    A_i = \sup_{z_i\in\mathbf{X}_i}\norm{z_i}_2^2 + |Y_i|\sup_{z_i\in\mathbf{X}_i}\norm{z_i}_2
\end{equation}
the envelope of the $i$-th observation, and by $A$ the corresponding population quantity.

\begin{lemma}[Sub-exponential Shapley--Folkman Remainder]\label{lem:subexp_sf}
Suppose Assumption~\ref{ass:subexp_envelope} holds. For any $\delta \in (0,1)$, with probability at least $1-\delta$,
\begin{equation}\label{eq:subexp_event2}
    \dH\bigl(\hat{\mathcal{K}}_{\mathrm{adm}},\mathrm{conv}(\hat{\mathcal{K}}_{\mathrm{adm}})\bigr)
    \le
    \frac{\sqrt d\,K_W\log(2n/\delta)}{n}.
\end{equation}
\end{lemma}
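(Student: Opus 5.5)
The plan is to reduce the statement to a maximal inequality for the i.i.d. envelopes $A_1,\ldots,A_n$ defined in \eqref{eq:envelope_def}, and then to control that maximum by a union bound with sub-exponential tails. The deterministic input is the first part of Lemma~\ref{lem:sf_remainder}, which already gives
\[
    \dH\bigl(\hat{\mathcal{K}}_{\mathrm{adm}},\mathrm{conv}(\hat{\mathcal{K}}_{\mathrm{adm}})\bigr)
    \le\frac{\sqrt d}{n}\max_{1\le i\le n}A_i .
\]
Here $d=p(p+1)/2+p$; this agrees with $p(p+3)/2$ used there. It therefore suffices to show that, with probability at least $1-\delta$, $\max_{i\le n}A_i\le K_W\log(2n/\delta)$.

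For the tail bound I will use the definition $\norm{Z}_{\psi_1}=\inf\{K>0:\E e^{|Z|/K}\le2\}$, already adopted in the proof of Lemma~\ref{lem:subexp_support_chaining}. Assumption~\ref{ass:subexp_envelope} states $\norm{A}_{\psi_1}\le K_W$. I treat the case $K_W=0$ separately: then $A=0$ almost surely and the bound is trivial. For $K_W>0$, since $\E e^{A/K}\le 2$ for every $K>K_W$, Markov's inequality applied to $e^{A/K}$ gives
\[
    \PP(A>t)\le 2e^{-t/K}.
\]
Letting $K\downarrow K_W$, and using right-continuity of $t\mapsto\PP(A>t)$ (or simply monotone convergence), yields $\PP(A>t)\le 2e^{-t/K_W}$.

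Next I take $t=K_W\log(2n/\delta)$, which gives $\PP(A_i>t)\le\delta/n$ for each $i$. A union bound over the $n$ i.i.d. copies guaranteed by Assumption~\ref{ass:sampling} then gives $\PP(\max_iA_i>t)\le\delta$. On the complementary event, the display above yields \eqref{eq:subexp_event2}.

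I do not expect a serious obstacle. The only points needing care are the following.
\begin{itemize}
\item Passing from the infimum in the $\psi_1$ definition to the bound with $K_W$ itself, handled by the limit above.
\item Checking that the Shapley--Folkman input is stated for the Minkowski average $n^{-1}\sum_i\mathbf W_i$ with the envelope $A_i$ bounding $\sup_{w\in\mathbf W_i}\norm{w}_2$. This is exactly the second inequality of Lemma~\ref{lem:sf_remainder}.
\item Noting that measurability of $\max_iA_i$ follows from the measurable graph in Assumption~\ref{ass:sampling}.
\end{itemize}
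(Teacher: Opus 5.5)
Your proposal is correct and follows the paper's proof. The paper likewise bounds $\PP(\max_{i\le n}A_i>K_W\log(2n/\delta))\le 2n\exp\{-\log(2n/\delta)\}=\delta$ via the $\psi_1$ tail and a union bound, then applies Lemma~\ref{lem:sf_remainder} on the complementary event; your handling of $K_W=0$ and of the passage from the infimum in the $\psi_1$ definition to $K_W$ only fills in details the paper leaves implicit (continuity of $K\mapsto 2e^{-t/K}$ at fixed $t$ suffices, so no right-continuity in $t$ is needed).
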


\begin{proof}
The definition of the $\psi_1$ norm and a union bound give
\[
    \PP\left(\max_{1\le i\le n}A_i>K_W\log(2n/\delta)\right)
    \le
    2n\exp\left\{-\log(2n/\delta)\right\}
    =\delta.
\]
Apply Lemma~\ref{lem:sf_remainder} on the complementary event.
\end{proof}

\begin{lemma}[Sub-exponential Lipschitz Envelope]\label{lem:subexp_lipschitz_envelope}
Suppose Assumption~\ref{ass:subexp_envelope} holds. For every $\delta\in(0,1)$, with probability at least $1-\delta$,
\begin{equation}\label{eq:subexp_lipschitz_envelope_bound}
    \frac1n\sum_{i=1}^n A_i
    \le K_W\left(\log 2+\frac{\log(1/\delta)}{n}\right).
\end{equation}
Consequently, $G_n$ is bounded by the right-hand side on the same event.
\end{lemma}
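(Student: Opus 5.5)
The statement to prove is Lemma~\ref{lem:subexp_lipschitz_envelope}: with probability at least $1-\delta$, $n^{-1}\sum_iA_i\le K_W(\log2+\log(1/\delta)/n)$, and on the same event $G_n$ obeys the same bound. The plan is a Chernoff bound at the single exponent $t=1/K_W$, using the convexity of $\log$ rather than a Bernstein argument. If $K_W=0$, then $A_i=0$ almost surely and there is nothing to prove, so assume $K_W>0$. By the definition $\norm{A}_{\psi_1}=\inf\{K>0:\E e^{|A|/K}\le2\}$ and monotone convergence, $\E e^{A/K_W}\le2$; recall $A\ge0$.

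By independence of $A_1,\ldots,A_n$,
\[
\PP\Bigl(\sum_{i=1}^nA_i>K_W(n\log2+\log(1/\delta))\Bigr)
\le e^{-n\log2-\log(1/\delta)}\prod_{i=1}^n\E e^{A_i/K_W}
\le 2^{-n}\delta\,2^n=\delta.
\]
This is exactly Markov's inequality applied to $\exp(\sum_iA_i/K_W)$. Dividing by $n$ gives~\eqref{eq:subexp_lipschitz_envelope_bound}. No centering is needed, which is why the leading constant is $\log2$ and not $\E A$.

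For the second claim, I would bound both pieces in the definition of $G_n$ from Lemma~\ref{lem:transport_B}.

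\emph{Empirical moment pairs.} Every $(\Sigma,\gamma)\in\widehat{\mathcal K}_{\mathrm{adm}}$ satisfies $\norm{\gamma}_2\le n^{-1}\sum_i|Y_i|\norm{z_i}_2\le n^{-1}\sum_iA_i$. This is controlled by the event above.

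\emph{Population moment pairs.} For $(\Sigma,\gamma)\in\mathcal K^*_{\mathrm{adm}}$, Jensen's inequality gives $\norm{\gamma}_2\le\E A$. Since $e^u\ge1+u$, we have $1+\E A/K_W\le\E e^{A/K_W}\le2$, so $\E A\le K_W\le K_W\log2\cdot(1/\log2)$.

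The one point needing care is this population term: $K_W$ exceeds $K_W\log2$, so the inequality $\E A\le K_W$ does not by itself place $\E A$ below the right-hand side. I would close the gap in one of two ways.

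\textbf{Sharper bound via Jensen.} Jensen gives $e^{\E A/K_W}\le\E e^{A/K_W}\le2$, hence $\E A\le K_W\log2$. This is below the right-hand side of~\eqref{eq:subexp_lipschitz_envelope_bound} deterministically.

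\textbf{Fallback.} If that route failed, I would restate the consequence for $G_n$ as $\max\{\E A,n^{-1}\sum_iA_i\}$.

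The Jensen route should work, so both parts of $G_n$ are bounded on the event, and that event has probability at least $1-\delta$.
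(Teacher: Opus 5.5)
Your proof is correct and follows the paper's argument. The paper likewise bounds $\E\exp(K_W^{-1}\sum_iA_i)\le 2^n$ by independence and the $\psi_1$ normalization, applies the exponential Markov inequality, and handles the population part of $G_n$ through Jensen's bound $\E[A]\le K_W\log 2$, so your $e^u\ge 1+u$ detour and the fallback are unnecessary.
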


\begin{proof}
If $K_W=0$, the result is immediate. Otherwise, independence and the
$\psi_1$ normalization give
\[
    \E\exp\left(\frac1{K_W}\sum_{i=1}^nA_i\right)
    =\prod_{i=1}^n\E e^{A_i/K_W}\le2^n.
\]
Exponential Markov inequality proves~\eqref{eq:subexp_lipschitz_envelope_bound}.
Moreover, Jensen's inequality gives $\E[A]\le K_W\log2$. Every empirical
admissible $\gamma$ satisfies $\norm{\gamma}_2\le n^{-1}\sum A_i$, whereas
every population admissible $\gamma$ satisfies $\norm{\gamma}_2\le\E[A]$.
\end{proof}

\begin{proof}[Proof of Theorem~\ref{thm:main_subexp}]
Apply Proposition~\ref{prop:subexp_K}, Lemma~\ref{lem:subexp_sf}, and Lemma~\ref{lem:subexp_lipschitz_envelope} with failure probability $\delta/3$ each. Their intersection has probability at least $1-\delta$, and on it
$G_n\le K_W\{\log2+\log(3/\delta)/n\}$. Lemma~\ref{lem:transport_B} then gives
\[
\begin{aligned}
    &\dH\bigl(\Bhat,\Bstar\bigr)\\
    &\quad\le K_W L_{\Phi,\mathrm{sub}}(n,\delta)
    \left[
    \begin{aligned}
        &2\sqrt{\frac{\log(3/\delta)+3\log2+d\log3}{n}}\\
        &+4\sqrt{\frac{\log(3/\delta)+5\log2+d\log12}{n}}\\
        &+\frac{3\log(3/\delta)+13\log2+d(\log3+2\log12)}{n}\\
        &+\frac{\sqrt d\log(6n/\delta)}{n}
    \end{aligned}
    \right],
\end{aligned}
\]
which is~\eqref{eq:subexp_beta_bound}.
\end{proof}

\subsection{Polynomial-envelope moment-set bound}
\label{app:proof_poly_K}

\begin{proposition}[Polynomial-envelope bounds]\label{prop:poly_K}
Under Assumptions~\ref{ass:sampling} and~\ref{ass:polynomial_envelope}, let $d=p(p+1)/2+p$. For every $\delta\in(0,1)$, with probability at least $1-\delta$,
\begin{equation}\label{eq:poly_K}
\begin{aligned}
  &\dH\bigl(\mathrm{conv}(\hat{\mathcal{K}}_{\mathrm{adm}}),\mathcal{K}^*_{\mathrm{adm}}\bigr)\\
  &\quad\le
  \frac{(\sqrt{2\pi d}+2)K_2}{\sqrt n}
  +5\sqrt2K_2\sqrt{\frac{\log(1/\delta)}{n}}\\
  &\qquad+3\mu_q(K_q+K_2)\delta^{-1/q}n^{-1+1/q}.
\end{aligned}
\end{equation}
\end{proposition}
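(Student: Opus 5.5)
The plan is to split $F=\sup_{U\in\mathcal S}|n^{-1}\sum_iW_i(U)-\E[W(U)]|$ into an expectation term and a deviation term, as in Lemma~\ref{lem:bounded_support_chaining}. The difference is that the unbounded envelope $A_i$ of Lemma~\ref{lem:subexp_support_chaining} must be handled by truncation rather than by McDiarmid. By Lemma~\ref{lem:support_representation} and identity~\eqref{eq:hausdorff_support}, the left-hand side of~\eqref{eq:poly_K} equals $F$. Throughout, Lemma~\ref{lem:support_lipschitz} gives $|W_i(U)|\le A_i$ and $|W_i(U)-W_i(V)|\le A_i\norm{U-V}_2$.

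\emph{Expectation term.} The proof of Lemma~\ref{lem:bounded_support_chaining} was written for general envelopes $A_i$. Conditionally on the sample, the Sudakov--Fernique comparison gives
\[
\E_\varepsilon\sup_{U\in\mathcal S}\Bigl|n^{-1}\sum_i\varepsilon_iW_i(U)\Bigr|\le\frac{\sqrt{\pi d/2}+1}{n}\Bigl(\sum_iA_i^2\Bigr)^{1/2}.
\]
Taking expectations and applying Jensen, $\E(\sum_iA_i^2)^{1/2}\le\sqrt n\,K_2$. Symmetrization then yields $\E F\le(\sqrt{2\pi d}+2)K_2/\sqrt n$, which is the first term of~\eqref{eq:poly_K}.

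\emph{Deviation term via truncation.} Fix a level $\tau>0$ and write $W_i(U)=W_i(U)\mathbf 1\{A_i\le\tau\}+W_i(U)\mathbf 1\{A_i>\tau\}$. Let $F_\tau$ and $R_\tau$ be the suprema of the two centered empirical processes, so that $F\le F_\tau+R_\tau$.

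The truncated class is bounded by $\tau$ and has weak variance $\sup_U\operatorname{Var}(W(U))\le K_2^2$. I would apply Bousquet's version of Talagrand's inequality to it, with failure probability $\delta/3$:
\[
F_\tau\le\E F_\tau+\sqrt{2x(K_2^2+2\tau\E F_\tau)/n}+\tau x/(3n).
\]
I would then split the cross term with $\sqrt{ab}\le(a+b)/2$, bound $\E F_\tau$ by $\E F$ plus $2\E[A\mathbf 1\{A>\tau\}]$, and absorb the rest. This should produce the $5\sqrt2K_2\sqrt{\log(1/\delta)/n}$ term together with multiples of $\tau/n$.

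For the remainder, on the event $\max_iA_i\le\tau$ the empirical part of $R_\tau$ vanishes. Only the bias $\E[A\mathbf 1\{A>\tau\}]\le K_q^q\tau^{1-q}$ is left. By Markov and a union bound, $\PP(\max_iA_i>\tau)\le nK_q^q/\tau^q$.

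\emph{Choice of $\tau$ and the main obstacle.} I would choose $\tau=K_q(3n/\delta)^{1/q}$, or a constant multiple of it, so that the truncation event costs $\delta/3$. Then $\tau/n\asymp K_q\delta^{-1/q}n^{-1+1/q}$. The bias is $K_q^q\tau^{1-q}=K_q(\delta/(3n))^{(q-1)/q}$, which is of the same or smaller order. Collecting all these contributions gives the $\mu_q(K_q+K_2)\delta^{-1/q}n^{-1+1/q}$ term.

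The main obstacle is the constant bookkeeping. One has to make sure the $\log(1/\delta)$ factor attached to $\tau x/n$ is absorbed by $\delta^{-1/q}$, using $\log(1/\delta)\le(q/e)\delta^{-1/q}$-type bounds. I expect this is the source of $\mu_q=2+\max(4/3,q/3)$. If the $\log$ absorption proves too lossy, I would instead use a Fuk--Nagaev/Einmahl--Li inequality, which controls $\norm{\max_iA_i}_q\le n^{1/q}K_q$ directly.

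Finally, a union bound over the three events (Bousquet, truncation, and the slack in $\E F$) gives probability at least $1-\delta$.
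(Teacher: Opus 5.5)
\textbf{There is a genuine gap in your primary route, in the deviation step.} Your expectation step is exactly the paper's: the conditional Sudakov--Fernique comparison from Lemma~\ref{lem:bounded_support_chaining} with general envelopes $A_i$, followed by Jensen and symmetrization.

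\emph{The $\delta$-dependence does not come out.} With truncation at $\tau=K_q(3n/\delta)^{1/q}$ and Bousquet at $x=\log(3/\delta)$, the bounded-class term $\tau x/(3n)$ is of order $K_q\,\delta^{-1/q}\log(1/\delta)\,n^{-1+1/q}$. The absorption you propose, $\log(1/\delta)\le(q/e)\delta^{-1/q}$, turns this into a $\delta^{-2/q}n^{-1+1/q}$ term. For fixed $n$ and $\delta\to0$, that term is not dominated by the right-hand side of~\eqref{eq:poly_K}, whose only $\delta$-dependence beyond $\sqrt{\log(1/\delta)}$ is $\delta^{-1/q}$.

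Re-tuning $\tau$ does not rescue this. Lowering $\tau$ means the Markov--union bound $nK_q^q\tau^{-q}$ no longer certifies failure probability $\delta/3$. Controlling $n^{-1}\sum_iA_i\mathbf 1\{A_i>\tau\}$ by Markov instead and optimizing over $\tau$ still leaves a factor $\log(1/\delta)^{1-1/q}$. The clean $\delta^{-1/q}$ needs a genuine Fuk--Nagaev inequality for suprema, where the large summands are handled by a moment argument rather than by inserting the crude bound $\tau$ into a Bernstein exponent.

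\emph{Three further mismatches with the stated constants.}
First, splitting Bousquet's cross term $\sqrt{4x\tau\E F_\tau/n}$ by AM--GM doubles the coefficient of $\E F$. The first term of~\eqref{eq:poly_K} would then come out as $2(\sqrt{2\pi d}+2)K_2/\sqrt n$.
Second, a three-way union bound produces $\log(3/\delta)$ and $(3/\delta)^{1/q}$, not the $\log(1/\delta)$ and $\delta^{-1/q}$ of the proposition. The factor $3$ enters only later, in Theorem~\ref{thm:main_polynomial}.
Third, there is no random event attached to ``the slack in $\E F$'', since $\E F$ is deterministic.

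\textbf{Your fallback is the paper's route.} The paper applies the Fuk--Nagaev inequality of \citet[Theorem~2.1]{Marchina2021} once, at level $\delta$. It is applied to the countable process indexed by $(U,s)\in\mathcal S\times\{-1,1\}$ with coordinates $s(W_i(U)-\E[W(U)])$. The inputs are:
the envelope $A_i+\E[A]$, with $L^2$ norm at most $2K_2$ and $L^q$ norm at most $K_q+K_2$;
a weak variance of at most $K_2^2$;
and a variance correction satisfying $v_n\le16K_2^2$.
This gives directly $\E F+5\sqrt2K_2\sqrt{\log(1/\delta)/n}+3\mu_q(K_q+K_2)\delta^{-1/q}n^{-1+1/q}$, with coefficient one on $\E F$ and no union bound. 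In particular, $\mu_q=2+\max(4/3,q/3)$ is the constant of that theorem, not a by-product of absorbing logarithms.
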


\begin{lemma}[Polynomial-envelope support-process concentration]\label{lem:poly_support_chaining}
Let
\[
    A_i
    =
    \sup_{z_i\in\mathbf X_i}\norm{z_i}_2^2
    +|Y_i|\sup_{z_i\in\mathbf X_i}\norm{z_i}_2,
\]
and let $A$ be the corresponding population quantity. If $\norm{A}_2\le K_2$ and
$\norm{A}_q\le K_q$ for some $q>2$, then, for every $\delta\in(0,1)$, with probability at
least $1-\delta$,
\[
\begin{aligned}
    &\sup_{U\in\mathcal S}
    \left|\frac1n\sum_{i=1}^nW_i(U)-\E[W(U)]\right|\\
    &\quad\le
    \frac{(\sqrt{2\pi d}+2)K_2}{\sqrt n}
    +5\sqrt2K_2\sqrt{\frac{\log(1/\delta)}{n}}\\
    &\qquad+3\mu_q(K_q+K_2)\delta^{-1/q}n^{-1+1/q}.
\end{aligned}
\]
\end{lemma}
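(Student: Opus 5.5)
We split $F=\sup_{U\in\mathcal S}|n^{-1}\sum_iW_i(U)-\E[W(U)]|$ into an expectation term and a deviation term, with $F-\E F$ controlled by a Fuk--Nagaev / Talagrand-type inequality for unbounded summands. The three terms of the bound map onto three pieces of the argument.

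\emph{Expectation term.} Symmetrization gives $\E F\le 2\E\,\E_\varepsilon\sup_U|n^{-1}\sum_i\varepsilon_iW_i(U)|$. Lemma~\ref{lem:support_lipschitz} holds pointwise with the random envelope $A_i$ in place of $L$. The Gaussian comparison in the proof of Lemma~\ref{lem:bounded_support_chaining}, run conditionally on the sample with these $A_i$, yields $\E_\varepsilon\sup\le(\sqrt{\pi d/2}+1)n^{-1}(\sum_iA_i^2)^{1/2}$. Taking expectations and using Jensen, $\E(\sum_iA_i^2)^{1/2}\le\sqrt n K_2$, so $\E F\le(\sqrt{2\pi d}+2)K_2/\sqrt n$. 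This is exactly the first term.

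\emph{Deviation term.} I will apply Adamczak's version of Talagrand's inequality for unbounded empirical processes, or the Fuk--Nagaev form of Einmahl--Li. Explicit constants are needed, so the preferred route is the one whose constants are documented, e.g.\ \citet{BoucheronEtAl} style or Einmahl--Li Theorem~4 with $\eta=1$ and $\alpha=1$. The class is $\{W(U)-\E W(U):U\in\mathcal S\}$, with weak variance $\sigma^2=\sup_U\operatorname{Var}W(U)\le K_2^2$. The envelope is $2A$ (centred), so $\E\max_i A_i^q\le nK_q^q$ and the $L^q$-norm of $\max_iA_i$ is at most $n^{1/q}K_q$. The inequality takes the form
\[
\PP\bigl(F\ge (1+\eta)\E F+ t\bigr)\le e^{-nt^2/(C_1\sigma^2)}+C_2\,\frac{\E\max_i(A_i+\E A)^q}{(nt)^q}.
\]
Each of the two tails is set equal to $\delta/2$. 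The Gaussian tail produces $c\,K_2\sqrt{\log(1/\delta)/n}$, which is to be matched with $5\sqrt2$. The polynomial tail produces $t\asymp (K_q+K_2)\delta^{-1/q}n^{-1+1/q}$, using $\|\max_i(A_i+\E A)\|_q\le n^{1/q}K_q+K_2$. The $K_2$ contributions absorbed into the last term come from the centring $\E A\le K_2$ and from the $(1+\eta)\E F$ slack. The constant $\mu_q=2+\max(4/3,q/3)$ presumably arises from optimizing a truncation level $\tau$ in a direct proof.

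\emph{Fallback if no clean constants are available.} I would prove the inequality by hand. First, truncate at $\tau=(K_q+K_2)(\delta/3)^{-1/q}n^{1/q}$. Markov on $\sum_i A_i^q$ shows that with probability $1-\delta/3$ no $A_i$ exceeds $\tau$. The truncated process has bounded increments $2\tau/n$ and variance $K_2^2$, so Bousquet's inequality applies. The truncation bias $\E[A\mathbf 1\{A>\tau\}]\le K_q^q\tau^{1-q}$ is of lower order. Bousquet's inequality yields $\sqrt{2(\sigma^2+2\tau\E F)x/n}+\tau x/(3n)$. The cross term is split by AM-GM into $K_2\sqrt{x/n}$ plus a multiple of $\tau/n$. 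Collecting the multiples of $\tau/n$ gives a factor of the shape $2+q/3$, which is where $\mu_q$ would come from.

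\emph{Main obstacle.} The hardest step is matching the exact constants $5\sqrt2$ and $3\mu_q$. Both must be checked carefully, since $\log(1/\delta)$ factors must be absorbed into the $\delta^{-1/q}$ term through $x\le (q/e)\delta^{-1/q}$-type inequalities.
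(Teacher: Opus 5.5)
Your expectation step is exactly the paper's argument: the conditional Gaussian comparison with random envelope $A_i$, then symmetrization and Jensen, give $\E F\le(\sqrt{2\pi d}+2)K_2/\sqrt n$.

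The gap is the deviation step. Neither route you propose can deliver the stated inequality, and the obstruction is structural, not a matter of tuning constants.

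\textbf{Main route.} Einmahl--Li controls $\PP(F\ge(1+\eta)\E F+t)$ with non-explicit constants; Adamczak's version additionally needs a $\psi_\alpha$ envelope. The excess $\eta\E F$ can be as large as $\eta(\sqrt{2\pi d}+2)K_2/\sqrt n$. Contrary to what you write, it cannot be absorbed into the last term. That term carries no $\sqrt d$ and is $o(n^{-1/2})$ because $q>2$. The middle term also carries no $\sqrt d$, and it vanishes as $\delta\uparrow1$. Splitting the failure probability as $\delta/2+\delta/2$ has the same defect. It produces $\sqrt{\log(2/\delta)}$ and $(2/\delta)^{1/q}$, and for $\delta$ near one the surplus, of order $K_2\sqrt{\log 2/n}$, is dominated by nothing on the right-hand side. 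The statement needs an inequality centred at $\E F$ with coefficient exactly one, whose Gaussian and polynomial parts are both calibrated at the same level $\delta$.

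\textbf{Fallback.} The truncation-plus-Bousquet argument does not repair this, for two reasons.
\begin{itemize}
\item Take the truncation level fixed in advance, $\tau\asymp(K_q+K_2)\delta^{-1/q}n^{1/q}$. Bousquet's linear term $\tau x/(3n)$ with $x\asymp\log(1/\delta)$ is then of order $\log(1/\delta)\,\delta^{-1/q}n^{-1+1/q}$. The bound $\log(1/\delta)\le(q/e)\delta^{-1/q}$ only turns this into $\delta^{-2/q}$, not $\delta^{-1/q}$. Removing that logarithm is exactly what the Fuk--Nagaev mechanism does, by tying the truncation to the deviation level.
\item Bousquet's variance proxy $\sigma^2+2b\E F$ produces the cross term $\sqrt{4\tau x\E F/n}$, and $\E F$ may be of order $\sqrt d\,K_2/\sqrt n$. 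Any AM--GM split therefore either adds a multiple of $\E F$ or leaves a factor $\sqrt{dx}$ on the $\tau/n$ piece. The target admits neither, so your split into ``$K_2\sqrt{x/n}$ plus a multiple of $\tau/n$'' is not available.
\end{itemize}

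\textbf{What the paper uses.} The missing ingredient is the Fuk--Nagaev inequality of \citet[Theorem~2.1]{Marchina2021}. It is centred at the mean with unit coefficient. Its variance term, as used in the paper, combines the weak variance with a correction controlled by the envelope's second moment, so no $\E F$ cross term appears. The paper applies it to the process indexed by $\mathcal S\times\{-1,1\}$ with coordinates $s(W_i(U)-\E[W(U)])$, using:
\begin{itemize}
\item envelope $A_i+\E[A]$, whose $L^2$-norm is at most $2K_2$ and $L^q$-norm at most $K_q+K_2$;
\item weak standard deviation at most $K_2$;
\item variance correction $v_n\le16K_2^2$;
\item $\Lambda_q^+(A_i+\E[A])\le K_q+K_2$.
\end{itemize}
This yields $5\sqrt2K_2\sqrt{n\log(1/\delta)}+3\mu_q(K_q+K_2)n^{1/q}\delta^{-1/q}$ at level $\delta$ directly. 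In particular, $\mu_q$ is that theorem's constant, not the result of optimizing a truncation level.
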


\begin{proof}
By Lemma~\ref{lem:support_lipschitz}, conditionally on the sample,
$|W_i(U)|\le A_i$ and
$|W_i(U)-W_i(V)|\le A_i\norm{U-V}_2$.
The conditional Gaussian comparison in the proof of
Lemma~\ref{lem:bounded_support_chaining} gives
\[
 \E_\varepsilon\sup_{U\in\mathcal S}
 \left|\frac1n\sum_{i=1}^n\varepsilon_iW_i(U)\right|
 \le\frac{\sqrt{\pi d/2}+1}{n}
       \left(\sum_{i=1}^nA_i^2\right)^{1/2}.
\]
Symmetrization and Jensen's inequality consequently give
\begin{align*}
    \E\sup_{U\in\mathcal S}
        \left|\frac1n\sum_{i=1}^nW_i(U)-\E[W(U)]\right|
    &\le\frac{\sqrt{2\pi d}+2}{n}
          \E\left(\sum_{i=1}^nA_i^2\right)^{1/2}\\
    &\le\frac{(\sqrt{2\pi d}+2)K_2}{\sqrt n}.
\end{align*}

To control deviations from this mean, apply the Fuk--Nagaev inequality of
\citet[Theorem~2.1]{Marchina2021} to the countable process indexed by
$(U,s)\in\mathcal S\times\{-1,1\}$, with coordinates
\[
    s\bigl(W_i(U)-\E[W(U)]\bigr).
\]
Its supremum is the absolute supremum in the statement. Its envelope satisfies
\[
    \sup_{U\in\mathcal S}
    |W_i(U)-\E[W(U)]|
    \le A_i+\E[A],
\]
and hence
\[
    \norm{A_i+\E[A]}_2\le2K_2,
    \qquad
    \norm{A_i+\E[A]}_q\le K_q+K_2.
\]
In the notation of that theorem, the wimpy standard deviation is at most
$K_2$, since
$\sup_{U\in\mathcal S}\operatorname{Var}(W(U))
\le\sup_{U\in\mathcal S}\E[W(U)^2]\le K_2^2$.
Moreover, each variable entering the definition of its variance correction is
obtained from twice the envelope by replacing a lower part of its distribution by
zero. Its second moment is therefore at most
$4\E[(A_i+\E[A])^2]$, and averaging these second moments gives
\[
    v_n
    \le4\E\bigl[(A_i+\E[A])^2\bigr]
    \le16K_2^2.
\]
Theorem~2.1 and the bound $\Lambda_q^+(A_i+\E[A])\le
\norm{A_i+\E[A]}_q$ now imply, with probability at least $1-\delta$,
\begin{align*}
    &\sup_{U\in\mathcal S}
    \left|\sum_{i=1}^nW_i(U)-n\E[W(U)]\right|\\
    &\quad\le
    \E\!\left[
        \sup_{U\in\mathcal S}
        \left|\sum_{i=1}^nW_i(U)-n\E[W(U)]\right|
    \right]
    +5\sqrt{2}\,K_2\sqrt{n\log(1/\delta)}
    +3\mu_q(K_q+K_2)n^{1/q}\delta^{-1/q}.
\end{align*}
Insert the expectation bound above and divide by $n$. Continuity extends the
result from the countable dense subset to $\mathcal S$, and the conclusion follows.
\end{proof}

\begin{proof}[Proof of Proposition~\ref{prop:poly_K}]
By identity~\eqref{eq:hausdorff_support} and Lemma~\ref{lem:support_representation}, the Hausdorff distance equals the supremum of the centered empirical support function:
\[
    \dH\bigl(\mathrm{conv}(\hat{\mathcal{K}}_{\mathrm{adm}}),\mathcal{K}^*_{\mathrm{adm}}\bigr) = \sup_{U \in \mathcal{S}} \abs{ \frac1n \sum_{i=1}^n W_i(U) - \E[W(U)] }.
\]
Lemma~\ref{lem:poly_support_chaining} proves~\eqref{eq:poly_K}.
\end{proof}

\subsection{Proof of Theorem~\ref{thm:main_polynomial}}
\label{app:proof_poly_B}

For any $i = 1,\dots,n$, we denote by $A_i$ the envelope of the $i$-th observation as defined in~\eqref{eq:envelope_def}, and by $A$ the corresponding population quantity.

\begin{lemma}[Polynomial Lipschitz Envelope]\label{lem:poly_lipschitz_envelope}
Suppose Assumption~\ref{ass:polynomial_envelope} holds. For every $\delta\in(0,1)$, with probability at least $1-\delta$,
\begin{equation}\label{eq:poly_lipschitz_envelope_bound}
    \frac1n\sum_{i=1}^n A_i
    \le K_2\sqrt{1+\frac{1-\delta}{n\delta}}.
\end{equation}
Consequently, $G_n$ is bounded by the right-hand side on the same event.
\end{lemma}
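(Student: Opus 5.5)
The plan is a second-moment Chebyshev argument applied to the sample mean of the envelopes $A_1,\ldots,A_n$, which are i.i.d.\ copies of $A$. The target is $K_2\sqrt{1+(1-\delta)/(n\delta)}$. The shape of this quantity suggests a one-sided Chebyshev (Cantelli) bound around $\E[A]$, combined with the identity $\E[A]^2+\operatorname{Var}(A)=\E[A^2]\le K_2^2$. A plain Markov bound on $(\bar A_n)^2$ would give only $K_2/\sqrt\delta$, which is too weak. The first step is therefore to set $\bar A_n=n^{-1}\sum_iA_i$, $m=\E[A]$ and $\sigma^2=\operatorname{Var}(A)$, so that $\operatorname{Var}(\bar A_n)=\sigma^2/n$ and $m^2+\sigma^2\le K_2^2$ by Assumption~\ref{ass:polynomial_envelope}. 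Both $m$ and $\sigma$ are finite because $K_2<\infty$.

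Next, apply Cantelli's inequality: for $t>0$,
\[
\PP(\bar A_n-m\ge t)\le\frac{\sigma^2/n}{\sigma^2/n+t^2}.
\]
Setting the right-hand side equal to $\delta$ gives $t=\sigma\sqrt{(1-\delta)/(n\delta)}$. Hence, with probability at least $1-\delta$,
\[
\bar A_n\le m+\sigma\sqrt{\frac{1-\delta}{n\delta}}.
\]
The degenerate case $\sigma=0$ is trivial, since then $\bar A_n=m\le K_2$ almost surely.

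Then maximize the right-hand side over the constraint. Writing $c=\sqrt{(1-\delta)/(n\delta)}$, Cauchy--Schwarz in $\R^2$ gives
\[
m+c\sigma\le\sqrt{1+c^2}\,\sqrt{m^2+\sigma^2}\le K_2\sqrt{1+\frac{1-\delta}{n\delta}},
\]
which is exactly \eqref{eq:poly_lipschitz_envelope_bound}. This Cauchy--Schwarz step is the only point that needs care. It has to be done jointly rather than by bounding $m\le K_2$ and $\sigma\le K_2$ separately, because the separate bounds would produce $K_2(1+c)$ instead of the sharper stated constant.

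For the consequence concerning $G_n$, argue as in the proof of Lemma~\ref{lem:subexp_lipschitz_envelope}. Every empirical admissible $\gamma=n^{-1}\sum_iz_iY_i$ satisfies $\norm{\gamma}_2\le n^{-1}\sum_i|Y_i|\norm{z_i}_2\le\bar A_n$. For the population, Jensen's inequality (with a measurable selection) gives $\norm{\gamma}_2\le\E[A]=m\le K_2$, and $K_2$ is itself at most the right-hand side. Hence on the same event $G_n$ is bounded by the right-hand side. I expect no real obstacle here beyond using the joint Cantelli/Cauchy--Schwarz route so that the stated constant is matched.
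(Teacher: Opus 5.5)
Your proposal is correct and follows the paper's proof essentially line by line. Like you, the paper applies Cantelli's one-sided inequality to the sample mean with deviation $\sqrt{(1-\delta)/(n\delta)}\sqrt{\operatorname{Var}(A)}$, merges $\E[A]$ and $\sqrt{\operatorname{Var}(A)}$ in a single Cauchy--Schwarz step using $(\E[A])^2+\operatorname{Var}(A)=K_2^2$, and bounds $G_n$ through $\norm{\gamma}_2\le n^{-1}\sum_iA_i$ for empirical pairs and $\norm{\gamma}_2\le\E[A]\le K_2$ for population pairs.
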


\begin{proof}
If $\operatorname{Var}(A)=0$, the result is immediate. Otherwise,
Cantelli's one-sided variance inequality gives
\[
 \PP\left(\frac1n\sum_{i=1}^nA_i>
 \E[A]+\sqrt{\frac{1-\delta}{n\delta}}\,
            \sqrt{\operatorname{Var}(A)}\right)\le\delta.
\]
Indeed, for centered $T$ with variance $v$ and $t>0$, Markov's inequality
applied to $(T+v/t)^2$ gives
$\PP(T\ge t)\le v/(v+t^2)$.
Cauchy--Schwarz and $(\E[A])^2+\operatorname{Var}(A)=K_2^2$ give
\[
 \E[A]+\sqrt{\frac{1-\delta}{n\delta}}\,
            \sqrt{\operatorname{Var}(A)}
 \le K_2\sqrt{1+\frac{1-\delta}{n\delta}}.
\]
Every empirical admissible $\gamma$ satisfies
$\norm{\gamma}_2\le n^{-1}\sum A_i$, and every population admissible
$\gamma$ satisfies $\norm{\gamma}_2\le\E[A]\le K_2$.
This proves the last assertion.
\end{proof}

\begin{lemma}[Polynomial Shapley--Folkman Remainder]\label{lem:poly_sf}
Suppose Assumption~\ref{ass:polynomial_envelope} holds. For every $\delta\in(0,1)$, with probability at least $1-\delta$,
\begin{equation}\label{eq:poly_sf_envelope_bound}
    \max_{1\le i\le n} A_i \le K_q \left(\frac{n}{\delta}\right)^{1/q},
\end{equation}
and
\begin{equation}\label{eq:poly_sf_bound}
    \dH\bigl(\hat{\mathcal{K}}_{\mathrm{adm}},\mathrm{conv}(\hat{\mathcal{K}}_{\mathrm{adm}})\bigr)
    \le
    \sqrt{d}\,K_q\,\delta^{-1/q}\,n^{-1+1/q}.
\end{equation}
\end{lemma}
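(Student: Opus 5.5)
The plan is to run the same argument as Lemma~\ref{lem:subexp_sf}: a union bound on the per-observation envelopes, with the exponential tail of the $\psi_1$ norm replaced by Markov's inequality on the $q$-th moment. Lemma~\ref{lem:sf_remainder} then turns the envelope bound into a Shapley--Folkman remainder.

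\emph{Step 1: control $\max_i A_i$.} Since the $A_i$ are i.i.d. copies of $A=W$, Markov's inequality applied to $A^q$ gives, for every $t>0$,
\[
\PP(A_i>t)\le \E[A^q]\,t^{-q}\le K_q^q\,t^{-q}.
\]
A union bound over $i=1,\ldots,n$ yields
\[
\PP\bigl(\max_i A_i>t\bigr)\le nK_q^q t^{-q}.
\]
Choosing $t=K_q(n/\delta)^{1/q}$ makes the right-hand side exactly $\delta$, which proves~\eqref{eq:poly_sf_envelope_bound}. If $K_q=0$, then $A=0$ almost surely and the claim is trivial.

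\emph{Step 2: transfer to the Hausdorff remainder.} On the complementary event, the second inequality of Lemma~\ref{lem:sf_remainder} gives
\[
\dH\bigl(\hat{\mathcal K}_{\mathrm{adm}},\operatorname{conv}(\hat{\mathcal K}_{\mathrm{adm}})\bigr)
\le\frac{\sqrt d}{n}\max_i A_i
\le \frac{\sqrt d}{n}\,K_q\,n^{1/q}\delta^{-1/q}.
\]
The right-hand side equals $\sqrt d\,K_q\,\delta^{-1/q}n^{-1+1/q}$, which is~\eqref{eq:poly_sf_bound}. Both bounds therefore hold on the same event, of probability at least $1-\delta$.

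There is no real obstacle in this argument. The only points needing care are bookkeeping:
\begin{itemize}
\item the dimension $d=p(p+1)/2+p$ used in Lemma~\ref{lem:sf_remainder} equals $p(p+3)/2$, so the two forms of $d$ in the paper agree;
\item the envelope in Lemma~\ref{lem:sf_remainder} is exactly $A_i$ as defined in~\eqref{eq:envelope_def}, so no extra constant enters;
\item the strict versus non-strict inequality in the union bound does not affect the conclusion.
\end{itemize}
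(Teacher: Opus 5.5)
Your proposal is correct and is essentially the paper's own proof. Like the paper, you combine a union bound with Markov's inequality on $A^q$ at the threshold $K_q(n/\delta)^{1/q}$, then apply the second inequality of Lemma~\ref{lem:sf_remainder} on the complementary event. Your separate handling of the degenerate case $K_q=0$ is a small point of care that the paper leaves implicit.
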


\begin{proof}
By a union bound and Markov's inequality,
\[
    \PP\left( \max_{1\le i\le n} A_i > K_q\left(\frac{n}{\delta}\right)^{1/q} \right)
    \le
    n\frac{\E[A^q]}{K_q^q n/\delta}
    \le\delta.
\]
Lemma~\ref{lem:sf_remainder} then gives~\eqref{eq:poly_sf_bound}.
\end{proof}

\begin{proof}[Proof of Theorem~\ref{thm:main_polynomial}]
Apply Proposition~\ref{prop:poly_K}, Lemma~\ref{lem:poly_sf}, and Lemma~\ref{lem:poly_lipschitz_envelope} with failure probability $\delta/3$ each. Their intersection has probability at least $1-\delta$, and on it
$G_n\le K_2\sqrt{1+(3-\delta)/(n\delta)}$. Lemma~\ref{lem:transport_B} gives
\begin{align*}
    \dH\bigl(\Bhat,\Bstar\bigr)
    &\le
    L_{\Phi,q}(n,\delta)
    \left[
    \begin{aligned}
        &\frac{(\sqrt{2\pi d}+2)K_2}{\sqrt n}
         +5\sqrt2K_2\sqrt{\frac{\log(3/\delta)}{n}}\\
        &+\{3\mu_q(K_q+K_2)+\sqrt d\,K_q\}(3/\delta)^{1/q}n^{-1+1/q}
    \end{aligned}
    \right].
\end{align*}
This is~\eqref{eq:polynomial_beta_bound}.
\end{proof}

\section{Proofs for Set-Valued Z-Estimation}
\label{app:section5}
This appendix contains the proofs for Section~\ref{sec:set_valued_z}.
Write $F=\sup_{\beta\in\Theta}\sup_{z\in\mathbf Z(O)}\norm{\varphi(z,\beta)}_2$
for the integrable envelope in Assumption~\ref{ass:z_completion}.

\subsection{Measurability and Compactness of the Score Correspondence}
\label{app:proofplan_z_measurability}

Fix $\beta \in \Theta$. We verify the regularity of
$\Phi_\beta(o) = \{\varphi(z,\beta) : z \in \mathbf{Z}(o)\}$ and of
$\psi_{\beta,u}(o) = \sup_{z \in \mathbf{Z}(o)} u^T \varphi(z,\beta)$.

Let $o$ lie outside the $P_{\mathrm{obs}}$-null exceptional set in
Assumption~\ref{ass:z_completion}. Under the same assumption,
$\mathbf{X}(o)$ is non-empty and compact. Hence
$\mathbf{Z}(o) = \mathbf{X}(o) \times \{y\} \times \{m\}$ is also non-empty
and compact. The continuity of $z \mapsto \varphi(z,\beta)$ on
$\mathbf{Z}(o)$ then makes $\Phi_\beta(o)$ non-empty and compact.

For measurability, Assumption~\ref{ass:z_completion} implies that $\mathbf{Z}$
is a measurable compact-valued correspondence and that
$\mathrm{Gr}(\mathbf{Z})$ is measurable. Under
Assumption~\ref{ass:z_completion}, $(o,z) \mapsto \varphi(z,\beta)$ is a
Carath\'eodory map and is therefore jointly measurable on this graph. Moreover,
\[
    \mathrm{Gr}(\Phi_\beta) = \{(o,v) \in \mathcal{O} \times \mathbb{R}^r : \exists z \in \mathbf{Z}(o) \text{ such that } \varphi(z,\beta) = v\}.
\]
Since the spaces $\mathcal{O}$ and $\mathcal{Z}$ are standard Borel and $\mathbf{Z}(o)$ is compact-valued, the compact-section projection theorem guarantees that for any closed set $C \subseteq \mathbb{R}^r$, the inverse image:
\[
    \{o \in \mathcal{O} : \Phi_\beta(o) \cap C \neq \varnothing\} = \mathrm{Proj}_{\mathcal{O}}\left( \{(o,z) \in \mathrm{Gr}(\mathbf{Z}) : \varphi(z,\beta) \in C\} \right)
\]
is a measurable set in $\mathcal{O}$. This proves that $\Phi_\beta$ is a measurable correspondence.

For fixed $(\beta,u)$, the map $(o,z) \mapsto u^T\varphi(z,\beta)$ is
measurable in $o$ and continuous in $z$. The measurable maximum theorem shows
that
\[
    \psi_{\beta,u}(o) = \sup_{z \in \mathbf{Z}(o)} u^T \varphi(z,\beta)
\]
is measurable and provides a selector $z_{\beta,u} : \mathcal{O} \to \mathcal{Z}$
satisfying $z_{\beta,u}(o) \in \mathbf{Z}(o)$ and
$\psi_{\beta,u}(o) = u^T \varphi(z_{\beta,u}(o), \beta)$ almost surely.

The envelope in Assumption~\ref{ass:z_completion} gives
\[
    |\psi_{\beta,u}| \le \sup_{z\in\mathbf{Z}}\norm{\varphi(z,\beta)}_2 \le F.
\]
so $\psi_{\beta,u}$ is integrable.

\subsection{Proof of Theorem~\ref{thm:z_sharp_equivalence}}
\label{app:proofplan_z_sharp_equivalence}

Fix $\beta \in \Theta$ and define the attainable moment set
\[
    \mathcal{M}(\beta) := \left\{ \mathbb{E}_P\bigl[\varphi((X,Y,M),\beta)\bigr] : P \in \mathcal{P}_{\mathrm{adm}} \right\} \subseteq \mathbb{R}^r,
\]
and the Aumann integral
\[
    \mathcal{G}(\beta) := \E_{\mathrm{Aumann}}\bigl[\Phi_\beta\bigr] = \left\{ \mathbb{E}[V(O)] : V(o) \in \Phi_\beta(o) \text{ } P_{\mathrm{obs}}\text{-a.s., } V \text{ is integrable} \right\} \subseteq \mathbb{R}^r.
\]
To prove $\mathcal{M}(\beta) \subseteq \mathcal{G}(\beta)$, let
$v \in \mathcal{M}(\beta)$. There exists $P \in \mathcal{P}_{\mathrm{adm}}$
such that $v = \mathbb{E}_P[\varphi((X,Y,M),\beta)]$.
Under $P$, the pair $(O, (X,Y,M))$ has a joint law on $\mathcal{O} \times \mathcal{Z}$. Since both spaces are standard Borel, the disintegration theorem \citep{Kallenberg2002} guarantees the existence of a probability kernel $K_P(dz \mid o)$ on $\mathcal{Z}$ given $\mathcal{O}$ such that:
\[
    P(do, dz) = P_{\mathrm{obs}}(do)\, K_P(dz \mid o).
\]
The marginal on $\mathcal{O}$ is $P_{\mathrm{obs}}$: indeed, for any measurable $A \subseteq \mathcal{O}$,
\[
    P(O \in A) = P\bigl(\mathsf{Obs}^{-1}(A)\bigr) = \bigl(P \circ \mathsf{Obs}^{-1}\bigr)(A) = P_{\mathrm{obs}}(A),
\]
where the last equality is the admissibility condition. Moreover, the condition $P((X,Y,M) \in \mathbf{Z}) = 1$ translates into a support constraint on the kernel: since
\[
    1 = P\bigl((X,Y,M) \in \mathbf{Z}\bigr) = \int_{\mathcal{O}} K_P\bigl(\mathbf{Z}(o) \mid o\bigr)\, P_{\mathrm{obs}}(do),
\]
and $K_P(\mathbf{Z}(o) \mid o) \le 1$ for all $o$, we conclude that $K_P(\mathbf{Z}(o) \mid o) = 1$ for $P_{\mathrm{obs}}$-almost every $o \in \mathcal{O}$. In other words, conditionally on the observed data $o$, the complete-data vector $(X,Y,M)$ is almost surely compatible with $o$.
Define the conditional score expectation function on $\mathcal O$:
\[
    V_P(o) = \int_{\mathcal{Z}} \varphi(z, \beta) K_P(dz \mid o).
\]
Since $\mathbf{Z}(o)$ is compact, the support of $K_P(\cdot \mid o)$ is contained in the compact set $\mathbf{Z}(o)$ almost surely. Thus, since $z \mapsto \varphi(z, \beta)$ is continuous, $\Phi_\beta(o) = \{\varphi(z, \beta) : z \in \mathbf{Z}(o)\}$ is compact and the point $V_P(o)$ lies in the closed convex hull of $\Phi_\beta(o)$ almost surely:
\[
    V_P(o) \in \mathrm{conv}\bigl(\Phi_\beta(o)\bigr) \quad P_{\mathrm{obs}}\text{-a.s.}
\]
Since $F$ is integrable, $V_P$ (when composed with $O$) defines an integrable selection of the convexified score correspondence $\mathrm{conv}(\Phi_\beta)$. Since the underlying probability space is non-atomic, Aumann's convexity theorem ensures that the Aumann expectation of $\Phi_\beta$ is convex and coextensive with that of its convex hull:
\[
    \E_{\mathrm{Aumann}}\bigl[\Phi_\beta\bigr] = \E_{\mathrm{Aumann}}\bigl[\mathrm{conv}\bigl(\Phi_\beta\bigr)\bigr].
\]
Therefore,
\[
    v = \mathbb{E}_P[\varphi((X,Y,M),\beta)] = \mathbb{E}[V_P(O)] \in \E_{\mathrm{Aumann}}\bigl[\mathrm{conv}\bigl(\Phi_\beta\bigr)\bigr] = \mathcal{G}(\beta).
\]
Conversely, let $v \in \mathcal{G}(\beta)$. There exists an integrable
selection $V(o) \in \Phi_\beta(o)$ such that
$v = \mathbb{E}[V(O)]$. Consider the inverse-image correspondence
\[
    \Lambda(o) = \{z \in \mathbf{Z}(o) : \varphi(z,\beta) = V(o)\} \subseteq \mathcal{Z}.
\]
By definition of $\Phi_\beta(o)$, the set $\Lambda(o)$ is non-empty for almost every $o$. It is also compact: since $z \mapsto \varphi(z,\beta)$ is continuous, $\Lambda(o)$ is a closed subset of the compact set $\mathbf{Z}(o)$. Furthermore, its graph is measurable because it is the intersection of the measurable graph of $\mathbf{Z}$ with the zero set of the jointly measurable function $(o,z) \mapsto \varphi(z,\beta) - V(o)$. By the Kuratowski--Ryll-Nardzewski measurable selection theorem \citep[Theorem~1.2.13]{Molchanov2005}, there exists a Borel measurable function $T : \mathcal{O} \to \mathcal{Z}$ such that $T(o) \in \Lambda(o)$ for almost every $o$.
Let $P_T$ be the law of $T(O)$ when $O$ has distribution $P_{\mathrm{obs}}$.
Since $T(o)\in\mathbf Z(o)$, we have $\mathsf{Obs}(T(o))=o$ almost surely;
hence $P_T\in\mathcal P_{\mathrm{adm}}$, and:
\[
    \mathbb{E}_{P_T}[\varphi((X,Y,M),\beta)] 
    = \int_{\mathcal{O}} \varphi(T(o), \beta) P_{\mathrm{obs}}(do) 
    = \int_{\mathcal{O}} V(o) P_{\mathrm{obs}}(do) = \mathbb{E}[V(O)] = v.
\]
Consequently, $\mathcal{G}(\beta) \subseteq \mathcal{M}(\beta)$, and
\eqref{eq:z_attainable_moments_equal_aumann} follows:
\[
    \mathcal{G}(\beta) = \left\{ \mathbb{E}_P[\varphi((X,Y,M),\beta)] : P \in \mathcal{P}_{\mathrm{adm}} \right\}.
\]
The equivalence of the zero sets $\Bstar_{\mathrm{Obs}} = \Bstar_{\mathrm{Aumann}}$ follows immediately:
\[
    \beta \in \Bstar_{\mathrm{Obs}} 
    \iff 0 \in \left\{ \mathbb{E}_P[\varphi((X,Y,M),\beta)] : P \in \mathcal{P}_{\mathrm{adm}} \right\}
    \iff 0 \in \mathcal{G}(\beta)
    \iff \beta \in \Bstar_{\mathrm{Aumann}}.
\]

Under Assumption~\ref{ass:z_completion}, $\Phi_\beta$ is compact-valued
and integrably bounded by $F$, so its Aumann expectation is compact. Since the
underlying probability space is non-atomic, Aumann's convexity theorem also
implies that $\mathcal{G}(\beta)$ is convex, even when the values
$\Phi_\beta(o)$ are not.

\subsection{Proof of the Support-Function Identity}
\label{app:proofplan_z_support}
For $u\in\mathbb S^{r-1}$, define
\begin{equation}
\psi_{\beta,u}(o)=\sup_{z\in\mathbf Z(o)}u^T\varphi(z,\beta).
\label{eq:z_support_class_element}
\end{equation}

For $u \in \mathbb{S}^{r-1}$, let $V(O)$ be any integrable selection of
$\Phi_\beta$. By \eqref{eq:z_support_class_element},
\[
    u^T V(O) \le \sup_{v \in \Phi_\beta} u^T v = \psi_{\beta,u} \quad \text{a.s.}
\]
Taking expectations on both sides yields:
\[
    u^T \mathbb{E}[V(O)] \le \mathbb{E}[\psi_{\beta,u}].
\]
Since this holds for every selection $V(O)$, taking the supremum over all selection integrals yields $\psi_{\mathcal{G}(\beta)}(u) \le \mathbb{E}[\psi_{\beta,u}]$.

For the reverse inequality, the measurable maximum theorem provides a Borel
measurable function $z_{\beta,u} : \mathcal{O} \to \mathcal{Z}$ such that
$z_{\beta,u}(o) \in \mathbf{Z}(o)$ and
\[
    u^T \varphi(z_{\beta,u}(o), \beta) = \psi_{\beta,u}(\omega) \quad \text{a.s. where } o = O(\omega).
\]
Define the selection $V_{\beta,u}(o) = \varphi(z_{\beta,u}(o), \beta)$. This is a measurable and integrably bounded selection of $\Phi_\beta(o)$. Its expectation satisfies:
\[
    \mathbb{E}[V_{\beta,u}(O)] = \int_{\mathcal{O}} \varphi(z_{\beta,u}(o), \beta) P_{\mathrm{obs}}(do).
\]
Taking the inner product with $u$ gives:
\[
    u^T \mathbb{E}[V_{\beta,u}(O)] = \int_{\mathcal{O}} u^T \varphi(z_{\beta,u}(o), \beta) P_{\mathrm{obs}}(do) = \mathbb{E}[\psi_{\beta,u}].
\]
Since $\mathbb{E}[V_{\beta,u}(O)] \in \mathcal{G}(\beta)$, we must have:
\[
    \psi_{\mathcal{G}(\beta)}(u) \ge u^T \mathbb{E}[V_{\beta,u}(O)] = \mathbb{E}[\psi_{\beta,u}].
\]
Combining the two inequalities yields the support function identity $\psi_{\mathcal{G}(\beta)}(u) = \mathbb{E}[\psi_{\beta,u}]$.

\subsection{Proof of the Empirical Convexification Identity}
\label{app:proofplan_z_convexification}

Let $A_1, \dots, A_n \subseteq \mathbb{R}^r$ be non-empty. Since
$A_i \subseteq \mathrm{conv}(A_i)$, monotonicity of the Minkowski sum gives
\[
    \sum_{i=1}^n A_i \subseteq \sum_{i=1}^n \mathrm{conv}(A_i).
\]
Since the Minkowski sum of convex sets is convex, the set $\sum_{i=1}^n \mathrm{conv}(A_i)$ is convex. Since the convex hull operator $\mathrm{conv}(\cdot)$ is the smallest convex set containing its argument, we obtain:
\[
    \mathrm{conv}\left( \sum_{i=1}^n A_i \right) \subseteq \sum_{i=1}^n \mathrm{conv}(A_i).
\]

For the reverse inclusion, let
$y \in \sum_{i=1}^n \mathrm{conv}(A_i)$, so that
$y = \sum_{i=1}^n y_i$ with $y_i \in \mathrm{conv}(A_i)$. By
Carath\'eodory's theorem, each $y_i$ admits a finite convex representation
\[
    y_i = \sum_{k=1}^{r+1} \lambda_{i, k} a_{i, k}, \quad \text{where } a_{i, k} \in A_i, \ \lambda_{i, k} \ge 0, \ \sum_{k=1}^{r+1} \lambda_{i, k} = 1.
\]
Substituting these expansions into the sum for $y$ gives:
\[
    y = \sum_{i=1}^n \sum_{k=1}^{r+1} \lambda_{i, k} a_{i, k}.
\]
We can expand this sum as a convex combination of points in $\sum_{i=1}^n A_i$. Let $K = \{1, \dots, r+1\}^n$. For each multi-index $\mathbf{k} = (k_1, \dots, k_n) \in K$, define the joint weight $w_{\mathbf{k}} = \prod_{i=1}^n \lambda_{i, k_i}$ and the sum vector $s_{\mathbf{k}} = \sum_{i=1}^n a_{i, k_i} \in \sum_{i=1}^n A_i$. Note that $w_{\mathbf{k}} \ge 0$ and:
\[
    \sum_{\mathbf{k} \in K} w_{\mathbf{k}} = \sum_{k_1} \dots \sum_{k_n} \prod_{i=1}^n \lambda_{i, k_i} = \prod_{i=1}^n \left( \sum_{k=1}^{r+1} \lambda_{i, k} \right) = 1.
\]
By expanding, we verify that:
\[
    \sum_{\mathbf{k} \in K} w_{\mathbf{k}} s_{\mathbf{k}} 
    = \sum_{k_1} \dots \sum_{k_n} \left(\prod_{j=1}^n \lambda_{j, k_j}\right) \left( \sum_{i=1}^n a_{i, k_i} \right)
    = \sum_{i=1}^n \sum_{k_i=1}^{r+1} \lambda_{i, k_i} a_{i, k_i} = y.
\]
Thus, $y$ is a convex combination of elements $s_{\mathbf{k}} \in \sum_{i=1}^n A_i$, which implies that $y \in \mathrm{conv}(\sum_{i=1}^n A_i)$.
This establishes that $\sum_{i=1}^n \mathrm{conv}(A_i) \subseteq \mathrm{conv}(\sum_{i=1}^n A_i)$.

Scaling by $1/n$ now yields
\[
    \widehat{\mathcal{G}}_n(\beta) = \frac{1}{n} \sum_{i=1}^n \mathrm{conv}(\Phi_\beta(O_i)) = \mathrm{conv}\left( \frac{1}{n} \sum_{i=1}^n \Phi_\beta(O_i) \right) = \mathrm{conv}\left( \widehat{\mathcal{G}}^{\mathrm{d}}_n(\beta) \right).
\]

\subsection{Proof of Corollary~\ref{cor:z_pointwise_clt}}
\label{app:proof_z_pointwise_clt}
\begin{proof}
Apply the random-set central limit theorem to the i.i.d.\ random compact sets
$\mathrm{conv}(\Phi_\beta(O_i))$.
\end{proof}

\subsection{Proof of Theorem~\ref{thm:z_functional_clt}}
\label{app:proof_z_functional_clt}

\begin{proof}
Using the support-function identities and
identity~\eqref{eq:hausdorff_support},
\[
 \sqrt n\,\Delta_n
 =\sup_{\substack{\beta\in\Theta\\u\in\mathbb S^{r-1}}}
 \left|\frac1{\sqrt n}\sum_{i=1}^n
 \{\psi_{\beta,u}(O_i)-\E[\psi_{\beta,u}(O)]\}\right|.
\]
Under the usual measurable or separable version of the Donsker assumption
\citep[Chapter~2]{vanDerVaartWellner1996},
the process on the right converges weakly in
$\ell^\infty(\Theta\times\mathbb S^{r-1})$ to a tight centered Gaussian
process. Its supremum norm is therefore $\mathcal O_P(1)$, which proves
$\Delta_n=\mathcal O_P(n^{-1/2})$.
\end{proof}

\subsection{Proof of Corollary~\ref{cor:z_oracle_zero_root_n}}
\label{app:proofplan_z_oracle_zero_root_n}

Work on the event
$\sup_{\beta\in\Theta}d_{\mathrm{H}}(\widehat{\mathcal{G}}_n(\beta),
\mathcal{G}(\beta))=\Delta_n$. If $\beta \in \Bstar$, then
$d(0,\mathcal{G}(\beta))=0$. The Lipschitz property of the distance to a set
implies
\[
    d(0, \widehat{\mathcal{G}}_n(\beta)) \le d(0, \mathcal{G}(\beta)) + \Delta_n = \Delta_n.
\]
By definition of $\widehat{\mathcal{B}}_n^{+}$, we have $\beta \in \widehat{\mathcal{B}}_n^{+}$, which proves the containment $\Bstar \subseteq \widehat{\mathcal{B}}_n^{+}$.

Conversely, if $\beta \in \widehat{\mathcal{B}}_n^{+}$, then
$d(0,\widehat{\mathcal{G}}_n(\beta))\le\Delta_n$, and the same Lipschitz
property gives
\[
    d(0, \mathcal{G}(\beta)) \le d(0, \widehat{\mathcal{G}}_n(\beta)) + \Delta_n \le 2\Delta_n.
\]
Under Assumption~\ref{ass:z_error_bound}, the uniform zero-set error bound guarantees that $d(0, \mathcal{G}(\beta)) \ge \kappa_0 d(\beta, \Bstar)$. Combining these inequalities gives:
\[
    d(\beta, \Bstar) \le \frac{d(0, \mathcal{G}(\beta))}{\kappa_0} \le \frac{2\Delta_n}{\kappa_0}.
\]

Since $\Bstar \subseteq \widehat{\mathcal{B}}_n^{+}$, the directed Hausdorff
distance from $\Bstar$ to $\widehat{\mathcal{B}}_n^{+}$ is zero, whereas
\[
    \sup_{\beta \in \widehat{\mathcal{B}}_n^{+}} d(\beta, \Bstar) \le \frac{2\Delta_n}{\kappa_0}.
\]
Therefore, the symmetric Hausdorff distance is:
\[
    d_{\mathrm{H}}\left( \widehat{\mathcal{B}}_n^{+}, \Bstar \right) = \max\left( \sup_{\beta \in \widehat{\mathcal{B}}_n^{+}} d(\beta, \Bstar), \ \sup_{\beta \in \Bstar} d(\beta, \widehat{\mathcal{B}}_n^{+}) \right) \le \frac{2\Delta_n}{\kappa_0}.
\]

Theorem~\ref{thm:z_functional_clt} gives
$\Delta_n = \mathcal{O}_P(n^{-1/2})$; since $\kappa_0>0$ is fixed,
\[
    d_{\mathrm{H}}\left( \widehat{\mathcal{B}}_n^{+}, \Bstar \right) = \mathcal{O}_P(n^{-1/2}).
\]

\section{Proofs for the Hadamard Sensitivity Analysis}
\label{app:section3}
This appendix contains the proofs and heuristic derivations for
Section~\ref{sec:hadamard}.

\subsection{Proof of Theorem~\ref{thm:hadamard_width_bound}}
\label{app:proof_hadamard_width_bound}

\begin{proof}
Let $f(x)=\widehat\beta_j(x)$ and
$\ell(x)=f(0)+\sum_{r=1}^m\partial_r f(0)x_r$. The integral form of
Taylor's theorem gives
\[
 f(x)-\ell(x)
 =\int_0^1(1-t)\sum_{r,s=1}^m
   \partial_{rs}^2f(tx)x_rx_s\,dt.
\]
Since $|x_r|\le1$,
\[
 \norm{f-\ell}_\infty
 \le\frac12\sup_{x\in[-1,1]^m}
       \sum_{r,s=1}^m|\partial_{rs}^2f(x)|.
\]
For two real-valued functions, the difference between their widths is at most
twice their uniform distance. Moreover,
$\max_x\ell(x)-\min_x\ell(x)=2\sum_r|\partial_r f(0)|$. Hence
\begin{align*}
 \left|\operatorname{wid}_j(\{\widehat\beta(x):x\in[-1,1]^m\})
       -\sum_r|\tau_{j,r}|\right|
 &\le \sup_x\sum_{r,s}|\partial_{rs}^2f(x)|
   +\left|2\sum_r|\partial_r f(0)|-\sum_r|\tau_{j,r}|\right|\\
 &\le \sup_x\sum_{r,s}|\partial_{rs}^2f(x)|
   +\sum_r|\tau_{j,r}-2\partial_r f(0)|,
\end{align*}
which is~\eqref{eq:hadamard_width_bound}.

For the vector conclusion, set
\[
 g(x)=\widehat\beta(0)+\sum_{r=1}^m
       \partial_r\widehat\beta(0)x_r,
 \qquad
 \mathcal Z_0=\{g(x):x\in[-1,1]^m\}.
\]
Pairing $\widehat\beta(x)$ with $g(x)$ for the same $x$, in both directions,
the same Taylor formula, now in Euclidean norm, gives
\[
 \dH\!\left(\{\widehat\beta(x):x\in[-1,1]^m\},\mathcal Z_0\right)
 \le\frac12\sup_{x\in[-1,1]^m}
       \sum_{r,s=1}^m\norm{\partial_{rs}^2\widehat\beta(x)}_2.
\]
Using the same coefficients $t_r$ in the two zonotopes gives
\[
 \dH(\mathcal Z_0,\widehat{\mathcal Z}_{\mathrm H})
 \le\frac12\sum_{r=1}^m
       \norm{\tau_r-2\partial_r\widehat\beta(0)}_2.
\]
The triangle inequality proves~\eqref{eq:hadamard_zonotope_bound}.
\end{proof}

\begin{proof}[Proof of Corollary~\ref{cor:total_width_error}]
On the event $\dH(\Bhat,\Bstar)\le r_n(\delta)$, the coordinatewise extrema
therefore give
\[
 |\operatorname{wid}_j(\Bhat)-\operatorname{wid}_j(\Bstar)|
 \le2r_n(\delta).
\]
The triangle inequality and Theorem~\ref{thm:hadamard_width_bound} complete
the proof.
\end{proof}

\subsection{Smooth sensitivity path}
Let $x=(x_{ik})\in[-1,1]^m$ index the missing entries and let
$X_i(x)=(z_i(x),Y_i,M_i)$.  A smooth just-identified estimator satisfies
\begin{equation}
    F_n(\beta,x)
    :=
    \frac1n\sum_{i=1}^n\varphi\bigl(X_i(x),\beta\bigr)=0.
    \label{eq:z_sample_estimating_map_new}
\end{equation}
Write $\widehat\beta(x)$ for its solution.

\begin{assumption}[Smooth sensitivity path]
\label{ass:z_ift_new}
For every $x\in[-1,1]^m$, the equation $F_n(\beta,x)=0$ has a unique solution
$\widehat\beta(x)$ in the interior of $\Theta$.  The map $F_n$ is twice
continuously differentiable on an open neighborhood of the solution graph,
its first and second derivatives are uniformly bounded there, and
\[
    J_n(x):=D_\beta F_n(\widehat\beta(x),x)
\]
is invertible with
\[
    \sup_{x\in[-1,1]^m}\norm{J_n(x)^{-1}}_{\mathrm{op}}<\infty.
\]
\end{assumption}

Under Assumption~\ref{ass:z_ift_new}, the implicit function theorem makes
$x\mapsto\widehat\beta(x)$ twice continuously differentiable.  We can
therefore carry out the same sensitivity analysis as in
Section~\ref{sec:hadamard}: compute
the first and second derivatives by implicit differentiation, use the latter
to control the Taylor correction, and compare the accumulated curvature with
the first-order Hadamard effect.  The general derivative formulas are stated
and proved in
Lemmas~\ref{lem:z_ift_micro_derivatives}--\ref{lem:z_second_derivative_glm}.
In what follows, we illustrate this construction for canonical GLMs.

\subsection{Exact Sensitivity Formulas}
\label{app:z_exact_sensitivity}

\begin{lemma}[Exact microscopic sensitivity of general smooth Z-estimators]
\label{lem:z_ift_micro_derivatives}
Under Assumption~\ref{ass:z_ift_new}, for
$(i,k)\in\mathcal I_{\mathrm{miss}}$,
\begin{equation}
    \frac{\partial\widehat\beta}{\partial x_{ik}}
    =
    -\frac{c_k}{n}J_n(x)^{-1}
    D_z\varphi\bigl(X_i(x),\widehat\beta(x)\bigr)e_k,
    \label{eq:z_first_implicit_derivative_micro}
\end{equation}
where $D_z\varphi$ is the derivative with respect to the completed covariate.
\end{lemma}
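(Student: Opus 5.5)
The plan is to differentiate the identity $F_n(\widehat\beta(x),x)=0$ with respect to the single coordinate $x_{ik}$ and then solve for the derivative. Under Assumption~\ref{ass:z_ift_new}, the implicit function theorem already gives that $x\mapsto\widehat\beta(x)$ is $C^1$ (in fact $C^2$) on a neighbourhood of $[-1,1]^m$. It also gives that $J_n(x)=D_\beta F_n(\widehat\beta(x),x)$ is invertible there. So the chain rule applies directly to the composite map
\[
x\mapsto F_n(\widehat\beta(x),x)\equiv0,
\]
which yields
\[
J_n(x)\,\frac{\partial\widehat\beta}{\partial x_{ik}}+\frac{\partial F_n}{\partial x_{ik}}\bigl(\widehat\beta(x),x\bigr)=0 .
\]
Multiplying on the left by $J_n(x)^{-1}$ then isolates $\partial\widehat\beta/\partial x_{ik}$. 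The only substantive step is computing the partial derivative of $F_n$ in $x_{ik}$ with $\beta$ held fixed.

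For that step I will use the parametrization of the admissible box, $Z_{ik}(x)=z_{ik}^*+c_kx_{ik}$, with observed entries fixed. The coordinate $x_{ik}$ enters only through the $k$th coordinate of the $i$th completed row $z_i(x)$, and linearly. Hence
\[
\partial z_{i'}(x)/\partial x_{ik}=c_k\,\mathbf 1\{i'=i\}\,e_k,
\]
while $Y_i$ and $M_i$ do not depend on $x$. In the average $F_n=n^{-1}\sum_{i'}\varphi(X_{i'}(x),\beta)$, only the $i'=i$ summand depends on $x_{ik}$. The chain rule for $z\mapsto\varphi((z,Y_i,M_i),\beta)$ then gives
\[
\partial F_n/\partial x_{ik}=\frac{c_k}{n}\,D_z\varphi(X_i(x),\beta)\,e_k .
\]
Evaluating this at $\beta=\widehat\beta(x)$ and substituting into the previous display produces \eqref{eq:z_first_implicit_derivative_micro}.

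Two small points need checking. First, differentiability of $\varphi$ in $z$ must be available. Assumption~\ref{ass:z_ift_new} asserts $C^2$ smoothness of $F_n$ in $(\beta,x)$ on a neighbourhood of the solution graph. Since $F_n$ depends on $x_{ik}$ only through $\varphi(X_i(x),\cdot)$ and the map $x_{ik}\mapsto z_i$ is affine with nonzero slope $c_k$ (when $c_k=0$ both sides vanish trivially), $z_k\mapsto\varphi$ is differentiable along the relevant line. That is all the formula requires, with $D_z\varphi\,e_k$ read as the partial derivative in the $k$th covariate coordinate. Second, at boundary points $x_{ik}=\pm1$ the derivative is understood one-sidedly, or via the open neighbourhood in the assumption. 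The remaining obstacle is only this bookkeeping about where differentiability holds; the algebra itself is immediate.
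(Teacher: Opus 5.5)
Your proposal is correct and follows the paper's proof essentially step for step: you differentiate $F_n(\widehat\beta(x),x)=0$ in $x_{ik}$, observe that only the $i$th summand depends on $x_{ik}$ with $\partial z_i/\partial x_{ik}=c_ke_k$, and then left-multiply by $J_n(x)^{-1}$. Your extra remarks on where $D_z\varphi\,e_k$ exists and on one-sided derivatives at $x_{ik}=\pm1$ are bookkeeping that the paper leaves implicit.
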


\begin{proof}
Under Assumption~\ref{ass:z_ift_new}, the map
$(\beta,x)\mapsto F_n(\beta,x)$ is $C^1$ near the solution path and
$J_n(x)=D_\beta F_n(\widehat\beta(x),x)$ is invertible. The implicit function
theorem therefore makes $x\mapsto\widehat\beta(x)$ continuously differentiable
on $[-1,1]^m$.

Differentiating $F_n(\widehat\beta(x),x)=0$ with respect to $x_{ik}$ gives
\begin{equation}
    D_\beta F_n(\widehat\beta(x), x) \frac{\partial \widehat\beta}{\partial x_{ik}} + \frac{\partial F_n}{\partial x_{ik}}(\widehat\beta(x), x) = 0.
\end{equation}
By definition, $D_\beta F_n(\widehat\beta(x), x) = J_n(x)$. The partial derivative of the sample estimating function $F_n$ with respect to the microscopic parameter $x_{ik}$ is:
\begin{equation}
    \frac{\partial F_n}{\partial x_{ik}}(\beta, x)
    =
    \frac1n \sum_{j=1}^n \frac{\partial}{\partial x_{ik}} \varphi\bigl(X_j(x), \beta\bigr).
\end{equation}
Since the complete-data covariate vector $z_j(x)$ depends on the coordinate $x_{ik}$ only for observation $j = i$, we have $\frac{\partial z_j(x)}{\partial x_{ik}} = \mathbf{0}$ for all $j \neq i$. For the $i$-th observation, the derivative is:
\begin{equation}
    \frac{\partial z_i(x)}{\partial x_{ik}} = c_k e_k.
\end{equation}
Applying the chain rule, we obtain:
\begin{equation}
    \frac{\partial F_n}{\partial x_{ik}}(\beta, x)
    =
    \frac1n D_z \varphi\bigl(X_i(x), \beta\bigr) \frac{\partial z_i(x)}{\partial x_{ik}}
    =
    \frac{c_k}{n} D_z \varphi\bigl(X_i(x), \beta\bigr) e_k.
\end{equation}
Substitution into the differentiated identity gives
\begin{equation}
    J_n(x) \frac{\partial \widehat\beta}{\partial x_{ik}} + \frac{c_k}{n} D_z \varphi\bigl(X_i(x), \widehat\beta(x)\bigr) e_k = 0.
\end{equation}
Left-multiplying by $J_n(x)^{-1}$ yields:
\begin{equation}
    \frac{\partial \widehat\beta}{\partial x_{ik}}
    =
    - J_n(x)^{-1} \left( \frac{c_k}{n} D_z \varphi\bigl(X_i(x), \widehat\beta(x)\bigr) e_k \right),
\end{equation}
which is \eqref{eq:z_first_implicit_derivative_micro}.
\end{proof}

Thus each microscopic sensitivity combines the global inverse Jacobian with
one local score derivative.

\begin{lemma}[Exact microscopic sensitivity of canonical GLMs]
\label{lem:z_glm_micro_derivatives}
For
\[
    \varphi(X,\beta)=z\bigl(\mu(z^T\beta)-y\bigr),
\]
define
\begin{align*}
    \widehat\varepsilon_i(x)
    &:=Y_i-\mu\bigl(z_i(x)^T\widehat\beta(x)\bigr),\\
    A_\mu(x)
    &:=\sum_{a=1}^n
      \mu'\bigl(z_a(x)^T\widehat\beta(x)\bigr)z_a(x)z_a(x)^T.
\end{align*}
If $A_\mu(x)\succ0$, then
\begin{equation}
    \frac{\partial\widehat\beta}{\partial x_{ik}}
    =
    c_kA_\mu(x)^{-1}
    \left[
        \widehat\varepsilon_i(x)e_k
        -\mu'\bigl(z_i(x)^T\widehat\beta(x)\bigr)
         \widehat\beta_k(x)z_i(x)
    \right].
    \label{eq:z_glm_micro_derivative}
\end{equation}
For $\mu(t)=t$, this is the OLS formula of
Lemma~\ref{lem:exact_beta_deriv_micro}.
\end{lemma}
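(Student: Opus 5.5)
Since the canonical GLM score is an instance of the general smooth estimating function, I would obtain the formula as a direct specialisation of Lemma~\ref{lem:z_ift_micro_derivatives}. Two ingredients are needed: the Jacobian $J_n(x)=D_\beta F_n(\widehat\beta(x),x)$ and the covariate derivative $D_z\varphi$ applied to $e_k$. The only hypothesis of that lemma that needs checking beyond Assumption~\ref{ass:z_ift_new} is the invertibility of $J_n(x)$, and this will come from $A_\mu(x)\succ0$.

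\emph{First step: the Jacobian.} For $\varphi(z,\beta)=z\{\mu(z^T\beta)-y\}$ one has $D_\beta\varphi=\mu'(z^T\beta)zz^T$. Averaging over $a=1,\ldots,n$ at $\beta=\widehat\beta(x)$ gives $J_n(x)=n^{-1}A_\mu(x)$. Consequently $J_n(x)^{-1}=nA_\mu(x)^{-1}$, which is well defined because $A_\mu(x)\succ0$.

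\emph{Second step: the covariate derivative.} Differentiate $\varphi$ in $z$ along $e_k$ using the product rule:
\[
D_z\varphi(z,\beta)e_k=e_k\{\mu(z^T\beta)-y\}+z\,\mu'(z^T\beta)\beta_k .
\]
Evaluated at $z=z_i(x)$, $y=Y_i$ and $\beta=\widehat\beta(x)$, this equals
\[
-\widehat\varepsilon_i(x)e_k+\mu'\bigl(z_i(x)^T\widehat\beta(x)\bigr)\widehat\beta_k(x)z_i(x).
\]

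\emph{Third step: substitution.} Insert both expressions into \eqref{eq:z_first_implicit_derivative_micro}. The factors $1/n$ and $n$ cancel, and the overall minus sign flips the bracket. The result is exactly \eqref{eq:z_glm_micro_derivative}.

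\emph{Identity-link check.} For $\mu(t)=t$ we have $\mu'=1$, so $A_\mu=Z^TZ$. The formula then reduces to $c_k(Z^TZ)^{-1}[\widehat\varepsilon_ie_k-\widehat\beta_kz_i]$, which is the familiar OLS derivative.

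The main point requiring care is the applicability of the implicit function theorem. The lemma's hypothesis is stated only as $A_\mu(x)\succ0$, so I would either assume that $\mu$ is $C^1$ (for differentiability) or invoke Assumption~\ref{ass:z_ift_new} directly. I would also make explicit that positive definiteness at $x$ is precisely the invertibility of $J_n(x)$ required locally. The remaining computations are routine.
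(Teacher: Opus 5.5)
Your proposal is correct and matches the paper's proof step for step. You specialise Lemma~\ref{lem:z_ift_micro_derivatives} by computing $J_n(x)=n^{-1}A_\mu(x)$ and $D_z\varphi(X_i(x),\widehat\beta(x))e_k=-\widehat\varepsilon_i(x)e_k+\mu'(z_i(x)^T\widehat\beta(x))\widehat\beta_k(x)z_i(x)$, then substitute so that the factors of $n$ and the sign cancel. Your caveat that the implicit-function hypotheses of Assumption~\ref{ass:z_ift_new} are genuinely needed is fair, since the paper also relies on them tacitly through the general lemma; note only that $A_\mu(x)\succ0$ is sufficient for, not equivalent to, invertibility of $J_n(x)$.
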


\begin{proof}
For the canonical GLM score
$\varphi(X,\beta)=z\bigl(\mu(z^T\beta)-y\bigr)$, the $\beta$-Jacobian is
\begin{equation}
    D_\beta \varphi(X, \beta) = \mu'(z^T\beta) z z^T.
\end{equation}
Summing over the sample, the sample Jacobian of the estimating map is:
\begin{equation}
    J_n(x)
    =
    D_\beta F_n(\widehat\beta(x), x)
    =
    \frac1n \sum_{j=1}^n \mu'\bigl(z_j(x)^T \widehat\beta(x)\bigr) z_j(x) z_j(x)^T
    =
    \frac{1}{n} A_\mu(x).
\end{equation}
Thus, the inverse sample Jacobian is:
\begin{equation}
    J_n(x)^{-1} = n A_\mu(x)^{-1}.
\end{equation}

The derivative with respect to $z$ is
\begin{equation}
    D_z \varphi(X, \beta)
    =
    D_z \left[ z \bigl( \mu(z^T\beta) - y \bigr) \right].
\end{equation}
Using the product rule, the Jacobian is:
\begin{equation}
    D_z \varphi(X, \beta)
    =
    \bigl( \mu(z^T\beta) - y \bigr) I_p + \mu'(z^T\beta) z \beta^T.
\end{equation}
Evaluating this derivative at $X = X_i(x)$ and $\beta = \widehat\beta(x)$, we obtain:
\begin{equation}
    D_z \varphi\bigl(X_i(x), \widehat\beta(x)\bigr) e_k
    =
    \bigl( \mu(z_i(x)^T \widehat\beta(x)) - Y_i \bigr) e_k + \mu'\bigl(z_i(x)^T \widehat\beta(x)\bigr) z_i(x) \widehat\beta_k(x).
\end{equation}
Using the definition of the residual $\widehat\varepsilon_i(x) = Y_i - \mu(z_i(x)^T \widehat\beta(x))$, this simplifies to:
\begin{equation}
    D_z \varphi\bigl(X_i(x), \widehat\beta(x)\bigr) e_k
    =
    -\widehat\varepsilon_i(x) e_k + \mu'\bigl(z_i(x)^T \widehat\beta(x)\bigr) \widehat\beta_k(x) z_i(x).
\end{equation}

Substitution into \eqref{eq:z_first_implicit_derivative_micro} yields
\begin{align}
    \frac{\partial \widehat\beta}{\partial x_{ik}}
    &= - J_n(x)^{-1} \left( \frac{c_k}{n} D_z \varphi\bigl(X_i(x), \widehat\beta(x)\bigr) e_k \right) \notag \\
    &= - n A_\mu(x)^{-1} \left( \frac{c_k}{n} \left[ -\widehat\varepsilon_i(x) e_k + \mu'\bigl(z_i(x)^T \widehat\beta(x)\bigr) \widehat\beta_k(x) z_i(x) \right] \right) \notag \\
    &= A_\mu(x)^{-1} c_k \left[ \widehat\varepsilon_i(x) e_k - \mu'\bigl(z_i(x)^T \widehat\beta(x)\bigr) \widehat\beta_k(x) z_i(x) \right].
\end{align}
This is \eqref{eq:z_glm_micro_derivative}.
\end{proof}

Second derivatives measure the error of the first-order imputation
approximation.

\begin{lemma}[Second-order microscopic sensitivity of general Z-estimators]
\label{lem:z_second_derivative_general}
Under Assumption~\ref{ass:z_ift_new}, for
$(i,k),(j,l)\in\mathcal I_{\mathrm{miss}}$,
\begin{equation}
\begin{split}
    \frac{\partial^2\widehat\beta}{\partial x_{ik}\partial x_{jl}}
    =
    -J_n(x)^{-1}\Bigg[
        \frac{\partial^2F_n}{\partial x_{ik}\partial x_{jl}}
        +D^2_{\beta x_{ik}}F_n\frac{\partial\widehat\beta}{\partial x_{jl}}
        +D^2_{\beta x_{jl}}F_n\frac{\partial\widehat\beta}{\partial x_{ik}}\\
        +D^2_{\beta\beta}F_n
        \left[
            \frac{\partial\widehat\beta}{\partial x_{ik}},
            \frac{\partial\widehat\beta}{\partial x_{jl}}
        \right]
    \Bigg],
\end{split}
\label{eq:z_second_derivative_general}
\end{equation}
where all derivatives of $F_n$ are evaluated at $(\widehat\beta(x),x)$.
\end{lemma}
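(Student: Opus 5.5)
The plan is to differentiate the identity $F_n(\widehat\beta(x),x)=0$ twice, once in $x_{jl}$ and then in $x_{ik}$. Under Assumption~\ref{ass:z_ift_new}, $F_n$ is $C^2$ on a neighborhood of the solution graph and $J_n(x)$ is invertible. The implicit function theorem, already invoked in Lemma~\ref{lem:z_ift_micro_derivatives}, then makes $x\mapsto\widehat\beta(x)$ twice continuously differentiable, so every chain-rule step below is legitimate. Lemma~\ref{lem:z_ift_micro_derivatives} gives the first-order identity
\[
 D_\beta F_n(\widehat\beta(x),x)\,\frac{\partial\widehat\beta}{\partial x_{jl}}(x)
 +\frac{\partial F_n}{\partial x_{jl}}(\widehat\beta(x),x)=0,
\]
valid for all $x$ in the cube. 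I will take this as the starting point and differentiate both sides in $x_{ik}$.

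The product and chain rules handle the first term. Differentiating $D_\beta F_n(\widehat\beta(x),x)$ in $x_{ik}$ produces two pieces: $D^2_{\beta x_{ik}}F_n$ from the explicit $x$-dependence, and $D^2_{\beta\beta}F_n[\partial\widehat\beta/\partial x_{ik},\,\cdot\,]$ through $\widehat\beta(x)$. Each piece is applied to $\partial\widehat\beta/\partial x_{jl}$. A further summand $J_n(x)\,\partial^2\widehat\beta/\partial x_{ik}\partial x_{jl}$ comes from differentiating the vector factor.

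Differentiating the second term gives $\partial^2F_n/\partial x_{ik}\partial x_{jl}$ together with $D^2_{x_{jl}\beta}F_n\,\partial\widehat\beta/\partial x_{ik}$. Because $F_n$ is $C^2$, Schwarz's theorem gives the equality of mixed partials $D^2_{x_{jl}\beta}F_n=D^2_{\beta x_{jl}}F_n$. Likewise the bilinear form $D^2_{\beta\beta}F_n$ is symmetric, so the order of its two arguments does not matter.

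Collecting terms, the left side equals $J_n(x)\,\partial^2\widehat\beta/\partial x_{ik}\partial x_{jl}$ plus the four bracketed terms of \eqref{eq:z_second_derivative_general}. The whole expression equals zero. Left-multiplying by $J_n(x)^{-1}$, which exists under Assumption~\ref{ass:z_ift_new}, then yields the claimed formula.

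There is no real analytic obstacle; the main risk is bookkeeping. The delicate point is to track which cross term comes from which differentiation, so that the two mixed terms appear exactly once each and are paired with the correct first derivatives. This relies on the symmetry of second derivatives noted above.

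A small additional remark is that all derivatives are evaluated at $(\widehat\beta(x),x)$. This is on the open neighborhood of the solution graph where Assumption~\ref{ass:z_ift_new} guarantees $C^2$ regularity. I will also note that the case $(i,k)=(j,l)$ is included without change.
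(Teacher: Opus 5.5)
Your proposal is correct and follows the paper's proof: differentiate the first-order implicit identity once more using the chain and product rules, then left-multiply by $-J_n(x)^{-1}$. The only difference is that the paper differentiates the $x_{ik}$-identity with respect to $x_{jl}$ rather than the reverse; this is immaterial given the symmetry of mixed partials, which you invoke explicitly and the paper uses tacitly.
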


\begin{proof}
Differentiate
\[
    D_\beta F_n\frac{\partial\widehat\beta}{\partial x_{ik}}
    +\frac{\partial F_n}{\partial x_{ik}}=0
\]
with respect to $x_{jl}$.  The chain and product rules give
\begin{align*}
    0={}&J_n(x)\frac{\partial^2\widehat\beta}
        {\partial x_{ik}\partial x_{jl}}
        +\frac{\partial^2F_n}{\partial x_{ik}\partial x_{jl}}
        +D^2_{\beta x_{ik}}F_n
          \frac{\partial\widehat\beta}{\partial x_{jl}}\\
       &+D^2_{\beta x_{jl}}F_n
          \frac{\partial\widehat\beta}{\partial x_{ik}}
        +D^2_{\beta\beta}F_n
          \left[
              \frac{\partial\widehat\beta}{\partial x_{ik}},
              \frac{\partial\widehat\beta}{\partial x_{jl}}
          \right].
\end{align*}
Multiplication by $-J_n(x)^{-1}$ proves
\eqref{eq:z_second_derivative_general}.
\end{proof}

\begin{lemma}[Second-order microscopic sensitivity of canonical GLMs]
\label{lem:z_second_derivative_glm}
For a canonical GLM, let
\[
    \Delta_{ik}(x)
    :=c_k\left[
        \widehat\varepsilon_i(x)e_k
        -\mu'\bigl(z_i(x)^T\widehat\beta(x)\bigr)
         \widehat\beta_k(x)z_i(x)
    \right].
\]
Then
\begin{equation}
    \frac{\partial^2\widehat\beta}{\partial x_{ik}\partial x_{jl}}
    =
    A_\mu(x)^{-1}
    \left(
        \frac{\partial\Delta_{ik}(x)}{\partial x_{jl}}
        -\frac{\partial A_\mu(x)}{\partial x_{jl}}
         \frac{\partial\widehat\beta}{\partial x_{ik}}
    \right),
    \label{eq:z_second_derivative_glm}
\end{equation}
with total derivatives on the right-hand side.
\end{lemma}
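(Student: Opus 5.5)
We will derive the GLM second-order formula by specializing the first-order identity of Lemma~\ref{lem:z_glm_micro_derivatives} and differentiating it once more, rather than by expanding the general formula~\eqref{eq:z_second_derivative_general} term by term. The starting point is the representation
\[
    A_\mu(x)\,\frac{\partial\widehat\beta}{\partial x_{ik}}=\Delta_{ik}(x),
\]
which holds identically in $x$ on $[-1,1]^m$. It is just~\eqref{eq:z_glm_micro_derivative} multiplied on the left by $A_\mu(x)$, using the definition of $\Delta_{ik}$.

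To differentiate, we need $x\mapsto\widehat\beta(x)$ to be $C^2$ and $A_\mu$, $\Delta_{ik}$ to be $C^1$. Under Assumption~\ref{ass:z_ift_new} the implicit function theorem gives the $C^2$ regularity of $\widehat\beta$. We assume $\mu$ is $C^2$, as in Remark~\ref{rem:z_probabilistic_domination_glm}. Then $A_\mu(x)=\sum_a\mu'(z_a(x)^T\widehat\beta(x))z_a(x)z_a(x)^T$ is $C^1$, because $z_a(x)$ is affine in $x$ and $\mu'$ is $C^1$. Similarly $\Delta_{ik}(x)$ is $C^1$, since it involves $\widehat\varepsilon_i=Y_i-\mu(z_i^T\widehat\beta)$, $\mu'(z_i^T\widehat\beta)$, $\widehat\beta_k$ and $z_i$.

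Next we differentiate the identity totally with respect to $x_{jl}$. The product rule for a matrix-valued function times a vector-valued function gives
\[
    \frac{\partial A_\mu}{\partial x_{jl}}\frac{\partial\widehat\beta}{\partial x_{ik}}
    +A_\mu\frac{\partial^2\widehat\beta}{\partial x_{ik}\partial x_{jl}}
    =\frac{\partial\Delta_{ik}}{\partial x_{jl}}.
\]
Since $A_\mu(x)\succ0$ (equivalently $J_n(x)=A_\mu(x)/n$ is invertible, as in the proof of Lemma~\ref{lem:z_glm_micro_derivatives}), we multiply on the left by $A_\mu(x)^{-1}$ and obtain~\eqref{eq:z_second_derivative_glm}. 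Both derivatives on the right-hand side are total derivatives: they include the dependence through $\widehat\beta(x)$ via the chain rule, so that term is not dropped.

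The main obstacle is bookkeeping rather than substance. We must make sure the total derivative of $A_\mu$ contains both pieces: the explicit dependence through $z_j(x)$, which only affects summand $a=j$ and enters through $c_le_l$, and the implicit dependence $\mu''(z_a^T\widehat\beta)z_a z_a^T\,z_a^T\partial\widehat\beta/\partial x_{jl}$ summed over all $a$.

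As an optional consistency check, these expansions can be substituted back into~\eqref{eq:z_second_derivative_general} with $J_n=A_\mu/n$. The explicit pieces match the $\partial^2F_n/\partial x\partial x$ and $D^2_{\beta x}F_n$ terms, and the $\mu''$ piece matches $D^2_{\beta\beta}F_n$ together with part of $D^2_{\beta x}F_n$. The direct differentiation argument above does not depend on this check.
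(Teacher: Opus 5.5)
Your proposal is correct and follows essentially the same route as the paper. Both start from the identity $A_\mu(x)\,\partial\widehat\beta/\partial x_{ik}=\Delta_{ik}(x)$ given by Lemma~\ref{lem:z_glm_micro_derivatives}, differentiate it totally in $x_{jl}$ with the product rule, and left-multiply by $A_\mu(x)^{-1}$; your remarks on the $C^2$ regularity of $\mu$ and $\widehat\beta$ and on the explicit and implicit parts of $\partial A_\mu/\partial x_{jl}$ spell out what the paper's two-line argument leaves implicit.
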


\begin{proof}
For canonical GLMs, the Z-estimator satisfies $A_\mu(x) \frac{\partial \widehat\beta}{\partial x_{ik}} = \Delta_{ik}$. Differentiating both sides with respect to $x_{jl}$ gives:
\begin{equation}
    A_\mu(x) \frac{\partial^2 \widehat\beta}{\partial x_{ik} \partial x_{jl}} 
    + \frac{\partial A_\mu(x)}{\partial x_{jl}} \frac{\partial \widehat\beta}{\partial x_{ik}} 
    = \frac{\partial \Delta_{ik}}{\partial x_{jl}},
\end{equation}
which immediately yields \eqref{eq:z_second_derivative_glm} upon left-multiplying by $A_\mu(x)^{-1}$.
\end{proof}

The linear approximation is reliable when the accumulated
curvature is small relative to the first derivative.

\subsection{Analytical GLM linearity condition}
\label{app:proof_z_analytical_linearity_glm}

For a canonical GLM, define
\begin{align*}
    \widehat\varepsilon_i(x)
    &:=Y_i-\mu\bigl(z_i(x)^T\widehat\beta(x)\bigr),\\
    A_\mu(x)
    &:=\sum_{a=1}^n
      \mu'\bigl(z_a(x)^T\widehat\beta(x)\bigr)z_a(x)z_a(x)^T,\\
    \Delta_{ik}(x)
    &:=c_k\left[
        \widehat\varepsilon_i(x)e_k
        -\mu'\bigl(z_i(x)^T\widehat\beta(x)\bigr)
         \widehat\beta_k(x)z_i(x)
    \right].
\end{align*}

\begin{theorem}[Analytical GLM linearity condition]
\label{thm:z_analytical_linearity_glm}
Suppose that $A_\mu(x)\succ0$.  For fixed $j$ and $k$, the first-order
sensitivity dominates the accumulated second-order sensitivity if, uniformly
in $x\in[-1,1]^m$,
\begin{equation}
\begin{aligned}
&\sum_{i\in\mathcal M_k}
 \sum_{(a,l)\in\mathcal I_{\mathrm{miss}}}
 \left|
 e_j^TA_\mu(x)^{-1}
 \left(
     \frac{\partial\Delta_{ik}(x)}{\partial x_{al}}
     -\frac{\partial A_\mu(x)}{\partial x_{al}}
      A_\mu(x)^{-1}\Delta_{ik}(x)
 \right)
 \right|\\
&\hspace{35mm}<
\sum_{i\in\mathcal M_k}
\left|e_j^TA_\mu(x)^{-1}\Delta_{ik}(x)\right|.
\end{aligned}
\label{cond:z_analytical_linearity_glm}
\end{equation}
Under this condition, the first-order Hadamard effect associated with
covariate $k$ dominates its Taylor correction.
\end{theorem}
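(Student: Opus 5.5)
This is the per-covariate version of the Taylor argument used in the proof of Theorem~\ref{thm:hadamard_width_bound}, specialised to the canonical GLM derivative formulas of Lemmas~\ref{lem:z_glm_micro_derivatives} and~\ref{lem:z_second_derivative_glm}. Fix $j,k$ and write $f(x)=\widehat\beta_j(x)$. The first step is to identify both sides of \eqref{cond:z_analytical_linearity_glm}. By Lemma~\ref{lem:z_glm_micro_derivatives}, $\partial_{ik}f(x)=e_j^TA_\mu(x)^{-1}\Delta_{ik}(x)$. Hence the right-hand side is $\sum_{i\in\mathcal M_k}|\partial_{ik}f(x)|$, which is the total first-order sensitivity of $\widehat\beta_j$ to the missing entries of covariate $k$.

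By Lemma~\ref{lem:z_second_derivative_glm}, the total second derivative is
\[
\partial^2_{ik,al}f=e_j^TA_\mu^{-1}\Bigl(\partial_{al}\Delta_{ik}-(\partial_{al}A_\mu)\,\partial_{ik}\widehat\beta\Bigr).
\]
Substituting $\partial_{ik}\widehat\beta=A_\mu^{-1}\Delta_{ik}$ from the first lemma turns the left-hand summand into exactly $|\partial^2_{ik,al}f(x)|$. So the condition reads
\[
\sup_x\sum_{i\in\mathcal M_k}\sum_{(a,l)}|\partial^2_{ik,al}f(x)|<\inf_x\sum_{i\in\mathcal M_k}|\partial_{ik}f(x)|,
\]
understood uniformly in $x$. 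This identification is purely algebraic, apart from one check: the derivatives in Lemma~\ref{lem:z_second_derivative_glm} must be total derivatives. That means $\partial_{al}\Delta_{ik}$ has to include the dependence through $\widehat\beta(x)$, via $\widehat\varepsilon_i$, $\mu'$ and $\widehat\beta_k$. I would state this explicitly so the quantities match.

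Next I would make ``dominates its Taylor correction'' precise, along the lines of the proof of Theorem~\ref{thm:hadamard_width_bound}. Restrict $f$ to the coordinates $\{(i,k):i\in\mathcal M_k\}$ and compare it with the first-order expansion $\ell_k$ at any base point $x$. The integral Taylor remainder over a displacement $h$ with $|h_{ik}|\le1$ is bounded by
\[
\tfrac12\sup_{x'}\sum_{i\in\mathcal M_k}\sum_{i'\in\mathcal M_k}|\partial^2_{ik,i'k}f(x')|.
\]
This is at most half the left-hand side of \eqref{cond:z_analytical_linearity_glm}, because that side also sums over all $(a,l)\in\mathcal I_{\mathrm{miss}}$. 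The linear part has half-range $\sum_{i\in\mathcal M_k}|\partial_{ik}f(x)|$, since the range of $\ell_k$ on the cube is twice this. Under the condition, the Taylor correction is therefore strictly smaller than the first-order effect.

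The Hadamard contrasts relate to this through $\tau_{j,(i,k)}=2\partial_{ik}f(0)+$ (terms controlled by the same Hessian sums), as in the second term of $\varepsilon_{\mathrm H,j}$. It follows that the main-effect width for covariate $k$ exceeds its curvature error.

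The main obstacle is interpretive, not computational. The theorem's conclusion is qualitative, so the work lies in fixing which inequality formalises it. I would commit to the inequality
\[
\sup_{h}\bigl|f(x+h)-\ell_k(h)\bigr|<\tfrac12\operatorname{wid}(\ell_k)
\]
for perturbations supported on covariate $k$, and prove it uniformly in $x$. A secondary point is to confirm that Assumption~\ref{ass:z_ift_new} (implied for GLMs by $A_\mu\succ0$ along the path together with $\mu\in C^2$) legitimises the $C^2$ Taylor expansion on the whole cube. I would invoke the implicit function theorem exactly as in Lemma~\ref{lem:z_ift_micro_derivatives}.
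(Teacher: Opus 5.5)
Your proposal is correct and follows the paper's proof. Like the paper, you identify the right-hand side with $\sum_{i\in\mathcal M_k}|\partial\widehat\beta_j/\partial x_{ik}|$ and the left-hand side with the accumulated total second derivatives via Lemmas~\ref{lem:z_glm_micro_derivatives} and~\ref{lem:z_second_derivative_glm}, and then apply Taylor's theorem. You make that last step more explicit than the paper does, through the sup/inf reading of ``uniformly'' and the restriction to the covariate-$k$ block.

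Two small cautions. First, expand about the center $x=0$ so that $x+h$ stays in $[-1,1]^m$. Second, your side remark on the contrasts is looser than stated. In a screening design the aliasing error $\tau_{j,(i,k)}-2\partial_{ik}\widehat\beta_j(0)$ is controlled by Hessian entries over all of $\mathcal I_{\mathrm{miss}}\times\mathcal I_{\mathrm{miss}}$, not only by the covariate-$k$ rows that appear in the condition.
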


\begin{proof}
By Lemmas~\ref{lem:z_glm_micro_derivatives} and
\ref{lem:z_second_derivative_glm},
\[
    \frac{\partial\widehat\beta_j}{\partial x_{ik}}
    =
    e_j^TA_\mu(x)^{-1}\Delta_{ik}(x)
\]
and
\[
    \frac{\partial^2\widehat\beta_j}{\partial x_{ik}\partial x_{al}}
    =
    e_j^TA_\mu(x)^{-1}
    \left(
        \frac{\partial\Delta_{ik}(x)}{\partial x_{al}}
        -\frac{\partial A_\mu(x)}{\partial x_{al}}
         A_\mu(x)^{-1}\Delta_{ik}(x)
    \right).
\]
Hence the two sides of
\eqref{cond:z_analytical_linearity_glm} are respectively the accumulated
absolute second- and first-order sensitivities associated with covariate $k$.
Taylor's theorem, uniformly on $[-1,1]^m$, then gives the stated domination.
\end{proof}

\subsection{Regularity for the probabilistic approximations}
\begin{assumption}[MNAR Regularity]
\label{ass:mnar_regularity}
Let $n\to\infty$ while the expected missingness fractions $\PP(M_k=1)$ remain
bounded away from zero and one. Fix an admissible trajectory of design matrices
$(Z_n)_{n\ge1}$, with $Z_n\in\Xcal_{\mathrm{adm},n}$, where
$\Xcal_{\mathrm{adm},n}$ is the admissible design space at sample size $n$.
Suppose that, for some non-singular matrix $\Sigma$,
\[
    \frac1n Z_n^T Z_n \xrightarrow{p} \Sigma,
    \qquad \widehat\beta(Z_n)\xrightarrow{p}\beta^*.
\]
Write $\Omega=\Sigma^{-1}$, let $\omega_j^T$ be its $j$-th row and set
$\varepsilon=Y-X^T\beta^*$. The conditional laws of large numbers used below
for the missingness-weighted first- and second-order terms are assumed to hold
along this trajectory.
\end{assumption}

\subsection{Derivation of Remark~\ref{rem:z_probabilistic_domination_glm}}
\label{app:proofplan_z_probabilistic_glm}

\begin{proof}
Following the same logic as the OLS probabilistic proof in Appendix~\ref{app:proof_probabilistic_domination}, the condition for the linear approximation to dominate is that the sum of the absolute curvatures over the missing coordinates is dominated by the first-order gradient.
Summing the exact microscopic monotonicity requirement over $i \in \mathcal{M}_k$:
\begin{equation} \label{eq:glm_sum_monotonicity}
    \sum_{i \in \mathcal{M}_k} \left| \frac{\partial \widehat\beta_j}{\partial x_{ik}} \right| 
    \;>\; 
    \sum_{i \in \mathcal{M}_k} \sum_l \sum_{a \in \mathcal{M}_l} \left| \frac{\partial^2 \widehat\beta_j}{\partial x_{ik} \partial x_{al}} \right|.
\end{equation}
For the left-hand side, since $\frac{\partial \widehat\beta}{\partial x_{ik}} = A_\mu(x)^{-1} \Delta_{ik}$, and $n A_\mu(x)^{-1} \xrightarrow{p} \Omega_\mu$, we have:
\begin{equation}
    \sum_{i \in \mathcal{M}_k} \left| \frac{\partial \widehat\beta_j}{\partial x_{ik}} \right|
    =
    \left( \frac{|\mathcal{M}_k|}{n} \right) \frac{1}{|\mathcal{M}_k|} \sum_{i \in \mathcal{M}_k} \left| e_j^T (n A_\mu(x)^{-1}) \frac{\Delta_{ik}}{c_k} \right| c_k.
\end{equation}
By the Law of Large Numbers, this converges in probability to:
\begin{equation}
    \PP(M_k=1) \, c_k \, \E_P\left[ \left| \Omega_{\mu, jk} \varepsilon^*(z) - \mu'\bigl(z^T \beta^*\bigr) \beta^*_k (\omega_{\mu, j}^T z) \right| \;\middle|\; M_k = 1 \right].
\end{equation}

For the right-hand side, work in the direct-curvature regime stated in
Remark~\ref{rem:z_probabilistic_domination_glm}, where the aggregated global
coupling terms are negligible relative to the first-order sensitivity. The
retained term corresponds to coordinates on the same observation $a=i$,
including the diagonal:
\begin{equation}
    \sum_l \sum_{i \in \mathcal{M}_k \cap \mathcal{M}_l} \left| \frac{\partial^2 \widehat\beta_j}{\partial x_{ik} \partial x_{il}} \right|.
\end{equation}
From Lemma~\ref{lem:z_second_derivative_glm}, the retained direct second-order derivative term is:
\begin{equation}
    \left( \frac{\partial^2 \widehat\beta}{\partial x_{ik} \partial x_{il}} \right)_{\text{direct}}
    =
    - A_\mu(x)^{-1} \Gamma_{ikl}(x),
\end{equation}
where
\[
\begin{aligned}
    \Gamma_{ikl}(x)
    =c_kc_l\Big[&
        \mu'\bigl(z_i(x)^T\widehat\beta(x)\bigr)
        \bigl(\widehat\beta_l(x)e_k+\widehat\beta_k(x)e_l\bigr)\\
        &+\mu''\bigl(z_i(x)^T\widehat\beta(x)\bigr)
        \widehat\beta_k(x)\widehat\beta_l(x)z_i(x)
    \Big].
\end{aligned}
\]
Summing over $i \in \mathcal{M}_k \cap \mathcal{M}_l$ and scaling by $n/n$ yields the limit in probability:
\begin{equation}
    \sum_l \PP(M_k=1, M_l=1) \, c_k c_l \, \mathcal{C}_{j,kl}^*.
\end{equation}
Combining these limits into \eqref{eq:glm_sum_monotonicity} and dividing by $\PP(M_k=1) c_k$ directly yields the probabilistic GLM linearity condition \eqref{cond:z_probabilistic_glm}.
\end{proof}

\subsection{Simplified GLM probabilistic condition}
\label{app:derivation_z_glm_simplifications}

\begin{remark}[Simplified probabilistic condition]
\label{rem:z_glm_simplifications}
Suppose heuristically that the signal dominates the residual, that
$c_l\asymp c$ and $|\beta_l^*|\asymp|\beta_k^*|$, that $\Omega_\mu$ is
diagonal, and that the covariates are standardized. Then condition
\eqref{cond:z_probabilistic_glm} reduces, up to multiplicative constants, to:
\begin{equation}
\label{cond:z_glm_simplifications}
\begin{cases}
    \text{For } j=k: \\[1ex]
    \displaystyle\quad c\sum_l\PP(M_l=1\mid M_k=1) \mathbb E\left[ |\mu'(z^T\beta^*)| + |\beta_k^*||\mu''(z^T\beta^*)| |z_k| \,\middle|\, M_k=1,M_l=1 \right] \\[1ex]
    \displaystyle\quad\quad \ll \mathbb E\left[ |\mu'(z^T\beta^*)| |z_k| \,\middle|\, M_k=1 \right], \\[3ex]
    \text{For } j\ne k: \\[1ex]
    \displaystyle\quad c\,\PP(M_j=1\mid M_k=1) \mathbb E\left[ |\mu'(z^T\beta^*)| + |\beta_k^*||\mu''(z^T\beta^*)| |z_j| \,\middle|\, M_k=1,M_j=1 \right] \\[1ex]
    \displaystyle\quad\quad + c|\beta_k^*| \sum_{l\ne j}\PP(M_l=1\mid M_k=1) \mathbb E\left[ |\mu''(z^T\beta^*)| |z_j| \,\middle|\, M_k=1,M_l=1 \right] \\[1ex]
    \displaystyle\quad\quad \ll \mathbb E\left[ |\mu'(z^T\beta^*)| |z_j| \,\middle|\, M_k=1 \right].
\end{cases}
\end{equation}
The bounded curvature of the logistic mean function makes this condition
potentially more favorable than for Poisson regression, whose curvature may
grow rapidly.
\end{remark}

In the high signal-to-noise regime,
\[
    \E_P\!\left[
        |\Omega_{\mu,jk}\varepsilon^*(z)|
        \,\middle|\,M_k=1
    \right]
    \ll
    |\beta_k^*|
    \mathbb E\!\left[
        |\mu'(z^T\beta^*)|\,|\omega_{\mu,j}^Tz|
        \,\middle|\,M_k=1
    \right].
\]
The right-hand side of \eqref{cond:z_probabilistic_glm} is therefore of order
\[
    |\beta_k^*|
    \mathbb E\!\left[
        |\mu'(z^T\beta^*)|\,|\omega_{\mu,j}^Tz|
        \,\middle|\,M_k=1
    \right].
\]
If $c_l\asymp c$ and $|\beta_l^*|\asymp|\beta_k^*|$, the triangle
inequality bounds the left-hand side, up to a constant, by
\[
\begin{aligned}
    c|\beta_k^*|\sum_l\PP(M_l=1\mid M_k=1)
    \Bigg\{&
        \bigl(|\Omega_{\mu,jk}|+|\Omega_{\mu,jl}|\bigr)
        \mathbb E\!\left[
            |\mu'(z^T\beta^*)|
            \,\middle|\,M_k=1,M_l=1
        \right]\\
        &+
        |\beta_k^*|
        \mathbb E\!\left[
            |\mu''(z^T\beta^*)|\,|\omega_{\mu,j}^Tz|
            \,\middle|\,M_k=1,M_l=1
        \right]
    \Bigg\}.
\end{aligned}
\]
When $\Omega_\mu$ is diagonal, all $l$ contribute to the first term for
$j=k$, whereas only $l=j$
contributes for $j\ne k$.  Cancelling the common factors
$|\beta_k^*|$ and the corresponding diagonal entry of $\Omega_\mu$ gives the simplified conditions:
\begin{equation}
\begin{cases}
    \text{For } j=k: \\[1ex]
    \displaystyle\quad c\sum_l\PP(M_l=1\mid M_k=1) \mathbb E\left[ |\mu'(z^T\beta^*)| + |\beta_k^*||\mu''(z^T\beta^*)| |z_k| \,\middle|\, M_k=1,M_l=1 \right] \\[1ex]
    \displaystyle\quad\quad \ll \mathbb E\left[ |\mu'(z^T\beta^*)| |z_k| \,\middle|\, M_k=1 \right], \\[3ex]
    \text{For } j\ne k: \\[1ex]
    \displaystyle\quad c\,\PP(M_j=1\mid M_k=1) \mathbb E\left[ |\mu'(z^T\beta^*)| + |\beta_k^*||\mu''(z^T\beta^*)| |z_j| \,\middle|\, M_k=1,M_j=1 \right] \\[1ex]
    \displaystyle\quad\quad + c|\beta_k^*| \sum_{l\ne j}\PP(M_l=1\mid M_k=1) \mathbb E\left[ |\mu''(z^T\beta^*)| |z_j| \,\middle|\, M_k=1,M_l=1 \right] \\[1ex]
    \displaystyle\quad\quad \ll \mathbb E\left[ |\mu'(z^T\beta^*)| |z_j| \,\middle|\, M_k=1 \right].
\end{cases}
\end{equation}

\subsection{Exact OLS derivatives}
\label{app:exact_bounds_micro}

The dependence of $\hat\beta(x)$ on a missing scalar entry $x_{ik}$ is obtained
by differentiating the Gram matrix $A(x)=Z(x)^TZ(x)$ and the normal equations.
A second differentiation then permits comparison of the coordinate gradient
with the accumulated curvatures.

\begin{lemma}[Exact Microscopic Gram Derivatives]
\label{lem:exact_bounds_micro}
Let $Q_Z = \sup_{Z \in \Xcal_{\mathrm{adm}}} \max_i \norm{z_i(Z)}_2$ be the deterministic spatial bound on the covariate vectors. For any missingness coordinate $(i,k) \in \mathcal{I}_{\mathrm{miss}}$, the first derivative of the information matrix $A(x)$ satisfies
\begin{equation}
    \frac{\partial A}{\partial x_{ik}} = c_k \bigl( e_k z_i(x)^T + z_i(x) e_k^T \bigr),
    \qquad
    \norm{\frac{\partial A}{\partial x_{ik}}}_{\mathrm{op}} \le 2 c_k \norm{z_i}_2 \le 2 (\max_m c_m) Q_Z.
\end{equation}
For any $(i,k),(a,l)\in\mathcal I_{\mathrm{miss}}$, including equal pairs,
\begin{equation}
    \frac{\partial^2 A}{\partial x_{ik}\partial x_{al}}
    =\mathbf 1\{i=a\}c_kc_l(e_ke_l^T+e_le_k^T).
    \label{eq:exact_gram_hessian_micro}
\end{equation}
In particular,
$\partial^2 A/\partial x_{ik}^2=2c_k^2e_ke_k^T$.
\end{lemma}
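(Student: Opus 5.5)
The plan is to compute directly from the parametrization. Write the Gram matrix as a sum of row outer products,
\[
 A(x)=Z(x)^TZ(x)=\sum_{a=1}^n z_a(x)z_a(x)^T,
\]
where $z_a(x)$ has observed entries fixed and missing entries $z_{ak}(x)=z_{ak}^*+c_kx_{ak}$. The only row depending on $x_{ik}$ is row $i$, and it depends on it affinely. Hence $\partial z_a/\partial x_{ik}=\mathbf 1\{a=i\}c_ke_k$, and all second derivatives of every row vanish.

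For the first derivative, apply the product rule to $z_iz_i^T$:
\[
 \partial_{ik}(z_iz_i^T)=c_k(e_kz_i^T+z_ie_k^T),
\]
with every other summand contributing zero. For the operator-norm bound, apply the triangle inequality to the two rank-one terms, using $\norm{uv^T}_{\mathrm{op}}=\norm{u}_2\norm{v}_2$ and $\norm{e_k}_2=1$. This gives
\[
 \norm{\partial A/\partial x_{ik}}_{\mathrm{op}}\le 2c_k\norm{z_i(x)}_2 .
\]
Then bound $c_k\le\max_mc_m$. Since $Z(x)\in\Xcal_{\mathrm{adm}}$ for every $x\in[-1,1]^m$ (each missing entry lies in $I_k$), we also have $\norm{z_i(x)}_2\le Q_Z$. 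The quantity $Q_Z$ is finite because the admissible rows range over compact boxes, so the supremum is over a compact set.

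For the Hessian, differentiate $c_k(e_kz_i^T+z_ie_k^T)$ with respect to $x_{al}$. The constant factors $c_ke_k$ pass through, and $\partial z_i/\partial x_{al}=\mathbf 1\{a=i\}c_le_l$. This yields
\[
 \partial^2_{ik,al}A=\mathbf 1\{i=a\}c_kc_l(e_ke_l^T+e_le_k^T).
\]
Taking the equal pair $(a,l)=(i,k)$ gives $2c_k^2e_ke_k^T$. No term from differentiating the rows twice appears, because the parametrization is affine.

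There is no real obstacle. The only point needing care is the bookkeeping for equal pairs and for $i=a$ with $k\ne l$, both of which fall under the indicator formula. It is also worth noting that the bound holds uniformly in $x$ because $Q_Z$ is taken over the whole admissible space.
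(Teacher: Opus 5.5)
Your proposal is correct and follows essentially the same route as the paper. It writes $A(x)=\sum_a z_a(x)z_a(x)^T$, uses $\partial z_a/\partial x_{ik}=\mathbf 1\{a=i\}c_ke_k$ in the product rule, bounds the symmetric rank-two matrix by the triangle inequality, and differentiates a second time to get a term that survives only when $a=i$. The only differences are minor: the paper proves $\norm{uv^T+vu^T}_{\mathrm{op}}\le 2\norm{u}_2\norm{v}_2$ inline via Cauchy--Schwarz where you cite $\norm{uv^T}_{\mathrm{op}}=\norm{u}_2\norm{v}_2$, and you spell out the final inequality $2c_k\norm{z_i}_2\le 2(\max_m c_m)Q_Z$ (via $Z(x)\in\Xcal_{\mathrm{adm}}$ and finiteness of $Q_Z$), which the paper leaves implicit.
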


\begin{proof}
The design matrix $Z(x) \in \R^{n \times p}$ is composed of row vectors $z_a(x)^T$ for $a \in \{1,\dots,n\}$. Thus, the Gram matrix can be decomposed as the sum of outer products:
\begin{equation}
    A(x) = Z(x)^T Z(x) = \sum_{a=1}^n z_a(x) z_a(x)^T.
\end{equation}
By construction, only the $i$-th observation vector $z_i(x)$ depends on the microscopic parameter $x_{ik}$. For any $a \neq i$, we have $\frac{\partial z_a(x)}{\partial x_{ik}} = \mathbf{0}$. For $a = i$, the derivative is given by the coordinate projection scaled by the half-width: $\frac{\partial z_i(x)}{\partial x_{ik}} = c_k e_k. $

Applying the product rule to the sum of outer products yields:
\begin{align}
    \frac{\partial A(x)}{\partial x_{ik}} &= \sum_{a=1}^n \left( \frac{\partial z_a(x)}{\partial x_{ik}} z_a(x)^T + z_a(x) \left( \frac{\partial z_a(x)}{\partial x_{ik}} \right)^T \right) \notag \\
    &= \frac{\partial z_i(x)}{\partial x_{ik}} z_i(x)^T + z_i(x) \left( \frac{\partial z_i(x)}{\partial x_{ik}} \right)^T \notag \\
    &= c_k e_k z_i(x)^T + z_i(x) \left( c_k e_k \right)^T \notag \\
    &= c_k \bigl( e_k z_i(x)^T + z_i(x) e_k^T \bigr).
\end{align}

To prove the operator norm bound, we analyze the general symmetric rank-two matrix $M = u v^T + v u^T$ for arbitrary vectors $u, v \in \R^p$. By definition, the operator norm under the $\ell_2$-norm is:
\begin{equation}
    \norm{u v^T + v u^T}_{\mathrm{op}} = \sup_{\norm{y}_2 = 1} \norm{(u v^T + v u^T)y}_2.
\end{equation}
Applying the triangle inequality and the Cauchy-Schwarz inequality, we obtain:
\begin{align}
    \norm{(u v^T + v u^T)y}_2 &= \norm{u (v^T y) + v (u^T y)}_2 \notag \\
    &\le \norm{u}_2 \left| v^T y \right| + \norm{v}_2 \left| u^T y \right| \notag \\
    &\le \norm{u}_2 \left( \norm{v}_2 \norm{y}_2 \right) + \norm{v}_2 \left( \norm{u}_2 \norm{y}_2 \right) \notag \\
    &= 2 \norm{u}_2 \norm{v}_2 \norm{y}_2.
\end{align}
Taking the supremum over $\norm{y}_2=1$ proves that $\norm{u v^T + v u^T}_{\mathrm{op}} \le 2 \norm{u}_2 \norm{v}_2$. Setting $u = e_k$ (noting that $\norm{e_k}_2 = 1$) and $v = z_i(x)$ yields:
\begin{equation}
    \norm{\frac{\partial A}{\partial x_{ik}}}_{\mathrm{op}} \le c_k \norm{e_k z_i^T + z_i e_k^T}_{\mathrm{op}} \le 2 c_k \norm{e_k}_2 \norm{z_i}_2 = 2 c_k \norm{z_i}_2.
\end{equation}

For the second-order cross-derivative, we differentiate $\frac{\partial A(x)}{\partial x_{ik}}$ with respect to $x_{jl}$:
\begin{align}
    \frac{\partial^2 A(x)}{\partial x_{ik} \partial x_{jl}} &= \frac{\partial}{\partial x_{jl}} \left[ c_k \bigl( e_k z_i(x)^T + z_i(x) e_k^T \bigr) \right] \notag \\
    &= c_k \left( e_k \left( \frac{\partial z_i(x)}{\partial x_{jl}} \right)^T + \frac{\partial z_i(x)}{\partial x_{jl}} e_k^T \right).
\end{align}
If $i \neq j$, $z_i(x)$ does not depend on $x_{jl}$, so $\frac{\partial z_i(x)}{\partial x_{jl}} = \mathbf{0}$. If $i = j$, including when $k=l$, we have $\frac{\partial z_i(x)}{\partial x_{il}} = c_l e_l$, yielding:
\begin{equation}
    \frac{\partial^2 A(x)}{\partial x_{ik} \partial x_{il}} = c_k \left( e_k \left( c_l e_l \right)^T + \left( c_l e_l \right) e_k^T \right) = c_k c_l \bigl( e_k e_l^T + e_l e_k^T \bigr).
\end{equation}
Combining the two cases gives~\eqref{eq:exact_gram_hessian_micro}.
\end{proof}

The preceding matrix derivative gives the corresponding derivative of the
least-squares coefficient.

\begin{lemma}[Exact Microscopic Differential Representation of $\hat\beta$]
\label{lem:exact_beta_deriv_micro}
Let $\hat{\varepsilon}_i(x) = Y_i - z_i(x)^T \hat\beta(x)$ denote the OLS residual for observation $i$. The exact first derivative of the estimator with respect to the single missing entry $x_{ik}$ is given by:
\begin{equation} \label{eq:exact_gradient_micro}
    g_{ik} = \frac{\partial \hat\beta}{\partial x_{ik}} = A^{-1} \Delta_{ik},
\end{equation}
where $\Delta_{ik} = c_k \bigl( e_k \hat{\varepsilon}_i(x) - z_i(x) \hat\beta_k(x) \bigr) \in \R^p$.
\end{lemma}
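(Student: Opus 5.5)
We differentiate the normal equations $A(x)\hat\beta(x)=Z(x)^TY$ with respect to the single coordinate $x_{ik}$, exactly as in the OLS specialisation of Lemma~\ref{lem:z_glm_micro_derivatives} with $\mu(t)=t$. A direct derivation is cleaner, so we carry it out directly. Under the empirical part of Assumption~\ref{ass:ure}, $A(x)=Z(x)^TZ(x)\succeq n\kappa I_p$ for every $x\in[-1,1]^{N_{\mathrm{miss}}}$. Hence $A(x)$ is invertible and its entries are polynomial in $x$. Consequently $\hat\beta(x)=A(x)^{-1}Z(x)^TY$ is a rational, hence $C^\infty$, function of $x$ on the cube, and differentiation of the identity is legitimate.

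The first key step is the product rule:
\[
\frac{\partial A}{\partial x_{ik}}\hat\beta+A\frac{\partial\hat\beta}{\partial x_{ik}}=\frac{\partial (Z^TY)}{\partial x_{ik}}.
\]
Since $Z^TY=\sum_a z_a(x)Y_a$ and only $z_i$ depends on $x_{ik}$, with $\partial z_i/\partial x_{ik}=c_ke_k$, the right-hand side equals $c_ke_kY_i$. By Lemma~\ref{lem:exact_bounds_micro}, $\partial A/\partial x_{ik}=c_k(e_kz_i^T+z_ie_k^T)$. Applying this to $\hat\beta$ gives
\[
c_k\bigl(e_k\,z_i^T\hat\beta+z_i\,\hat\beta_k\bigr).
\]

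The second step is rearrangement:
\[
A\,\partial\hat\beta/\partial x_{ik}=c_k\bigl(e_k(Y_i-z_i^T\hat\beta)-z_i\hat\beta_k\bigr)=c_k\bigl(e_k\hat\varepsilon_i-z_i\hat\beta_k\bigr)=\Delta_{ik}.
\]
Left-multiplying by $A^{-1}$ yields \eqref{eq:exact_gradient_micro}.

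As a consistency check, the result should coincide with \eqref{eq:z_glm_micro_derivative} when $\mu'\equiv1$. There, $A_\mu=A$ and the bracket is the same $\Delta_{ik}$.

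There is no real obstacle here. The only points needing care are justifying differentiability of $x\mapsto\hat\beta(x)$ and getting the term $\partial A/\partial x_{ik}\,\hat\beta$ right. Differentiability follows from the uniform invertibility of $A(x)$ on the whole cube, not just at $0$. For the second point, the term $e_k z_i^T\hat\beta$ combines with $c_ke_kY_i$ to produce the residual, and the sign of the $z_i\hat\beta_k$ term must be tracked.
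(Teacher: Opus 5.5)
Your proposal is correct and follows the paper's proof: differentiate the normal equations $A(x)\hat\beta(x)=Z(x)^TY$ in $x_{ik}$, use $\partial(Z^TY)/\partial x_{ik}=c_ke_kY_i$ and the Gram derivative from Lemma~\ref{lem:exact_bounds_micro}, collect the residual term, and left-multiply by $A^{-1}$. Your explicit justification of differentiability through the empirical part of Assumption~\ref{ass:ure} ($A(x)\succeq n\kappa I_p$ on the whole cube) is a small addition the paper leaves implicit.
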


\begin{proof}
The OLS estimator satisfies the normal equations $A(x)\hat\beta(x) = Z(x)^T Y$. Differentiating both sides of this identity with respect to the microscopic coordinate $x_{ik}$ using the product rule yields:
\begin{equation} \label{eq:normal_diff}
    \frac{\partial A(x)}{\partial x_{ik}} \hat\beta(x) + A(x) \frac{\partial \hat\beta(x)}{\partial x_{ik}} = \frac{\partial \bigl(Z(x)^T Y\bigr)}{\partial x_{ik}}.
\end{equation}
Since $Z(x)^T Y = \sum_{a=1}^n z_a(x) Y_a$, and only observation $i$ depends on $x_{ik}$, we have:
\begin{equation}
    \frac{\partial \bigl(Z(x)^T Y\bigr)}{\partial x_{ik}} = \sum_{a=1}^n \frac{\partial z_a(x)}{\partial x_{ik}} Y_a = \frac{\partial z_i(x)}{\partial x_{ik}} Y_i = c_k e_k Y_i.
\end{equation}
Substituting this back into \eqref{eq:normal_diff} and isolating the derivative of the estimator gives:
\begin{equation}
    A(x) \frac{\partial \hat\beta(x)}{\partial x_{ik}} = c_k e_k Y_i - \frac{\partial A(x)}{\partial x_{ik}} \hat\beta(x).
\end{equation}
Substitution of the expression for $\frac{\partial A(x)}{\partial x_{ik}}$ from
Lemma~\ref{lem:exact_bounds_micro} gives
\begin{align}
    A(x) \frac{\partial \hat\beta(x)}{\partial x_{ik}} &= c_k e_k Y_i - c_k \bigl( e_k z_i(x)^T + z_i(x) e_k^T \bigr) \hat\beta(x) \notag \\
    &= c_k \left[ e_k Y_i - e_k \bigl( z_i(x)^T \hat\beta(x) \bigr) - z_i(x) \bigl( e_k^T \hat\beta(x) \bigr) \right].
\end{align}
Since the term $z_i(x)^T \hat\beta(x)$ is a scalar, we can factor out $e_k$ from the first two terms. Furthermore, by definition of the coordinate projection, $e_k^T \hat\beta(x)$ is exactly the $k$-th scalar coefficient $\hat\beta_k(x)$. Thus:
\begin{equation}
    A(x) \frac{\partial \hat\beta(x)}{\partial x_{ik}} = c_k \left[ e_k \bigl( Y_i - z_i(x)^T \hat\beta(x) \bigr) - z_i(x) \hat\beta_k(x) \right].
\end{equation}
Using the OLS residual notation $\hat{\varepsilon}_i(x) = Y_i - z_i(x)^T \hat\beta(x)$, this simplifies to:
\begin{equation}
    A(x) \frac{\partial \hat\beta(x)}{\partial x_{ik}} = c_k \bigl( e_k \hat{\varepsilon}_i(x) - z_i(x) \hat\beta_k(x) \bigr) = \Delta_{ik}.
\end{equation}
Multiplying both sides by $A(x)^{-1}$ from the left yields the desired result:
\begin{equation}
    \frac{\partial \hat\beta(x)}{\partial x_{ik}} = A(x)^{-1} \Delta_{ik}.
\end{equation}
\end{proof}

\begin{lemma}[Exact Microscopic OLS Hessian]
\label{lem:exact_beta_hessian_micro}
For $r=(i,k)$ and $s=(a,l)$ in $\mathcal I_{\mathrm{miss}}$, write
$A_r=\partial_rA$ and $A_{rs}=\partial_{rs}^2A$. Then
\begin{equation}
 \partial_{rs}^2\hat\beta
 =-A^{-1}A_sA^{-1}\Delta_r
  -A^{-1}A_rA^{-1}\Delta_s
  -A^{-1}A_{rs}\hat\beta.
 \label{eq:exact_beta_hessian_micro}
\end{equation}
In particular,
\begin{equation}
 \frac{\partial^2\hat\beta}{\partial x_{ik}^2}
 =-2A^{-1}A_{ik}A^{-1}\Delta_{ik}
  -2c_k^2A^{-1}e_ke_k^T\hat\beta.
 \label{eq:exact_beta_diagonal_micro}
\end{equation}
If $\Omega=(A/n)^{-1}$, the accumulated direct term satisfies the exact
identity
\begin{equation}
 n\sum_{(a,l)\in\mathcal I_{\mathrm{miss}}}
 |e_j^TA^{-1}A_{(i,k),(a,l)}\hat\beta|
 =c_k\sum_{l:(i,l)\in\mathcal I_{\mathrm{miss}}}c_l
   |\Omega_{jk}\hat\beta_l+\Omega_{jl}\hat\beta_k|.
 \label{eq:exact_direct_curvature_sum}
\end{equation}
The term $l=k$ on the right is
$2c_k^2|\Omega_{jk}\hat\beta_k|$.
\end{lemma}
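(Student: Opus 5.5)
The plan is to differentiate the first-order identity of Lemma~\ref{lem:exact_beta_deriv_micro} a second time. I would write it as $A\,\partial_r\hat\beta=\Delta_r$ with $\Delta_r=c_k(e_kY_i-A_r\hat\beta/c_k\cdot c_k)$, or more cleanly as $A\,\partial_r\hat\beta=b_r-A_r\hat\beta$, where $b_r=\partial_r(Z^TY)=c_ke_kY_i$ does not depend on $x$. Differentiating $A\,\partial_r\hat\beta+A_r\hat\beta=b_r$ with respect to $x_s$, and using $\partial_s b_r=0$, gives
\[
A\,\partial^2_{rs}\hat\beta+A_s\,\partial_r\hat\beta+A_r\,\partial_s\hat\beta+A_{rs}\hat\beta=0 .
\]
I would then substitute $\partial_r\hat\beta=A^{-1}\Delta_r$ and $\partial_s\hat\beta=A^{-1}\Delta_s$ from Lemma~\ref{lem:exact_beta_deriv_micro} and multiply on the left by $A^{-1}$. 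This yields~\eqref{eq:exact_beta_hessian_micro}, with $A$ invertible by the empirical part of Assumption~\ref{ass:ure}.

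For the diagonal case~\eqref{eq:exact_beta_diagonal_micro}, I would set $s=r=(i,k)$. The two cross terms then coincide, giving the factor $2$. By Lemma~\ref{lem:exact_bounds_micro}, $A_{rr}=2c_k^2e_ke_k^T$. In the statement, the symbol $A_{ik}$ is read as $A_r=\partial_rA$.

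For the identity~\eqref{eq:exact_direct_curvature_sum}, I would use~\eqref{eq:exact_gram_hessian_micro}. All terms with $a\ne i$ vanish, so the sum reduces to $l$ with $(i,l)\in\mathcal I_{\mathrm{miss}}$. For these terms,
\[
e_j^TA^{-1}A_{(i,k),(i,l)}\hat\beta=c_kc_l\,e_j^TA^{-1}(e_k\hat\beta_l+e_l\hat\beta_k)=\frac{c_kc_l}{n}\bigl(\Omega_{jk}\hat\beta_l+\Omega_{jl}\hat\beta_k\bigr),
\]
since $A^{-1}=\Omega/n$. Multiplying by $n$, taking absolute values and summing gives the claim. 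When $l=k$, the bracket becomes $2\Omega_{jk}\hat\beta_k$, which gives $2c_k^2|\Omega_{jk}\hat\beta_k|$.

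No real obstacle is expected. The only care points are keeping track of which factor is differentiated: $A_s$ multiplies $\partial_r\hat\beta$ and not $\partial_s\hat\beta$, and $b_r$ is constant in $x$ because $Y$ is observed. One also has to interpret the notation $A_{ik}$ in~\eqref{eq:exact_beta_diagonal_micro} as the first derivative $A_r$, not the Hessian block.
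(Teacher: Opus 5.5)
Your proposal is correct and follows essentially the paper's route. The paper differentiates $\partial_r\hat\beta=A^{-1}\Delta_r$ using $\partial_sA^{-1}=-A^{-1}A_sA^{-1}$ and $\partial_s\Delta_r=-A_rA^{-1}\Delta_s-A_{rs}\hat\beta$, which is the same computation as your implicit differentiation of $A\,\partial_r\hat\beta+A_r\hat\beta=b_r$; the diagonal case and the direct-curvature identity are then obtained exactly as you describe. One small fix: drop your first, garbled expression for $\Delta_r$, which as written equals $c_ke_kY_i-c_kA_r\hat\beta$, and keep only the correct form $\Delta_r=c_ke_kY_i-A_r\hat\beta$ that you actually use.
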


\begin{proof}
Differentiate $\partial_r\hat\beta=A^{-1}\Delta_r$. The product rule and
$\partial_sA^{-1}=-A^{-1}A_sA^{-1}$ give
\[
 \partial_{rs}^2\hat\beta
 =-A^{-1}A_sA^{-1}\Delta_r+A^{-1}\partial_s\Delta_r.
\]
Differentiating
$\Delta_r=c_k(e_kY_i-e_kz_i^T\hat\beta-z_ie_k^T\hat\beta)$ gives
$\partial_s\Delta_r=-A_rA^{-1}\Delta_s-A_{rs}\hat\beta$, proving
\eqref{eq:exact_beta_hessian_micro}. Equation
\eqref{eq:exact_beta_diagonal_micro} follows from
$A_{rr}=2c_k^2e_ke_k^T$. Finally, substitute
\eqref{eq:exact_gram_hessian_micro} into the left-hand side of
\eqref{eq:exact_direct_curvature_sum} and use $nA^{-1}=\Omega$.
\end{proof}

\subsection{Microscopic OLS probabilistic condition}
\label{app:proof_probabilistic_domination}

\begin{remark}[Heuristic probabilistic condition]
\label{thm:probabilistic_domination}
Under Assumption~\ref{ass:mnar_regularity}, in a regime where the aggregated
global coupling terms are negligible relative to the first-order sensitivity,
the direct-curvature heuristic is:
\begin{equation} \label{cond:probabilistic}
    \sum_l \PP(M_l=1 \mid M_k=1) \, c_l \big| \Omega_{jk} \beta^*_l + \Omega_{jl} \beta^*_k \big| \;\ll\; \E\bigl[ \big| \omega_j^T (e_k \varepsilon - X\beta^*_k) \big| \;\big|\; M_k = 1 \bigr].
\end{equation}
Here $\PP(M_k=1\mid M_k=1)=1$; its contribution is the diagonal
curvature.
\end{remark}

\begin{proof}
The working comparison for a fixed variable $k$ is
\begin{equation} \label{eq:sum_monotonicity}
    \sum_{i \in \Mcal_k} \left| \frac{\partial \hat\beta_j}{\partial x_{ik}} \right| \;>\; \sum_{i \in \Mcal_k} \sum_{(a,l) \in \mathcal{I}_{\mathrm{miss}}} \left| \frac{\partial^2 \hat\beta_j}{\partial x_{ik} \partial x_{al}} \right|.
\end{equation}
To determine the asymptotic behavior of the left-hand side of \eqref{eq:sum_monotonicity}, we express the summation as:
\begin{equation}
    \sum_{i \in \Mcal_k} \left| \frac{\partial \hat\beta_j}{\partial x_{ik}} \right| = \left(\frac{|\Mcal_k|}{n}\right) \frac{1}{|\Mcal_k|} \sum_{i \in \Mcal_k} \big| \omega_j^T \Delta_{ik} \big|.
\end{equation}
By the Law of Large Numbers, $\frac{|\Mcal_k|}{n} \xrightarrow{p} \PP(M_k=1)$, and the empirical average of $|\omega_j^T \Delta_{ik}|$ converges to its conditional expectation:
\begin{equation} \label{eq:limit_grad}
    \sum_{i \in \Mcal_k} \left| \frac{\partial \hat\beta_j}{\partial x_{ik}} \right| \xrightarrow{p} \PP(M_k=1) \, c_k \E\bigl[ \big| \omega_j^T (e_k \varepsilon - X\beta^*_k) \big| \;\big|\; M_k = 1 \bigr],
\end{equation}
where we factored out the deterministic scale $c_k > 0$.
For each pair of missing entries, Lemma~\ref{lem:exact_beta_hessian_micro}
gives the decomposition
\[
    \frac{\partial^2 \hat\beta_j}{\partial x_{ik} \partial x_{al}}
    = e_j^T T_1 + e_j^T T_2 + e_j^T T_3,
\]
where $T_1$ and $T_2$ are the two global coupling terms and
$T_3=-A^{-1}(\partial^2_{ik,al}A)\hat\beta$ is the direct term. In the
direct-curvature regime stated in Remark~\ref{thm:probabilistic_domination},
the aggregated contributions of $T_1+T_2$ are negligible relative to the
aggregated first-order sensitivity. Thus the calculation below keeps $T_3$;
this is a regime assumption, not a cancellation argument.

Since $\partial^2_{ik,al}A=\mathbf{0}$ whenever $a\neq i$, this local term only
survives for coordinates in the same observation, including the diagonal
pair. Therefore,
the leading curvature contribution retained in this regime is
\begin{equation}
    \sum_l \sum_{i \in \Mcal_k \cap \Mcal_l} \left| e_j^T A^{-1} \left(\frac{\partial^2 A}{\partial x_{ik} \partial x_{il}}\right) \hat\beta \right|.
\end{equation}
Substituting $\frac{\partial^2 A}{\partial x_{ik} \partial x_{il}} = c_k c_l (e_k e_l^T + e_l e_k^T)$ and scaling the sum by $n/n$ yields:
\begin{align}
    \sum_l \sum_{i \in \Mcal_k \cap \Mcal_l}
    \left| e_j^T A^{-1} \left(\frac{\partial^2 A}{\partial x_{ik} \partial x_{il}}\right) \hat\beta \right|
    &= \sum_l \left(\frac{|\Mcal_k \cap \Mcal_l|}{n}\right)
    \frac{1}{|\Mcal_k \cap \Mcal_l|} \notag\\
    &\quad\times \sum_{i \in \Mcal_k \cap \Mcal_l}
    \big| e_j^T \Omega c_k c_l (e_k e_l^T + e_l e_k^T) \hat\beta \big| \notag \\
    &\text{ }\xrightarrow{p} \sum_l \PP(M_k=1, M_l=1) \, c_k c_l \big| \Omega_{jk} \beta^*_l + \Omega_{jl} \beta^*_k \big|. \label{eq:limit_hess}
\end{align}
Substituting the limits \eqref{eq:limit_grad} and \eqref{eq:limit_hess} back into the aggregated monotonicity requirement \eqref{eq:sum_monotonicity} gives:
\begin{equation}
    \sum_l \PP(M_k=1, M_l=1) \, c_k c_l \big| \Omega_{jk} \beta^*_l + \Omega_{jl} \beta^*_k \big| \;\ll\; \PP(M_k=1) \, c_k \E\bigl[ \big| \omega_j^T (e_k \varepsilon - X\beta^*_k) \big| \;\big|\; M_k = 1 \bigr].
\end{equation}
Dividing both sides by $\PP(M_k=1) \, c_k$ (since $c_k > 0$) and applying Bayes' theorem yields the simplified condition \eqref{cond:probabilistic}.
\end{proof}

\subsection{Macroscopic OLS probabilistic condition}
\label{app:proof_analytical_condition}\label{app:exact_A_derivatives}

\begin{remark}[Heuristic probabilistic condition]
\label{prop:analytical_condition}
Under the same working approximations, a covariate-level heuristic is:
\begin{equation} \label{cond:asymptotic_scalar}
    \sum_l \PP(M_l=1 \mid M_k=1) \, c_l \big| \Omega_{jk} \beta^*_l + \Omega_{jl} \beta^*_k \big| \ll \big| \omega_j^T \E\bigl[ e_k \varepsilon - X \beta^*_k \;\big|\; M_k = 1 \bigr] \big|.
\end{equation}
\end{remark}

Consider the covariate-level parametrization $x\in[-1,1]^m$, where a single
coordinate $x_k$ moves all missing entries in covariate $k$ simultaneously.
Differentiating this common perturbation gives the Gram-matrix derivatives and
the corresponding derivative of $\hat\beta(x)$; their population limits yield
Remark~\ref{prop:analytical_condition}.

\begin{lemma}[Exact Macroscopic Gram Derivatives]
\label{lem:exact_A_derivatives}
The matrix $\frac{\partial A}{\partial x_k}$ is symmetric, non-zero only on
its $k$-th row and column, and has operator norm
$\mathcal{O}(|\Mcal_k|)$. For all $k,l$, including $k=l$,
\begin{equation}
 \frac{\partial^2A}{\partial x_k\partial x_l}
 =m_{kl}c_kc_l(e_ke_l^T+e_le_k^T),
 \qquad m_{kl}=|\Mcal_k\cap\Mcal_l|.
 \label{eq:exact_macro_gram_hessian}
\end{equation}
In particular, $\partial^2A/\partial x_k^2=2|\Mcal_k|c_k^2e_ke_k^T$.
\end{lemma}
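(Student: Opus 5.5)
The plan is to compute the macroscopic Gram derivatives directly from the microscopic ones in Lemma~\ref{lem:exact_bounds_micro}, using the chain rule. Under the covariate-level parametrization, $Z_{ik}(x)=z_{ik}^*+c_kx_k$ for every $i\in\Mcal_k$. Hence the macroscopic coordinate $x_k$ is the common value of all microscopic coordinates $x_{ik}$, $i\in\Mcal_k$.

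Write $A(x)=\sum_{a=1}^n z_a(x)z_a(x)^T$. Then
\[
\partial z_a/\partial x_k=\mathbf 1\{a\in\Mcal_k\}c_ke_k,
\]
which is exactly the sum over $i\in\Mcal_k$ of the microscopic derivatives.

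\textbf{First derivative.} By the product rule,
\[
\frac{\partial A}{\partial x_k}=c_k\sum_{i\in\Mcal_k}\bigl(e_kz_i^T+z_ie_k^T\bigr)=c_k\bigl(e_kw_k^T+w_ke_k^T\bigr),
\qquad w_k=\sum_{i\in\Mcal_k}z_i(x).
\]
Symmetry is immediate. The matrix $e_kw_k^T$ is supported on row $k$ and $w_ke_k^T$ on column $k$, so the support claim follows.

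For the norm, the rank-two bound $\norm{uv^T+vu^T}_{\mathrm{op}}\le2\norm{u}_2\norm{v}_2$ already proved in Lemma~\ref{lem:exact_bounds_micro} applies. Together with $\norm{w_k}_2\le|\Mcal_k|\,Q_Z$, where $Q_Z$ is the deterministic bound from that lemma, this gives
\[
\norm{\partial A/\partial x_k}_{\mathrm{op}}\le2c_kQ_Z|\Mcal_k|=\mathcal O(|\Mcal_k|).
\]
Here I read the $\mathcal O$ as treating $c_k$ and $Q_Z$ as constants.

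\textbf{Second derivative.} Differentiate $c_k(e_kw_k^T+w_ke_k^T)$ with respect to $x_l$. Since
\[
\partial w_k/\partial x_l=\sum_{i\in\Mcal_k}\mathbf 1\{i\in\Mcal_l\}c_le_l=|\Mcal_k\cap\Mcal_l|\,c_le_l,
\]
we obtain
\[
\frac{\partial^2A}{\partial x_k\partial x_l}=m_{kl}c_kc_l(e_ke_l^T+e_le_k^T).
\]
Equivalently, one can sum \eqref{eq:exact_gram_hessian_micro} over $i\in\Mcal_k$ and $a\in\Mcal_l$: the indicator $\mathbf 1\{i=a\}$ leaves exactly $m_{kl}$ surviving terms.

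Setting $l=k$ gives $m_{kk}=|\Mcal_k|$ and $e_ke_k^T+e_ke_k^T=2e_ke_k^T$, which is the stated diagonal formula. Differentiating twice is legitimate because the Gram matrix is polynomial in $x$.

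There is no real obstacle here. The only points needing care are the chain-rule bookkeeping that identifies $x_k$ with the tied microscopic coordinates, and the remark that the $\mathcal O(|\Mcal_k|)$ norm statement relies on the uniform bound $Q_Z$ on the admissible rows.
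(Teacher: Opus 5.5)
Your proposal is correct and follows essentially the paper's own argument: the product rule on $A(x)=\sum_a z_a(x)z_a(x)^T$, collapsing the sum to $\Mcal_k$ (respectively $\Mcal_k\cap\Mcal_l$) via $\partial z_i/\partial x_k=\mathbf 1\{i\in\Mcal_k\}c_ke_k$, and the rank-two operator-norm bound with $Q_Z$. The differences are cosmetic. You apply the rank-two bound once to the aggregated vector $w_k$, whereas the paper applies it term by term with the triangle inequality; both give the constant $2c_kQ_Z|\Mcal_k|$. You also check the row/column support claim explicitly, which the paper leaves implicit.
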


\begin{proof}
By the linearity of the derivative and the product rule, the first derivative of the information matrix $A(x)$ is:
\begin{equation}
    \frac{\partial A(x)}{\partial x_k} = \sum_{i=1}^n \left( \frac{\partial z_i(x)}{\partial x_k} z_i(x)^T + z_i(x) \left(\frac{\partial z_i(x)}{\partial x_k}\right)^T \right).
\end{equation}
Since $\frac{\partial z_i(x)}{\partial x_k} = \mathbf{0}$ for $i \notin \Mcal_k$ and $\frac{\partial z_i(x)}{\partial x_k} = c_k e_k$ for $i \in \Mcal_k$, the sum collapses to the missingness stratum:
\begin{equation}\label{eq:first_deriv_A}
    \frac{\partial A(x)}{\partial x_k} = \sum_{i \in \Mcal_k} c_k \Bigl( e_k z_i(x)^T + z_i(x) e_k^T \Bigr).
\end{equation}
The triangle inequality and sub-multiplicativity of the operator norm, with
$Q_Z = \sup_{Z, i} \norm{z_i(Z)}_2$, give
\begin{equation}
    \norm{\frac{\partial A}{\partial x_k}}_{\mathrm{op}} \le \sum_{i \in \Mcal_k} 2 c_k \norm{z_i(x)}_2 \le 2 m_k c_k Q_Z = \mathcal{O}(m_k).
\end{equation}
To compute the second-order derivative, we differentiate \eqref{eq:first_deriv_A} with respect to $x_l$:
\begin{equation}
    \frac{\partial^2 A(x)}{\partial x_k \partial x_l} = \sum_{i \in \Mcal_k} c_k \left( e_k \left( \frac{\partial z_i(x)}{\partial x_l} \right)^T + \frac{\partial z_i(x)}{\partial x_l} e_k^T \right).
\end{equation}
Since $\frac{\partial z_i(x)}{\partial x_l} = c_l e_l$ for $i \in \Mcal_l$ and $\mathbf{0}$ otherwise, the summation survives only on the intersection $\Mcal_k \cap \Mcal_l$, giving:
\begin{equation}
    \frac{\partial^2 A(x)}{\partial x_k \partial x_l} = \sum_{i \in \Mcal_k \cap \Mcal_l} c_k c_l \Bigl( e_k e_l^T + e_l e_k^T \Bigr) = m_{kl} c_k c_l \Bigl( e_k e_l^T + e_l e_k^T \Bigr),
\end{equation}
where $m_{kl} = |\Mcal_k \cap \Mcal_l|$. Taking the operator norm yields:
\begin{equation}
    \norm{\frac{\partial^2 A(x)}{\partial x_k \partial x_l}}_{\mathrm{op}} \le 2m_{kl} c_k c_l = \mathcal{O}(m_{kl}),
\end{equation}
which matches the asserted bound.
\end{proof}

The derivative of $A(x)$ identifies the curvature terms. We also need the exact
first-order driving force along the common perturbation $x_k$.

\begin{lemma}[Exact Macroscopic Differential Representation of $\hat\beta$]
\label{lem:exact_beta_deriv}
Let $x \in [-1, 1]^m$ be the normalized missingness policy, mapping to the interval $I_k$ via the deterministic scale $c_{ik}=(u_k-\ell_k)/2$. Let $\hat{\varepsilon}_i(x) = Y_i - z_i(x)^T \hat\beta(x)$ denote the OLS residual. The exact first derivative of the estimator with respect to $x_k$ is:
\begin{equation} \label{eq:exact_gradient}
    G_k = \frac{\partial \hat\beta}{\partial x_k} = (Z^TZ)^{-1} \sum_{i \in \Mcal_k} c_{ik} \Bigl( e_k \hat{\varepsilon}_i(x) - z_i(x) \hat\beta_k(x) \Bigr) = (Z^TZ)^{-1} \Delta_k(x),
\end{equation}
where $\Delta_k \in \R^p$ is the exact empirical driving force of the gradient along axis $k$.
\end{lemma}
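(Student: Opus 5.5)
The plan is to mirror the proof of Lemma~\ref{lem:exact_beta_deriv_micro}, with the macroscopic Gram derivative from Lemma~\ref{lem:exact_A_derivatives} taking the place of the microscopic one. Under the empirical part of Assumption~\ref{ass:ure}, $A(x)=Z(x)^TZ(x)\succeq n\kappa I_p$ for every $x\in[-1,1]^m$. The map $x\mapsto A(x)$ is polynomial and $A(x)^{-1}$ is smooth on this set. It follows that $\hat\beta(x)=A(x)^{-1}Z(x)^TY$ is continuously differentiable, and we may differentiate the normal equations $A(x)\hat\beta(x)=Z(x)^TY$ with respect to the single coordinate $x_k$.

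\textbf{Differentiating the normal equations.} The product rule gives
\[
A(x)\,\partial_k\hat\beta(x)=\partial_k\bigl(Z(x)^TY\bigr)-\partial_kA(x)\,\hat\beta(x).
\]
In the covariate-level parametrization, $\partial z_i/\partial x_k=c_{ik}e_k$ for $i\in\Mcal_k$ and zero otherwise. Here $c_{ik}=c_k$ is the common half-width; I will keep the index $i$ only to match the statement. Hence $\partial_k(Z^TY)=\sum_{i\in\Mcal_k}c_{ik}e_kY_i$. Formula~\eqref{eq:first_deriv_A} in the proof of Lemma~\ref{lem:exact_A_derivatives} gives $\partial_kA=\sum_{i\in\Mcal_k}c_{ik}(e_kz_i^T+z_ie_k^T)$.

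\textbf{Collecting terms.} Substituting these two derivatives, the right-hand side becomes
\[
\sum_{i\in\Mcal_k}c_{ik}\bigl[e_kY_i-e_k\,(z_i^T\hat\beta)-z_i\,(e_k^T\hat\beta)\bigr].
\]
The scalar $z_i^T\hat\beta$ lets us factor out $e_k$ to produce the residual $\hat\varepsilon_i(x)$, and $e_k^T\hat\beta=\hat\beta_k(x)$. This yields exactly $\Delta_k(x)=\sum_{i\in\Mcal_k}\Delta_{ik}(x)$, the sum of the microscopic driving forces of Lemma~\ref{lem:exact_beta_deriv_micro}. Left-multiplying by $A(x)^{-1}=(Z^TZ)^{-1}$ gives \eqref{eq:exact_gradient}.

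\textbf{Chain-rule check and main difficulty.} As a consistency check, the chain rule gives an alternative derivation: $\partial_k\hat\beta=\sum_{i\in\Mcal_k}\partial\hat\beta/\partial x_{ik}$ evaluated on the diagonal $x_{ik}=x_k$, which agrees with the formula above. There is no real obstacle here. The only point needing care is justifying differentiability and invertibility uniformly in $x$, and that follows from the uniform eigenvalue bound in Assumption~\ref{ass:ure}.
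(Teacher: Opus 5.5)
Your proof is correct and takes essentially the paper's route. The paper applies the product rule to $\hat\beta=A^{-1}B$ using $\partial_k A^{-1}=-A^{-1}(\partial_k A)A^{-1}$, which is equivalent to your differentiation of the normal equations, and then collapses the sums to $\Mcal_k$ exactly as you do; your appeal to the empirical part of Assumption~\ref{ass:ure} for invertibility and smoothness (left implicit in the paper) and your chain-rule cross-check against Lemma~\ref{lem:exact_beta_deriv_micro} are sound additions.
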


\begin{proof}
Let $A = Z^T Z$ and $B = Z^T Y$. By the matrix derivative identity $\frac{\partial A^{-1}}{\partial x_k} = -A^{-1} \frac{\partial A}{\partial x_k} A^{-1}$, the product rule applied to $\hat\beta = A^{-1}B$ yields:
\begin{equation} \label{eq:grad_base}
    G_k = A^{-1} \left( \frac{\partial B}{\partial x_k} - \frac{\partial A}{\partial x_k} \hat\beta \right).
\end{equation}
Because the derivative of $z_i(x)$ with respect to $x_k$ is $\mathbf{0}$ for any patient $i \notin \Mcal_k$, the sum collapses to the $m_k = |\Mcal_k|$ missing observations. For $i \in \Mcal_k$, we have $\frac{\partial z_i}{\partial x_k} = c_{ik} e_k$. Thus:
\begin{align*}
    G_k &= A^{-1} \sum_{i \in \Mcal_k} c_{ik} \Bigl( e_k Y_i - (e_k z_i^T + z_i e_k^T) \hat\beta \Bigr) \\
    &= A^{-1} \sum_{i \in \Mcal_k} c_{ik} \Bigl( e_k \underbrace{\bigl(Y_i - z_i^T \hat\beta\bigr)}_{= \, \hat{\varepsilon}_i} - z_i \underbrace{\bigl(e_k^T \hat\beta\bigr)}_{= \, \hat\beta_k} \Bigr) = A^{-1} \Delta_k.
\end{align*}
\end{proof}

The preceding two lemmas now allow us to compare the limiting gradient with the
limiting curvature terms.
\begin{proof}
By the Law of Large Numbers, $\frac{1}{n} Z^T Z \xrightarrow{p} \Sigma$ and $\hat\beta(\mathbf{0}) \xrightarrow{p} \beta^*$. The empirical driving force of the gradient limits to
\[
    \frac{1}{n} \Delta_k
    \xrightarrow{p}
    \gamma_k c_k \E\bigl[e_k \varepsilon - X\beta_k^* \mid M_k=1\bigr],
\]
where $\gamma_k = \PP(M_k=1)$.

The working condition for strict monotonicity of the scalar function
$\hat\beta_j(x)$ along axis $x_k$ compares the scalar gradient with the sum of
scalar curvatures over the hypercube:
\begin{equation}
    \sum_{l=1}^m \left| \frac{\partial^2 \hat\beta_j}{\partial x_k \partial x_l} \right| \ll \left| \frac{\partial \hat\beta_j}{\partial x_k} \right|.
\end{equation}

Projecting the exact differential representations (Lemma~\ref{lem:exact_beta_deriv}) onto the $j$-th coordinate, the gradient converges in probability to:
\begin{equation}
    \frac{\partial \hat\beta_j}{\partial x_k} = e_j^T G_k \xrightarrow{p} \gamma_k c_k \omega_j^T \E\bigl[e_k \varepsilon - X\beta_k^* \mid M_k=1\bigr].
\end{equation}

To compute the second derivative, differentiate the exact identity
$G_k=A^{-1}\Delta_k$. If $A_l=\partial A/\partial x_l$ and
$A_{kl}=\partial^2 A/\partial x_k\partial x_l$, then
\begin{equation}
    \frac{\partial^2 \hat\beta}{\partial x_k \partial x_l}
    = -A^{-1}A_lA^{-1}\Delta_k
      -A^{-1}A_kA^{-1}\Delta_l
      -A^{-1}A_{kl}\hat\beta .
\end{equation}
The first two terms are global coupling terms: they propagate the first-order
gradient along $x_k$ through the global perturbation of the Gram matrix. We
focus on the direct-curvature regime where their aggregated contribution is
negligible relative to the direct term
$-A^{-1}A_{kl}\hat\beta$. This is a regime assumption, not a cancellation
argument.

Under this regime, using Lemma~\ref{lem:exact_A_derivatives}, the scalar
curvature converges to:
\begin{align}
    \frac{\partial^2 \hat\beta_j}{\partial x_k \partial x_l} = e_j^T H_{kl} &\xrightarrow{p} -\gamma_{kl} e_j^T \Sigma^{-1} c_k c_l (e_k e_l^T + e_l e_k^T) \beta^* \notag \\
    &= -\gamma_{kl} c_k c_l \big( \Omega_{jk} \beta^*_l + \Omega_{jl} \beta^*_k \big),
\end{align}
where $\gamma_{kl} = \PP(M_k=1, M_l=1)$.

Taking the ratio of the asymptotic sum of absolute curvatures, including the
diagonal, to the asymptotic absolute gradient yields:
\begin{equation}
    \frac{ \sum_l \gamma_{kl} c_k c_l \big| \Omega_{jk} \beta^*_l + \Omega_{jl} \beta^*_k \big| }{ \gamma_k c_k \big| \omega_j^T \E[e_k \varepsilon - X\beta_k^* \mid M_k=1] \big| }
    =
    \frac{ \sum_l \gamma_{kl} c_l \big| \Omega_{jk} \beta^*_l + \Omega_{jl} \beta^*_k \big| }{ \gamma_k \big| \omega_j^T \E[e_k \varepsilon - X\beta_k^* \mid M_k=1] \big| },
\end{equation}
where we factored out and canceled the positive scale $c_k$ from both the numerator and the denominator. By the definition of conditional probability, $\frac{\gamma_{kl}}{\gamma_k} = \PP(M_l=1 \mid M_k=1)$, with $\gamma_{kk}=\gamma_k$. Requiring this ratio to be small yields condition \eqref{cond:asymptotic_scalar}.

\end{proof}

\subsubsection{Standardized covariates}
\begin{remark}[Standardized Covariates Simplification]
\label{rem:standardized_covariates_macro}
Under the additional simplification $\E[\varepsilon \mid M_k=1]=0$, and after centering and scaling the covariates, take $c_l \asymp c$ and $\abs{\beta_l^*}\asymp\abs{\beta_k^*}$ as order-of-magnitude approximations. The condition \eqref{cond:asymptotic_scalar} then becomes:
\begin{equation} \label{cond:standardized_macro}
    \sum_l \PP(M_l=1 \mid M_k=1) \, c \abs{\beta_k^*} \big| \Omega_{jk} + \Omega_{jl} \big| \ll \big| \beta^*_k \omega_j^T \mathbb{E}[X \mid M_k = 1] \big|.
\end{equation}
\end{remark}

\subsection{Interpretation of the two-way interaction contrast}
\label{app:interaction_proof}

\begin{remark}[The Interpretation of the Two-Way Interaction Effect $\tau_{j, kl}$]
\label{rem:interaction_extraction}
In the conditional approach of \citet{BatteyCox2023}, the two-way interaction contrast between factors $k$ and $l$ on the target coefficient $j$ is defined using the orthogonal design as:
\begin{equation} \label{eq:interaction_contrast_def}
    \tau_{j, kl} = \frac{1}{N/2} \sum_{r=1}^N h_{r, k} h_{r, l} \hat\beta_j(h^{(r)}).
\end{equation}
By substituting the second-order Taylor expansion of $\hat\beta_j(x)$ around
$\mathbf{0}$ and using a Resolution~V design, with at least
$N\ge1+m+\binom m2$ runs for simultaneous estimation of the corresponding
main and two-factor effects, we get:
\begin{equation}
    \tau_{j, kl} \approx 2 \frac{\partial^2 \hat\beta_j}{\partial x_k \partial x_l}(\mathbf{0}).
\end{equation}
The interaction terms measure the off-diagonal part of the second-order
correction. Pure quadratic terms are not detected by a two-level design and
must be included separately.

From a practical standpoint, the main effects, central diagonal derivatives
and two-way interaction contrasts give the diagnostic:
\begin{equation} \label{cond:practical_heuristic}
    2\left|\frac{\partial^2\hat\beta_j}{\partial x_k^2}(0)\right|
    +\sum_{l \neq k} \big| \tau_{j, kl} \big|
    \;\ll\;\big| \tau_{j, k} \big|.
\end{equation}
Without the first term, this is only a cross-interaction diagnostic. The
diagonal derivative is available from the exact OLS Hessian in
Lemma~\ref{lem:exact_beta_hessian_micro}.
\end{remark}

To establish the relation between the empirical interaction contrast $\tau_{j, kl}$ and the second-order cross-derivative $\frac{\partial^2 \hat\beta_j}{\partial x_k \partial x_l}(\mathbf{0})$, we expand the estimator function $\hat\beta_j(x)$ around the mean imputation $\mathbf{0}$ to the second order:
\begin{equation}\label{eq:taylor_order2}
    \hat\beta_j(x) \approx \hat\beta_j(\mathbf{0}) + \sum_{u=1}^m \frac{\partial \hat\beta_j}{\partial x_u}(\mathbf{0}) x_u + \frac{1}{2} \sum_{u=1}^m \frac{\partial^2 \hat\beta_j}{\partial x_u^2}(\mathbf{0}) x_u^2 + \sum_{u < v} \frac{\partial^2 \hat\beta_j}{\partial x_u \partial x_v}(\mathbf{0}) x_u x_v.
\end{equation}
Substituting \eqref{eq:taylor_order2} into the two-way interaction contrast definition \eqref{eq:interaction_contrast} yields:
\begin{align}
    \tau_{j, kl} &\approx \frac{2}{N} \sum_{r=1}^N h_{r, k} h_{r, l} \left( \hat\beta_j(\mathbf{0}) + \sum_{u=1}^m \frac{\partial \hat\beta_j}{\partial x_u}(\mathbf{0}) h_{r,u} + \frac{1}{2} \sum_{u=1}^m \frac{\partial^2 \hat\beta_j}{\partial x_u^2}(\mathbf{0}) h_{r,u}^2 + \sum_{u < v} \frac{\partial^2 \hat\beta_j}{\partial x_u \partial x_v}(\mathbf{0}) h_{r,u} h_{r,v} \right) \notag \\
    &= I_1 + I_2 + I_3 + I_4, \label{eq:contrast_decomp}
\end{align}
where the four terms correspond respectively to the intercept, the main effects,
the pure quadratic terms, and the cross-curvature terms. The intercept
contribution vanishes by orthogonality of the columns of the design matrix:
\begin{equation}
    I_1 = \frac{2}{N} \hat\beta_j(\mathbf{0}) \sum_{r=1}^N h_{r, k} h_{r, l} = 0.
\end{equation}
The main-effect contribution is
\begin{equation}
    I_2 = \frac{2}{N} \sum_{u=1}^m \frac{\partial \hat\beta_j}{\partial x_u}(\mathbf{0}) \sum_{r=1}^N h_{r, k} h_{r, l} h_{r, u}.
\end{equation}
For a given $u$, the inner sum can be non-zero only when the main effect of
variable $u$ is aliased with the two-factor interaction between $k$ and $l$,
which occurs in Resolution III designs. In a design of Resolution IV or higher,
no main effect is aliased with a two-factor interaction, and hence $I_2=0$.

The pure quadratic contribution also vanishes. Since the design points are
vertices, $h_{r,u}^2=1$ for every $r$ and $u$, and therefore
\begin{equation}
    I_3 = \frac{1}{N} \sum_{u=1}^m \frac{\partial^2 \hat\beta_j}{\partial x_u^2}(\mathbf{0}) \sum_{r=1}^N h_{r, k} h_{r, l} h_{r, u}^2
    = \frac{1}{N} \sum_{u=1}^m \frac{\partial^2 \hat\beta_j}{\partial x_u^2}(\mathbf{0}) \sum_{r=1}^N h_{r, k} h_{r, l}
    = 0.
\end{equation}
It remains to consider the cross-curvature contribution
\begin{equation}
    I_4 = \frac{2}{N} \sum_{u < v} \frac{\partial^2 \hat\beta_j}{\partial x_u \partial x_v}(\mathbf{0}) \sum_{r=1}^N h_{r, k} h_{r, l} h_{r, u} h_{r, v}.
\end{equation}
For $(u,v)=(k,l)$, the inner sum is
$\sum_{r=1}^N h_{r,k}^2 h_{r,l}^2=N$. For $(u,v)\neq(k,l)$, it is the inner
product between the interaction columns $h_kh_l$ and $h_uh_v$. A Resolution IV
design may still alias two-factor interactions with one another; to make this
inner product vanish for every $(u,v)\neq(k,l)$, one needs a Resolution V design
or higher \citep{HedayatEtAl1999}.

Under a Resolution V design, \eqref{eq:contrast_decomp} therefore reduces to
\begin{equation}
    \tau_{j, kl} \approx \frac{2}{N} \frac{\partial^2 \hat\beta_j}{\partial x_k \partial x_l}(\mathbf{0}) \sum_{r=1}^N h_{r, k}^2 h_{r, l}^2 = 2 \frac{\partial^2 \hat\beta_j}{\partial x_k \partial x_l}(\mathbf{0}).
\end{equation}
Thus $\tau_{j,kl}$ estimates twice the cross-derivative at the central
imputation.  Lemma~\ref{lem:exact_A_derivatives} shows that its direct
co-missingness component scales with
$m_{kl}=|\Mcal_k\cap\Mcal_l|$.  Rare joint missingness therefore makes a small
interaction plausible, although cancellation with the remaining factors may
also occur.

\subsection{Proof of Proposition~\ref{prop:mixed_refinement_monotonicity}}
\label{app:proof_refinement}
\begin{proof}
Let \(Z\in\mathcal X_{\mathcal C}\). Then there exists \(u=(u_a)\) such that for all \((i,k)\in C_a\), \(Z_{ik}=z_{ik}^*+c_ku_a\). Since \(\mathcal C'\) refines \(\mathcal C\), every cell \(C_b'\in\mathcal C'\) is contained in some cell \(C_a\in\mathcal C\). Assigning to \(C_b'\) the same value \(u_a\) defines \(u'\) such that \(Z^{\mathcal C'}(u')=Z\). Hence \(\mathcal X_{\mathcal C}\subseteq\mathcal X_{\mathcal C'}\). The inclusion of coefficient sets follows directly from the definition of \(\widehat{\mathcal B}_{\mathcal C}\). The coordinatewise width inequality follows from set inclusion.
\end{proof}

\numberwithin{figure}{section}
\numberwithin{table}{section}
\section{Additional Illustrations and Numerical Details}
\label{app:illustrations}
\subsection{Microscopic and macroscopic cancellation}
\begin{remark}[Intuitive explanation of the difference between the Macroscopic and the Microscopic conditions]
\label{rem:micro_vs_macro_cancellation}
The main difference here is that we end up with a condition on $ \big| \E_P[X_j \mid M_k = 1] \big|$ whereas in the microscopic case we had a condition on $ \E_P[|X_j| \mid M_k = 1]$ and with some missingness distributions this could change a lot of things. \\

For example, let us say we have a measuring instrument that tends to fail to measure extreme values in the first case (very low and very high values), while in the second case we have an instrument that only fails on extremely high values. \\

We could therefore have this distribution for the missingness and some observed values as below:
\end{remark}
\begin{figure}[H]
    \centering
    \begin{tikzpicture}[scale=0.95, >=stealth]
        % Left: Bimodal case (Symmetric tails failure)
        \begin{scope}[xshift=0cm]
            \draw[->] (-3,0) -- (3,0) node[right] {$x$};
            \draw[->] (0,0) -- (0,1.8) node[above] {$f(x)$};
            \draw[domain=-2.8:-0.2, samples=50, thick, blue] plot (\x, {1.2*exp(-(\x+1.5)*(\x+1.5)/0.2)});
            \draw[domain=0.2:2.8, samples=50, thick, blue] plot (\x, {1.2*exp(-(\x-1.5)*(\x-1.5)/0.2)});
            \node[blue, above right] at (1.5,1.2) {$X_k \mid M_k=1$};
            \draw[dashed, red, very thick] (0,0) -- (0,1.5);
            \node[black, below] at (0,-0.1) {$\left\{ \begin{aligned} \E[X_k \mid M_k=1] &= 0 \\ \E[|X_k| \mid M_k=1] &= \mu > 0 \end{aligned} \right.$};
            
        \end{scope}

        % Right: Unimodal case (One-sided high tail failure)
        \begin{scope}[xshift=6.5cm]
            \draw[->] (-1,0) -- (3.5,0) node[right] {$x$};
            \draw[->] (0,0) -- (0,1.8) node[above] {$f(x)$};
            \draw[domain=0.2:2.8, samples=50, thick, blue] plot (\x, {1.2*exp(-(\x-1.5)*(\x-1.5)/0.2)});
            \node[blue, above right] at (1.5,1.2) {$X_k \mid M_k=1$};
            \draw[dashed, red, very thick] (1.5,0) -- (1.5,1.5);
            \node[black, below] at (1.5,-0.1) {$\left\{ \begin{aligned} \E[X_k \mid M_k=1] &= \mu \\ \E[|X_k| \mid M_k=1] &= \mu > 0 \end{aligned} \right.$};
            
        \end{scope}
    \end{tikzpicture}
    \caption{Probability density functions of a covariate $X_k$ conditionally on being missing ($M_k=1$) under two sensor failure modes.}
    \label{fig:sensor_failure_distributions}
    \alttext{Symmetric bimodal and one-sided conditional densities.}
\end{figure}

\begin{figure}[H]
    \centering
    \begin{tikzpicture}[scale=0.89, >=stealth, x={(1cm,0cm)}, y={(0.5cm,-0.3cm)}, z={(0cm,1cm)}]
        % --- Left: Alternating signs (5, -7, 6) ---
        \begin{scope}[xshift=0cm]
            % 3D coordinate system axes through the origin
            \draw[->, gray!80, thick] (-2,0,0) -- (3,0,0) node[right] {\small $i=1$};
            \draw[->, gray!80, thick] (0,-2,0) -- (0,3,0) node[below right] {\small $i=2$};
            \draw[->, gray!80, thick] (0,0,-2) -- (0,0,3) node[above] {\small $i=3$};
            
            % Draw microscopic box [-1.5, 1.5]^3 (dashed green)
            \draw[green!60!black, dashed, thick] (-1.5,-1.5,1.5) -- (1.5,-1.5,1.5) -- (1.5,1.5,1.5) -- (-1.5,1.5,1.5) -- cycle;
            \draw[green!60!black, dashed, thick] (-1.5,-1.5,-1.5) -- (1.5,-1.5,-1.5) -- (1.5,1.5,-1.5) -- (-1.5,1.5,-1.5) -- cycle;
            \draw[green!60!black, dashed, thick] (-1.5,-1.5,-1.5) -- (-1.5,-1.5,1.5);
            \draw[green!60!black, dashed, thick] (1.5,-1.5,-1.5) -- (1.5,-1.5,1.5);
            \draw[green!60!black, dashed, thick] (1.5,1.5,-1.5) -- (1.5,1.5,1.5);
            \draw[green!60!black, dashed, thick] (-1.5,1.5,-1.5) -- (-1.5,1.5,1.5);
            \node[green!60!black, right] at (1.5,-1.5,1.5) {\small Micro Space $[-c_k, c_k]^3$};
            
            % Draw macroscopic line (solid red diagonal)
            \draw[red, very thick] (-2.2,-2.2,-2.2) -- (2.5,2.5,2.5) node[above right] {\small Macro Line};
            
            % Plot true imputation point (5, -7, 6) -> scaled: (1.0, -1.4, 1.2)
            \coordinate (Z1) at (1.0, -1.4, 1.2);
            \fill[blue] (Z1) circle (2.5pt);
            \node[blue, above left] at (Z1) {\small $\mathbf{z}^* = (5, \color{red}{-7}\color{blue}, 6)$};
            
            % Distance helper line to macro line (projection at 0.3)
            \draw[dashed, blue!60, <->] (Z1) -- (0.3, 0.3, 0.3);
            
            \node[below] at (0.5,0,-3.0) {\small (a) Alternating Signs (Macro fails)};
        \end{scope}

        % --- Right: All positive signs (5, 7, 6) ---
        \begin{scope}[xshift=7.5cm]
            % 3D coordinate system axes through the origin
            \draw[->, gray!80, thick] (-2,0,0) -- (3,0,0) node[right] {\small $i=1$};
            \draw[->, gray!80, thick] (0,-2,0) -- (0,3,0) node[below right] {\small $i=2$};
            \draw[->, gray!80, thick] (0,0,-2) -- (0,0,3) node[above] {\small $i=3$};
            
            % Draw microscopic box [-1.5, 1.5]^3
            \draw[green!60!black, dashed, thick] (-1.5,-1.5,1.5) -- (1.5,-1.5,1.5) -- (1.5,1.5,1.5) -- (-1.5,1.5,1.5) -- cycle;
            \draw[green!60!black, dashed, thick] (-1.5,-1.5,-1.5) -- (1.5,-1.5,-1.5) -- (1.5,1.5,-1.5) -- (-1.5,1.5,-1.5) -- cycle;
            \draw[green!60!black, dashed, thick] (-1.5,-1.5,-1.5) -- (-1.5,-1.5,1.5);
            \draw[green!60!black, dashed, thick] (1.5,-1.5,-1.5) -- (1.5,-1.5,1.5);
            \draw[green!60!black, dashed, thick] (1.5,1.5,-1.5) -- (1.5,1.5,1.5);
            \draw[green!60!black, dashed, thick] (-1.5,1.5,-1.5) -- (-1.5,1.5,1.5);
            \node[green!60!black, right] at (1.5,-1.5,1.5) {\small Micro Space $[-c_k, c_k]^3$};

            % Draw macroscopic line (solid red diagonal)
            \draw[red, very thick] (-2.2,-2.2,-2.2) -- (2.5,2.5,2.5) node[above right] {\small Macro Line};
            
            % Plot true imputation point (5, 7, 6) -> scaled: (1.0, 1.4, 1.2)
            \coordinate (Z2) at (1.0, 1.4, 1.2);
            \fill[blue] (Z2) circle (2.5pt);
            \node[blue, above left] at (Z2) {\small $\mathbf{z}^* = (5, 7, 6)$};
            
            % Distance helper line to macro line (projection at 1.2)
            \draw[dashed, blue!60, <->] (Z2) -- (1.2, 1.2, 1.2);

            \node[below] at (0.5,0,-3.0) {\small (b) Uniform Signs (Macro succeeds)};
        \end{scope}
    \end{tikzpicture}
    \caption{Geometric comparison of macroscopic vs. microscopic imputation spaces in $\mathbb{R}^3$ for three missing observations.}
    \label{fig:macro_vs_micro_cancellation}
    \alttext{Microscopic cube and macroscopic diagonal.}
\end{figure}

We can see that in the first case the covariate imputation will miss the true distribution of the missingness whereas the microscopic imputation will contain it. Therefore the projection on the covariate imputation space will lead to some variance loss and therefore an underestimation of the sensitivity analysis.

\subsection{Numerical reproducibility}
\label{app:reproducibility}

We set $n=1000$, $p=10$, $\beta^*=(2.5,-2,1.5,1,1,\ldots,1)^T$ and
$Y=X^T\beta^*+\varepsilon$, with $\varepsilon\sim\mathcal N(0,0.5^2)$.
Independent standard-normal covariates are clipped to $[-2,2]$ and standardized.
For $k=1,2,3$, let
\[
 U_{ik}=\Phi^{-1}(0.1)\sqrt{1+\gamma^2}+\gamma X_{ik}
       +\sqrt{1-\rho_{\mathrm{miss}}}\,\eta_{ik}
       +\sqrt{\rho_{\mathrm{miss}}}\,\xi_i,
 \qquad M_{ik}=\mathbf1\{U_{ik}>0\},
\]
where $\gamma=0.5$ and $\eta_{ik},\xi_i$ are independent standard normals.
The $8\times8$ grid uses $\rho_{\mathrm{miss}}\in[0.2,0.9]$ and central
empirical quantile mass $1-\alpha\in[0.50,0.99]$, with 40 replications and
seed 42. An order-eight full factorial design gives the Hadamard contrasts.
For each extremum over $[-1,1]^3$, all vertices, the center and 64 Sobol points
are evaluated; the five best candidates are polished by L-BFGS-B. The plotted
quantities are given in~\eqref{eq:numerical_metrics}.
Code and numerical files are available from the author upon reasonable request.

Both quantities tend to increase with co-missingness
$(\rho_{\mathrm{miss}})$ and quantile size $(1-\alpha)$. This is consistent
with Remark~\ref{rem:uncorrelated_rules} and
Remark~\ref{rem:interaction_extraction}.

The following separate experiment retains the vertex metric across missingness
distributions.

\begin{figure}[H]
    \centering
    \includegraphics[width=0.95\textwidth]{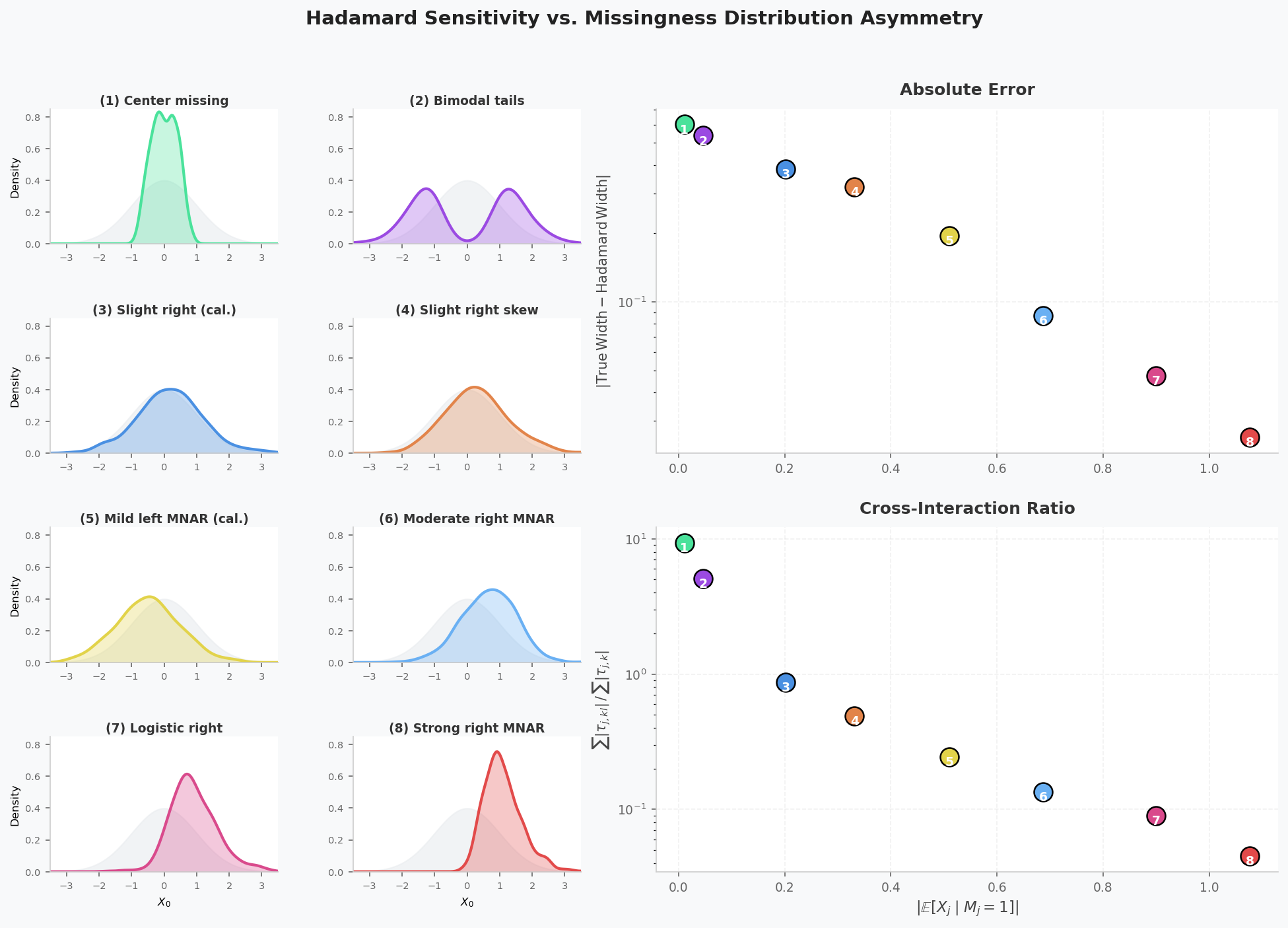}
    \caption{Vertex Hadamard Error and Cross-Interaction Ratio for different missingness Distributions.}
    \label{fig:hadamard_grid2}
    \alttext{Results for eight missing-value distributions.}
\end{figure}

Moreover, Figure~\ref{fig:hadamard_grid2} is consistent with
Remark~\ref{rem:uncorrelated_rules}: the variation across missingness
distributions reflects the influence of $\E_P[X_j\mid M_k=1]$ and remains
coherent with the interaction diagnostic in
Remark~\ref{rem:interaction_extraction}.

\subsection{Comparison of imputation partitions}
\begin{table}[htbp]
\centering
\caption{Comparison of Microscopic and Macroscopic Imputation Partitions.}
\label{tab:micro_macro_partitions}
\resizebox{\textwidth}{!}{%
\begin{tabular}{p{2.5cm} p{6.0cm} p{3.0cm} p{2.0cm}}
\toprule
Partition & Definition of cells $C_a$ & Blocks & Dimension $d$ \\
\midrule
Microscopic & $\{\{(i,k)\}:(i,k)\in\mathcal I_{\mathrm{miss}}\}$ & $\{\{(i,k)\}:(i,k)\in\mathcal I_{\mathrm{miss}}\}$ & $N_{\mathrm{miss}}$ \\[8pt]
Macroscopic & $\{C_k:k\in\mathcal J\}$ where $C_k = \{(i,k):i\in\Mcal_k\}$ & $I_{\mathrm{miss}}$ & $m$ \\
\bottomrule
\end{tabular}
}
\end{table}
\paragraph{Complexity comparison.}
We compare the computational complexity of the different designs in Table~\ref{tab:hadamard_complexity_extended}. $d_{\mathrm{group}} = \sum_{g=1}^G d_g$ is the number of factors in the groupwise adaptive method, where $d_g$ is the number of final factors in block $B_g$.
For OLS, the central diagonal derivatives use the exact Hessian formula and do
not change these orders in numbers of regression fits.

\begin{table}[htbp]
\centering
\caption{Computational complexity of the Hadamard sensitivity methods.}
\label{tab:hadamard_complexity_extended}
\resizebox{\textwidth}{!}{%
\begin{tabular}{lcccc}
\toprule
Method
& Factors
& Main effects only
& Main + 2-way interactions
& Exact enumeration \\
\midrule

Microscopic
&
\(N_{\mathrm{miss}}\)
&
\(\mathcal O(N_{\mathrm{miss}}T_{\mathrm{OLS}})\)
&
\(\mathcal O(N_{\mathrm{miss}}^2T_{\mathrm{OLS}})\)
&
\(\mathcal O(2^{N_{\mathrm{miss}}}T_{\mathrm{OLS}})\)
\\[6pt]

Macroscopic
&
\(m\)
&
\(\mathcal O(mT_{\mathrm{OLS}})\)
&
\(\mathcal O(m^2T_{\mathrm{OLS}})\)
&
\(\mathcal O(2^mT_{\mathrm{OLS}})\)
\\[6pt]

Groupwise adaptive 
&
\(\sum_g d_g\)
&
\(\mathcal O(d_{\mathrm{group}}T_{\mathrm{OLS}})\)
&
\(\mathcal O(d_{\mathrm{group}}^2T_{\mathrm{OLS}})\)
&
\(\mathcal O(2^{d_{\mathrm{group}}}T_{\mathrm{OLS}})\)
\\
\bottomrule
\end{tabular}
}
\end{table}

\subsection{Illustrative Example: Patient data Across Multiple Hospitals}
\label{sec:illustrative_example}

This synthetic example illustrates the construction of the observation groups
and cell blocks. The same pattern is then scaled to give an end-to-end
computational assessment of the procedure.

\definecolor{listA}{HTML}{D6EAF8}     % Soft blue (Hospital A similarity block)
\definecolor{listB}{HTML}{FADBD8}     % Soft red (Hospital B similarity block)
\definecolor{listC}{HTML}{D5F5E3}     % Soft green (Hospital C similarity block)
\definecolor{cellB35}{HTML}{FCF3CF}   % Soft yellow (Cell block B3-B5)
\definecolor{cellB6}{HTML}{EBDEF0}    % Soft purple (Cell block B6)
\definecolor{cellC47}{HTML}{F5CBA7}   % Soft orange (Cell block C4-C7)
\definecolor{cellC2}{HTML}{E8F8F5}    % Soft teal (Cell C2)
\definecolor{cellA1}{HTML}{AED6F1}    % Soft blue for C_{1,1}
\definecolor{cellA7}{HTML}{A9DFBF}    % Soft green for C_{1,7}
\definecolor{cellB3}{HTML}{FAD0C4}    % Soft pink/red for C_{2,3}
\definecolor{cellB5}{HTML}{FDEBD0}    % Soft peach for C_{2,5}
\definecolor{cellB6m1}{HTML}{E8DAEF}  % Soft lavender for C_{2,6} micro 1
\definecolor{cellB6m2}{HTML}{F5EEF8}  % Lighter lavender for C_{2,6} micro 2
\definecolor{cellC4}{HTML}{FCF3CF}    % Soft yellow for C_{3,4}
\definecolor{cellC7}{HTML}{FADBD8}    % Soft pink for C_{3,7}

To illustrate the concrete application of the observation grouping and cell block clustering steps in Algorithm~\ref{alg:adaptive_groupwise_macro_micro}, we study a synthetic database of $n = 13$ patients and $p = 7$ covariates. As shown in the previous section, the success of the linear approximation depends fundamentally on the co-missingness structure. We thus group the data to calculate interactions among observations and variables that are mutually related (co-missing). These patients are drawn from three different hospitals ($A$, $B$, and $C$), leading to hospital-specific systematic missingness:
\begin{itemize}
    \item \textbf{Hospital A} (Patients $P_1, P_2, P_3$): Covariates $X_1$ and $X_7$ are systematically missing.
    \item \textbf{Hospital B} (Patients $P_4, P_5, P_6, P_7$): Covariates $X_3$ and $X_5$ are systematically missing. Furthermore, for a subpopulation ($P_4$ and $P_5$), the covariate $X_6$ is also missing.
    \item \textbf{Hospital C} (Patients $P_8, P_9, P_{10}, P_{11}, P_{12}, P_{13}$): Covariate $X_2$ is systematically missing, covariate $X_4$ is missing for exactly half of the patients ($P_8, P_9, P_{10}$), and when $X_4$ is missing, $X_7$ is missing for 2 out of 3 patients ($P_8, P_9$).
\end{itemize}
The resulting missingness matrix $M \in \{0, 1\}^{13 \times 7}$ is shown in Table~\ref{tab:missingness_M}.

\begin{table}[htbp]
\centering
\caption{Missingness indicators $M_{ik}$ for the 13-patient cohort (1: missing, 0: observed).}
\label{tab:missingness_M}
\begin{tabular}{c c ccccccc}
\toprule
Patient & Hospital & $M_1$ & $M_2$ & $M_3$ & $M_4$ & $M_5$ & $M_6$ & $M_7$ \\
\midrule
$P_{1}$ & Hospital A & 1 & 0 & 0 & 0 & 0 & 0 & 1 \\
$P_{2}$ & Hospital A & 1 & 0 & 0 & 0 & 0 & 0 & 1 \\
$P_{3}$ & Hospital A & 1 & 0 & 0 & 0 & 0 & 0 & 1 \\
$P_{4}$ & Hospital B & 0 & 0 & 1 & 0 & 1 & 1 & 0 \\
$P_{5}$ & Hospital B & 0 & 0 & 1 & 0 & 1 & 1 & 0 \\
$P_{6}$ & Hospital B & 0 & 0 & 1 & 0 & 1 & 0 & 0 \\
$P_{7}$ & Hospital B & 0 & 0 & 1 & 0 & 1 & 0 & 0 \\
$P_{8}$ & Hospital C & 0 & 1 & 0 & 1 & 0 & 0 & 1 \\
$P_{9}$ & Hospital C & 0 & 1 & 0 & 1 & 0 & 0 & 1 \\
$P_{10}$ & Hospital C & 0 & 1 & 0 & 1 & 0 & 0 & 0 \\
$P_{11}$ & Hospital C & 0 & 1 & 0 & 0 & 0 & 0 & 0 \\
$P_{12}$ & Hospital C & 0 & 1 & 0 & 0 & 0 & 0 & 0 \\
$P_{13}$ & Hospital C & 0 & 1 & 0 & 0 & 0 & 0 & 0 \\
\bottomrule
\end{tabular}
\end{table}

\textbf{Observation Grouping}
Following the first step of the algorithm, we compute the patient co-missingness similarity matrix $s_{ii'}$ according to:
\begin{equation}
    s_{ii'} = \frac{\sum_k M_{ik}M_{i'k}}{\sum_k \mathbf 1\{M_{ik}+M_{i'k}\ge 1\}}.
\end{equation}
We set $s_{ii'}=0$ when the denominator is zero.
The calculated similarities are detailed in Table~\ref{tab:patient_similarity}.
We then construct the observation graph by connecting patients $i$ and $i'$ whenever $s_{ii'} \ge \lambda_{\mathrm{obs}}$. Under the threshold $\lambda_{\mathrm{obs}} = 0.5$, we obtain exactly three connected components representing the three hospitals:
\begin{itemize}
    \item \textbf{Group 1 (Hospital A)}: Patients $\{P_1, P_2, P_3\}$ have a pairwise similarity of $1.0$, forming a fully connected component.
    \item \textbf{Group 2 (Hospital B)}: Patients $\{P_4, P_5, P_6, P_7\}$ have pairwise similarities of $1.0$ (within the same missingness pattern) or $0.67$ (between the $\{P_4, P_5\}$ and $\{P_6, P_7\}$ subgroups, as they share 2 missing variables out of 3 in their union). All exceed the $0.5$ threshold, forming a single connected component.
    \item \textbf{Group 3 (Hospital C)}: Patients $\{P_8, \dots, P_{13}\}$ have pairwise similarities of $1.0$ (between $\{P_{11}, P_{12}, P_{13}\}$ or $\{P_8, P_9\}$), $0.67$ (between $\{P_8, P_9\}$ and $P_{10}$), and $0.5$ (between $P_{10}$ and $P_{11}, P_{12}, P_{13}$). At $\lambda_{\mathrm{obs}} = 0.5$, all are connected, forming a single connected component.
\end{itemize}

\begin{table}[htbp]
\centering
\caption{Patient co-missingness similarities $s_{ii'}$.}
\label{tab:patient_similarity}
\resizebox{\textwidth}{!}{%
\begin{tabular}{c c c c c c c c c c c c c c}
\toprule
Patient & $P_{1}$ & $P_{2}$ & $P_{3}$ & $P_{4}$ & $P_{5}$ & $P_{6}$ & $P_{7}$ & $P_{8}$ & $P_{9}$ & $P_{10}$ & $P_{11}$ & $P_{12}$ & $P_{13}$ \\
\midrule
$P_{1}$ & 1.00 & 1.00 & 1.00 & 0.00 & 0.00 & 0.00 & 0.00 & 0.25 & 0.25 & 0.00 & 0.00 & 0.00 & 0.00 \\
$P_{2}$ & 1.00 & 1.00 & 1.00 & 0.00 & 0.00 & 0.00 & 0.00 & 0.25 & 0.25 & 0.00 & 0.00 & 0.00 & 0.00 \\
$P_{3}$ & 1.00 & 1.00 & 1.00 & 0.00 & 0.00 & 0.00 & 0.00 & 0.25 & 0.25 & 0.00 & 0.00 & 0.00 & 0.00 \\
$P_{4}$ & 0.00 & 0.00 & 0.00 & 1.00 & 1.00 & 0.67 & 0.67 & 0.00 & 0.00 & 0.00 & 0.00 & 0.00 & 0.00 \\
$P_{5}$ & 0.00 & 0.00 & 0.00 & 1.00 & 1.00 & 0.67 & 0.67 & 0.00 & 0.00 & 0.00 & 0.00 & 0.00 & 0.00 \\
$P_{6}$ & 0.00 & 0.00 & 0.00 & 0.67 & 0.67 & 1.00 & 1.00 & 0.00 & 0.00 & 0.00 & 0.00 & 0.00 & 0.00 \\
$P_{7}$ & 0.00 & 0.00 & 0.00 & 0.67 & 0.67 & 1.00 & 1.00 & 0.00 & 0.00 & 0.00 & 0.00 & 0.00 & 0.00 \\
$P_{8}$ & 0.25 & 0.25 & 0.25 & 0.00 & 0.00 & 0.00 & 0.00 & 1.00 & 1.00 & 0.67 & 0.33 & 0.33 & 0.33 \\
$P_{9}$ & 0.25 & 0.25 & 0.25 & 0.00 & 0.00 & 0.00 & 0.00 & 1.00 & 1.00 & 0.67 & 0.33 & 0.33 & 0.33 \\
$P_{10}$ & 0.00 & 0.00 & 0.00 & 0.00 & 0.00 & 0.00 & 0.00 & 0.67 & 0.67 & 1.00 & 0.50 & 0.50 & 0.50 \\
$P_{11}$ & 0.00 & 0.00 & 0.00 & 0.00 & 0.00 & 0.00 & 0.00 & 0.33 & 0.33 & 0.50 & 1.00 & 1.00 & 1.00 \\
$P_{12}$ & 0.00 & 0.00 & 0.00 & 0.00 & 0.00 & 0.00 & 0.00 & 0.33 & 0.33 & 0.50 & 1.00 & 1.00 & 1.00 \\
$P_{13}$ & 0.00 & 0.00 & 0.00 & 0.00 & 0.00 & 0.00 & 0.00 & 0.33 & 0.33 & 0.50 & 1.00 & 1.00 & 1.00 \\
\bottomrule
\end{tabular}%
}
\end{table}

\begin{figure}[H]
    \centering
    \includegraphics[width=0.75\textwidth]{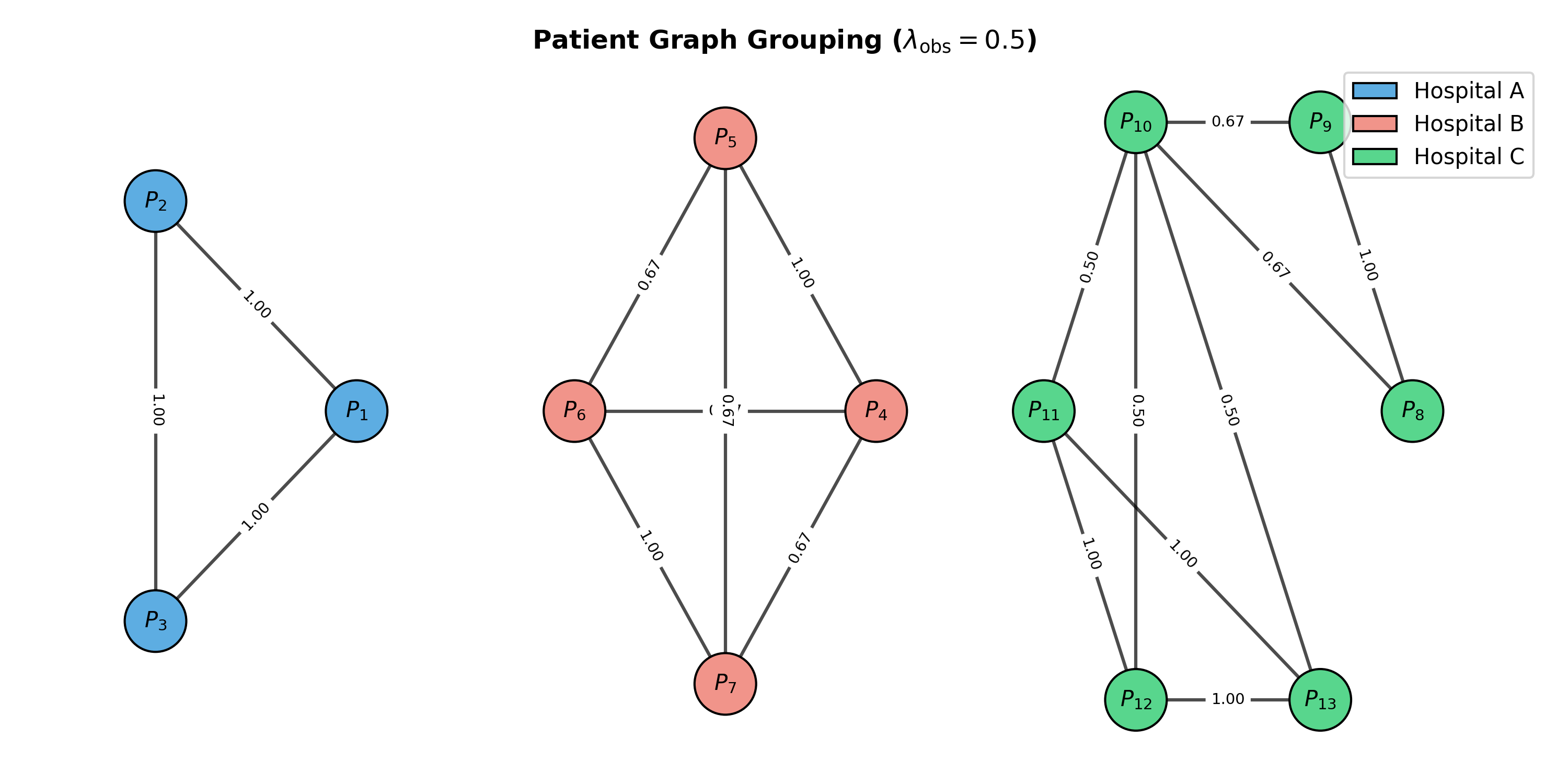}
    \caption{Patient similarity graph under threshold $\lambda_{\mathrm{obs}} = 0.5$. Node colors represent ground-truth hospitals, showing that the three components perfectly recover the hospital groupings.}
    \label{fig:obs_grouping}
    \alttext{Patient graph with three hospital components.}
\end{figure}

\begin{table}[htbp]
\centering
\caption{Patient co-missingness similarities $s_{ii'}$ with hospital blocks color-shaded (Hospital A: Blue, Hospital B: Red, Hospital C: Green).}
\label{tab:patient_similarity_colored}
\resizebox{\textwidth}{!}{%
\begin{tabular}{c c c c c c c c c c c c c c}
\toprule
Patient & $P_{1}$ & $P_{2}$ & $P_{3}$ & $P_{4}$ & $P_{5}$ & $P_{6}$ & $P_{7}$ & $P_{8}$ & $P_{9}$ & $P_{10}$ & $P_{11}$ & $P_{12}$ & $P_{13}$ \\
\midrule
$P_{1}$ & \cellcolor{listA} 1.00 & \cellcolor{listA} 1.00 & \cellcolor{listA} 1.00 & 0.00 & 0.00 & 0.00 & 0.00 & 0.25 & 0.25 & 0.00 & 0.00 & 0.00 & 0.00 \\
$P_{2}$ & \cellcolor{listA} 1.00 & \cellcolor{listA} 1.00 & \cellcolor{listA} 1.00 & 0.00 & 0.00 & 0.00 & 0.00 & 0.25 & 0.25 & 0.00 & 0.00 & 0.00 & 0.00 \\
$P_{3}$ & \cellcolor{listA} 1.00 & \cellcolor{listA} 1.00 & \cellcolor{listA} 1.00 & 0.00 & 0.00 & 0.00 & 0.00 & 0.25 & 0.25 & 0.00 & 0.00 & 0.00 & 0.00 \\
$P_{4}$ & 0.00 & 0.00 & 0.00 & \cellcolor{listB} 1.00 & \cellcolor{listB} 1.00 & \cellcolor{listB} 0.67 & \cellcolor{listB} 0.67 & 0.00 & 0.00 & 0.00 & 0.00 & 0.00 & 0.00 \\
$P_{5}$ & 0.00 & 0.00 & 0.00 & \cellcolor{listB} 1.00 & \cellcolor{listB} 1.00 & \cellcolor{listB} 0.67 & \cellcolor{listB} 0.67 & 0.00 & 0.00 & 0.00 & 0.00 & 0.00 & 0.00 \\
$P_{6}$ & 0.00 & 0.00 & 0.00 & \cellcolor{listB} 0.67 & \cellcolor{listB} 0.67 & \cellcolor{listB} 1.00 & \cellcolor{listB} 1.00 & 0.00 & 0.00 & 0.00 & 0.00 & 0.00 & 0.00 \\
$P_{7}$ & 0.00 & 0.00 & 0.00 & \cellcolor{listB} 0.67 & \cellcolor{listB} 0.67 & \cellcolor{listB} 1.00 & \cellcolor{listB} 1.00 & 0.00 & 0.00 & 0.00 & 0.00 & 0.00 & 0.00 \\
$P_{8}$ & 0.25 & 0.25 & 0.25 & 0.00 & 0.00 & 0.00 & 0.00 & \cellcolor{listC} 1.00 & \cellcolor{listC} 1.00 & \cellcolor{listC} 0.67 & \cellcolor{listC} 0.33 & \cellcolor{listC} 0.33 & \cellcolor{listC} 0.33 \\
$P_{9}$ & 0.25 & 0.25 & 0.25 & 0.00 & 0.00 & 0.00 & 0.00 & \cellcolor{listC} 1.00 & \cellcolor{listC} 1.00 & \cellcolor{listC} 0.67 & \cellcolor{listC} 0.33 & \cellcolor{listC} 0.33 & \cellcolor{listC} 0.33 \\
$P_{10}$ & 0.00 & 0.00 & 0.00 & 0.00 & 0.00 & 0.00 & 0.00 & \cellcolor{listC} 0.67 & \cellcolor{listC} 0.67 & \cellcolor{listC} 1.00 & \cellcolor{listC} 0.50 & \cellcolor{listC} 0.50 & \cellcolor{listC} 0.50 \\
$P_{11}$ & 0.00 & 0.00 & 0.00 & 0.00 & 0.00 & 0.00 & 0.00 & \cellcolor{listC} 0.33 & \cellcolor{listC} 0.33 & \cellcolor{listC} 0.50 & \cellcolor{listC} 1.00 & \cellcolor{listC} 1.00 & \cellcolor{listC} 1.00 \\
$P_{12}$ & 0.00 & 0.00 & 0.00 & 0.00 & 0.00 & 0.00 & 0.00 & \cellcolor{listC} 0.33 & \cellcolor{listC} 0.33 & \cellcolor{listC} 0.50 & \cellcolor{listC} 1.00 & \cellcolor{listC} 1.00 & \cellcolor{listC} 1.00 \\
$P_{13}$ & 0.00 & 0.00 & 0.00 & 0.00 & 0.00 & 0.00 & 0.00 & \cellcolor{listC} 0.33 & \cellcolor{listC} 0.33 & \cellcolor{listC} 0.50 & \cellcolor{listC} 1.00 & \cellcolor{listC} 1.00 & \cellcolor{listC} 1.00 \\
\bottomrule
\end{tabular}%
}
\end{table}

\textbf{Cell Block Clustering}
For each group $G_h$, the algorithm constructs the non-empty cells $C_{h,k} = \{(i,k) : i \in G_h, M_{ik}=1\}$.
For Hospital B ($G_2$), we obtain cells $C_{2,3}, C_{2,5}, C_{2,6}$. Their cell similarity matrix is shown in Table~\ref{tab:cell_similarity_B}.
For Hospital C ($G_3$), we obtain cells $C_{3,2}, C_{3,4}, C_{3,7}$. Their cell similarity matrix is shown in Table~\ref{tab:cell_similarity_C}.

\begin{table}[htbp]
\centering
\caption{Cell co-missingness similarities $a_{ab}$ within Hospital B ($G_2$).}
\label{tab:cell_similarity_B}
\begin{tabular}{c ccc}
\toprule
Cell & $C_{2,3}$ & $C_{2,5}$ & $C_{2,6}$ \\
\midrule
$C_{2,3}$ & 1.00 & 1.00 & 0.75 \\
$C_{2,5}$ & 1.00 & 1.00 & 0.75 \\
$C_{2,6}$ & 0.75 & 0.75 & 1.00 \\
\bottomrule
\end{tabular}
\end{table}

\begin{table}[htbp]
\centering
\caption{Cell co-missingness similarities $a_{ab}$ within Hospital C ($G_3$).}
\label{tab:cell_similarity_C}
\begin{tabular}{c ccc}
\toprule
Cell & $C_{3,2}$ & $C_{3,4}$ & $C_{3,7}$ \\
\midrule
$C_{3,2}$ & 1.00 & 0.75 & 0.67 \\
$C_{3,4}$ & 0.75 & 1.00 & 0.83 \\
$C_{3,7}$ & 0.67 & 0.83 & 1.00 \\
\bottomrule
\end{tabular}
\end{table}

We construct cell graphs by connecting cells with similarities at least $\lambda_{\mathrm{cell}}$, and define cell blocks as connected components. Figure~\ref{fig:cell_clustering} displays the cell graphs for both Hospital B and Hospital C under the threshold $\lambda_{\mathrm{cell}} = 0.8$. This shows that:
\begin{itemize}
    \item In Hospital B ($G_2$), the similarity of $C_{2,6}$ with $C_{2,3}$ and $C_{2,5}$ is $0.75 < 0.8$, leaving it isolated. This splits the hospital's cells into two blocks: $\{C_{2,3}, C_{2,5}\}$ (yellow) and $\{C_{2,6}\}$ (purple).
    \item In Hospital C ($G_3$), the systematically missing cell $C_{3,2}$ is isolated since its similarities with the others are $0.75$ and $0.67$. Only $a_{(3,4),(3,7)} = 0.83 \ge 0.8$ remains connected, yielding blocks: $\{C_{3,2}\}$ (teal) and $\{C_{3,4}, C_{3,7}\}$ (orange). This groups the two dependent cells $C_{3,4}$ and $C_{3,7}$ together, and splits off the independent systematic cell $C_{3,2}$, showing that the grouping captures variables whose missingness is mutually dependent.
\end{itemize}

\begin{figure}[H]
    \centering
    \includegraphics[width=0.85\textwidth]{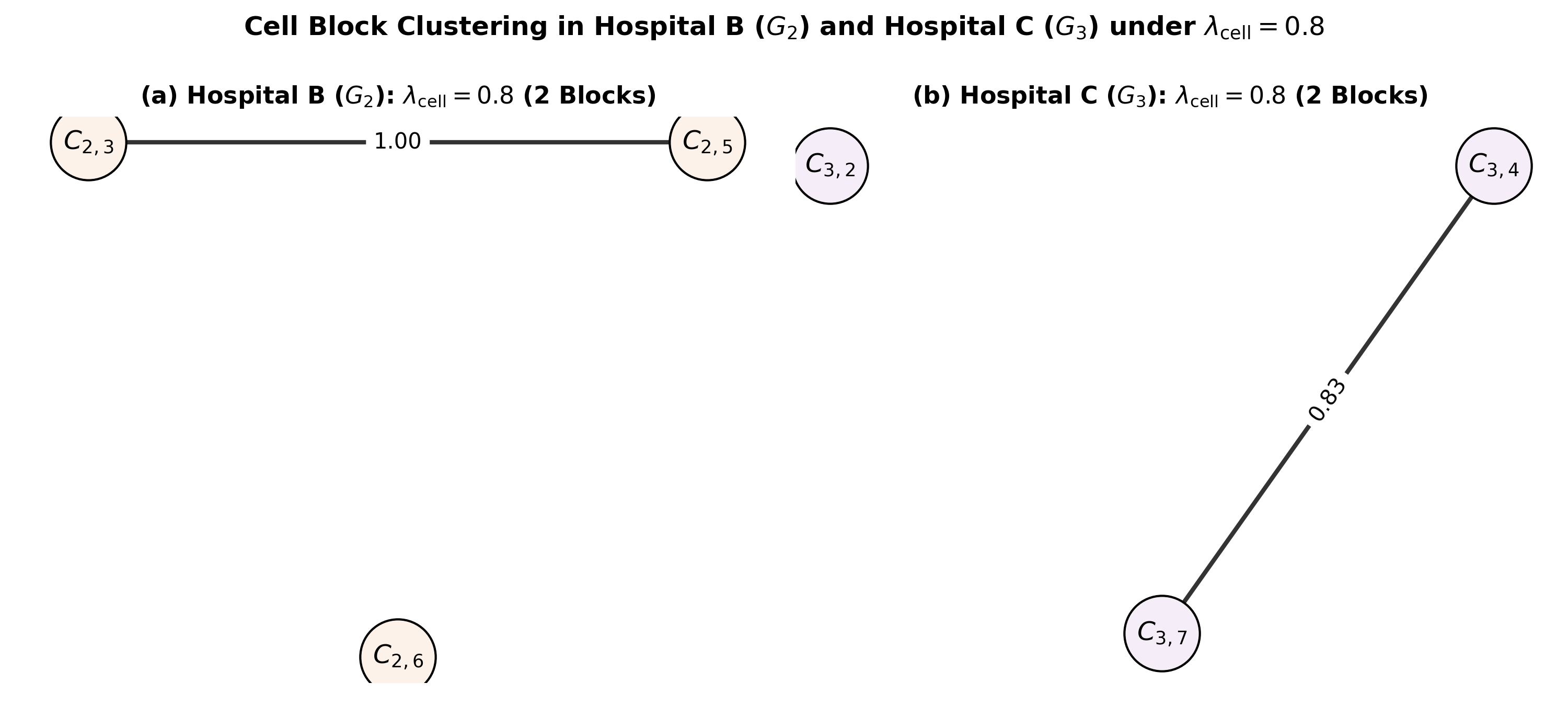}
    \caption{Cell block clustering within Hospital B ($G_2$) and Hospital C ($G_3$) under $\lambda_{\mathrm{cell}} = 0.8$. Node colors represent the cell blocks.}
    \label{fig:cell_clustering}
    \alttext{Thresholded cell graphs for Hospitals B and C.}
\end{figure}

The cell similarities with the resulting cell blocks color-shaded at threshold $\lambda_{\mathrm{cell}} = 0.8$ are shown in Table~\ref{tab:cell_similarity_B_colored} and Table~\ref{tab:cell_similarity_C_colored}.

\begin{table}[htbp]
\centering
\caption{Cell similarities within Hospital B ($G_2$) with cell blocks highlighted at $\lambda_{\mathrm{cell}} = 0.8$.}
\label{tab:cell_similarity_B_colored}
\begin{tabular}{c ccc}
\toprule
Cell & $C_{2,3}$ & $C_{2,5}$ & $C_{2,6}$ \\
\midrule
$C_{2,3}$ & \cellcolor{cellB35} 1.00 & \cellcolor{cellB35} 1.00 & 0.75 \\
$C_{2,5}$ & \cellcolor{cellB35} 1.00 & \cellcolor{cellB35} 1.00 & 0.75 \\
$C_{2,6}$ & 0.75 & 0.75 & \cellcolor{cellB6} 1.00 \\
\bottomrule
\end{tabular}
\end{table}

\begin{table}[htbp]
\centering
\caption{Cell similarities within Hospital C ($G_3$) with cell blocks highlighted at $\lambda_{\mathrm{cell}} = 0.8$.}
\label{tab:cell_similarity_C_colored}
\begin{tabular}{c ccc}
\toprule
Cell & $C_{3,2}$ & $C_{3,4}$ & $C_{3,7}$ \\
\midrule
$C_{3,2}$ & \cellcolor{cellC2} 1.00 & 0.75 & 0.67 \\
$C_{3,4}$ & 0.75 & \cellcolor{cellC47} 1.00 & \cellcolor{cellC47} 0.83 \\
$C_{3,7}$ & 0.67 & \cellcolor{cellC47} 0.83 & \cellcolor{cellC47} 1.00 \\
\bottomrule
\end{tabular}
\end{table}

\textbf{Final Partition of Missing Entries}
Finally, after running the local Hadamard design, the algorithm applies the cell-level curvature criterion to decide whether to refine cells. Suppose that under $\lambda_{\mathrm{cell}} = 0.8$:
\begin{itemize}
    \item Group 1 (Hospital A) has cells $\{C_{1,1}, C_{1,7}\}$ kept at cell level and grouped together.
    \item Group 2 (Hospital B) has blocks $\{C_{2,3}, C_{2,5}\}$ and $\{C_{2,6}\}$. Suppose the block $\{C_{2,6}\}$ is refined to microscopic cells $\{(4,6)\}$ and $\{(5,6)\}$ and the block $\{C_{2,3}, C_{2,5}\}$ is kept at the macro level.
    \item Group 3 (Hospital C) has blocks $\{C_{3,2}\}$ and $\{C_{3,4}, C_{3,7}\}$. Suppose both are kept at the macro block level.
\end{itemize}
The resulting final partition of the missing entries is displayed in Table~\ref{tab:final_partition_grid}. In this table, missing entries (1) are colored based on their partition block. The cells $C_{1,1}$ and $C_{1,7}$ in Hospital A share the same color (\colorbox{cellA1}{1}) since they form a single systematic block. The cells $C_{2,3}$ and $C_{2,5}$ are shaded with the same color (\colorbox{cellB35}{1}), showing they are grouped together at the macro level. Similarly, the dependent cells $C_{3,4}$ and $C_{3,7}$ in Hospital C share the same color (\colorbox{cellC47}{1}), illustrating that they are grouped into a single block because their missingness patterns depend on each other. The distinct colors on $M_6$ for $P_4, P_5$ illustrate cells refined to the microscopic level.

\begin{table}[htbp]
\centering
\caption{Final partition grid of the missing entries. Cells with the same color are grouped together. Red/Orange/Yellow represent cells kept at the macro level, whereas the individual patterns on $M_6$ represent cells refined to the microscopic level.}
\label{tab:final_partition_grid}
\begin{tabular}{c c ccccccc}
\toprule
Patient & Hospital & $M_1$ & $M_2$ & $M_3$ & $M_4$ & $M_5$ & $M_6$ & $M_7$ \\
\midrule
$P_{1}$ & Hospital A & \cellcolor{cellA1} 1 & 0 & 0 & 0 & 0 & 0 & \cellcolor{cellA1} 1 \\
$P_{2}$ & Hospital A & \cellcolor{cellA1} 1 & 0 & 0 & 0 & 0 & 0 & \cellcolor{cellA1} 1 \\
$P_{3}$ & Hospital A & \cellcolor{cellA1} 1 & 0 & 0 & 0 & 0 & 0 & \cellcolor{cellA1} 1 \\
$P_{4}$ & Hospital B & 0 & 0 & \cellcolor{cellB35} 1 & 0 & \cellcolor{cellB35} 1 & \cellcolor{cellB6m1} 1 & 0 \\
$P_{5}$ & Hospital B & 0 & 0 & \cellcolor{cellB35} 1 & 0 & \cellcolor{cellB35} 1 & \cellcolor{cellB6m2} 1 & 0 \\
$P_{6}$ & Hospital B & 0 & 0 & \cellcolor{cellB35} 1 & 0 & \cellcolor{cellB35} 1 & 0 & 0 \\
$P_{7}$ & Hospital B & 0 & 0 & \cellcolor{cellB35} 1 & 0 & \cellcolor{cellB35} 1 & 0 & 0 \\
$P_{8}$ & Hospital C & 0 & \cellcolor{cellC2} 1 & 0 & \cellcolor{cellC47} 1 & 0 & 0 & \cellcolor{cellC47} 1 \\
$P_{9}$ & Hospital C & 0 & \cellcolor{cellC2} 1 & 0 & \cellcolor{cellC47} 1 & 0 & 0 & \cellcolor{cellC47} 1 \\
$P_{10}$ & Hospital C & 0 & \cellcolor{cellC2} 1 & 0 & \cellcolor{cellC47} 1 & 0 & 0 & 0 \\
$P_{11}$ & Hospital C & 0 & \cellcolor{cellC2} 1 & 0 & 0 & 0 & 0 & 0 \\
$P_{12}$ & Hospital C & 0 & \cellcolor{cellC2} 1 & 0 & 0 & 0 & 0 & 0 \\
$P_{13}$ & Hospital C & 0 & \cellcolor{cellC2} 1 & 0 & 0 & 0 & 0 & 0 \\
\bottomrule
\end{tabular}
\end{table}

% ============================================================

\FloatBarrier
\bibliographystyle{unsrtnat}
\bibliography{references}
\end{document}